\documentclass[a4paper]{amsart}

\usepackage[utf8]{inputenc}
\usepackage[english]{babel}
\usepackage[T1]{fontenc}
\usepackage{lmodern}
\usepackage{dirtytalk}
\usepackage{comment}

\usepackage{babelbib}

\usepackage{amsmath, amssymb, amsthm}   
\usepackage{amstext, amsfonts, mathrsfs}
\usepackage[svgnames]{xcolor}

\usepackage{thmtools}

\usepackage{tikz-cd}

\usepackage[all]{xy} 
\usepackage{centernot}
\usepackage{array}
\usepackage{enumerate}
\usepackage{graphics,graphicx}
\usepackage{pinlabel}
\usepackage{float}
\usepackage{ae,aecompl}
\usepackage{eso-pic}
\usepackage{pst-all}
\usepackage{caption}
\usepackage{subcaption}

\usepackage{tabularx}
\usepackage{cases}
\usepackage[shortlabels]{enumitem}

\usepackage{hyperref}

\usepackage[normalem]{ulem}

\usepackage{thm-restate}

\newtheorem{thm}{Theorem}[section]
\newtheorem{prop}[thm]{Proposition}
\newtheorem{fact}[thm]{Fact}
\newtheorem{claim}[thm]{Claim}

\newtheorem{lem}[thm]{Lemma}

\newtheorem{coro}[thm]{Corollary}

\newtheorem{mainthm}{Theorem}    
 
\newtheorem{ques}[mainthm]{Question} 
\newtheorem{thmintro}{Theorem}

\theoremstyle{definition}
\newtheorem{defi}[thm]{Definition}
\newtheorem{nota}[thm]{Notation}
\newtheorem{example}[thm]{Example}

\newtheorem{convention}[thm]{Convention}

\theoremstyle{remark}
\newtheorem{rmk}[thm]{Remark}

\numberwithin{equation}{section}

\def\nn{\noindent}

\def\qq{\nn\quad}

\def\ssm{\ensuremath{{\smallsetminus}}}
\def\inv{\ensuremath{{-1}}}

\newcommand{\res}[2]
{\ensuremath{#1}{}_{\, {\vrule height 0.3cm} \, \ensuremath{#2}}}

\def\N{{\mathbb N}}    
\def\Z{{\mathbb Z}}    
\def\R{{\mathbb R}}

\def\T{{\mathbb T}}
\def\D{{\mathbb D}}

  \def\cG{{\mathcal G}}      \def\cN{{\mathcal N}} \def\cT{{\mathcal T}} \def\cC{{\mathcal C}}   \def\cO{{\mathcal O}}     \def\cP{{\mathcal P}} \def\cV{{\mathcal V}}   \def\cK{{\mathcal K}} \def\cQ{{\mathcal Q}}  \def\cF{{\mathcal F}}  \def\cL{{\mathcal L}}

\def\intr{\operatorname{int}}

\def\per{{\mathrm{per}}}

\def\pap{pseudo-Anosov piece}
\def\paps{pseudo-Anosov pieces}
\def\pa{pseudo-Anosov}
\def\pagp{pseudo-Anosov gluing map}

\def\paf{pseudo-Anosov flow}
\def\pafs{pseudo-Anosov flows}

\def\qms{quasi-Morse-Smale}

\def\cc{connected component}
\def\ccs{connected components}
\def\iin{\ensuremath{\mathrm{in}}}
\def\out{\ensuremath{\mathrm{out}}}
\def\Pin{\ensuremath{P^\iin}}
\def\Pout{\ensuremath{P^\out}}

\def\pP{\ensuremath{\partial P}}

\def\intr{\ensuremath{\mathrm{int}}}

\def\Per{\ensuremath{\mathscr{P}}}

\def\mm{{\ensuremath{**}}}

\PassOptionsToPackage{unicode}{hyperref}
\hypersetup{ 
linktocpage=true,
colorlinks=true,
breaklinks=true, 
pdfencoding=auto,
pdftitle={}, pdfauthor={}, pdfsubject={}
}

\definecolor{jauneclair}{RGB}{255,245,190}
\definecolor{orangeclair}{RGB}{255,225,180}
\definecolor{roseclair}{RGB}{250,220,245}

\definecolor{monviolet}{RGB}{125,0,255}

\title[Uniqueness of gluings]{Uniqueness of gluings and virtual finiteness of pseudo-Anosov flows on graph manifolds}

\author[Thomas Barthelm\'e]{Thomas Barthelm\'e}
\address{Queen's University, Kingston, Ontario}
\email{thomas.barthelme@queensu.ca}
\urladdr{sites.google.com/site/thomasbarthelme}

\author[Neige Paulet]{Neige Paulet}
 \address{Queen's University, Kingston, Ontario}
 \email{neige.paulet@queensu.ca}
\urladdr{ }

\date{\today}

\begin{document}
\maketitle

\begin{abstract}
    In this article, we give a characterization of when two pseudo-Anosov flows obtained via gluings of pieces of pseudo-Anosov flows are orbit equivalent. As an application of this work, and the description of pseudo-Anosov flows in Seifert pieces due to Barbot and Fenley, we prove a ``virtual'' version of the Finiteness Conjecture for transitive pseudo-Anosov flows on graph-manifolds.
\end{abstract}

\section{Introduction}

One approach to study and classify pseudo-Anosov flows on toroidal $3$-manifolds that was successfully developed over many years by Barbot and Fenley (\cite{barbotMisePositionOptimale1995,Bar96,barbotPseudoAnosovFlowsToroidal2013a,barbotClassificationRigidityTotally2015,barbotFreeSeifertPieces2021}) is to use a JSJ decomposition of the manifold that is adapted to the flow, and then describe the restriction to the pieces. This strategy has been particularly successful on Seifert pieces, as they obtained a complete description of the restriction of the flow inside such pieces (\cite{barbotClassificationRigidityTotally2015,barbotFreeSeifertPieces2021}).

What had been missing so far with this strategy was general conditions allowing us to uniquely recover the original flow from the ``model'' obtained in each piece. More precisely, while the topology of the graph manifold $M$ is completely determined by the topology of each piece together with the isotopy class of the gluing maps, there was no general understanding of which gluings (in a fixed isotopy class) give the same flows up to orbit equivalence, i.e., the following question was left open:

\begin{ques}\label{ques_graph_manifold_same_model}
    Let $\phi_1,\phi_2$ be two flows on a graph-manifold $M$ such that for any piece $P$ of the JSJ decomposition the ``models'' of $\phi_1|_P$ and $\phi_2|_P$ are orbit equivalent. 
    What are necessary and sufficient conditions on the gluing maps that ensure that $\phi_1$ and $\phi_2$ are orbit equivalent?
\end{ques}

The first main result of this article is to give an answer to this question, on any toroidal $3$-manifold, when the flows are assumed to be transitive: 

\begin{thmintro}\label{thmintro_advertisement}
    Let $\phi_1,\phi_2$ be two transitive pseudo-Anosov flows on an orientable (toroidal) $3$-manifold $M$ such that for any piece $P$ of the JSJ decomposition the restriction of $\phi_1|_P$ and $\phi_2|_P$ are orbit equivalent. Then $\phi_1$ and $\phi_2$ are orbit equivalent if and only if they have the same \emph{complete bar code}.
\end{thmintro}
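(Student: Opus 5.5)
The plan is to prove the two implications separately. The forward direction should be formal; the reverse direction carries the real content.

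\emph{Necessity.} Assume $h\colon M\to M$ is an orbit equivalence from $\phi_1$ to $\phi_2$. By the JSJ theorem we may isotope $h$ so that it preserves the collection of JSJ tori. We then replace each torus by the adapted representative: a Birkhoff torus, or a weakly embedded quasi-transverse torus, chosen as in Barbot--Fenley. Such a representative is canonical up to isotopy along the flow, so $h$ carries the adapted decomposition of $\phi_1$ onto that of $\phi_2$, piece to piece. The complete bar code should be defined out of data that orbit equivalences preserve: the periodic orbits and lozenges contained in each torus, and how the stable and unstable foliations on the two sides are matched up across it. Invariance of the bar code under $h$ then follows directly from the definition.

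\emph{Sufficiency.} This is where the work is. Fix orbit equivalences $h_P\colon \phi_1|_P\to\phi_2|_P$ for every piece $P$. These need not agree on the boundary tori. The first step is to show that on each boundary torus $T$, the pair of maps induced by the adjacent pieces on the boundary data differ by an element of a small group of ``admissible'' changes. Concretely, this means isotopies along the flow, Dehn-twist-like slides along the periodic orbits lying in $T$, and symmetries of the piece. The second step is to use equality of bar codes to show that this discrepancy can be absorbed. In other words, after composing $h_P$ with a self orbit equivalence of $\phi_2|_P$ that is isotopic to the identity rel the other boundary components, the maps coincide on $T$. The third step is to glue the modified $h_P$ together. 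The result is a homeomorphism of $M$ that maps orbits to orbits piece by piece, hence globally, which gives the required orbit equivalence.

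Two tools feed into this. One is a description of the self orbit equivalences of a pseudo-Anosov piece. The other is the result (presumably proved earlier in the paper) that the orbit equivalence class of a gluing depends only on the combinatorial gluing data up to these moves. Transitivity is used to guarantee that the pieces are hyperbolic-like with the right boundary behaviour, and that a flow built by gluing is again pseudo-Anosov.

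The main obstacle is the second step. We must show that the bar code is a \emph{complete} invariant of the gluing on each torus modulo the moves that extend across the pieces. Moreover, the corrections made on different boundary tori of the same piece must be realizable simultaneously. I would first handle one torus at a time, using self orbit equivalences supported near that boundary component, obtained from the flow-isotopy and slide moves in a collar. Independence across tori would then follow because these moves have disjoint supports. Some residual global twist may remain, coming from Seifert pieces with a periodic fibre. I expect this to be exactly the extra information recorded in the ``complete'' part of the bar code, and matching that part is what rules the twist out.
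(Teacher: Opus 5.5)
Your necessity direction is fine. The sufficiency argument, however, has a genuine gap exactly at your Step~2, and the mechanism you propose there cannot work.

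Gluing the $h_P$ requires the exact identities $h_{P'}\circ f_1=f_2\circ h_P$ on every boundary torus, i.e.\ an equality of homeomorphisms of tori. Equal complete bar codes only match the cyclic pattern of compact leaves of $(\cF_\partial,f_*\cF_\partial)$. Your local moves cannot bridge this, because a self orbit equivalence $g$ of $(P,\varphi)$ that is the identity outside a collar $U$ of a boundary torus $T$ is the identity on $T$. Indeed, for $x\in T$ off the tangent periodic orbits, the orbit of $x$ leaves $U$ at a point fixed by $g$. So $g$ maps that orbit segment to itself and, since it preserves entrance points, fixes $x$; continuity handles the periodic orbits. Hence flow isotopies and slides in a collar do not change the gluing at all.

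The self orbit equivalences that do move boundary points act on entrance and exit faces simultaneously, through $g^{\mathrm{out}}\circ\Gamma=\Gamma\circ g^{\mathrm{in}}$ with $\Gamma$ the crossing map (compare Claim~\ref{claim: equivalence on boundary}). They must also extend continuously across $\cK_\partial$. A correction on one torus is therefore transported by $\Gamma$ to the exit faces of $P$, then through the gluing into neighbouring pieces, and possibly back onto $T$. So independence by disjoint supports fails. Solving all the compatibility equations at once amounts to conjugating the return maps $\Theta_1,\Theta_2$ on $\cT$, including along orbits that cross $\cT$ infinitely often. That global problem is the entire content of the theorem. The ``result presumably proved earlier'' you invoke (the orbit equivalence class of a gluing depends only on combinatorial data up to moves) is essentially the statement itself, so the argument is circular at that point.

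There are two further warning signs. First, transitivity plays no real role in your scheme, which would therefore give orbit equivalence for non-transitive flows as well; the paper does not claim this. Second, the ``complete'' part of the bar code is not a residual Seifert twist. It records the compact leaves of $\cF_\partial$ that are not in $\cK_\partial$, produced by stable and unstable cylinders of periodic orbits elsewhere in $M$ that cross the tori (Examples~\ref{ex: non-filling bar code} and~\ref{ex: same bar code different complete bar codes}).

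The paper never constructs the conjugacy. It proceeds as follows.
\begin{enumerate}
\item It uses the complete bar codes only to build a homeomorphism $\xi$ homotopic to the identity, carrying the tori and their compact stable and unstable leaves for one flow to those of the other.
\item It shows that the induced chain isomorphism preserves the order $\prec$ on maximal lines of lozenges. This uses the lifted return map $\widetilde\Theta$ and the fact that returned unstable segments accumulate on periodic boundary leaves from the correct side (Lemma~\ref{lem: action lift return map transverse segment}).
\item It deduces $\Per(\phi_1)=\Per(\phi_2)$ from Theorem~\ref{thmintro: tool thm for free homotopy data}, since $g$ fixes a point of a maximal line $\cL$ exactly when $\cL\prec g^{\pm1}\cL$.
\item Transitivity enters only here, through the theorem of \cite{BFrM25}: transitive pseudo-Anosov flows with the same free homotopy data are orbit equivalent unless they differ by a flip on a scalloped periodic Seifert piece. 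That flip is excluded because the piecewise orbit equivalences fix the orientations of the periodic orbits lying on the tori, which determine its sign.
\end{enumerate}
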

     Informally, the complete bar code for $\phi$ is a \emph{finite} combinatorial data that is determined by, and recovers, the traces of the stable and unstable foliations of $\phi$ on each of the tori of the (adapted) JSJ decomposition of $M$. See below and Definition \ref{def: bar code} for the precise definition.

As an application of this result, and thanks to the work of Barbot and Fenley classifying pseudo-Anosov flows on Seifert pieces, we are able to prove a version of the \emph{Finiteness Conjecture} for pseudo-Anosov flows on graph-manifolds.
The Finiteness Conjecture, now listed as Problem 2.31 in Kirby's third list \cite{K3_list}, has attracted quite a lot of attention in the recent past, with many partial results (see, e.g., \cite{BM24,BSZ25,BTZ26,CP26,LT26}). It states that for a fixed $3$-manifold $M$, there should only be finitely many distinct orbit-equivalent classes of pseudo-Anosov flows on $M$.\footnote{Two independent proofs of the Finiteness Conjecture on \emph{atoroidal} manifolds were recently announced in upcoming works by Gabai and Li, and by Bowden and Colin.}

Here, we obtain a ``virtual finiteness'' version of the conjecture:
\begin{restatable}{thmintro}{finitenessgraph}
     \label{thmintro: finiteness graph}
    Let $M$ be a graph manifold and $\{\phi_i\}$ a family of transitive pseudo-Anosov flows on $M$. There exists a finite cover $\hat M$ and an integer $n$, both depending only on the topology of $M$, such that the lifted flows $\{\hat \phi_i\}$  belong to at most $n$ distinct orbit-equivalent classes.
\end{restatable}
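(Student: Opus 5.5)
The strategy is to combine Theorem~\ref{thmintro_advertisement} with the Barbot--Fenley description of pseudo-Anosov flows restricted to Seifert pieces. The point is to reduce the problem to two finiteness statements. First, for each JSJ piece $P$ of $\hat M$, the restrictions $\hat\phi_i|_P$ must fall into boundedly many orbit equivalence classes. Second, the complete bar codes of the $\hat\phi_i$ must take boundedly many values, up to the ambiguity allowed by Theorem~\ref{thmintro_advertisement}.

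Since $M$ is a graph manifold, every piece is Seifert fibered. For each $\phi_i$ we first place the JSJ tori in good position with respect to the flow, so that they are quasi-transverse or consist of Birkhoff annuli, following Barbot and Fenley. By their classification, each Seifert piece is of one of two kinds. It is either periodic, meaning the fiber is freely homotopic to a periodic orbit, in which case the restricted flow is determined by a neighborhood of a Birkhoff-annuli spine and a fat graph. Or it is free, in which case the flow is (after a cover) a model built from geodesic flows or Handel--Thurston/Bonatti--Langevin type blocks.

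The finite cover $\hat M$ is meant to remove the finite-order ambiguities that depend on the flow. These include the choice of which periodic orbits are the fibers, the orientability of the stable and unstable foliations on each piece, and the rotation or prong data of the periodic orbits. We take $\hat M$ to be a characteristic cover in which every Seifert piece becomes a product $\Sigma\times S^1$ with orientable base of large genus, and every JSJ torus lifts to tori on which the traces of the foliations have orientable, untwisted structure. Because $\pi_1$ of a graph manifold is residually finite and there are only finitely many subgroups of bounded index, such a cover can be chosen depending only on $M$.

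With this cover in hand, we bound the number of pieces. In each lifted piece, the number of periodic orbits in the spine and the combinatorics of the Birkhoff annuli are bounded by the topology: the Euler characteristic of the base and the number of boundary tori bound the fat graph. Barbot--Fenley then give finitely many orbit equivalence classes of restricted flows on each piece.

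Next we bound the bar codes. On each JSJ torus $T$, the traces of the stable and unstable foliations consist of finitely many Reeb annuli or periodic leaves. Their number and slopes are controlled by the adjacent pieces' models: the slopes are the fiber directions, and the number of components is determined by the model. So the bar code on $T$ is determined by the two adjacent models together with a bounded amount of discrete data, namely the cyclic matching of annuli and a shift.

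The main obstacle is the gluing within a fixed isotopy class. Dehn twists along the fiber could a priori produce infinitely many distinct bar codes. I would first try to show that such twists either change the orbit equivalence class only by finite-order data, which is absorbed by passing to $\hat M$, or are already realized by a self-orbit-equivalence of an adjacent piece model. For a periodic piece, for example, a twist along the fiber is a power of the flow-commuting fibration. Since the bar code is finite combinatorial data once the models are fixed, Theorem~\ref{thmintro_advertisement} then yields at most $n$ classes, with $n$ depending only on $M$.
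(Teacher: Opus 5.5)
Your outline has the right overall shape: a finite cover, finiteness piece by piece via Barbot--Fenley, finiteness of bar codes, then the uniqueness theorem. But the bar-code step rests on a false claim, and that is where most of the real work lies.

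You assert that the compact leaves on a JSJ torus $T$ are determined by the two adjacent models, so that the bar code is those models plus a bounded matching and shift. That is true for the bar code of the boundary prefoliation $\cK_\partial$. Theorem~\ref{thmintro_advertisement}, however, requires equality of the \emph{complete} bar code, i.e.\ of the boundary foliation $\cF_\partial$ of the glued flow. When $\cK_\partial$ is not filling, which is the typical situation along boundaries of periodic Seifert pieces (Bonatti--Langevin blocks, for instance), $\cF_\partial$ contains extra compact leaves. These are traces of stable/unstable cylinders of periodic orbits lying in other pieces, transported across several pieces by the flow. Their number depends on the global gluing. Examples~\ref{ex: non-filling bar code} and~\ref{ex: same bar code different complete bar codes} show that it changes even within a fixed isotopy class with a fixed bar code. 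You therefore need a uniform bound on these extra leaves; without it, Theorem~\ref{thmintro_advertisement} gives no finiteness at all. The paper supplies this bound in Proposition~\ref{propintro: finiteness of bar code}:
\begin{itemize}
\item an extra compact leaf reaches a leaf of $\cK_\cT^s$ after finitely many applications of the first-return map $\Theta$, since otherwise essential curves would become arbitrarily short;
\item a cylinder returning to the same torus forces that torus into a periodic Seifert piece;
\item in a periodic piece, the number of returns is bounded by intersecting the boundary curve with finitely many arc types determined by the fat graph.
\end{itemize}

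There are three further gaps.

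(a) Your cover targets the wrong property, and its uniformity is unjustified. What is needed is that, for every flow simultaneously, the modified JSJ tori become embedded rather than merely weakly embedded (Proposition~\ref{prop: cover embedded jsj tori}). This is what makes Theorem~\ref{thm_same_spine_implies_same_flow} applicable and what turns the non-periodic part into a genuine building block. Your appeal to ``finitely many subgroups of bounded index'' presupposes a bound on the index that you never establish. The paper obtains it from Lemma~\ref{lem:finite_number_fat_graph} together with a Marshall Hall argument.

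(b) $\chi(\Sigma)$ bounds fat graphs only when there are no valence-$2$ vertices. Such vertices come from boundary periodic orbits on tori shared with free pieces, so their number must be bounded by first proving finiteness on the non-periodic part. The paper does this by doubling that part into a skew Anosov flow on a fixed closed manifold and invoking Marty's finiteness (Proposition~\ref{prop: finite on skew}). Your one-line appeal to Barbot--Fenley for free pieces does not give a bound depending only on $M$.

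(c) To obtain \emph{equivalent} triples, the orbit equivalences between pieces must be isotopic to the identity on the boundary. Your tentative Dehn-twist paragraph points in the right direction, but it is not yet an argument. On periodic pieces one needs McCullough's theorem, to reduce the boundary mismatch to twists along vertical annuli, together with Lemma~\ref{lem: vertical annular twists orbit equivalences}. On non-periodic pieces one needs the observation that a nontrivial boundary twist would produce an essential annulus joining boundary periodic orbits. Such an annulus would have to be vertical, which would make the piece periodic.
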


In order to describe more precisely the general versions of the above results, we need to introduce some notations.

\subsection{Precise statement of results}

The idea behind Theorem \ref{thmintro_advertisement} is to be able to decide whether two flows that are obtained by gluing the same pieces of flow along tori are orbit equivalent. Note that aside from the work of Barbot and Fenley mentioned above that aimed for a classification of pseudo-Anosov flows by classifying the restriction of the flow to pieces, there has also been a lot of \emph{constructions} of Anosov flows by gluing distinct pieces \cite{bonattiExempleFlotAnosov1994,beguinBuildingAnosovFlows2017,pauletAnosovFlowsDimension2025,barbotPseudoAnosovFlowsToroidal2013a}, and the question of which gluings lead to orbit equivalent flows was only resolved for some cases (see \cite{beguinUniquenessTheoremTransitive2023}). 
Here, we will define what we mean by a piece of a pseudo-Anosov flow in the following way:

Given a manifold $M$, carrying a pseudo-Anosov flow $\phi$, a collection of disjoint tori $\cT$ that are \emph{quasi-transverse} (i.e., transverse except along finitely many periodic orbits, see Definition \ref{def: qt surface}), we will call a \emph{pseudo-Anosov piece} the couple $(P,\varphi)$, where $P$ is the completion of a (disjoint union of) component(s) of $M\smallsetminus \cT$, and $\varphi = \phi|_P$ is the restricted flow (see Definition \ref{def: pseudo anosov piece}). Note that when we consider a pseudo-Anosov piece $(P, \varphi)$, the original manifold $M$, and original flow $\phi$ are irrelevant. Instead, what is particularly relevant for this article are all the ways that one may glue the boundary components of $\partial P$. To study this, we define a \textit{pseudo-Anosov triple}, $(P, \varphi, f)$, as the data of a pseudo-Anosov piece $(P,\varphi)$ together with a map $f\colon \partial P \to \partial P$ such that the flow $\varphi$ induces a pseudo-Anosov flow $\varphi_f$ on the quotient manifold $M_f = P/f$ (see Definition \ref{def: gluing map}).

With this terminology, we address the following generalization of Question \ref{ques_graph_manifold_same_model}:
\begin{ques}
   Let $(P_0,\varphi_0,f_0)$ and $(P_1,\varphi_1,f_1)$ be two pseudo-Anosov triples. What are necessary and sufficient conditions for the flow $(\varphi_0)_{f_0}$ on $M_{f_0} = P_0/f_0$ and $(\varphi_1)_{f_1}$ on $M_{f_1} = P_1/f_1$ to be orbit equivalent?
\end{ques}

The first condition we will need is that the pseudo-Anosov pieces themselves are orbit equivalent, and that the manifolds $M_{f_0}$ and $M_{f_1}$ are diffeomorphic: We say that two pseudo-Anosov triples $(P_0,\varphi_0,f_0)$ and $(P_1,\varphi_1,f_1)$ are \emph{equivalent} if the pieces $(P_0, \phi_0)$ and $(P_1, \phi_1)$ are orbit equivalent via a map $H\colon P_0 \to P_1$ and the gluing maps $f_0$ and $H^{-1}\circ f_1 \circ H$ are homotopic. See Definition \ref{def: equivalent triple}.

The second condition we will require is that the complete bar code of the two triples are the same. As its definition is technical, we leave it for later (see Definition \ref{def: bar code}). For this introduction, it is enough to know that the complete bar code is just a finite combinatorial data that uniquely determines the traces of the stable and unstable foliations of $(\varphi_i)_{f_i}$ on (the images in $M_{f_i}$ of) $\partial P_i$.

In \cite{BM24,BFrM25}, an often complete invariant of the orbit equivalence class of \emph{transitive} pseudo-Anosov flows was introduced: the \emph{free homotopy data} of a flow.
Formally, given $\phi$ a flow on a manifold $M$, the free homotopy data of $\phi$ is the set $\cP(\phi)$ of elements $\gamma\in \pi_1(M)$ such that either $\gamma$ or $\gamma^{-1}$ are freely homotopic to a periodic orbit of the flow $\phi$. For two flows $\phi_0$ on $M_0$ and $\phi_1$ on $M_1$, we say that their free homotopy data are equivalent if there exists a homeomorphism $H\colon M_0 \to M_1$ such that $H_\ast \cP(\phi_0) = \cP(\phi_1)$.

The work that we do in this article is to show that, under the right conditions on the pseudo-Anosov triple, the resulting flows have equivalent free homotopy data, i.e., the more general version of Theorem \ref{thmintro_advertisement} is:
\begin{restatable}{thmintro}{uniquenessgluing}
 \label{thmintro: uniqueness of gluing}
    Two equivalent triples with same complete bar codes induce \pa{} flows with equivalent free homotopy data.
Moreover, if one of them is transitive, then they are orbit equivalent.
\end{restatable}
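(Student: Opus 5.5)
The plan is to reduce the statement to a comparison of periodic orbits, tracked through their passages across the gluing tori. The final step will then invoke the result of \cite{BM24,BFrM25} that transitive pseudo-Anosov flows with equivalent free homotopy data are orbit equivalent.

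First I normalize the triples. Let $(P_0,\varphi_0,f_0)$ and $(P_1,\varphi_1,f_1)$ be equivalent via $H\colon P_0\to P_1$. Replacing $(P_1,\varphi_1,f_1)$ by $(P_0,\varphi_0,H^{-1}\circ f_1\circ H)$, I may assume $P_0=P_1=P$ and $\varphi_0=\varphi_1=\varphi$, with $f_0$ homotopic to $f_1$. Homotopic maps between tori are isotopic, so $M_{f_0}$ and $M_{f_1}$ are homeomorphic via a homeomorphism $\Psi$ that is the identity on $P$ away from a collar of $\partial P$. This $\Psi$ gives the identification $\pi_1(M_{f_0})\cong\pi_1(M_{f_1})$ used to compare free homotopy data.

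Next I use the bar codes. Equal complete bar codes mean the traces of the stable and unstable foliations of $(\varphi)_{f_0}$ and $(\varphi)_{f_1}$ on $\partial P$ agree combinatorially. From this I will show that $f_1$ is isotopic to $f_0$ through maps $f_t$ that conjugate the induced foliations on the entrance boundary $P^{\mathrm{in}}$ to those on the exit boundary $P^{\mathrm{out}}$. Concretely, $f_0^{-1}\circ f_1$ preserves the pair of one-dimensional foliations, up to isotopy along leaves, on the relevant boundary tori. Each such torus has finitely many Reeb-type or periodic annuli determined by the bar code, and the claim is that such a map is isotopic to the identity within maps preserving that structure. Since the foliation pair is the same, every gluing $f_t$ still yields a pseudo-Anosov flow.

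Then I compare periodic orbits. A periodic orbit of $\varphi_{f}$ is either contained in $P$, or it is a cyclic concatenation of orbit segments of $\varphi$ going from $P^{\mathrm{in}}$ to $P^{\mathrm{out}}$, each exit point being glued by $f$ to the next entrance point. I will encode such orbits symbolically, using Markovian families of rectangles on $\partial P$ bounded by leaves of the traces. Because the gluings $f_0$ and $f_1$ induce the same combinatorics on these rectangles, each gluing produces a transverse hyperbolic structure in which every admissible cyclic itinerary is realized by exactly one periodic orbit (via shadowing, or via the bifoliated plane). Matching orbits with the same itinerary are freely homotopic through $\Psi$, since $f_0\simeq f_1$. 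Hence $\Psi_*\cP(\varphi_{f_0})=\cP(\varphi_{f_1})$, which is the first assertion. For the second, I will show that transitivity is read off from the same itinerary data, so one flow is transitive if and only if the other is. The orbit equivalence then follows from \cite{BM24,BFrM25}.

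The main obstacle is the existence and uniqueness of a periodic orbit for each itinerary, made uniform in $f$. Near the quasi-transverse periodic orbits on the tori, the tangency orbits, there is no uniform transversality, so rectangle chains may degenerate. I would first isolate these tangency orbits, whose free homotopy classes clearly coincide for both gluings. Away from them, I would work in the orbit space, lifting to the universal cover and using the stability of lozenges and chains of lozenges. Nonsingular orbits would be handled there by a Brouwer-type fixed point argument on product rectangles in the bifoliated plane.
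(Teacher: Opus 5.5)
Your normalization step is fine. The step that carries the whole theorem, however, is only asserted: that equal complete bar codes make $f_0$ and $f_1$ ``induce the same combinatorics'' on a Markovian family of rectangles, so that each itinerary realized for one gluing is realized by a periodic orbit for the other. The complete bar code records only the cyclic order and type of the \emph{compact} leaves of $(\cF_\partial,f_*\cF_\partial)$. It does not give $f_1(\cK_\partial\cap\Pout)=f_0(\cK_\partial\cap\Pout)$, nor any control of how non-compact leaves cross, and you give no mechanism turning it into admissibility of itineraries. There is also no such coding to compare. The return map to $\cT$ is defined only on $\cT\setminus\cK^s_\cT$, which in general has infinitely many components (strips, and annuli when $\cK_\partial$ is not filling), and the return time is unbounded near $\cK^s_\cT$. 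So there is no finite Markov family and no uniform hyperbolicity to shadow with.

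Your second paragraph does not repair this. The boundary foliations $\cF_{\partial,i}$ themselves depend on $f_i$, so there is no common foliation pair for $f_0^{-1}\circ f_1$ to preserve. An isotopy from $f_0$ to $f_1$ through pseudo-Anosov gluings is a strong-isotopy statement in the sense of \cite{beguinBuildingAnosovFlows2017}. You do not derive it from the bar code, and it is at least as strong as what you want to prove. Moreover, topological pseudo-Anosov flows have no structural stability that would carry periodic orbits along such a path.

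The missing idea is that you never need full itineraries. You only need the order $\prec$ on maximal lines of lozenges in the traces (equivalently, on maximal periodic bands of $\widetilde\cT$), and for a \emph{single} crossing this relation is read off from compact leaves. The paper uses the bar code only to build $\xi\colon\cT\to\cT$ homotopic to the identity, sending compact leaves to compact leaves and agreeing on compact leaves of $\cK^s_{\cT,0}$ (resp.\ $\cK^u_{\cT,0}$) with the maps $h^s$ (resp.\ $h^u$) induced by the orbit equivalence of the pieces. Suppose $x\in B_0$ and $\widetilde\Theta_0(x)\in B_0'$. Then $B_0$ and $B_0'$ contain the sides of periodic boundary leaves of the component $C_0\ni x$ of $\widetilde\cT\setminus\widetilde\cK^s_{\cT,0}$ and of $\widetilde\Theta_0(C_0)$ (Claim~\ref{claim: corresponding nbh to side of periodic boundary}). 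The lift $\widetilde\xi$ matches these leaves and sides, and Lemma~\ref{lem: action lift return map transverse segment} then produces a $\widetilde\phi_1$-orbit from $\widetilde\xi(B_0)$ to $\widetilde\xi(B_0')$. Transitivity of $\prec$ handles segments crossing several bands. The existence step you hoped to get from shadowing, which your last paragraph gestures at, is Corollary~\ref{cor: characterization fixed point with line}: $g$ fixes a point of $\cL$ iff $\cL\prec g^{\pm1}\cL$, through a Markovian intersection $L\prec gL$. This is how Theorem~\ref{thmintro: tool thm for free homotopy data} concludes.

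Finally, your last step is incomplete. $\Per$ is unoriented, and \cite{BFrM25} gives orbit equivalence of transitive flows with equal data only when no scalloped periodic Seifert piece carries different signs for the two flows. The paper rules this out by noting that these signs are determined by orientations of periodic orbits common to both flows. This holds either because the piece avoids $\cT$, or via the periodic orbits on the tori of $\cT$.
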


Notice that while complete bar codes are \emph{not} uniquely determined by a pseudo-Anosov triple, they are finitely determined:
\begin{restatable}{propintro}{finitenessbarcode}
\label{propintro: finiteness of bar code}
            There are finitely many complete bar codes in the equivalent class of a \pa{} triple $(P, \phi, f)$.
\end{restatable}
This result together with Theorem \ref{thmintro: uniqueness of gluing} implies that, starting from a single pseudo-Anosov piece, one can only build finitely many distinct flows on the same manifold (see Corollary \ref{coro: finiteness of free homotopy data}), and is essential for our application to the Finiteness Conjecture.

During our proof of Theorem \ref{thmintro: uniqueness of gluing}, we will work in the \emph{orbit space} $\cQ_\phi$ of a pseudo-Anosov flow $\phi$. The orbit space, introduced in works of Barbot \cite{barbotCharacterizationAnosovFlows1995} and Fenley \cite{fenleyAnosovFlows3manifolds1994}, is a topological plane obtained as the quotient of $\widetilde M$ by the orbits of $\widetilde \phi$, together with the projection of the stable and unstable foliations and the induced action of $\pi_1(M)$. (We recall below some background on the orbit space and refer to \cite{barthelmePseudoAnosovFlowsPlane2025} for a comprehensive treatment.)

Given a pseudo-Anosov triple, $(P,\varphi,f)$, the images of $\partial P$ in $M_f$, lifted to $\widetilde M_f$ and projected to the orbit space, give structures called \emph{chains of lozenges}, and the ``combinatorial'' type of these chains (i.e.,  the data of whether consecutive lozenges share a stable side, an unstable side, or a corner, see section \ref{subsec: trace qt tori}) is another characterization of the complete bar code. Moreover, the intersection pattern of the different chains of lozenges defines a partial order $\prec$ on these chains (see section \ref{subsec_lines_order}).

The key result which allows us to prove Theorem \ref{thmintro: uniqueness of gluing}, and may be of more general interest, gives a condition for the free homotopy data of two flows to be the same in terms of this order:
\begin{restatable}{thmintro}{toolthm}
\label{thmintro: tool thm for free homotopy data}
    Let $\phi_1$ and $\phi_2$ be pseudo-Anosov flows on a closed $3$-manifold $M$.
For $i=1,2$, let $\cT_i$ be good collections of quasi-transverse tori for $\phi_i$.
Assume that:
\begin{enumerate}
    \item\label{thm:tool_assumption_piecewise}
    $\Per\bigl(\res{\phi_1}{M\setminus \cT_1}\bigr)=\Per\bigl(\res{\phi_2}{M\setminus \cT_2}\bigr)$;
    \item\label{thm:tool_assumption_homeo}
    there exists a homeomorphism $\xi\colon M\to M$, homotopic to the identity, such that $\xi(\cT_1)=\cT_2$
    and $\xi$ maps compact leaves of $\cF^{s/u}_1\cap \cT_1$ to compact leaves of $\cF^{s/u}_2\cap \cT_2$;
    \item\label{thm:tool_assumption_order}
    for the induced chain isomorphisms $\bar \xi$ (Lemma~\ref{lem: isomorphism of chain}), $\bar \xi$ preserves the order relation~$\prec$ on maximal lines of lozenges in the traces.
\end{enumerate}
Then
$\Per(\phi_1)=\Per(\phi_2)$.
\end{restatable}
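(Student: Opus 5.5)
We argue at the level of orbit spaces. Fix lifts so that $\pi_1(M)$ acts on both $\cQ_{\phi_1}$ and $\cQ_{\phi_2}$. Since $\xi$ is homotopic to the identity, $\xi_\ast$ is an inner automorphism, and we may normalize it to the identity. We must show that every $\gamma\in\pi_1(M)$ fixing a point of $\cQ_{\phi_1}$ also fixes a point of $\cQ_{\phi_2}$; the converse follows by symmetry.

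\emph{Step 1 (orbits avoiding the tori).} If the periodic orbit $\alpha$ of $\phi_1$ associated to $\gamma$ can be homotoped into $M\smallsetminus\cT_1$, or lies in a piece, then assumption~\eqref{thm:tool_assumption_piecewise} gives $\gamma\in\Per(\phi_2)$ directly. The same holds for the periodic orbits contained in the tori: these are the compact leaves of the traces, and assumption~\eqref{thm:tool_assumption_homeo} sends compact leaves to compact leaves. So the real case is a periodic orbit $\alpha$ that crosses $\cT_1$ essentially.

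\emph{Step 2 (combinatorial coding of a crossing orbit).} In $\cQ_{\phi_1}$, a lift of $\alpha$ is a point $x$ fixed by $\gamma$. Each crossing of $\alpha$ with a torus $T\in\cT_1$ corresponds to a lift $\widetilde T$. Its projection is a maximal chain of lozenges $C_T$, which separates $\cQ_{\phi_1}$ into sides. The point $x$ and its $\gamma$-orbit determine a bi-infinite sequence of chains $\dots, C_{-1}, C_0, C_1,\dots$ separating consecutive segments of $\alpha$, with $\gamma C_i=C_{i+k}$. In these terms, $x$ is characterized as the unique point lying between all the $C_i$ in the nested, ordered manner recorded by the relation $\prec$ on the maximal lines of lozenges, together with the pieces in which the segments lie.

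Apply $\bar\xi$ from Lemma~\ref{lem: isomorphism of chain}. This gives chains $\bar\xi(C_i)$ in $\cQ_{\phi_2}$, which are $\gamma$-equivariant because $\xi_\ast=\mathrm{id}$. By assumption~\eqref{thm:tool_assumption_order}, they carry the same order pattern. Consequently, the intersection $R$ of the appropriate complementary regions of all $\bar\xi(C_i)$ is a nonempty, $\gamma$-invariant region of $\cQ_{\phi_2}$. In the other direction, assumption~\eqref{thm:tool_assumption_piecewise} applied to segments inside each piece gives that the corresponding piece-regions in $\cQ_{\phi_2}$ are nonempty.

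\emph{Step 3 (producing a fixed point).} We now show that $\gamma$ has a fixed point in $\overline R$, via a nested-sets argument. The regions $R_n$, cut out by the finite families $\bar\xi(C_{-n}),\dots,\bar\xi(C_n)$, are nested, and $\gamma$ maps $R_{n}$ into a region of the same type shifted by $k$. We aim to show that $\gamma$ acts on them as a contraction in one foliation direction and an expansion in the other. For this, we use the stable/unstable type of the lozenge sides, which the chain isomorphism preserves. From this we get a $\gamma$-invariant product region containing a unique fixed point, by the standard criterion: an element that maps a product rectangle's stable sides inward and its unstable sides outward fixes a point. Alternatively, $\gamma$ may preserve a lozenge, in which case its corners are fixed.

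\textbf{Main obstacle.} I expect the hardest step to be showing that the order data $\prec$ really pins down the nesting needed for this criterion. In particular, one must rule out that the $\bar\xi(C_i)$ accumulate onto a line or escape to infinity, which would leave $\gamma$ acting freely. Degenerate configurations, such as consecutive lozenges sharing corners or singular orbits on the tori, will need separate case analysis. To handle this, I would first try to use that $\cT_2$ is a good collection, so that the chains are properly embedded and the orbit space has no infinite product regions adjacent to them. This should force nonempty compact nested intersections.
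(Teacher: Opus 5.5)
Your proposal has a genuine gap. It sits exactly at your ``main obstacle'', and it comes from a mistaken picture of how the tori appear in the orbit space.

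\textbf{The picture in Step 2 is wrong.} Since $\gamma\tilde\alpha=\tilde\alpha$ and the whole lift $\tilde\alpha$ projects to the single point $x$, every lift $\widetilde T_i$ crossed by $\tilde\alpha$ satisfies $x\in p(\widetilde T_i)$. So $x$ lies in a lozenge of \emph{every} chain $C_i$. The chains do not separate consecutive segments of $\alpha$ in $\cQ_{\phi_1}$, and $x$ is not ``between'' them. The separation you have in mind exists in $\widetilde M$. Its translation to the orbit space is precisely the relation $\prec$: by Proposition~\ref{prop: order line and orbit}, a positive orbit segment from a transverse lift $B$ to $B'$ exists if and only if $\cL\prec\cL'$. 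This describes overlapping lozenges with Markovian intersection, not nested complementary regions.

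\textbf{Consequences for Steps 2 and 3.} The region $R$, its claimed nonemptiness, and the nested-sets contraction/expansion argument of Step 3 are therefore unsupported. You also misuse assumption~(1). It concerns closed periodic orbits that do not cross $\cT_1$, and says nothing about orbit segments inside pieces. In fact, it plays no role in the crossing case.

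\textbf{The missing idea.} No infinite family of chains and no limiting argument is needed; a single maximal line suffices. Let $\cL\subset C_0$ be a maximal line containing the lozenge that contains $x$. Then $\gamma$ fixes a point of $\cL$ if and only if $\cL\prec\gamma\cL$ or $\cL\prec\gamma^{-1}\cL$ (Corollary~\ref{cor: characterization fixed point with line}). The direction ``$\cL\prec\gamma\cL$ implies a fixed point'' goes through Lemma~\ref{lem: line intersection type}. That lemma produces a single lozenge $L\in\cL$ with $L\prec\gamma L$, a Markovian intersection. The standard criterion you quoted then applies to this one lozenge and its image.

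\textbf{How the argument closes.} Because the lift $\tilde\xi$ commutes with $\pi_1(M)$, we have $\bar\xi(\gamma\cL)=\gamma\bar\xi(\cL)$. Assumption~(3) then turns $\cL\prec\gamma\cL$ into $\bar\xi(\cL)\prec\gamma\bar\xi(\cL)$ in $\cQ_{\phi_2}$. The same corollary gives a fixed point of $\gamma$ there, so $\gamma\in\Per(\phi_2)$. This is the paper's entire argument, and it avoids all the accumulation and escape issues you anticipated.

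\textbf{A smaller point in Step 1.} The relevant dichotomy is whether $\alpha$ crosses $\cT_1$ transversely at all. It is not whether its class can be homotoped off $\cT_1$, nor whether it crosses ``essentially''. Any orbit that crosses must go through the orbit-space argument.
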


In the above result, $\cP(\res{\phi_i}{M \setminus \cT_i})$ denotes the set of free homotopy classes of periodic orbits that do \emph{not} cross any torus in $\cT_i$. 
The condition of Item \ref{thm:tool_assumption_order} can be explained in the following way. Up to conjugating the flow $\phi_2$ by $\xi$, one may assume that $\cT_1$ and $\cT_2$ are equal, with exactly the same compact leaves. Now, given a connected component $P$ of $M\smallsetminus \cT_1$,  each torus $T$ in $\partial P$ can be decomposed into maximal annuli with the property that each annulus contains compact leaves of only one foliation. Given two such annuli $A,B$, there may or may not exist an orbit of $\phi_i$ going from $A$ to $B$. The requirement of item \ref{thm:tool_assumption_order} is that if there exists an orbit $\alpha_1$ of $\phi_1$ between $A$ and $B$, then there also exists an orbit $\alpha_2$ of $\phi_2$ from $A$ to $B$ \emph{and} $\alpha_1$ and $\alpha_2$ are freely homotopic relative to $A \cup B$.

Note that versions of Theorem \ref{thmintro: tool thm for free homotopy data} were contained in \cite{BM24} (in the case when the flows $\phi_i$ are \emph{skew}) and \cite{BFM25} (in the case where all the tori in $\cT_i$ are \emph{scalloped}) and inspired the above generalization.

One application of Theorem \ref{thmintro: tool thm for free homotopy data} that we mention in passing (Corollary \ref{coro: 3D dehn twist}) is that one can use it to determine when a 3-dimensional Dehn twist is a self-orbit equivalence of a transitive pseudo-Anosov flow, generalizing some results of \cite{BM24}.

Finally, our application to the Finiteness Conjecture is slightly more general than Theorem \ref{thmintro: finiteness graph} stated above: We allow the manifold to have mixed JSJ decomposition, as long as the restriction to each hyperbolic piece satisfies an additional condition. We will say that a \pap{} $(P,\varphi)$ is \textit{skew} if it is orbit equivalent to a \pap{} $(P', \varphi')$ cut in a skew Anosov flow (see Definition \ref{def: skew pap}).

\begin{restatable}{thmintro}{finitenessgeneral}
 \label{thmintro: finiteness orbit equivalence}
 Let $M$ be a closed $3$-manifold and $\{\phi_i\}$ a family of transitive pseudo-Anosov flows on $M$ that are skew on each hyperbolic piece of the JSJ decomposition of $M$. Then there exists a finite cover $\hat M$ and an integer $n$, both depending only on the topology of $M$, such that the lifted flows $\{\hat \phi_i\}$  belongs to at most $n$ distinct orbit-equivalent classes.
\end{restatable}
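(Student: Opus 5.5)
The strategy is to reduce to a single \pa{} piece glued along tori, and then combine Theorem~\ref{thmintro: uniqueness of gluing} with Proposition~\ref{propintro: finiteness of bar code}. This requires a finite list of model pieces and a finite list of gluing isotopy classes, both controlled by the topology of $M$.

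\textbf{Step 1: adapt the JSJ decomposition to the flow.} By the work of Barbot and Fenley, for each transitive $\phi_i$ the JSJ tori of $M$ can be isotoped to a good collection $\cT_i$ of quasi-transverse tori. The isotopy class of $\cT_i$ is the JSJ collection, so it is independent of $i$.

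\textbf{Step 2: pass to a finite cover so that Seifert pieces have few models.} The Barbot--Fenley description of pseudo-Anosov flows on Seifert pieces (\cite{barbotClassificationRigidityTotally2015,barbotFreeSeifertPieces2021}) says that the restriction to a Seifert piece is either periodic or free.
\begin{itemize}
\item In the periodic case, the restriction is determined up to orbit equivalence by a finite combinatorial datum: a fat graph/spine of the base orbifold together with the number of prongs. This datum is bounded in terms of the topology, but Dehn-twist-type ambiguities along fibers can produce infinitely many inequivalent pieces.
\item In the free case, the restriction is, up to finite cover, a piece of a geodesic-flow-like model.
\end{itemize}
The plan is to choose a characteristic finite cover $\hat M$: take the fibers to be orientable, the base orbifolds to be surfaces, and kill the finite ambiguities by making the relevant holonomies trivial. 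This cover depends only on $\pi_1(M)$. After lifting, each Seifert piece of $\hat\phi_i$ should be orbit equivalent to one of finitely many model pieces $(P_k,\varphi_k)$, with the number bounded by the topology of the base surfaces. For the hyperbolic pieces in Theorem~\ref{thmintro: finiteness orbit equivalence}, the skew hypothesis gives analogous finiteness. For graph manifolds there are no hyperbolic pieces.

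\textbf{Step 3: reduce to finitely many triples.} With the models fixed, each $\hat\phi_i$ is obtained, up to orbit equivalence, from a triple $(P,\varphi,f)$. Here $P$ is one of finitely many disjoint unions of model pieces, and $f$ lies in the isotopy class prescribed by the JSJ gluing of $\hat M$. That class is only determined up to the mapping classes of the pieces preserving the models. Composing with self-orbit-equivalences of the pieces, such as Dehn twists along fibers realized by flow equivalences, the triples split into finitely many equivalence classes in the sense of Definition~\ref{def: equivalent triple}. I expect this step to be the main obstacle. One has to show that every change of gluing by a fiber Dehn twist that preserves the homeomorphism type of $\hat M$ is absorbed by an orbit equivalence of the model piece homotopic to the twist. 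This is exactly where passing to the cover should be needed: in the cover, the relevant twists become powers that the periodic or free Seifert models realize.

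\textbf{Step 4: apply the earlier finiteness results.} Proposition~\ref{propintro: finiteness of bar code} gives finitely many complete bar codes in each equivalence class. Theorem~\ref{thmintro: uniqueness of gluing} then shows that lifted flows with equivalent triples and equal complete bar codes are orbit equivalent, since transitivity lifts to finite covers. The bound $n$ is therefore the product of the number of model choices, the number of triple classes, and the maximal number of bar codes. Each of these should depend only on the topology of $M$: the number of bar codes is controlled by the number of boundary tori and by the bounded number of periodic orbits on each torus coming from the models.
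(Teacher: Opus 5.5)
Your skeleton matches the paper's: finitely many models for the pieces, hence finitely many equivalence classes of triples, then Proposition~\ref{propintro: finiteness of bar code} and Theorem~\ref{thmintro: uniqueness of gluing}. Step~4 is fine. The gap is that the two substantive inputs are asserted rather than proved, and the mechanisms you suggest are partly wrong.

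\textbf{Finiteness of the pieces.} The skew hypothesis gives no finiteness by itself: it only says each hyperbolic piece is cut from \emph{some} skew Anosov flow, possibly on another manifold. For free Seifert pieces you give no finiteness argument at all. Cutting along \emph{every} JSJ torus makes this worse, because it creates tori between two non-periodic pieces on which the slope of the boundary periodic orbits is not a priori fixed by the topology of $M$. There, neither your model count nor your gluing classes is controlled. The missing idea is to lump all hyperbolic and free Seifert pieces into a single piece $P_i$. Each boundary torus of $P_i$ is adjacent to a periodic Seifert piece, so its boundary periodic orbits lie in the fixed fiber class. One then proceeds as follows.
\begin{enumerate}
\item Show $P_i$ has no corner orbits (by acylindricity, resp.\ freeness) and is a skew building block (Lemmas~\ref{lem: Pi building block} and~\ref{lem: Pi skew}).
\item Double it along gluings normalized by Dehn twists along the periodic slope, so that all the doubles are one fixed closed manifold.
\item Apply Marty's finiteness of skew Anosov flows on a fixed manifold (Proposition~\ref{prop: finite on skew}).
\end{enumerate}
Only after this is the number of valence-$2$ boundary vertices of the periodic fatgraphs bounded. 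So your claim that the periodic datum is bounded by topology alone is unjustified. Also, vertical fiber twists do not produce infinitely many inequivalent periodic pieces: they are realized by self-orbit equivalences (Lemma~\ref{lem: vertical annular twists orbit equivalences}).

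\textbf{The cover and Step~3.} The cover is not there to ``kill holonomies'' or to make twists ``become powers''. The Barbot--Fenley tori are only weakly embedded. The cover of Proposition~\ref{prop: cover embedded jsj tori}, built via Lemma~\ref{lem:finite_cover_for_all_graph} and Hall's theorem uniformly over the finitely many fatgraphs, makes all modified JSJ tori embedded and all Seifert pieces products $\Sigma\times S^1$. This is what makes the pieces manifolds, so that Theorem~\ref{thm_same_spine_implies_same_flow} and the doubling construction apply.

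Your Step~3 mechanism fails on free Seifert pieces. A fiber twist on one of their boundary tori moves the slope of the boundary periodic orbits, so it is never isotopic to a self-orbit equivalence. What is true instead is that an orbit equivalence of the skew part is already isotopic to the identity on $\partial P$. Its boundary restriction is a twist along the periodic slope. By McCullough's theorem \cite{mcculloughHomeomorphismsWhichAre2006a}, a nontrivial such twist would produce an essential annulus. That is impossible in hyperbolic pieces, and in free pieces it would force the periodic slope to be the fiber.

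On periodic pieces, McCullough's second theorem writes the boundary discrepancy as a product of vertical annulus twists. Lemma~\ref{lem: vertical annular twists orbit equivalences} realizes these by self-orbit equivalences (Proposition~\ref{prop: finite on one periodic piece}). With these inputs, the triples coming from different flows are equivalent, and your Step~4 then concludes.
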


Note that to deduce the Finiteness Conjecture from the virtual version we obtained, one could hope to answer positively the following question, which is interesting in and of itself:

\begin{ques}\label{que_oe_in_lift}
    Let $M$ be a closed $3$-manifold and $\{\phi_i\}$ a family of (transitive) pseudo-Anosov flows on $M$. Suppose that there exists a finite cover $\hat M$ such that all the lifts $\hat \phi_i$ are orbit equivalent in $\hat M$. Does there exist $n$ such that $\{\phi_i\}$ belongs to at most $n$ distinct orbit equivalent classes? 
\end{ques}
Note that if the flows $\hat \phi_i$ are orbit equivalent via a homeomorphism that is homotopic to the identity, then the $\phi_i$ themselves are orbit equivalent (see \cite[Proposition 2.5]{BTZ26}). It implies that the answer to Question \ref{que_oe_in_lift} is positive on hyperbolic manifolds as they have finite mapping class group, but the toroidal case seems to be wide open. We do not even know of an example where $n\neq 1$.

The reason we need to pass to a finite cover in the preceding theorems is somewhat technical and amounts to the fact that, for our argument to work with what is currently in the literature, we need the flows to satisfy some additional conditions. If these conditions are satisfied in $M$, then we do not need to pass to the finite cover. More precisely:

\begin{restatable}{thmintro}{finitenessextra}
    \label{thmintro: finiteness extra conditions}
 Let $M$ be a closed $3$-manifold and $\{\phi_i\}$ a family of transitive pseudo-Anosov flows on $M$. Suppose that:
 \begin{enumerate}[label=(\roman*)]
     \item $M$ is orientable;
     \item Each Seifert piece $P$ of the JSJ decomposition of $M$ has base an orientable surface and orientable fibers;
     \item Each $\phi_i$ is skew on each hyperbolic piece of the JSJ decomposition of $M$;
     \item Each torus $T$ of the JSJ decomposition is isotopic to an \emph{embedded} quasi-transverse torus for $\phi_i$.
 \end{enumerate}
 Then there exists $n$ depending only on $M$ such that the flows $\{\phi_i\}$ belong to at most $n$ distinct orbit equivalent classes.
\end{restatable}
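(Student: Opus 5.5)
The plan is to reduce Theorem \ref{thmintro: finiteness extra conditions} to Theorem \ref{thmintro: uniqueness of gluing} together with Proposition \ref{propintro: finiteness of bar code}. The argument is a pigeonhole on finitely many combinatorial types of pseudo-Anosov triples supported on $M$.

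First, for each $\phi_i$ I use hypothesis (iv) to put every JSJ torus $T$ of $M$ in quasi-transverse position. I choose one embedded quasi-transverse torus isotopic to $T$ for each $T$, and arrange them pairwise disjoint; this should follow from the good-collection results of Barbot--Fenley. Cutting $M$ along this collection $\cT_i$ gives a pseudo-Anosov piece $(P_i,\varphi_i)$, where $P_i$ is the disjoint union of the completed JSJ pieces, and a gluing map $f_i$. Together these form a pseudo-Anosov triple with $M_{f_i}\cong M$ and $(\varphi_i)_{f_i}$ orbit equivalent to $\phi_i$. Since the JSJ decomposition is unique up to isotopy, the gluings $f_i$ all lie in finitely many homotopy classes once each $P_i$ is identified with a fixed model of the JSJ pieces. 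There is a finite ambiguity here coming from the symmetries of the pieces, which I absorb.

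Second, I show that the pieces $(P_i,\varphi_i)$ fall into finitely many orbit equivalence classes, with a bound depending only on $M$. I argue piece by piece.
\begin{enumerate}
\item On a hyperbolic piece, $\phi_i$ is skew by (iii). A skew piece is determined up to orbit equivalence by finite data, namely the number and combinatorics of its boundary periodic orbits, and this is bounded by the topology of the piece.
\item On a Seifert piece, I invoke the Barbot--Fenley classification, which is valid under hypothesis (ii) (orientable base and fibers) and (i). The restricted flow is then either periodic or free. In either case it is determined by a model, namely a filling of the base orbifold by ideal polygons or a fat-graph/spine-type structure on the base surface, up to finitely many choices. The number of choices is bounded in terms of the genus and number of boundary components, because the Euler characteristic bounds the combinatorics.
\end{enumerate}
This is the step I expect to be the main obstacle. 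Two things need care. One must extract from the Barbot--Fenley description a uniform finiteness statement rather than just a description. One must also check that orbit equivalence of the pieces can be chosen compatibly with the fixed identification of the boundary tori, so that the triples are equivalent in the sense of Definition \ref{def: equivalent triple}. For the compatibility I would compose with piece self-homeomorphisms and use that only finitely many boundary homotopy classes occur.

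Third, once the triples $(P_i,\varphi_i,f_i)$ fall into finitely many equivalence classes, I apply Proposition \ref{propintro: finiteness of bar code}: each equivalence class admits only finitely many complete bar codes. Hence the $\phi_i$ are partitioned into at most $n$ classes, each consisting of equivalent triples sharing a complete bar code. Here $n$ is the product of the number of piece types, gluing homotopy classes, and bar codes, all of which depend only on $M$. For the bar code count to depend only on $M$, I would check in the proof of Proposition \ref{propintro: finiteness of bar code} that the bound is in terms of the number of periodic orbits on the tori, which is controlled by the piece.

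Finally, since the $\phi_i$ are transitive, Theorem \ref{thmintro: uniqueness of gluing} shows that flows within a single class are orbit equivalent. This concludes the proof.
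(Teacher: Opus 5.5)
You are right that Step~2 is the obstacle. The justifications you give there do not work, and this is where the real content of the theorem lies; your Steps~1, 3 and~4 do match the paper's endgame. For hyperbolic pieces you claim that a skew piece is determined up to orbit equivalence by the number and combinatorics of its boundary periodic orbits, and that this number is bounded by the topology. No such rigidity statement or bound is available, and the bound is essentially what must be proved.

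Cutting along \emph{every} JSJ torus makes this harder. On a torus separating two non-periodic pieces, neither the slope of the periodic orbits nor their number is fixed by $M$. The missing idea is the paper's regrouping: it never cuts there, and instead takes $P=M\setminus P_\per$, the union of all hyperbolic and free Seifert pieces.
\begin{itemize}
\item It shows $P_i$ has no corner orbits, so it is a building block (Lemma \ref{lem: Pi building block}).
\item It shows $P_i$ is skew by gluing it to a copy of itself with Paulet's theorem (Lemma \ref{lem: Pi skew}).
\item The periodic orbits on $\partial P_i$ lie in the fiber class of the adjacent periodic Seifert pieces, which is fixed by $M$. So after Dehn twists along that slope, all the doubles are skew Anosov flows on one closed manifold.
\item Marty's finiteness theorem then gives finitely many classes of $(P_i,\varphi_i)$ (Proposition \ref{prop: finite on skew}), and hence a uniform bound on the number of boundary periodic orbits.
\end{itemize}
Free Seifert pieces are absorbed into this union and need no separate Barbot--Fenley count. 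Your claim that the tori can be made pairwise disjoint is unsupported, but it is also unnecessary, since pieces are defined for weakly embedded good collections.

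Your periodic Seifert count has a similar hole. Euler characteristic only controls fatgraphs \emph{without} valence-$2$ vertices (Lemma \ref{lem:finite_number_fat_graph}), because such vertices contribute nothing to $\sum_v(\deg v-2)=-2\chi(\Sigma)$. Valence-$2$ vertices are exactly the periodic orbits on boundary tori facing non-periodic pieces. The paper bounds them only through the skew-side finiteness (Proposition \ref{prop: finite on one periodic piece}), so this step depends on the missing one above.

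Your fix for compatibility with the gluings also fails as stated. Equivalence of triples requires an \emph{orbit equivalence} $h$ with $h_*f_0\simeq f_i$, so you cannot compose with arbitrary self-homeomorphisms of the pieces. Finiteness of gluing classes says nothing about $h|_{\partial}$, which a priori could be any of infinitely many vertical Dehn twists in the Seifert pieces. The paper instead shows $h$ can be chosen isotopic to the identity on $\partial$, as follows.
\begin{itemize}
\item By McCullough's theorems, a nontrivial boundary restriction is realized by twists along essential annuli whose boundary slopes are the twisting slopes.
\item On the skew union, such an annulus would force the periodic slope to be a fiber slope of a non-periodic piece, which is impossible.
\item On a periodic piece, the vertical twists are realized by self-orbit equivalences (Lemma \ref{lem: vertical annular twists orbit equivalences}, via Theorem \ref{thm_same_spine_implies_same_flow}).
\end{itemize}
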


\subsection*{Outline}
In Section \ref{sec: preli}, we recall some background on pseudo-Anosov flows, orbit spaces, quasi-transverse tori, and the modified JSJ decomposition of Barbot--Fenley. We also prove a result that has not yet appeared in the literature (Proposition \ref{prop: cover embedded jsj tori}) which states that one may always take a finite cover so that all the modified JSJ tori are embedded.

In Section \ref{sec: foliation QT tori}, we cover quasi-Morse-Smale laminations and foliations, and introduce the notion of bar code.

In Section \ref{sec: pA pieces}, we focus on the definitions and prove some properties of pseudo-Anosov pieces, pseudo-Anosov triples, and their associated bar code and return maps.

After these preliminary sections, the proof of Theorem \ref{thmintro: tool thm for free homotopy data} is done in 
Section \ref{sec: free data block}. In Section \ref{sec: block gluing}, we deduce Theorems \ref{thmintro_advertisement}, \ref{thmintro: uniqueness of gluing}, and Proposition \ref{propintro: finiteness of bar code}. Finally, in Section \ref{sec: finiteness flow}, we prove Theorem \ref{thm: finiteness free homotopy data}, which is a more general version of the virtual finiteness results stated above (Theorems \ref{thmintro: finiteness graph}, \ref{thmintro: finiteness orbit equivalence}, and \ref{thmintro: finiteness extra conditions}).

\subsection*{Acknowledgements}
We thank Katie Mann and François Béguin for many discussions about the content of this article.
Both authors were partially supported by the NSERC Discovery (RGPIN-2024-04412) and Alliance International programs (ALLRP 598447 - 24).

\section{Preliminaries}
\label{sec: preli}

In this section, we will quickly recall a number of results on pseudo-Anosov flows and the orbit space. We refer the reader to \cite{barthelmePseudoAnosovFlowsPlane2025} for a comprehensive introduction to this subject.

\subsection{Pseudo-Anosov flows}
\label{sec: preli; subsec: pa flows}

As there are many definitions of pseudo-Anosov flows in the literature, we recall the one we will be working with. Mostly, we will be working with \emph{topological} pseudo-Anosov flows, and sometimes we will need to consider smooth versions.

\begin{defi}[Pseudo-Anosov flows]
A (topological) pseudo-Anosov flow is a flow $\phi^t \colon M \to M$ with no fixed point, satisfying
the following conditions:

\begin{enumerate}
\item There are two continuous, topologically transverse, 2-dimensional
foliations $\cF^s$ and $\cF^u$ whose leaves are saturated by orbits of $\phi$
and intersect along orbits of $\phi$.  
Singularities lie along a finite (possibly empty) collection of periodic orbits
$\alpha_1,\ldots,\alpha_n$.  
Each $\alpha_i$ is locally homeomorphic to the singular locus $0\times(0,1)$
of a model $p_i$-prong, for some $p_i \ge 3$.

\item For any $\varepsilon>0$ sufficiently small, if $y\in \cF^s_\varepsilon(x)$
(resp.\ $y\in \cF^u_\varepsilon(x)$), then there is a standard reparameterization
$\sigma$ such that
\[
d\bigl(\phi_t(x),\, \phi_{\sigma(t)}(y)\bigr)\longrightarrow 0
\quad\text{as } t\to +\infty\ \text{(resp.\ } t\to -\infty\text{).}
\]

\item For any $\varepsilon>0$ sufficiently small, there exists $\delta>0$ such that
if $y\in \cF^s_\varepsilon(x)$ (resp.\ $y\in \cF^u_\varepsilon(x)$) is not in the same
$\varepsilon$-local orbit as $x$, then for any standard reparameterization $\sigma$
there exists $t<0$ (resp.\ $t>0$) such that
\[
d\bigl(\phi_t(x),\, \phi_{\sigma(t)}(y)\bigr)>\delta .
\]
\end{enumerate}
\end{defi}

\textit{Topological Anosov flows} are pseudo-Anosov flows without singular periodic orbits.

In Section \ref{sec: finiteness flow}, we will have to work with \emph{smooth} pseudo-Anosov flows instead of just topological ones. The notion of smooth Anosov flows is classical (see, e.g., \cite{FH_hyperbolicflows} or \cite{barthelmePseudoAnosovFlowsPlane2025}), as for smooth pseudo-Anosov flows they admit the usual hyperbolic splitting definition of smooth Anosov flows outside the singular orbits, and have Lipschitz regularity at the singular orbits. Since the exact definition is not essential in this work, we refer to \cite[Definition 5.9]{AT24} for details.

Note that thanks to a result of Shannon \cite{Sha25}, extended to pseudo-Anosov flows by Agol--Tsang \cite{AT24}, any \emph{transitive} topological pseudo-Anosov flow is orbit equivalent to a smooth one.

\begin{defi}[Orbit space]\label{def: orbit space}
Let $\phi$ be a (topological) pseudo-Anosov flow on a closed $3$-manifold $M$, and let
$\widetilde\phi$ denote its lift to the universal cover $\widetilde M$.
The \emph{orbit space} of $\phi$, denoted $\cQ_\phi$, is the quotient
\[
\cQ_\phi \ :=\ \widetilde M \big/ \widetilde\phi,
\]
i.e., the space of $\widetilde\phi$-orbits in $\widetilde M$, endowed with the quotient topology.
We denote by $p\colon \widetilde M \to \cQ_\phi$ the quotient map.

The stable and unstable foliations $\widetilde \cF^s,\widetilde \cF^u$ on $\widetilde M$ project
to one-dimensional (possibly singular) foliations on $\cQ_\phi$, denoted
$\cQ^s,\cQ^u$, obtained by collapsing each $\widetilde\phi$-orbit inside a leaf of
$\widetilde \cF^{s/u}$ to a point (so that $p$ maps each leaf of $\widetilde \cF^{s/u}$ to a leaf of
$\cQ^{s/u}$).
\end{defi}

\begin{defi}[Perfect fit]\label{def: perfect fit leaves}
Let $l^s$ be a leaf of $\cQ^s$ and $l^u$ be a leaf of $\cQ^u$.
We say that $l^s$ and $l^u$ \emph{make a perfect fit} if
$l^s\subset \partial\big(\cQ^s(l^u)\big)$
and
$l^u\subset \partial\big(\cQ^u(l^s)\big)$.
\end{defi}

Here $\partial\big(\cQ^s(l^u)\big)$ denotes the boundary of the saturation by the foliation $\cQ^s$ of the leaf $l^u$ in $\cQ_\phi$. Equivalently, it is the boundary in the leaf space $\Lambda(\cQ^s)$.

\begin{defi}[Lozenge]\label{def: lozenge}
A \emph{lozenge} in $\cQ_\phi$ is an open set $L\subset \cQ_\phi$ of the following form.
There exist two points (the \emph{corners}) $a,b\in \cQ_\phi$ and four half-leaves
\[
l^s_a\subset \cQ^s(a),\quad l^u_a\subset \cQ^u(a),\quad
l^s_b\subset \cQ^s(b),\quad l^u_b\subset \cQ^u(b)
\]
with $l^s_a$ and $l^u_b$ forming a perfect fit, and $l^u_a$ and $l^s_b$ forming a perfect fit.
The boundary of $L$ is exactly the union of these four half-leaves
and $L$ is the component of $\cQ_\phi\setminus \partial L$ whose closure contains both $a$ and $b$.
The two half-leaves $l^s_a,l^s_b$ are the \emph{stable sides} of $L$, and
$l^u_a,l^u_b$ are the \emph{unstable sides}.
\end{defi}

\begin{defi}[Chain of lozenges]\label{def: chain lozenges}
A \emph{chain of lozenges} in $\cQ_\phi$ is a union of closed lozenges that satisfies the following connectedness property: for any two lozenges $L$, $L'$ in the chain, there exist lozenges $L_0, L_1, \ldots L_k$ of the chain such that $L = L_0, L'= L_k$ and for all $i$ the lozenges $L_i$ and $L_{i+1}$ share a corner.
\end{defi}

\subsection{The geometric JSJ decomposition}

In this article we will often be working with the Jaco-Shalen-Johannson (JSJ) decomposition: 

\begin{thm}[JSJ decomposition, \cite{johannsonHomotopyEquivalences3manifolds1979}, \cite{JacoSeifertFiberedSpace1979}] \label{thm: JSJ}
Let $M$ be a closed, orientable, irreducible $3$-manifold.
There exists a finite family $\cT$ of disjoint incompressible tori and Klein bottles
such that every component of $M\setminus \cT$ is either Seifert fibered or atoroidal.
A minimal such family is unique up to isotopy.
\end{thm}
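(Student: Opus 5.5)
This is the classical theorem of Jaco--Shalen and Johannson, and in the paper we would simply cite it. If a proof were required, I would split it into \emph{existence}, which comes from a finiteness argument, and \emph{uniqueness of a minimal family}, which is the real content.

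For existence, the first tool is Kneser--Haken finiteness. Since $M$ is closed, orientable and irreducible, there is a constant $c(M)$, obtained from normal surface theory with respect to a fixed triangulation, with the following property: any family of disjoint, pairwise non-parallel, incompressible closed surfaces in $M$ has at most $c(M)$ members. The construction then goes inductively. Start with the empty family and look at the components of $M\setminus \cT$. If some component is neither Seifert fibered nor atoroidal, it contains an essential torus (or Klein bottle) $S$ that is not boundary parallel. By irreducibility and incompressibility of the existing boundary, $S$ is incompressible in $M$ and is not parallel to any member of $\cT$. We add $S$ to $\cT$ and repeat. Finiteness guarantees termination. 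This produces a family with the required property, and throwing away redundant members yields a minimal one.

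For uniqueness, I would follow the characteristic submanifold approach. The key input is the \emph{enclosing property}. Let $\Sigma$ be the union of the Seifert fibered components of the complement of a minimal family $\cT$, where we exclude fibrations that can be absorbed into neighbours. The property is that any essential map of a torus (or Seifert fibered space) into $M$ can be homotoped into $\Sigma$. The proof goes through the torus theorem: an essential map of a torus into $M$ either comes from an embedded essential torus, or $M$ is Seifert fibered. One then uses the annulus/torus decomposition inside the atoroidal pieces, where every essential torus is boundary parallel.

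Now take two minimal families $\cT$ and $\cT'$, and isotope them to intersect minimally. If some torus of $\cT'$ met a torus of $\cT$ essentially, the intersection curves would cut it into essential annuli. These annuli would lie in atoroidal pieces, where essential annuli force a Seifert structure. Alternatively they would lie in Seifert pieces, where they are vertical. In either case, the fact that both families are minimal forces the annuli to be removable. So after isotopy $\cT\cap\cT'=\emptyset$. Each torus of $\cT'$ then sits inside a piece of $M\setminus\cT$. In an atoroidal piece it must be boundary parallel. In a Seifert piece it must be vertical, and minimality rules out vertical tori that are not boundary parallel, since they would split a Seifert piece unnecessarily. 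By symmetry the two families are isotopic.

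The \textbf{main obstacle} is this last step: showing that the fibrations match up, so that minimality genuinely excludes a superfluous vertical torus. This is exactly where the exceptional cases enter, namely the Klein bottles, the twisted $I$-bundle over the Klein bottle, and pieces with non-unique Seifert fibrations. These cases must be handled by hand using the classification of Seifert spaces with multiple fibrations.
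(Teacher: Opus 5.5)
The paper gives no proof of this statement; it quotes it from Johannson and Jaco--Shalen, so citing it is exactly what the paper does. Your existence argument (Kneser--Haken finiteness, adding non-boundary-parallel incompressible tori until every piece is Seifert fibered or atoroidal) is the standard one and is fine.

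As a proof of uniqueness, however, your sketch has a genuine gap, precisely at the step you flag. Suppose $\cT\cap\cT'=\emptyset$ and $T'\in\cT'$ is a vertical, non-boundary-parallel torus in a Seifert piece $M_i$ of $M\setminus\cT$. To contradict minimality of $\cT'$, you must show that the two components $N,N'$ of $M\setminus\cT'$ adjacent to $T'$ are Seifert fibered, with fibrations inducing the same slope on $T'$. These components are in general not contained in $M_i$, so saying that $T'$ ``splits a Seifert piece unnecessarily'' proves nothing yet. The missing argument looks at the tori of $\cT$ inside $N$:
\begin{itemize}
\item If they are all parallel to $\partial N$, then $N$ is, up to isotopy, a component of $M_i$ cut along vertical tori, and it inherits the fibration of $M_i$.
\item If some $T\in\cT$ is essential in $N$, then $N$ is not atoroidal, hence Seifert with $T$ vertical. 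The component of $M_i\setminus\cT'$ adjacent to $T'$ then carries two Seifert fibrations, restricted from $N$ and from $M_i$, and you need these to be isotopic. This is where uniqueness of Seifert fibrations away from $S^1\times D^2$, $T^2\times I$ and $K\widetilde{\times} I$ is used.
\end{itemize}
Also, in your disjointness step the contradiction is with minimality of $\cT$ alone. The torus $T\in\cT$ met by $T'$ has Seifert pieces on both sides whose fibers can be chosen to be the intersection slope, so $T$ is superfluous. It is not that the annuli become ``removable''.

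Two further points. First, the enclosing-property paragraph is misstated and never used. With $\Sigma$ the union of the Seifert pieces, a JSJ torus separating two atoroidal pieces is an essential torus that cannot be homotoped into $\Sigma$; the characteristic submanifold must include collars of such tori. Moreover, the torus theorem only produces \emph{some} embedded essential torus or a Seifert structure, not an embedded torus realizing the given map.

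Second, and more seriously, no case-by-case analysis can dispose of the Klein bottles, because with Klein bottles allowed the uniqueness claim is false as literally written. Let $M=N_1\cup_T N_2$ be a Sol manifold with $N_1,N_2\cong K\widetilde{\times} I$, and let $K_1$ be the core Klein bottle of $N_1$. Then $\{T\}$ and $\{K_1\}$ are both minimal families, since cutting along $K_1$ leaves a copy of $N_2$, which is Seifert fibered. They are not isotopic. A proof should target the tori-only statement, as in Hatcher's notes, with Klein bottles entering only through an explicit convention.
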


The closures of the connected components of $M\setminus \cT$ are called the \emph{JSJ pieces}.

We will need the fact that there are only finitely many ways one can reconstruct a manifold $M$ from its decomposition into JSJ pieces. While this result is well-known among 3-manifold topologists, we could not find a reference with that precise statement, so we provide a proof.

\begin{prop}[Finiteness of gluing along a JSJ torus]\label{prop: finiteness jsj}
Let $T$ be a JSJ torus embedded in an orientable, irreducible $3$-manifold $M$, and let $P$ be the closure of $M \setminus T$. 
Then there exist only finitely many isotopy classes of involutions
$f \colon \partial P \rightarrow \partial P$ such that the quotient $P / f$ is diffeomorphic to $M$.
\end{prop}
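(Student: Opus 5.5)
Let $T_1,T_2$ be the two boundary tori of $P$ coming from $T$ (if $P$ is connected they lie in the same component; otherwise $P=P_1\sqcup P_2$). An involution $f$ gluing $\partial P$ back to a closed manifold swaps $T_1$ and $T_2$, so it is determined up to isotopy by $g=f|_{T_1}\colon T_1\to T_2$, i.e.\ by the element $g_*\in \mathrm{Iso}(H_1(T_1;\Z),H_1(T_2;\Z))$, a $GL_2(\Z)$-torsor. My plan is to compare any such $f$ with one fixed gluing $f_0$ (the one giving $M$) and to show that $h=f_0^{-1}\circ f$, read on $T_1$ as an element $A\in GL_2(\Z)$, can take only finitely many values modulo changes that do not alter the isotopy class. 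The key input is that a diffeomorphism $\Psi\colon P/f\to P/f_0\cong M$ can be isotoped to preserve the JSJ torus (uniqueness of the minimal JSJ family up to isotopy, Theorem~\ref{thm: JSJ}). It therefore restricts to a self-diffeomorphism $\psi$ of $P$, possibly exchanging the pieces or swapping $T_1,T_2$, with $f_0\circ\psi=\psi\circ f$ on $\partial P$. Thus $f=\psi^{-1}f_0\psi$. The isotopy classes of such $f$ are then in bijection with the orbit of $f_0$ under conjugation by $\mathrm{Mod}(P)$, modulo the stabilizer, restricted to those $\psi$ for which the result is an involution. Since only $\psi|_{\partial P}$ matters, I need the image of the restriction map $\rho\colon \mathrm{Mod}(P)\to \mathrm{Mod}(\partial P)$, and the number of distinct $\psi|_{\partial}^{-1} f_0 \psi|_\partial$.

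I would then treat the pieces adjacent to $T$ case by case. If a piece is hyperbolic, then by Mostow rigidity, or Johannson's finiteness of the mapping class group of simple Haken manifolds, $\mathrm{Mod}$ of that piece is finite, so its contribution to $\rho$ is finite. If a piece is Seifert fibered, the fiber slope on each boundary torus is preserved up to sign by every diffeomorphism, since the Seifert fibration of a JSJ Seifert piece other than $T^2\times I$ or the twisted $I$-bundle over the Klein bottle is unique up to isotopy. So $\rho(\psi)$ lies, up to finite index, in the group of matrices $\begin{pmatrix}\pm1 & k\\ 0&\pm1\end{pmatrix}$ in a basis (fiber, section curve). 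The conjugates $\psi^{-1}f_0\psi$ then differ from $f_0$ by Dehn twists along fibers on each side. The JSJ condition says that the fibers from the two sides do not match under $f_0$ (otherwise $T$ would not be needed). Because of this, conjugating by fiber twists on both sides produces a rigid constraint, and only finitely many results are compatible with the boundary data.

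The main obstacle is this last count: $\mathrm{Mod}(P)$ is infinite in the Seifert case, and the conjugation orbit is a priori infinite. It must be shown that $\rho(\mathrm{Mod}(P))$, intersected with the set of $\psi$ making $\psi^{-1}f_0\psi$ a valid gluing, acts with finitely many orbits. I would work at the level of $H_1(\partial P)$. The twists in $\rho(\mathrm{Mod}(P))$ are generated, up to finite index, by vertical Dehn twists (twists along vertical annuli joining boundary components, and twists on a single boundary torus along the fiber). I would compute that conjugating $f_0$ by a twist $t_1^{k}$ on $T_1$ and $t_2^{l}$ on $T_2$ yields the gluing matrix $B_2^{l}A_0B_1^{-k}$, where $B_i$ are unipotent fixing the fiber $h_i$. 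Using $A_0 h_1\neq \pm h_2$, I would check that these matrices are pairwise distinct as $(k,l)$ varies. This shows that the set of gluings is the orbit itself and that it is parametrized by the twists. I then have to argue that every such conjugate is isotopic to $f_0$ after accounting for the extension of the twist over the piece: vertical twists along boundary-parallel tori extend over $P$ and are isotopic to the identity in $M$. This reduces the count to the finite quotient $\rho(\mathrm{Mod}(P))/\langle\text{extendable twists}\rangle$, together with the finitely many sign, swap and piece-exchange choices. The degenerate pieces ($T^2\times I$, and the twisted $I$-bundle over the Klein bottle, which cannot be adjacent to a JSJ torus in the minimal decomposition in the same way) would be handled separately by direct inspection.
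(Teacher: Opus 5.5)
Your setup matches the paper's. You realize $P/f\cong M$ by a diffeomorphism respecting the JSJ tori, write $f=\psi^{-1}f_0\psi$ with $\psi$ a diffeomorphism of $P$, and then use finiteness of mapping class groups of hyperbolic pieces and preservation of the fiber slope in Seifert pieces. (A minor point: JSJ uniqueness only lets $\Psi$ send the image of $\partial P$ to \emph{some} JSJ torus of $M$, not necessarily $T$. This is fixed by choosing one reference gluing for each JSJ torus.)

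\textbf{The gap is your final count.} You correctly show that the classes $B_2^{l}A_0B_1^{-k}$ are pairwise distinct. So distinct pairs $(k,l)$ of boundary fiber twists realized by $\rho(\mathrm{Mod}(P))$ give \emph{distinct} isotopy classes of $f$ with $P/f\cong M$. The proposition counts isotopy classes of $f$ on $\partial P$. That $\psi$ extends over $P$ is exactly why $P/\psi^{-1}f_0\psi\cong M$; it is not a reason for $\psi^{-1}f_0\psi$ to be isotopic to $f_0$. Dividing by ``extendable twists'' turns the count into a count of $\mathrm{Mod}(P)$-conjugacy classes, which is trivially finite and is not the claim. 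Moreover, twists along boundary-parallel tori restrict to the identity on $\partial P$, so they cannot cancel anything; the fiber twists on $T_1,T_2$ come from twists along vertical annuli. What you must show is that $\rho(\mathrm{Mod}(P))$ has finite image in $\mathrm{Mod}(T_1\sqcup T_2)$, so that the conjugation orbit of $f_0$ is itself finite.

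\textbf{The missing idea is to bound the exponents by propagation through the piece}, which is how the paper argues. A fiber twist of exponent $n$ on $T_1$ extends over the adjacent Seifert piece $Q$ only through twists along vertical annuli, and these also twist the other end $T'$ of the annulus. When $T'$ is a JSJ torus in the interior of $P$, $\psi$ must also extend across $T'$ over the neighbouring piece $Q'$. If $Q'$ is hyperbolic, only finitely many classes on $T'$ occur. If $Q'$ is Seifert, its fiber is not parallel to that of $Q$, so a nonzero twist along the fiber of $Q$ cannot extend over $Q'$. Either way $n$ is bounded.

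This needs $T'\notin\{T_1,T_2\}$. The paper's proof takes $T'$ to face a different JSJ piece, so it does not treat the remaining case either. Suppose one Seifert piece lies on both sides of $T$ and a vertical annulus joins $T_1$ to $T_2$. Then $\mathrm{Mod}(P)$ realizes fiber twists of unbounded exponent on $T_1$ and $T_2$ simultaneously, and your own computation gives infinitely many pairwise non-isotopic $f$ with $P/f\cong M$. A concrete instance is $P=\Sigma\times S^1$ with $\Sigma$ a torus with two holes, glued to itself by a map not matching the fibers. In that configuration the statement as written fails. It has to be excluded, or the count taken up to $\mathrm{Mod}(P)$; no quotient argument can recover it.
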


\begin{proof}
Assume by contradiction that there exist infinitely many pairwise non-isotopic gluing maps 
$f_i \colon \partial P \to \partial P$ such that 
$M_i := P / f_i \simeq M$
for all $i$.  

Fix a reference gluing $f_0$ producing $M_0 \simeq M$, and let $H_i \colon M_i \to M_0$ be diffeomorphisms. 
Denote by $q_i \colon P \to M_i$ the quotient maps, and by $T_i = q_i(\partial P)$ the images of the boundary tori.
Each $H_i$ carries the JSJ decomposition of $M_i$ to that of $M_0$.
Up to composing $H_i$ with a diffeomorphism of $M_0$, we may assume that
$H_i(T_i) = T_0 = T$.  
Cutting $M_i$ and $M_0$ along $T_i$ and $T_0$ respectively, the restriction of $H_i$ induces a diffeomorphism
$h_i \colon P \to P$
which satisfies the boundary relation
\begin{equation}\label{eq:conjugacy}
    h_i \circ f_i = f_0 \circ h_i \quad\text{on } \partial P.
\end{equation}

Let $T^-$ be one boundary component of $\partial P$, and let $T^+ = f_i(T^-)$ be its paired component.  
Denote by $Q^\pm$ the JSJ pieces of $P$ adjacent to $T^\pm$, and set
\[
h_i^\pm := h_i|_{Q^\pm} \colon Q^\pm \to Q^\pm.
\]

For a compact $3$-manifold $Q$ with incompressible torus boundary, denote by $\mathrm{MCG}(Q;\partial)$ the mapping class group fixing each boundary component, and for a boundary component $T\subset\partial Q$ define $E(T)$ the image of this group inside $\mathrm{MCG}(T) \subset \mathrm{SL}(2,\Z)$.

Suppose first that both $Q^+$ and $Q^-$ are hyperbolic.
For a compact hyperbolic $3$-manifold with incompressible torus boundary, the mapping class group $\mathrm{MCG}(Q;\partial)$ is finite by Mostow--Prasad rigidity.  
Consequently the image $E(T)$ is finite.  
Since \eqref{eq:conjugacy} implies
$f_i = (h_i^+)^{-1} \circ f_0 \circ h_i^-$,
and there are only finitely many isotopy classes of possible $h_i^\pm|_{\partial Q^\pm}$, it follows that the set of isotopy classes of $f_i$ is finite.
This proves the claim in the hyperbolic case.

Suppose now that one piece is Seifert fibered with a unique Seifert fibration.
Let $Q=Q^\pm$ be such a Seifert piece, and let $T = T^\pm$ be the boundary torus.  
Choose a basis $(k,s)$ of $H_1(T)\cong\mathbb Z^2$, where $k$ is the oriented fiber slope and $s$ a transverse section slope.  
Any boundary diffeomorphism $f$ extending over $Q$ must preserve the fiber direction, hence is a $n$-Dehn twist along $k$ (possibly reversing orientation).  
If $n\ne 0$, such a twist extends uniquely to the interior of $Q$ as a fiberwise Dehn twist supported in a neighborhood $N\simeq A\times I$ of a vertical annulus $A$ connecting two boundary components $T$ and $T'$, and is the identity outside $N$.
Now let $T'$ be the other boundary component of $A$, adjacent to another JSJ piece $Q'$.  
If $Q'$ is hyperbolic, then $\mathrm{MCG}(Q';\partial)$ is finite, hence the slope of the twist that can be realized on $T'$ is determined by finitely many possibilities for $n$.  
It follows that only finitely many $n$ can occur on~$T$ as well.
If $Q'$ is Seifert fibered with a unique fibration, we may choose the bases $(k,s)$ and $(k',s')$ on $T$ and $T'$ so that $k'$ corresponds to the fiber of $Q$ and $s'$ to the fiber of $Q'$.  
Then $f$ must preserve both $k'$ and $s'$, forcing $n=0$.

In all cases, there are finitely many isotopy classes for the restrictions $h_i^\pm|_{\partial Q^\pm}$, hence for the maps $f_i$.

If the Seifert fibration is not unique,
then by \cite[Lemma~VI.20]{jacoLecturesThreemanifoldTopology1997} the piece $Q$ is homeomorphic to one of the following:
\[
\D^2\times S^1, \quad \T^2\times I,\quad \text{or}\quad K \widetilde{\times} I,
\]
where $K \widetilde{\times} I$ is the twisted $I$-bundle over the Klein bottle.
Since $Q$ is a piece of the torus decomposition of an irreducible manifold, $Q\neq \D^2\times S^1$ because its boundary would be compressible.  
If $Q=\T^2\times I$, then it would lie inside the peripheral region of an adjacent Seifert piece, contradicting the maximality of the JSJ decomposition, or the whole manifold $M$ is a mapping torus, in which case the proposition is true.
Hence the only remaining possibility is that $Q$ is a twisted $I$-bundle over the Klein bottle.  
In this case, $Q$ admits exactly two non-isotopic Seifert fibrations, and the above arguments give the finiteness as in the previous cases.
\end{proof}

\subsection{The modified JSJ decomposition}

Here we recall the modified JSJ decomposition of Barbot and Fenley, and refer to \cite[Chapter 6]{barthelmePseudoAnosovFlowsPlane2025} for more details.

\begin{defi}[Quasi-transverse surface] \label{def: qt surface}
A closed surface $S$ in $M$ is said to be \emph{quasi-transverse} for $\phi$
if it is immersed, incompressible and:
\begin{enumerate}
\item There are finitely many (possibly zero) periodic orbits
$O_1,\ldots,O_n$ of $\phi$ contained in $S$;
\item The complement $S \setminus \{O_1,\ldots,O_n\}$ of the periodic orbits is
transverse to $\phi$.
\end{enumerate}
\end{defi}

\begin{defi}[Weakly embedded] \label{def: weakly embedded quasi transverse surface}
A quasi-transverse surface $S$ is \emph{weakly embedded} if the complement
$S \setminus \{O_1,\ldots,O_n\}$ of periodic orbits
is embedded.
\end{defi}

In this article, we will only consider quasi-transverse tori.
Let $T$ be a quasi-transverse torus and let
$O_1,\ldots,O_n$ be its periodic orbits.
Let $A_i$ denote the open annuli bounded by $O_{i-1}$ and $O_i$ (with the convention that $O_{n+1}=O_1$).
The interior of $A_i$ is transverse to the flow, hence the foliation $\cF^s$ and $\cF^u$ induce 1-dimensional foliation $f^s_i$ and $f^u_i$ on each annulus $A_i$.
Both foliations admit $O_{i-1}$ and $O_i$ as closed leaves.
They extend to a pair of 1-dimensional foliations $(\cF_T^s, \cF_T^u)$ on the quasi-transverse torus $T$.

\begin{defi}[Normal forms of quasi-transverse tori]\label{def: qt normal forms}
A quasi-transverse torus $T$ is:
\begin{enumerate}
    \item \emph{alternating} if $n$ is even and the transverse orientation of $\phi$
    alternates from $A_i$ to $A_{i+1}$;
    \item \emph{maximally transverse} if $n=0$ or if $A_i$ and $A_{i+1}$ lie in
    non-adjacent quadrants near $O_i$;
    \item \emph{maximally periodic} if the only closed leaves of the induced foliations on $T$
    are the curves $O_1,\ldots,O_n$.
\end{enumerate}
\end{defi}

Maximally transverse tori minimize the number of periodic orbits they contain,
while maximally periodic ones maximize it.
Any alternating quasi-transverse surface is automatically maximally transverse, and the converse also holds except sometimes when some of the $O_i$ are singular orbits.

Given a quasi-transverse torus $T$, the lift of $T$ to the universal cover
projects to a chain of lozenges in the orbit space $\cQ_\phi$ (see Section \ref{sec: preli; subsec: pa flows}).

The following result summarizes this correspondence.

\begin{prop}[Trace of a quasi-transverse torus, {\cite[Proposition 5.3.5]{barthelmePseudoAnosovFlowsPlane2025}}]
Let $T$ be a quasi-transverse torus or Klein bottle, $\widetilde{T}$ a lift of $T$ to $\widetilde{M}$,
and $\widehat{T}$ its projection to the orbit space $\cQ_\phi$.
Then the closure of $\widehat{T}$ is a chain of lozenges $C$ with the property that consecutive lozenges share sides, and no three lozenges share a corner.
Furthermore, for any two ``diagonal'' lozenges of $C$ sharing a corner but not a side,
the shared corner is contained in $\widehat{T}$, and:
\begin{enumerate}
    \item $T$ is maximally transverse if and only if the only corners in $\widehat{T}$ are
    shared by diagonal lozenges;
    \item $T$ is alternating if and only if $T$ is maximally transverse and, for each
    corner $c$ contained in $\widehat{T}$, there are an odd number of quadrants separating
    the two lozenges sharing $c$;
    \item $T$ is maximally periodic if and only if each corner of $C$ is in $\widehat{T}$.
\end{enumerate}
\end{prop}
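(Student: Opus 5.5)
The plan is to work in the universal cover and build the chain lozenge by lozenge, one for each band of $T$ cut out by the closed leaves of the induced foliations. Since $T$ is incompressible, a lift $\widetilde T$ is a properly embedded plane (for the immersed or weakly embedded case we work with the lifted immersion). Its stabilizer $G\subset\pi_1(M)$ is isomorphic to $\Z^2$, or to the Klein bottle group. The first step is the standard fact that an orbit of $\widetilde\phi$ meets the transverse part of $\widetilde T$ at most once. I would argue this by contradiction: an orbit crossing twice would produce, after projecting back, a closed transversal to the stable foliation bounding a disc in $\widetilde T$, and this is impossible for a foliation of $\widetilde M$ whose leaves are planes. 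Consequently $p|_{\widetilde T}$ is injective off the lifted periodic orbits. Each lift $\widetilde O_j$ of a periodic orbit $O_j\subset T$ projects to a single point of $\cQ_\phi$, fixed by the element $g_j\in G$ represented by $O_j$.

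Next I decompose $T$ using the closed leaves of $\cF^s_T$ and $\cF^u_T$. These leaves are all freely homotopic to the $O_j$, so they all represent (powers of) a common primitive element $g\in G$. Consider a band $B$ of $T$ between two consecutive closed leaves. Its lift $\widetilde B$ is a $g$-invariant strip whose two boundary lines $\widetilde c_1,\widetilde c_2$ lie over closed leaves of $\cF^s_T$ or $\cF^u_T$. The projection of a closed stable leaf of $T$ lies in a stable leaf of $\cQ_\phi$ that is $g$-invariant, so it contains a $g$-fixed point $a$ and the relevant half-leaf. In the interior of $B$ the induced foliations have no closed leaves, so all their leaves spiral from one boundary leaf to the other. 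Hence $p(\widetilde B)$ is an open $g$-invariant region containing no $g$-fixed point, bounded by half-leaves issued from two $g$-fixed points $a,b$. I would then invoke the classical lemma (Fenley, Barbot; see \cite{barthelmePseudoAnosovFlowsPlane2025}): if $g$ fixes $a$ and $b$ and a stable half-leaf of $a$ and an unstable half-leaf of $b$ are connected through $g$-invariant transversals, then they make a perfect fit, and $a,b$ are the corners of a lozenge. In our situation the transversals come from the spiralling leaves of $\widetilde B$. This identifies $\overline{p(\widetilde B)}$ with a closed lozenge whose corners are $a,b$. I expect this step to be the main obstacle. The delicate points are proving the perfect fits and ruling out that $p(\widetilde B)$ is only a proper subset of the lozenge; for the latter I would try first to use that the transverse part is properly embedded, so that its image is closed in the lozenge.

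Consecutive bands share a closed leaf. When that leaf is a closed stable (resp. unstable) leaf of $T$ lying in the interior of a transverse annulus, the adjacent lozenges share the corresponding stable (resp. unstable) side. When instead it is an orbit $O_i$, the lozenges share the corner $p(\widetilde O_i)$, and this corner lies in $\widehat T$. At such an orbit the two bands sit in two quadrants at $O_i$, and the corresponding lozenges are adjacent or diagonal according to whether these quadrants are adjacent. In the adjacent case the two lozenges share a side as well as the corner. The transversality of $T$ near the surrounding annuli shows that at most two lozenges meet at any corner. Consecutive lozenges always share a side or are diagonal with corner in $\widehat T$, which gives the general statement. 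The three items then follow by translating the definitions. Maximal transversality (non-adjacent quadrants at each $O_i$) means exactly that every corner in $\widehat T$ is a diagonal one. The alternating condition on transverse orientations translates into an odd number of quadrants separating the two lozenges at each such corner, since the flow direction flips across each stable or unstable half-leaf of the prong. Finally, $T$ is maximally periodic exactly when every closed leaf of $T$ is some $O_j$, that is, when every corner of $C$ comes from a periodic orbit in $T$ and so lies in $\widehat T$.
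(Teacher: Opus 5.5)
The paper does not prove this statement; it quotes it from \cite{barthelmePseudoAnosovFlowsPlane2025}. Your overall structure is the natural one: one lozenge per band cut out by the closed leaves, with adjacency read off from the shared closed leaf and from the quadrants at the tangent orbits. But the step you flag as the main obstacle is a genuine gap, and what you propose for it does not close it.

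From the spiralling of leaves you only learn that $p(\widetilde c_1)$ and $p(\widetilde c_2)$ are half-leaves at $g$-fixed points $a,b$ lying in the frontier of $p(\widetilde B)$. Nothing controls the rest of the frontier, and being open, $g$-invariant and free of $g$-fixed points does not make a region a lozenge. For instance, let $L_1,L_2$ be lozenges with corners $a,c$ and $c,b$, all fixed by $g$, sharing a stable side $l^s_c$. Then $R=L_1\cup l^s_c\cup L_2$:
\begin{itemize}
\item has all the properties you list;
\item is swept out by a $g$-invariant family of unstable arcs from $l^s_a$ to $l^s_b$;
\item has $l^s_a$ and the unstable side $l^u_b$ of $L_2$ in its frontier;
\item and yet $l^s_a$ and $l^u_b$ do not make a perfect fit.
\end{itemize}
So the lemma you invoke would need hypotheses excluding this, and you do not say how $\widetilde B$ supplies them.

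What rules out $R$ is that $l^s_c$ is a $g$-invariant half-leaf inside it. Its preimage in $\widetilde B$ would project to a closed leaf of $\cF^s_T$ in the interior of $B$. So the absence of interior closed leaves must be used more sharply than ``leaves spiral''. As in the proof of Proposition~\ref{prop: real bar code}, it excludes a Reeb component, since the arcs of the other trace would be transversals crossing it. It therefore forces the two boundary leaves of $B$ to carry incoherent dynamical orientations. Read through the hyperbolic action of $g$ at $a$ and $b$, this says that $g$ contracts the stable leaf of one corner and expands that of the other. When both boundary leaves are stable, it means the holonomy of $\cF^u_T$ across $\widetilde B$ sends the end of $p(\widetilde c_1)$ at $a$ to the ideal end of $p(\widetilde c_2)$. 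This is the input from which you should derive the perfect fits, $a\neq b$, and the equality of $p(\widetilde B)$ with the open lozenge.

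Properness of $\widetilde T$ in $\widetilde M$ is no substitute, because $p|_{\widetilde T}$ is not proper. The two remaining sides of each lozenge are limits of images of divergent sequences in $\widetilde B$ and are not contained in $\widehat T$.

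Independently, your decomposition step fails when $T$ contains no periodic orbit, which the statement allows. You use that every closed leaf is freely homotopic to some $O_j$, so that all closed leaves of both traces represent powers of one element $g$. Now take a transverse torus whose stable and unstable closed leaves are not freely homotopic (the scalloped case of Definition~\ref{def: parallel scalloped}, which the paper uses later). The two families then have different slopes on $T$ and therefore intersect. So $T$ cannot be cut along all of them into bands without closed leaves, and there is no common $g$. There you must cut along the closed leaves of a single trace. Each resulting annulus is crossed by arcs of closed leaves of the other trace, and the band argument applies with two boundary leaves of the same type. The trace is then a bi-infinite $s$-line (or, cutting along the other trace, a $u$-line), i.e.\ a scalloped region. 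If neither trace has a closed leaf, as for a fibre of a suspension, then $\widehat T=\cQ_\phi$, so the statement tacitly excludes that case.

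Finally, two of your concluding claims are not local at the orbits $O_i$.
\begin{itemize}
\item ``No three lozenges share a corner'' must also exclude lozenges with the same corner coming from distant bands of $\widetilde T$.
\item The converse direction of item~(3) requires showing that the stable or unstable cylinder of an orbit $O_j\subset T$ meets $T$ in no other closed leaf.
\end{itemize}
Neither follows from transversality near the adjacent annuli. Both need global information on how leaves of $\widetilde\cF^s$ and $\widetilde\cF^u$ meet $\widetilde T$.
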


Barbot and Fenley show that the embedded JSJ tori can be homotoped into quasi-transverse
position relative to a pseudo-Anosov flow,
at the cost of allowing intersections along periodic orbits only.

The following result was proved by Barbot and Fenley in \cite{barbotPseudoAnosovFlowsToroidal2013a}, see also \cite[Theorem 6.3.14]{barthelmePseudoAnosovFlowsPlane2025}.

\begin{thm}[Barbot-Fenley modified JSJ decomposition] \label{thm: modified JSJ}
Let $\phi$ be a pseudo-Anosov flow on a $3$-manifold $M$,
and let $J=\{T_1,\dots,T_n\}$ be the (geometric) JSJ tori and Klein bottles.
There exists a finite family
\[
J_\phi=\{T_1',\ldots,T_n'\}
\]
of weakly embedded quasi-transverse surfaces such that:
\begin{enumerate}
    \item each $T_i'$ is homotopic to $T_i$;
    \item the union $\bigcup_i T_i'$ is embedded except possibly along periodic orbits contained in the $T_i'$;
    \item each connected component of $M\setminus J_\phi$ is either a JSJ piece,
    or has arbitrarily small neighborhoods that are JSJ pieces.
\end{enumerate}
\end{thm}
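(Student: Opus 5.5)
The plan is the Barbot--Fenley strategy: work in the orbit space $\cQ_\phi$, find a chain of lozenges invariant under the $\Z^2$ (or Klein bottle group) subgroup carried by each JSJ surface, and build quasi-transverse surfaces from these chains. Fix a JSJ torus $T_i$ and a lift with stabilizer $G\cong \pi_1(T_i)$ acting on $\cQ_\phi$.

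\emph{Step 1: an invariant chain.} First I would show that some nontrivial $g\in G$ fixes a point of $\cQ_\phi$, i.e.\ is represented by a periodic orbit. The argument I would try: if $G$ acted freely, then $\cQ_\phi/G$ together with $\cQ^s,\cQ^u$ would give an Anosov-type structure contradicting incompressibility and the $\Z^2$ structure. Then I would use the standard facts that
\begin{itemize}
\item[(a)] an element fixing two points forces a chain of lozenges between them, and
\item[(b)] the commuting elements of $G$ permute the fixed points of $g$.
\end{itemize}
Together these produce a $G$-invariant chain of lozenges $C_i$. I would take $C_i$ minimal among such chains, so that consecutive lozenges share sides and no three lozenges share a corner.

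\emph{Step 2: realize the chain by a surface.} Over each lozenge $L$ of $C_i$, I would pick an embedded transverse disk in $p^{-1}(L)$, bounded by the lifted orbits of the corners. These are Birkhoff-annulus pieces: they are transverse in the interior and contain the corner orbits. I would choose them $G$-equivariantly, which is possible because the stabilizers of lozenges are cyclic and act cocompactly along the corner orbits. Gluing them along the shared corner orbits gives a $G$-invariant plane, which descends to an immersed quasi-transverse surface $T_i'$ with $\pi_1$-image $G$. Since $M$ is aspherical, incompressible tori with conjugate fundamental groups are homotopic, which gives item (1).

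\emph{Step 3: embeddedness, and the main obstacle.} The hard part is items (2)--(3). I must arrange that each $T_i'$ is embedded off its periodic orbits, and that distinct $T_i',T_j'$ meet only along common periodic orbits. My approach would be to show that for $\gamma\notin G$, the chains $C_i$ and $\gamma C_i$ (and likewise $C_i$ and $C_j$) intersect, if at all, only in common corners or in whole common lozenges. I would derive this from the fact that the $T_i$ are disjoint and embedded, so the associated subgroups have no "linking" in $\widetilde M$. Then, choosing the transverse disks inside shared lozenges to coincide, or to be nested by a small flow-time push in the lozenge, removes all intersections except along corner orbits.

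Finally, each component of $M\setminus J_\phi$ is homotopy equivalent to a JSJ piece. Where several $T_i'$ share periodic orbits, the region is pinched along these orbits. Thickening slightly along them yields regular neighborhoods isotopic to the JSJ pieces, by Waldhausen's theorem on homotopic incompressible surfaces being isotopic. This gives item (3). I expect Step 3 to be the crux, especially in periodic Seifert regions where many tori share orbits.
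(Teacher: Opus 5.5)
The paper does not prove this statement; it quotes Barbot--Fenley (see also Barthelm\'e--Mann, Theorem~6.3.14), so I compare your proposal with that argument. Your overall route is the right one: a $\pi_1(T_i)$-invariant chain built from Fenley's theorem on freely homotopic orbits, realized by Birkhoff annuli, with item~(1) following from asphericity. However, two steps fail as written.

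\textbf{Step 1.} There is no contradiction to be had when $G=\pi_1(T_i)$ acts freely on $\cQ_\phi$. Take the suspension of a hyperbolic toral automorphism. The fiber torus is the JSJ torus in the sense of Theorem~\ref{thm: JSJ}, since its complement is $T^2\times(0,1)$. Here $G\cong\Z^2$ acts by translations on $\cQ_\phi\cong\R^2$, no element of $G$ is represented by a periodic orbit, and $\cQ_\phi/G$ is precisely a torus carrying a transverse pair of foliations. The free case is therefore a genuine alternative: the flow is a suspension and $T_i$ is already transverse, i.e.\ quasi-transverse with no periodic orbit. You have to treat this case, not rule it out.

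\textbf{Step 3.} This is the more serious gap: the lemma you want is false. Traces of distinct lifts of one embedded JSJ torus, or of two different JSJ tori, routinely contain lozenges whose interiors overlap in a Markovian way. By Proposition~\ref{prop: order line and orbit}, this happens exactly when an orbit of $\widetilde\phi$ crosses one lift and later the other.

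For instance, take a periodic orbit crossing $T_i$ transversely; such orbits exist as soon as $\phi$ is transitive. Some $g$ representing it fixes a point of a lozenge $L$ of $C_i$. Then $L\cap gL\neq\emptyset$ while $gL\neq L$ (compare Corollary~\ref{cor: characterization fixed point with line}).

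What disjointness of the original tori does give, by the argument of Claim~\ref{claim:liftedT_planes_noncrossing}, is the non-corner condition of Definition~\ref{def:simple_collection}: no translate of a corner lies inside a lozenge. So making the disks over shared lozenges coincide, or nesting them by a small flow push, does not touch the main source of intersections.

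Two things are missing:
\begin{itemize}
\item a construction of a $\pi_1(M)$-equivariant family of transverse disks that are disjoint over every Markovian overlap and are crossed in the order forced by the dynamics (the disk over $L$ first when $L\prec L'$), done consistently near the corner orbits, where infinitely many such overlaps accumulate;
\item a proof that tori sharing a periodic orbit do not cross each other along it.
\end{itemize}
The second point is exactly what you need before you can push the $T_i'$ off to a disjoint embedded family and invoke Waldhausen for item~(3). Together these make up the substantive part of the cited proof, and they are also where one sees why only weak embeddedness can be obtained in general.
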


The family $J_\phi$ is called the \emph{modified JSJ decomposition} for the flow $\phi$.

\subsection{Periodic Seifert pieces and spines}\label{subsec: spine}
Barbot and Fenley, in their work on pseudo-Anosov flows in Seifert pieces \cite{barbotPseudoAnosovFlowsToroidal2013a,barbotClassificationRigidityTotally2015,barbotFreeSeifertPieces2021}, introduced the following terminology.

\begin{defi} \label{def_periodic_Seifert}
Let $P$ be a Seifert fibered JSJ piece of $M$.  The piece $P$ is called {\em periodic} for a pseudo-Anosov flow $\phi$ if a regular fiber of some Seifert fibration of $P$ is freely homotopic to a periodic orbit of $\phi$, and is called a {\em free piece} otherwise.  
\end{defi} 

They then describe the dynamics of the flow restricted to periodic pieces in \cite{barbotPseudoAnosovFlowsToroidal2013a}, and to free pieces in \cite{barbotFreeSeifertPieces2021}. It turns out that in periodic pieces, the dynamics of the flow is completely characterized by the \emph{spine}:

\begin{thm}[\cite{barbotPseudoAnosovFlowsToroidal2013a} Theorem B, and \cite{BFM25} Theorem 3.1] \label{thm_spines_exist}
Let $P$ be a periodic Seifert fibered piece for a pseudo-Anosov flow on $M$. Then there exists a connected, finite union $Z$ of elementary Birkhoff annuli in $M$, embedded along the complement of their periodic orbit boundaries,  such that any sufficiently small neighborhood of this union is a representative for the JSJ piece $P$. 
The union $Z$ is called the \emph{spine} of the piece. 
\end{thm}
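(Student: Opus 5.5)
The plan is to build the spine out of the periodic orbits freely homotopic to the regular fiber, and then use the orbit space to control how these orbits are connected. First fix a regular fiber $h$ of $P$ freely homotopic to a periodic orbit $\alpha$, and pick a lift $\widetilde P\subset\widetilde M$ with stabilizer $G\cong\pi_1(P)$. Let $g\in G$ be the element representing the fiber; it is central (or central up to an index two subgroup if the base is non-orientable). Since $g$ is freely homotopic to a periodic orbit, $g$ fixes a point of $\cQ_\phi$. By the standard structure theorem for fixed points of a deck transformation in the orbit space, the fixed point set of $g$ is contained in a chain of lozenges $C$, and any two fixed points are connected by such a chain whose corners are all fixed by $g$. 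Because $g$ is (virtually) central in $G$, the group $G$ preserves $\mathrm{Fix}(g)$, so $G$ acts on the maximal such chain $C$.

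Next I would show that $C/G$ is finite. Corners of $C$ correspond to periodic orbits in the free homotopy class of $h^{\pm1}$, and there are only finitely many such orbits, since a pseudo-Anosov flow has finitely many periodic orbits of bounded period and freely homotopic orbits have uniformly comparable lengths. Also, no three lozenges share a corner in a non-degenerate way beyond the finitely many quadrants at a $p$-prong. Hence there are finitely many $G$-orbits of lozenges in $C$. For each such lozenge $L$ with corners $a,b$, both fixed by $g$, I would construct an elementary Birkhoff annulus: take a $g$-invariant surface in $\widetilde M$ projecting to $L$, which is transverse to $\widetilde\phi$ in the interior and bounded by the two orbits $a,b$. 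It projects in $M$ to an immersed annulus bounded by the corresponding periodic orbits. Let $Z$ be the union of these finitely many annuli. $Z$ is connected because $C$ is connected through corners.

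The two main steps remaining are embeddedness and identifying the neighbourhood with $P$; I expect embeddedness to be the main obstacle. For embeddedness, interiors of distinct lozenges of $C$ are disjoint, and if $\gamma L$ meets $L'$ for some $\gamma\notin G$, then $\gamma g\gamma^{-1}$ and $g$ would share fixed corners, forcing $\gamma$ into the stabilizer. Using this, one can choose the annuli $G$-equivariantly and isotope them, as in Barbot--Fenley's good position arguments, so that their interiors are embedded and pairwise disjoint. The delicate points are at singular orbits and at lozenges sharing sides. For the identification with $P$, a small neighbourhood $N(Z)$ is Seifert fibered, with fibration induced by the periodic orbits and the flow-direction foliation of the annuli. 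Moreover $\pi_1(N(Z))$ maps onto a finite index subgroup of $G$; in fact it equals $G$, since every element of $G$ preserves $C$ and is realized by a path in $Z$. Its boundary tori are incompressible and homotopic into $\partial P$. Then maximality and uniqueness of the JSJ decomposition (Theorem~\ref{thm: JSJ}), together with the fact that $Z$ contains no essential annuli leaving $P$, show that $N(Z)$ is isotopic to $P$.
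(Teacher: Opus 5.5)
The paper gives no proof of this statement; it quotes it from Barbot--Fenley \cite{barbotPseudoAnosovFlowsToroidal2013a} and \cite{BFM25}. Your overall architecture is reasonable: the chain $C$ of lozenges with corners fixed by the fiber $g$, one Birkhoff annulus per $G$-orbit of lozenges, then JSJ theory to identify $N(Z)$ with $P$. However, the two key steps, finiteness and embeddedness, rest on claims that are false.

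\emph{Finiteness.} You deduce that $C/G$ is finite from the assertion that freely homotopic periodic orbits have uniformly comparable periods. There is no such principle. For a skewed $\R$-covered Anosov flow on a hyperbolic manifold, Fenley \cite{fenleyAnosovFlows3manifolds1994} showed that every periodic orbit is freely homotopic (up to orientation) to infinitely many distinct periodic orbits, so periods are unbounded within a single free homotopy class.

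Corners of $C$ modulo $G$ correspond to periodic orbits freely homotopic to (roots of) $h^{\pm1}$. So finiteness of $C/G$ is essentially finiteness of that free homotopy class. It cannot come from period counting and has to use that $g$ is the fiber of a Seifert piece.

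You can sidestep this, since the statement only asks for \emph{some} finite union. Fix a corner $x$ and generators $s_1,\dots,s_n$ of $G$. Each $s_ix$ is again $g$-fixed, so the finite chains in $C$ joining $x$ to the $s_ix$ form a finite set $S$. Then $G\cdot S$ is a connected, $G$-invariant subchain of $C$ with finite quotient.

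\emph{Embeddedness.} You claim that $\gamma L\cap L'\neq\emptyset$ with $\gamma\notin G$ forces $\gamma g\gamma^{-1}$ and $g$ to share a fixed corner. This is false: lozenges can overlap without sharing corners, and here they do.

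In any transitive example, periodic orbits are dense, so some periodic orbit crosses the interior of an annulus $A_L$. Its deck transformation $\gamma$ fixes a point of $L$, so $\gamma L\cap L\neq\emptyset$. Yet $\gamma\notin G$. An element of $G$ fixing a point inside a lozenge of $C$ would have to preserve that lozenge, because distinct lozenges of $C$ have disjoint interiors. A nontrivial element preserving a lozenge cannot fix an interior point. So by your own normalizer argument, $\gamma L$ and $L$ share no corner.

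Disjointness of the projections in $\cQ_\phi$ is therefore neither true nor what embeddedness means. Embeddedness concerns the transverse planes in $\widetilde M$, which can be disjoint even when their projections overlap in a Markovian way. What must be ruled out in $\cQ_\phi$ is a corner of a $\pi_1(M)$-translate lying in the interior of a lozenge of $C$; this is the simplicity condition of Definition~\ref{def:simple_collection}. If it fails, a boundary orbit of one annulus is forced through the interior of another, and no choice of transverse planes can repair this. After that, one still has to choose the planes equivariantly and pairwise disjoint.

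Proving simplicity is where the Seifert/JSJ structure is really used. Saying ``isotope them as in Barbot--Fenley's good position arguments'' defers exactly this core step.

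The final identification of $N(Z)$ with $P$ is fairly standard once $Z$ is embedded. However, $\pi_1(N(Z))=G$ should come from covering theory applied to the connected lift of $Z$, together with the fact that the normalizer of $\langle g\rangle$ is $G$. It does not follow from ``every element of $G$ preserves $C$''.
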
 

One can always isotope the spine $Z$ in a periodic Seifert piece $P$ so that it is foliated by Seifert fibers. 
When $P$ is topologically a $\Sigma\times S^1$, with $\Sigma$ a compact orientable surface (with boundary), then the projection of the spine $Z$ to the base surface $\Sigma$ gives a \emph{fatgraph} on $\Sigma$ (i.e., a graph $\cG$ embedded in $\Sigma$ such that $\Sigma$ deformation retracts to $\cG$).

In that case, the spine, and hence the flow in the piece, is determined by the fatgraph up to diffeomorphisms, together with a choice of orientation for one of the periodic orbits in the piece. More precisely, we have the following result, essentially due to Barbot and Fenley \cite{barbotPseudoAnosovFlowsToroidal2013a, barbotClassificationRigidityTotally2015}:

\begin{thm}\label{thm_same_spine_implies_same_flow}
    Let $\phi_1, \phi_2$ be two pseudo-Anosov flows on an orientable manifold $M$. Let $P$ be a Seifert piece which is periodic for both flows, and call $P_i$, $i=1,2$, the associated modified pieces for $\phi_i$.
    Assume that $P$ is homeomorphic to $\Sigma \times S^1$ with $\Sigma$ an orientable surface, and that the tori in $\partial P_i$ are all embedded.
 Suppose that the fatgraphs $\cG_1,\cG_2\subset \Sigma$ associated to $\phi_1$ and $\phi_2$ respectively are homeomorphic by a map $h\colon \Sigma \to \Sigma$ and there exists a vertex $v_0\in \cG_1$ such that the orientation in $P$ of the periodic orbit of $\phi_1$ corresponding to $v_0$ is the same as the orientation of the periodic orbit of $\phi_2$ corresponding to $h(v_0)\in \cG_2$.
 Then the restrictions $\phi_1|_{P_1}$ and $\phi_2|_{P_2}$ are orbit equivalent. Furthermore, if $h\colon \Sigma \to \Sigma$ is isotopic to the identity, then $\phi_1|_{P_1}$ and $\phi_2|_{P_2}$ are isotopically equivalent\footnote{By isotopically equivalent, we mean that the two flows are orbit equivalent via a map that is isotopic to the identity.}.
\end{thm}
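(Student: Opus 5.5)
The plan is to reduce to the Barbot--Fenley description of periodic Seifert pieces through their spines. By Theorem \ref{thm_spines_exist}, each $P_i$ is a small regular neighborhood of a spine $Z_i$. Each $Z_i$ is a finite union of elementary Birkhoff annuli, embedded away from their periodic boundary orbits. After isotoping $Z_i$ so that it is foliated by Seifert fibers, its projection to $\Sigma$ is the fatgraph $\cG_i$. Vertices of $\cG_i$ correspond to the periodic orbits of $\phi_i$ in $Z_i$, which are freely homotopic to powers of the fiber. Edges correspond to the Birkhoff annuli.

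First we lift $h$ to a fiber-preserving homeomorphism $H_0 = h\times \mathrm{id}\colon \Sigma\times S^1 \to \Sigma\times S^1$ sending $\pi^{-1}(\cG_1)$ to $\pi^{-1}(\cG_2)$. We then compare the two spines combinatorially. For an elementary Birkhoff annulus, the flow is determined up to orbit equivalence, near the annulus, by three pieces of data:
\begin{enumerate}
\item the orientations of its two boundary orbits relative to the fiber;
\item which side the flow crosses it to;
\item the cyclic order of the annuli and prongs around each vertex orbit.
\end{enumerate}
The cyclic order is recorded by the fatgraph structure. The key step is to show that the orientation at a single vertex $v_0$ propagates. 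Around a periodic orbit, consecutive Birkhoff annuli in the spine alternate their transverse orientations, following the alternating structure of the quadrants. Across an elementary Birkhoff annulus, the two boundary orbits have opposite orientations relative to the fiber direction, as in the Barbot--Fenley analysis. Since $\cG_1$ is connected, fixing the orientation at $v_0$ then fixes the orientation of every vertex orbit and the flow direction across every annulus. Hence $H_0$ matches all of this data on $Z_1$ and $Z_2$.

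Next we build the orbit equivalence. We construct a homeomorphism of neighborhoods $N(Z_1)\to N(Z_2)$, homotopic to $H_0$, that maps orbits to orbits. The construction is local and then glued:
\begin{itemize}
\item near each periodic orbit we use the local $p$-prong model, with the number of prongs read off from the valence and quadrant data of the vertex;
\item near each annulus we use the standard model of the flow crossing a Birkhoff annulus, given by its first-return/transit structure;
\item we glue these along flow boxes.
\end{itemize}
Here we use that the tori of $\partial P_i$ are embedded, so the neighborhoods $P_i$ are genuine submanifolds bounded by quasi-transverse tori. The flow on $P_i$ is then the union of orbit segments entering and exiting through $\partial P_i$, and these segments stay in a neighborhood of $Z_i$. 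The point is that every orbit in $P_i$ other than the spine orbits crosses $Z_i$ a bounded number of times before exiting, in an order prescribed by the spine combinatorics. So the orbit structure of $\phi_i|_{P_i}$ is determined by $(\cG_i, \text{orientations})$. This is the content of \cite{barbotPseudoAnosovFlowsToroidal2013a, barbotClassificationRigidityTotally2015}, which we would cite and adapt.

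I expect the main obstacle to be this last step: making rigorous that the restriction of the flow to $P_i$, rather than just near $Z_i$, is determined by the spine. The approach I would try first is to choose $P_i$ to be a small neighborhood of $Z_i$ with quasi-transverse boundary (allowed by Theorem \ref{thm_spines_exist}), and then show that two such choices give orbit equivalent restricted flows by flowing along orbits.

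For the isotopy statement, if $h$ is isotopic to the identity then $H_0 = h\times\mathrm{id}$ is isotopic to the identity of $P$. Each local model map is constructed within a small neighborhood and is homotopic to $H_0$ relative to the spine structure. So the resulting orbit equivalence is isotopic to the identity, using that homotopy implies isotopy for homeomorphisms of Haken manifolds.
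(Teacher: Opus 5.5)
Your strategy is the paper's. The paper also realizes $\phi_i|_{P_i}$ as standard neighbourhoods of elementary Birkhoff annuli glued according to the fatgraph. The difference is only in packaging: it compares each flow to a common model instead of building a map $N(Z_1)\to N(Z_2)$ directly. It also says exactly what has to be adapted. The cited construction assumes transverse boundary. The product structure $P\cong\Sigma\times S^1$ excludes exceptional fibres, so no Dehn surgery is needed, and it leaves only gluings along half-faces or along a periodic orbit. That is what allows quasi-transverse boundary. Your deferred ``main obstacle'' is precisely what this citation supplies.

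\textbf{The gap: your propagation step does not follow.}

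The facts you invoke only \emph{transport} data once it has been seeded.
\begin{enumerate}
\item The two ends of an annulus have inverse orientations, which propagates orbit orientations along edges.
\item Transverse orientations alternate around each vertex, which propagates the crossing side from one annulus to all others, by connectedness.
\end{enumerate}
Nothing ties the side to which the flow crosses the annuli at $v_0$ to the orientation of the orbit at $v_0$, and in fact it is not tied.

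Locally, take vertical coordinates $D\times S^1$ around the vertex orbit. Let $Y$ be a saddle (or prong) field whose separatrices lie in the sectors between the edges. Then $\sigma\partial_t+Y$ and $\sigma\partial_t-Y$ have the same orbit orientation and the same transverse vertical annuli. However, they exchange stable and unstable prongs in every sector, and each annulus is crossed in opposite directions.

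Globally, identify $P_1$ with $\Sigma\times S^1$ so that $Z_1$ is vertical, and let $c$ be a reflection of the $S^1$ factor. The flow $(\mathrm{id}_\Sigma\times c)_*(-\phi_1|_{P_1})$ has the same spine, the same fatgraph and the same orbit orientations as $\phi_1|_{P_1}$. But every entrance region of $\partial P_1$ has become an exit region. If, say, some boundary torus is transverse, an orbit equivalence isotopic to the identity would have to carry it to itself while exchanging entrance and exit, which is impossible.

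So the data ``fatgraph plus one orbit orientation'' does not, by your argument, pin down the flow on the piece, which is all your argument uses. You need one more bit of normalization, for example one of:
\begin{itemize}
\item both flows cross a chosen annulus (or a chosen boundary torus) in the same direction;
\item a chosen sector at $v_0$ contains a stable prong for both flows.
\end{itemize}
The paper's sketch feeds only the fatgraph and one orbit orientation into the model construction and is silent on this point. So the bit has to be added explicitly rather than recovered from the connectedness argument. With it added, the rest of your plan agrees with the paper's.
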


\begin{proof}
    This result follows from the fact that, from the data of the fatgraph together with one choice of orientation, one can build a \emph{model flow} by gluing together copies of a standard neighborhood of an elementary Birkhoff annulus (see \cite[Section 4.1]{BFM25} or \cite[Section 6.5.1]{barthelmePseudoAnosovFlowsPlane2025}), according to the combinatorics of the fatgraph. 
    By construction that model flow will be isotopically equivalent on the piece to the original flow. This construction was first done in \cite{barbotPseudoAnosovFlowsToroidal2013a}, then extended in \cite[Proposition 4.7]{BFM25}, without any assumption about the topological type of $P$ but under the assumption that all the boundary surfaces in $\partial P_i$ are \emph{transverse} to the flow.

    This same construction can be easily extended to the current setting thanks to our assumption that $P\simeq \Sigma \times S^1$, which prevents the existence of singular Seifert fibers (so that one does not have to do Dehn surgeries to recover the piece $P$) and having to deal with non-orientability considerations (so that the only two types of gluing one needs are gluing model blocks along half-faces or just along a periodic orbit). See the discussion after Theorem 6.5.6 in \cite{barthelmePseudoAnosovFlowsPlane2025}.
\end{proof}

The following lemma on self-orbit equivalences of periodic Seifert pieces will be used later.

\begin{lem}
\label{lem: vertical annular twists orbit equivalences}
Let $(Q,\varphi)$ be a periodic Seifert pseudo-Anosov piece homeomorphic to $\Sigma\times S^1$ with $\Sigma$ orientable.
Let $A\subset Q$ be a vertical annulus in $Q$ joining two boundary curves in $\partial Q$.
Then a
three-dimensional Dehn twist along $A$ in the direction of the Seifert
fiber can be represented by a self-orbit equivalence of $(Q,\varphi)$.
\end{lem}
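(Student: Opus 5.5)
We plan to use the structure from Theorem \ref{thm_same_spine_implies_same_flow}: the flow $\varphi$ on $Q\simeq\Sigma\times S^1$ is determined, up to isotopic equivalence, by its spine $Z$. $Z$ is a union of elementary Birkhoff annuli foliated by Seifert fibers, and its projection to $\Sigma$ is a fatgraph $\cG$. First we isotope the vertical annulus $A$ so that it is as simple as possible with respect to $Z$. Write $A=\gamma\times S^1$, where $\gamma$ is a properly embedded arc in $\Sigma$ joining the two boundary curves. Since $\Sigma$ deformation retracts onto $\cG$, each component of $\Sigma\smallsetminus\cG$ is a boundary-parallel annulus $C_j$; it corresponds to a component of $Q\smallsetminus Z$ that is a collar $T_j\times[0,1)$ of a boundary torus. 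We homotope $\gamma$ (rel endpoints, within its isotopy class) so that it runs from a boundary curve through its collar $C_1$, crosses $\cG$ transversally at finitely many interior points of edges, and ends in the collar $C_2$. The Dehn twist $\tau_A$ is isotopic to a composition of Dehn twists supported near vertical annuli; we split it into twists near the collar annuli together with a twist along the vertical tori/annuli that cross the spine edges.

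Second, we realize each elementary twist by a self-orbit equivalence. Inside a collar of a boundary torus $T_j$, the flow is a product-like region transverse (or quasi-transverse) to the fibers. A key fact is that the periodic orbits in $Z$ are freely homotopic to (powers of) the fiber. This gives a reparametrization-type map: we push points along the flow by an amount that depends on their position along a transverse coordinate. More precisely, we use a ``fiber-direction flow-shear'': in the model blocks (neighborhoods of elementary Birkhoff annuli, as in the proof of Theorem \ref{thm_same_spine_implies_same_flow}), the flow direction is homotopic to the fiber direction away from the periodic orbits. So a map of the form $x\mapsto \varphi^{\,t(x)}(x)$, with $t$ a function that interpolates from $0$ to one period across a transverse band, is an orbit equivalence (it preserves each orbit) that acts like a fiber Dehn twist. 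A cleaner alternative is to use the model-flow construction directly. Cut $Q$ along $A$ to obtain a piece whose model is built from the same blocks. Then reglue with the fatgraph unchanged but the gluing shifted by one fiber period. Theorem \ref{thm_same_spine_implies_same_flow} applied to the identity map $h$ of $\Sigma$ then gives an orbit equivalence isotopic to $\tau_A$. This works because the fatgraph and the orientation of the periodic orbits are unchanged.

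The main obstacle is the crossings of $\gamma$ with $\cG$, that is, where $A$ cuts through elementary Birkhoff annuli. There the flow-shear trick must be compatible with the periodic orbits, which are fibers but have their own periods. The interpolating time function $t(x)$ must be constant along periodic orbits and must shift by exactly a multiple of the period there. We would handle this by choosing $A$ to cross each Birkhoff annulus along a flow-line segment transverse to its interior. Across the thin band we then use the time function of the first return to the Birkhoff annulus, which varies continuously and equals a full turn along a fiber. The orientation conventions from Theorem \ref{thm_same_spine_implies_same_flow} ensure that all periodic orbits rotate consistently with the fiber, so the sign of the twist is consistent. Finally, we check that the resulting homeomorphism is isotopic to $\tau_A$, by comparing their actions on $\pi_1$: both multiply the class of each loop crossing $A$ by the fiber.
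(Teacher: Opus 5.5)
Your main route, realizing the twist by a time shift $x\mapsto\varphi^{t(x)}(x)$, cannot work, and the crossings of $\gamma$ with $\cG$ are not the real obstruction.

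\textbf{Why the time shift fails.} Suppose first that $t$ is continuous. Then $x\mapsto\varphi^{s\,t(x)}(x)$, $s\in[0,1]$, is a homotopy inside $Q$ from the identity to your map, because it only moves points along the orbit segment between $x$ and $\varphi^{t(x)}(x)$. But $\tau_A$ is not homotopic to the identity. Write $\pi_1(Q)\cong\pi_1(\Sigma)\times\Z$, with the $\Z$ factor generated by the fiber. Then $\tau_A$ acts by $(a,n)\mapsto(a,n+\iota(a))$, where $\iota\colon\pi_1(\Sigma)\to\Z$ is algebraic intersection with $\gamma$. This $\iota$ is nonzero, since a loop parallel to one of the two distinct boundary components joined by $\gamma$ meets $\gamma$ once. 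The automorphism is not inner, because conjugation fixes the $\Z$-coordinate. Since $Q$ is aspherical, $\tau_A$ is therefore not homotopic to the identity.

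Suppose instead that $t$ jumps across $A$. Then continuity forces every point of $A$ to be periodic, with period dividing the jump. Your condition only handles the periodic orbits in $Z$, while the generic orbits crossing $A$ enter and leave $Q$.

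So a self-orbit equivalence in the class of $\tau_A$ must genuinely permute orbits. No modification supported near $A$ can produce it, and your closing $\pi_1$ comparison would actually fail for the map you build. What is needed is a global rigidity input.

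\textbf{The correct argument.} That input is exactly your ``cleaner alternative'', which is the paper's proof. It should be phrased without cutting and regluing. A flow cannot in general be cut and reglued along an annulus that is not transverse to it, and $A$ need not be. Also, regluing by a full rotation of the fibers of $A$ is the identity map of $A$, so the ``new'' flow only exists through an identification of the reglued manifold with $Q$, which is precisely conjugation by $\tau_A$.

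So set $\varphi':=\tau_A\circ\varphi\circ\tau_A^{-1}$ directly. Since $\tau_A$ covers the identity of $\Sigma$ and rotates each fiber preserving its orientation, the spine $\tau_A(Z)$ of $\varphi'$ is still vertical and projects to the same fatgraph $\cG$. Corresponding periodic orbits also have the same orientation in $Q$. Theorem \ref{thm_same_spine_implies_same_flow} with $h=\mathrm{id}_\Sigma$ then gives an orbit equivalence $g$ from $\varphi'$ to $\varphi$ that is isotopic to the identity, not to $\tau_A$ as you wrote.

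The missing step is the composition: $g\circ\tau_A$ is a self-orbit equivalence of $\varphi$ isotopic to $\tau_A$. With this, your decomposition of $\tau_A$ into local twists and the final $\pi_1$ check are unnecessary.
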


\begin{proof}
Call $D\colon \Sigma\times S^1 \to \Sigma\times S^1$ a Dehn twist along the vertical annulus $A$. Note that the projection of $D$ onto $\Sigma$ is the identity. Therefore, calling $\varphi'= D \circ \varphi \circ D^{-1}$, the fatgraph associated to the spine of $\varphi'$ is the same as that of $\varphi$. Moreover, the direction of the periodic orbits of $\varphi'$ are also unchanged compared to those of $\varphi$, so Theorem \ref{thm_same_spine_implies_same_flow} implies that $\varphi'$ and $\varphi$ are isotopically equivalent, so $D$ is isotopic to a self-orbit equivalence of $\varphi$.\qedhere

\end{proof}

\subsection{Finite cover with embedded modified JSJ tori}

For the technical details of the proof of Theorem \ref{thmintro: finiteness orbit equivalence}, it will be important for us to work with embedded quasi-transverse tori, not just weakly embedded, as well as ``good'' Seifert pieces. In this section, we show that this is achievable in a finite cover:

\begin{prop}\label{prop: cover embedded jsj tori}
   There exists an orientable finite cover $\hat M$ of $M$ such that all Seifert pieces are the product of an orientable surface with $S^1$, and for all pseudo-Anosov flows $\phi$ on $M$, the modified JSJ tori of the lift $\hat \phi$ on $\hat M$ are embedded.
\end{prop}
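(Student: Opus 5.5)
We reduce the statement to a purely topological one about \emph{which elements of $\pi_1(M)$ can produce non-embeddedness}, and then kill those elements in a single finite cover using residual finiteness, separability and the finiteness of the relevant data.

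\textbf{Orientability and product Seifert pieces.} First take the orientation double cover if needed, so that $M$ is orientable. For each Seifert piece $P$, a standard argument passes to a finite cover of $P$ in which the base orbifold is an orientable surface without cone points and the fibers are orientable, so that the cover is $\Sigma\times S^1$. Consider the homomorphisms from $\pi_1(P)$ to finite groups recording the orientation character of the fiber and of the base, together with the cone-point data. By the standard construction of finite covers of graph manifolds (Hempel's residual finiteness, or Luecke--Wu / Przytycki--Wise gluing of compatible characteristic covers of pieces), we can realize all pieces simultaneously. We take characteristic covers of pieces of a common degree on boundary tori, which can then be glued. This yields a finite cover $M_1$ with all Seifert pieces of the form $\Sigma\times S^1$, $\Sigma$ orientable. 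Any further finite cover preserves this property, because covers of $\Sigma\times S^1$ adapted to JSJ are again of this form, up to passing to the fiber-preserving cover; we may also need to ask that the cover is characteristic in each piece.

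\textbf{Where non-embeddedness comes from.} By Theorem~\ref{thm: modified JSJ}, a modified JSJ torus $T'$ fails to be embedded only along a periodic orbit $O\subset T'$ along which two sheets of $T'$ (or of two tori) meet. Non-embeddedness of a weakly embedded quasi-transverse torus along $O$ happens exactly when $O$ represents, in $\pi_1(T')$, a class that is a proper power or appears twice. Concretely, two distinct lifts $\widetilde T', g\widetilde T'$ share the lift of $O$, with $g\notin \pi_1(T')$. Then $g$ commutes with the element $\gamma$ of $\pi_1(T')$ representing $O$. Since $T'$ is a JSJ torus, $\gamma$ and $g$ lie in the Seifert piece adjacent to $T'$, with $\gamma$ being (a power of) the regular fiber: this is exactly the periodic Seifert piece situation where the spine of Theorem~\ref{thm_spines_exist} has a Birkhoff annulus folded back onto itself. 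Hence the obstruction is a finite list of topological configurations. Each is a pair $(\pi_1(T),g)$ with $g$ in the centralizer of the fiber in the adjacent Seifert piece $P$ but not in $\pi_1(T)$, modulo $\pi_1(T)$ on both sides. These come from self-intersections of the boundary tori of a regular neighborhood of the spine, that is, from how the fatgraph boundary components run.

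\textbf{Killing the obstructions in a cover.} In a cover $\Sigma\times S^1$ of $P$, the fiber is central and the centralizer question becomes whether the boundary curve of $\Sigma$, lifted, passes twice through the same vertex-orbit of the fatgraph in a way realizing the same fiber. We choose a finite cover of $\Sigma$ in which each boundary component's lift is embedded in the sense that the double coset $\pi_1(T)g\pi_1(T)$ of each offending $g$ does not meet the cover's subgroup. This is possible because peripheral subgroups and their double cosets are separable in surface groups and, via the gluing results, in graph manifold groups.

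\textbf{Main obstacle.} The main difficulty is uniformity over all flows $\phi$. I plan to argue that the set of offending $g$, up to the equivalence above, is determined by the topology alone. In the Seifert piece with product structure, the fiber class of each boundary torus is fixed. Any self-intersection of $T'$ along $O$ forces $O$ to be freely homotopic to a fixed power of the fiber, and $g$ to correspond to a boundary component of $\Sigma$ that is \emph{not} embedded in $\Sigma$'s corresponding cover. It therefore suffices to pass to a cover in which every boundary curve's neighborhood embeds and the fibers are primitive, which does not depend on $\phi$. Verifying that weak embeddedness plus primitivity of the fiber really forces embeddedness is the step I expect to require the most care. It uses Theorem~\ref{thm_spines_exist} and the structure of Birkhoff annuli.
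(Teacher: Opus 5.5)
\textbf{The gap is in the uniformity step, and the criterion you propose there is false.} After your first step, $M$ already has product Seifert pieces, with primitive fibers and embedded collars of the boundary tori. If ``weak embeddedness plus primitivity of the fiber forces embeddedness'' held, no further cover would be needed. The extra cover exists precisely because embeddedness fails for a reason that depends on the flow, namely on its spine. The modified torus over a boundary component $c\subset\partial\Sigma$ follows the Birkhoff annuli of the spine $Z_\phi$ along the retraction of $c$ onto the fatgraph $\cG_\phi=p(Z_\phi)$. It meets itself along the periodic orbit of a vertex $v$ exactly when that retraction passes through $v$ via two distinct local sectors. For example, for a figure-eight spine of a pair of pants, the outer boundary curve retracts onto the cycle $ab$, which passes twice through the vertex.

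Consequently the offending elements $g$ are not ``determined by the topology alone''. Their images in $\pi_1(\Sigma)$ are, up to double cosets of $\langle c\rangle$, the closed sub-loops of the retraction of $c$ between two visits to the same vertex. These depend on $\cG_\phi$, and over all flows they form an infinite set: already the images of a single spine under $\mathrm{MCG}(\Sigma)$ give infinitely many. So no separability argument applied to a list fixed in advance suffices. Your phrase about a boundary component ``not embedded in $\Sigma$'s cover'' does not describe a real condition, since boundary curves are always embedded.

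\textbf{The missing idea is finiteness of the spine combinatorics, together with monotonicity under further covers.}
\begin{enumerate}
\item After collapsing valence-$2$ vertices as the paper does, a fatgraph spine of $\Sigma$ satisfies $\sum_v(\deg v-2)=-2\chi(\Sigma)$. This bounds the number of vertices, their valences and the number of edges, so there are only finitely many types up to homeomorphism of $\Sigma$ (Lemma~\ref{lem:finite_number_fat_graph}).
\item For each type and each boundary component $c$, the forbidden sub-loop words form a finite set. Marshall Hall's theorem \cite{StallingsTopologyFiniteGraphs1983} gives a finite-index $S\supseteq\langle c\rangle$ missing them. Your double-coset formulation is compatible with this: for normal $N\subseteq S$, we have $N\cap\langle c\rangle w\langle c\rangle=\emptyset$ iff $w\notin N\langle c\rangle$, which follows from $w\notin S$.
\item Once every boundary curve retracts onto a simple cycle of the lifted fatgraph, this persists in all further covers, because preimages of simple cycles under graph covers are simple cycles.
\item You may therefore intersect over the finitely many types and boundary components. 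Passing to a characteristic finite-index subgroup then makes the cover serve every embedding $h(\cG)$ at once.
\end{enumerate}
This is the content of Lemmas~\ref{lem:finite_number_fat_graph} and~\ref{lem:finite_cover_for_all_graph}. The covers of the pieces are then assembled into one cover of $M$ via \cite[Lemma~9.2]{FriedlDistinctFiniteCovers2021}. Working in the free group $\pi_1(\Sigma)$ also spares you separability claims in $\pi_1(M)$ itself, which for graph manifolds is not LERF in general.
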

Note that the degree of the finite cover $\hat M \to M$ only depends on $M$: it is the same for each pseudo-Anosov flow $\phi$ on $M$.

This result contains two parts: one is a purely topological classical fact that one can always take a finite cover so that the Seifert pieces are as nice as one wants, and the second is that up to taking an additional cover, we can ensure that all the lifted flows have embedded quasi-transverse JSJ tori. For that second part, we will need some preliminary work on fatgraphs.

\begin{lem}\label{lem:finite_number_fat_graph}
Let $\Sigma$ be a compact orientable surface.  
Then there are only finitely many fatgraphs $\cG\subset\Sigma$, up to diffeomorphism of~$\Sigma$, having no vertex of valence~$2$.
\end{lem}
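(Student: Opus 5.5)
The plan is to bound the combinatorial size of such a fatgraph in terms of the topology of $\Sigma$ alone, and then use that a fatgraph in $\Sigma$ is determined up to diffeomorphism by finite combinatorial data. Write $\chi=\chi(\Sigma)$ and let $b$ be the number of boundary components of $\Sigma$. Since $\Sigma$ deformation retracts onto $\cG$, we have $V-E=\chi(\cG)=\chi(\Sigma)$. Here $V$ and $E$ denote the numbers of vertices and edges of $\cG$.

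The first step handles vertices of valence $1$. If $\cG$ has a vertex of valence $1$ (and $\cG$ is not a single vertex or a single edge), we will check that in the setting we care about this cannot occur. Indeed, a fatgraph arising from a spine has every vertex of valence at least $2$, since the projection of a spine of elementary Birkhoff annuli has no free ends. However, Lemma~\ref{lem:finite_number_fat_graph} as stated only excludes valence $2$. So we prune instead: repeatedly deleting a valence-$1$ vertex together with its edge does not change the homotopy type, hence does not change $\chi$. I would first try to show directly that pruning is unnecessary, by noting that a fatgraph is usually defined with the tree parts being trivial. Failing that, I would observe that the relevant count still works whenever $\cG$ contains no trees attached. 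The cleanest statement I would aim to prove is that, assuming all valences are at least $3$ (after possibly discarding the degenerate cases of $\Sigma$ a disk or an annulus, where $\cG$ is a point or a circle), we get $2E=\sum_v \mathrm{val}(v)\ge 3V$. Combined with $V-E=\chi$, this gives $E\le -3\chi$ and $V\le -2\chi$. This is the main obstacle: it is where the hypothesis on valences is genuinely used, and valence-$1$ vertices must either be excluded by the definition of fatgraph or handled separately. Otherwise one could attach arbitrarily long trees and break finiteness, so I expect the intended definition to force valences at least $2$.

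Next, with $V$ and $E$ bounded, there are finitely many abstract graphs with at most $-2\chi$ vertices and $-3\chi$ edges. Each carries finitely many ribbon structures, that is, cyclic orderings of the half-edges at each vertex. A regular neighborhood of $\cG$ in $\Sigma$ is a ribbon surface determined up to diffeomorphism by the ribbon structure, since $\Sigma$ is orientable. Because $\Sigma$ deformation retracts to $\cG$, this neighborhood is diffeomorphic to $\Sigma$ via a diffeomorphism carrying $\cG$ to $\cG$, using the collar of $\partial\Sigma$.

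Hence two embedded fatgraphs with isomorphic ribbon structures differ by a diffeomorphism of $\Sigma$. This gives finitely many fatgraphs up to diffeomorphism, bounded by the number of ribbon graphs with the stated size bounds.
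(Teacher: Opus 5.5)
Your argument is essentially the paper's. You bound $V$ and $E$ by combining $V-E=\chi(\Sigma)$ with the degree sum; your inequality $2E\ge 3V$ is the paper's identity $\sum_v(\deg(v)-2)=-2\chi(\Sigma)$ with each term at least $1$. You then pass to finitely many combinatorial types, and your ribbon-structure and collar argument makes explicit the step from combinatorial type to diffeomorphism class that the paper leaves implicit.

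Your concern about univalent vertices is justified: attaching trees to a fatgraph does produce infinitely many examples. The paper's proof makes the same tacit assumption that no vertex has valence $1$, since it needs every term $\deg(v)-2$ to be nonnegative. That assumption holds for the spine fatgraphs to which the lemma is applied.
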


\begin{proof}
Since $\cG$ is a deformation retract of $\Sigma$, their Euler characteristics coincide:
\[
\chi(\Sigma)=\chi(\cG)=V-E,
\]
where $V$ and $E$ denote the numbers of vertices and edges of $\cG$.
By the Degree Sum Formula,
\[
\sum_{v\in\mathrm{Vert}(\cG)} (\deg(v)-2)
  = 2E-2V = -2\chi(\Sigma).
\]
Hence the sum of the quantities $\deg(v)-2$ over all vertices is a fixed integer depending only on $\Sigma$.
In particular, the number of vertices with valence $\ge3$ and the valence of each vertex are uniformly bounded in terms of $\chi(\Sigma)$.
Since $E=V-\chi(\Sigma)$, the number of edges is also uniformly bounded.
Consequently, there exist only finitely many combinatorial types of such graphs, and therefore finitely many fatgraphs in~$\Sigma$ up to diffeomorphism.
\end{proof}

\begin{lem}\label{lem:finite_cover_for_all_graph}
Let $\Sigma$ be an orientable, compact surface with non-empty boundary.
Then there exists a finite covering $\hat\Sigma \to \Sigma$ such that, for every fatgraph $\cG \subset \Sigma$, 
each boundary component $\hat c$ of $\hat\Sigma$ retracts onto a simple closed curve contained in the lift $\hat\cG \subset \hat\Sigma$ of $\cG$.
\end{lem}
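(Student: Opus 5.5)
The plan is to reduce to finitely many fatgraphs via Lemma~\ref{lem:finite_number_fat_graph} and a characteristic cover, and then to handle a single fatgraph with separability properties of the free group $F=\pi_1(\Sigma)$.

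\textbf{Normalizing the fatgraph.} Since $\Sigma$ has nonempty boundary, $F$ is free of finite rank. For a fatgraph $\cG\subset\Sigma$, the property in the statement is unchanged if we erase valence-$2$ vertices: merging the two edges at such a vertex does not change the underlying curves. We may also prune valence-$1$ vertices by retracting the dangling edge, since $\Sigma$ still retracts onto the smaller graph. So we may assume $\cG$ has all valences $\ge 3$. By Lemma~\ref{lem:finite_number_fat_graph}, there are then finitely many such fatgraphs $\cG_1,\dots,\cG_m$ up to diffeomorphism of $\Sigma$.

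\textbf{Reduction to finitely many graphs.} Suppose we find, for each $j$, a finite-index subgroup $N_j\le F$ that works for $\cG_j$. Let $K\le F$ be the intersection of all subgroups of $F$ whose index is at most $\max_j [F:N_j]$. There are finitely many such subgroups because $F$ is finitely generated, so $K$ has finite index. It is characteristic, hence invariant under every diffeomorphism of $\Sigma$ (after choosing a path to fix basepoints).

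Now let $\cG=h(\cG_j)$ for a diffeomorphism $h$. Then $h$ lifts to the cover $\hat\Sigma$ associated to $K$, and this lift carries the lift of $\cG_j$ to the lift of $\cG$. We must also check that the good property is inherited when we pass from $N_j$ to a smaller subgroup $K\subset N_j$. I expect it is, because a closed edge path that lifts to a simple closed curve in one cover still lifts to simple closed curves in any further cover. Hence it suffices to treat one fatgraph $\cG$ at a time.

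\textbf{The single-graph case.} Each boundary component $c$ of $\Sigma$ retracts onto its boundary cycle $\gamma_c$. This is a cyclically reduced closed edge path of length at most $2E$; it is reduced because there are no valence-$1$ vertices. A boundary component $\hat c$ of a cover $\hat\Sigma_N$ (with $N$ normal of finite index) covers $c$ with degree $d$, where $d$ is the order of $\gamma_c$ in $F/N$. Moreover $\hat c$ retracts onto the lift of the path $\gamma_c^{d}$. This lift is a simple closed curve exactly when no proper subpath of $\gamma_c^d$ between two visits of the same vertex lifts to a closed loop, that is, lies in $N$.

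Such a subpath represents, up to conjugation, either $\gamma_c^k$ with $0<k<d$, or an element $\gamma_c^k\sigma$ with $0\le k<d$. Here $\sigma$ runs over the finitely many loops obtained as subpaths of $\gamma_c$, possibly wrapping around once, between two visits of the same vertex at different positions. The first kind never lies in $N$, by the choice of $d$. For the second kind, $\gamma_c^k\sigma\in N$ forces $\sigma\in\langle\gamma_c\rangle N$. So it suffices to find a finite-index normal $N$ with $\sigma\notin\langle\gamma_c\rangle N$ for every such $\sigma$ and every boundary component $c$.

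\textbf{Main obstacle.} The difficulty is to check two things. First, that each such $\sigma$ is not in $\langle\gamma_c\rangle$. I would argue this via reduced words in the free group $\pi_1(\cG,v)$: $\sigma$ is a reduced loop that is a proper piece of the cyclically reduced word $\gamma_c$, and not a whole number of periods. Extra care is needed when $\gamma_c$ is itself a proper power, though this cannot happen for a boundary curve. Second, that the subgroup separability of free groups (Hall's theorem: finitely generated subgroups are closed in the profinite topology) gives, for each $\sigma$, a finite-index subgroup $H\supseteq\langle\gamma_c\rangle$ with $\sigma\notin H$. Passing to a normal finite-index $N\subseteq H$ is delicate, because $\langle\gamma_c\rangle N$ may be larger than $H$. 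I would instead pick $N$ with $N\subseteq H$ and $N$ normal, and use that $\langle\gamma_c\rangle N\subseteq H$, which holds since $\gamma_c\in H$. Intersecting over the finitely many pairs $(c,\sigma)$ then gives the required $N$.
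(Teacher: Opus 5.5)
Your argument is correct, and it takes a different and more careful route than the paper.

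\textbf{The paper's route.} It writes $[c]=\beta_1\cdots\beta_k$ as a product of simple loops in $\cG$. It then uses Stallings' form of Hall's theorem to find a finite-index $S'\supseteq\langle c\rangle$ avoiding a finite set of ``forbidden'' sub-words, replaces $S'$ by its normal core, and iterates over boundary components and over the finitely many fatgraphs.

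\textbf{Your route.} You isolate the exact obstruction instead. Powers $\gamma_c^{k}$ with $0<k<d$ are excluded by the definition of $d$. So in the normal cover $\hat\Sigma_N$, the lift of $\gamma_c^{d}$ is simple iff no $\rho^k\sigma$ with $0\le k<d$ lies in $N$. Here $\rho$ is the rotation of $\gamma_c$ based at a repeated vertex, and $\sigma$ is the sub-path between two distinct visits. What is needed is therefore $\sigma\notin\langle\rho\rangle N$.

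Both of your ``obstacles'' are fine:
\begin{enumerate}
\item $\sigma\notin\langle\rho\rangle$ is immediate from reduced lengths. The reduced loop $\rho^{k}$ has length $|k|\ell$, while $\sigma$ has length strictly between $0$ and $\ell$, so no proper-power discussion is needed.
\item Taking $N$ normal inside the separating subgroup $H$ gives $\langle\rho\rangle N\subseteq H$.
\end{enumerate}

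\textbf{What your version fixes.}
\begin{itemize}
\item Your degree-$d$ bookkeeping handles the step the paper skips: it keeps using $\langle c\rangle\subset S'$ after passing to the normal core, where that inclusion need no longer hold.
\item Your characteristic subgroup $K$ is what turns ``finitely many fatgraphs up to diffeomorphism'' into a single cover valid for \emph{every} fatgraph, since diffeomorphisms of $\Sigma$ lift to $\hat\Sigma_K$. The paper leaves this implicit.
\item Your inheritance claim is right: the preimage of an embedded circle under a covering is a disjoint union of embedded circles.
\item Erasing valence-$2$ vertices is harmless simply because it does not change $\cG$ as a space. This is more robust than the paper's intersection-number claim that such vertices never witness non-simplicity. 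That claim fails: put a valence-$2$ vertex on one loop of the figure-eight spine of the one-holed torus, whose single boundary cycle runs along both sides of every edge.
\end{itemize}
The paper's version is shorter; yours is complete.

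\textbf{One step to correct.} Pruning valence-$1$ vertices is not a valid reduction. If $\cG$ has a dangling edge at $v$, the boundary cycle turns around at the leaf and meets two consecutive sectors of $v$. The same happens for every lift in every finite cover, so the statement is false for such fatgraphs. You should instead assume that $\cG$ has no valence-$1$ vertices. The paper tacitly assumes this too (Lemma~\ref{lem:finite_number_fat_graph} also fails without it), and it holds for the spine fatgraphs in the application. This is also exactly the hypothesis that makes $\gamma_c$ cyclically reduced in your argument.
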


\begin{proof}

We identify $\pi_1(\Sigma)$ with a free group $F_n$.
Let $\cG \subset \Sigma$ be a fatgraph.

\smallskip
\noindent
\textbf{Step 1.}
We will call a \emph{local sector} of a vertex $v$ in $\cG$ a connected component of $D\smallsetminus\cG$, where $D$ is a small disk on $\Sigma$ centered at $v$. A vertex of valence $p$ has $p$ local sectors.
A boundary component $c$ of $\Sigma$ retracts to a curve $\gamma$ in $\cG$ which is \emph{not} simple 
if and only if there exists a vertex $v \in \cG$ such that two distinct local sectors of $v$ intersect $c$. 

We claim that this vertex is never of valence 2: By contradiction, choose a small closed disk $D$ centered at $v$ such that $D \cap \cG$ is the image
of a single embedded segment $e \cap D$.
Consider a small embedded arc
$a \colon [0,1] \longrightarrow \Sigma$, with
$a(0) \in c$, $a((0,1]) \subset \mathrm{int}(\Sigma)$,
$a$ is transverse to $e$ and meets $e$ exactly once, in a point $p \in e \cap D$, and
$a$ does not meet $\cG$ elsewhere.
We orient $a$ from $c$ into the interior of~$\Sigma$.
By the choice of the boundary orientation on~$c$, the algebraic intersection number
$\langle c,a \rangle$ is equal to $+1$.
Denote $\gamma$ the retract of $c$ to $\cG$. We can assume that the homotopy retract fixes $a$.
We have
$\langle \gamma, a \rangle = \langle c, a \rangle = +1.$
By contradiction, suppose that $c$ meets both sides of $e$ near $v$,
then the retracted curve $\gamma$ travels along the edge $e$ in both orientations: once on each side.
These two contributions to the algebraic intersection with $a$ cancel each other, so that
$\langle \gamma, a \rangle = 0$. Contradiction.

In particular, we can replace the fatgraph $\cG$ by the fatgraph with no vertex of valence 2 by collapsing such a vertex to an edge.
We abuse notation and still denote this fatgraph $\cG$.
Choose a basepoint $x_0$ corresponding to one such vertex $v$.
We have
\[
[\gamma] = \beta_1 \beta_2 \dots \beta_k
\]
in generators $\beta_i$ represented by simple closed curves in $\cG$.

\smallskip
\noindent
\textbf{Step 2.}
We apply a version of Marshall Hall’s Theorem for free groups given by Stallings \cite[Cor.~6.3]{StallingsTopologyFiniteGraphs1983}.
Let $S = \langle c \rangle < \pi_1(\Sigma)$ and define the finite set of \emph{forbidden} words $F$ to be all reduced words 
obtained by concatenating distinct letters $\beta_i$ except for the full word $c$ itself, i.e.
\[
F = \bigl\{ \beta_{i_1} \dots \beta_{i_l} \ \big|\ 
(i_1, \dots, i_l) \neq (1, \dots, k),\ 
i_j \in \{1, \dots, k\},\ i_j \neq i_{j'} \text{ if } j \neq j' \bigr\}.
\]
This set $F$ is finite.  
By Hall’s theorem, there exists a finite–index subgroup $S' < \pi_1(\Sigma)$ that contains $S$ and excludes all forbidden words $F$. Up to replacing $S'$ by its normal core, we may further assume that $S'$ is normal in $\pi_1(\Sigma)$.
Let $\hat\Sigma \to \Sigma$ be the finite covering corresponding to $S'$, of degree $n$.
Since $\langle c \rangle \subset S'$, each boundary component $c$ of $\Sigma$ has exactly $n$ lifts 
in $\hat\Sigma$ that project homeomorphically to $c$.

\smallskip
\noindent
\textbf{Step 3.}
We claim that in a neighborhood of each vertex $v$ of $\cG$, any lift $\hat c$ of $c$ intersects at most one local sector of a lift of $v$.

Assume by contradiction that some lift $\hat c$ intersects two distinct sectors $\hat Q_0$ and $\hat Q_1$ 
around the same lift $\hat v$ of $v$ in the same sheet of $\hat\Sigma$.  
Let $U$ be a small neighborhood of $v$, and let $H$ be a homotopy between $c$ and its retraction $\gamma$ inside $\cG$.

Let $t_0$ and $t_1$ be time parameters such that $\hat c(t_0) \in \hat Q_0$, $\hat c(t_1) \in \hat Q_1$ and they retract to the same point, i.e. $H(1,t_0) = H(1, t_1)$.
Then the segment of $c$ between $t_0,t_1$ maps, under the retraction, 
to a closed (not necessarily simple) curve $\gamma_1$ in $\cG$ representing a reduced word $\beta \in F$.
Lifting the homotopy $H$ to $\hat H$ with $\hat H(0,\cdot) = \hat c$ gives a lifted loop $\hat\gamma_1 = \hat H(1,\cdot)$ 
contained in $\hat\cG$ and projecting homeomorphically to $\gamma_1$.
This would imply that the forbidden word $\beta \in F$ belongs to $S'$, a contradiction.

Hence, each lift $\hat c_i$ meets distinct quadrants of the lifted vertices in distinct sheets, 
so each $\hat c_i$ retracts onto a \emph{simple} closed curve contained in $\hat\cG$.

\smallskip
\noindent
\textbf{Step 4.}
Repeating this process for all the (finitely many) boundary components whose retractions are not simple, 
and replacing $\Sigma$ by the successive finite covers $\hat\Sigma$, we eventually obtain a finite cover 
where every boundary component retracts onto a simple closed curve in the lift of any fatgraph $\cG$.
Since there are only finitely many fatgraphs by Lemma \ref{lem:finite_number_fat_graph}, we can also iterate this process for each fatgraph of $\Sigma$ with no vertex of valence 2 (by collapsing such a vertex to an edge.
Indeed, taking successive finite covers does not introduce new self–intersections:
Otherwise, a boundary component in the cover would meet two distinct sectors above a single vertex 
without such an intersection occurring in the base surface, which is impossible.
This concludes the proof.
\end{proof}

\begin{proof}[Proof of Proposition \ref{prop: cover embedded jsj tori}]
First, up to taking a double cover, we assume that $M$ is orientable.
Since $M$ is closed and irreducible, it admits a JSJ decomposition into finitely many Seifert fibered pieces and atoroidal pieces separated by incompressible embedded tori. 

Next, we claim that we can take a finite cover of $M$ such that each Seifert piece in the JSJ decomposition of $M$ is topologically an orientable surface times $S^1$: If $P$ is any Seifert piece in the JSJ decomposition, we can take a cover $\hat P$ of $P$ such that $\hat P = \Sigma \times S^1$ with $\Sigma$ an orientable surface. Once we choose such covers $\hat P_i$ for each of the Seifert pieces $P_i$, we can apply Lemma 9.2 of  \cite{FriedlDistinctFiniteCovers2021}, to deduce that there exists a finite cover $\hat M$ of $M$ such that all its Seifert pieces are finite covers of $\hat P_i$.

So from now on, we assume that each Seifert piece in $M$ is a product of an orientable surface with $S^1$. Our goal will now be to show that we may take a further finite cover $\hat M$ of $M$ such that any pseudo-Anosov flow $\phi$ on $M$ lifts to a flow $\hat \phi$ on $\hat M$ such that all the JSJ tori are isotopic to \emph{embedded} quasi-transverse tori.

Let $P$ be a Seifert piece of $M$, and let $p \colon P \longrightarrow \Sigma$
be the corresponding circle fibration over an orientable compact surface $\Sigma$ with boundary.

Let $T$ be the union of JSJ tori corresponding to the boundary of $P$.
We now explain how Lemma~\ref{lem:finite_cover_for_all_graph} can be used to produce a finite cover of $P$ in which all modified JSJ tori bounding periodic Seifert pieces are embedded, uniformly for all pseudo-Anosov flows.

\smallskip
\noindent
\textbf{Step 1.}
Let $\phi$ be a pseudo-Anosov flow on $M$ such that $P$ is a periodic Seifert piece for $\phi$.
Denote by $T_\phi$ the corresponding collection of modified JSJ tori, weakly embedded, quasi-transverse to $\phi$, and homotopic to~$T$.
Let $P_\phi$ be the corresponding modified Seifert piece bounded by $T_\phi$.

Let $Z_\phi \subset P$ be the spine for $\phi$ and let $\cG_\phi \subset \Sigma$ be its associated fatgraph (see Section \ref{subsec: spine}).

The images of the JSJ tori $T$ under the projection $p\colon P \to \Sigma$ are boundary components of $\Sigma$; let $c \subset \partial\Sigma$ be one such boundary component corresponding to a JSJ torus $T_0 \subset T$.

The modified surface $\Sigma_\phi := p(P_\phi)$ is obtained from $\Sigma$ by pushing the boundary along the projection of the weakly embedded tori $T_\phi$; it is homotopy equivalent to $\Sigma$, but may have singularities when the tori in $T_\phi$ are only weakly embedded.
The image of $\cG_\phi$ in $\Sigma_\phi$ is again a finite graph, and each boundary component $c_\phi \subset \partial\Sigma_\phi$ corresponds to the projection of a modified JSJ torus in $T_\phi$ quasi-transverse to $\phi$.
A modified JSJ torus corresponding to $c_\phi$ fails to be embedded if and only if the boundary component $c$ retracts, in $\Sigma$, onto a non-simple closed curve in the fatgraph $\cG_\phi$.
Equivalently: self-intersections of the modified torus correspond precisely to corners where $c$ meets two distinct local sectors at the same vertex of $\cG_\phi$.

\smallskip
\noindent
\textbf{Step 2.}
Apply Lemma~\ref{lem:finite_cover_for_all_graph} to the surface~$\Sigma$.
It provides a finite covering $\hat\Sigma \to \Sigma$
with the following property: for every fatgraph $\cG \subset \Sigma$, and in particular for every graph $\cG_\phi$ arising from a spine as above, each boundary component $\hat c$ of $\hat\Sigma$ lying over a boundary component $c\subset\partial\Sigma$ retracts onto a simple closed curve contained in the lift $\hat\cG$ of $\cG$.

Consider now the pullback Seifert fibration $\hat P \to \hat\Sigma$
obtained as the associated finite cover of $P$.
The preimage $\hat T$ of $T$ in $\partial\hat P$ is a finite union of tori covering those of~$T$.
Given any pseudo-Anosov flow $\phi$ on $M$, if $\hat M$ is a finite cover of $M$ that contains (any cover of) $\hat P$ as a JSJ piece, then the lift $\hat \phi$ of $\phi$ to $\hat M$ is such that its spine $\hat Z_\phi \subset \hat P$ projects to a fatgraph $\hat \cG_\phi \subset \hat\Sigma$ that is the lift of $\cG_\phi$.
By the property of $\hat\Sigma$, each boundary component $\hat c\subset\partial\hat\Sigma$ lying over $c$ retracts onto a simple closed curve in $\hat \cG_\phi$.
Therefore, the corresponding lifted modified JSJ tori in $\hat P$ are embedded.

In other words, after passing to the finite cover $\hat P$ of each Seifert piece $P$, all modified JSJ tori bounding periodic Seifert pieces for any pseudo-Anosov flow become embedded in that piece.

\smallskip
\noindent
\textbf{Step 3.}
There are only finitely many Seifert pieces in the JSJ decomposition of~$M$.  
For each such piece $P_j$ we have constructed a finite cover $\hat P_j \to P_j$ with the property that, for any pseudo-Anosov flow on $M$, all modified JSJ tori bounding periodic Seifert pieces are embedded in the finite cover $\hat P_j$. Using again \cite[Lemma 9.2]{FriedlDistinctFiniteCovers2021}, we can build a finite cover $\hat M$ of $M$ such that all of its Seifert pieces are finite covers of $\hat P_j$. By the argument of Step 2., we deduce that all the quasi-transverse JSJ tori bounding the periodic Seifert pieces of $\hat M$ are embedded.

By \cite[Theorem 6.10]{barbotPseudoAnosovFlowsToroidal2013a}, the only modified JSJ tori that may fail to be truly embedded are those bounding a periodic Seifert piece, so we conclude that all the lifts $\hat \phi$ on the finite cover $\hat M$ have embedded quasi-transverse JSJ tori, as claimed.\qedhere

\end{proof}

\section{Foliations on quasi-transverse tori}\label{sec: foliation QT tori}

\subsection{Generalities on quasi-Morse–Smale laminations}

We start by introducing {quasi–Morse–Smale laminations}.  
This will describe the foliations induced by the stable and unstable foliations of a pseudo–Anosov flow on a quasi–transverse torus.  
We will present a way to encode the topological information needed to reconstruct this pair of foliations.
These quasi–Morse–Smale laminations were first introduced in \cite[Definition~1.9]{pauletAnosovFlowsDimension2025}, but since we deal with pseudo–Anosov flows in the present article, we need to slightly adapt the definition.

\begin{convention}\label{conv: orientable}
Throughout this section, all manifolds and surfaces are assumed to be orientable.  
By passing to a finite cover if necessary, we may also assume that there are no incompressible Klein bottles.
\end{convention}

\begin{defi}[Quasi–Morse–Smale lamination]\label{def: qms lam}
A quasi–Morse–Smale lamination is the data
\[
(\cK,\Gamma_*,S^\iin,S^\out,\chi)
\]
where:
\begin{enumerate}[label=(\arabic*)]
    \item $\cK$ is a $1$–dimensional lamination on a closed orientable surface $S$, such that:
    \begin{itemize}
        \item[--] \label{def: qms lam; it: finite compact}
        there is a finite set of compact leaves 
        \(\Gamma = \{\gamma_1,\dots,\gamma_N\}\subset\cK\);
        \item[--] \label{def: qms lam; it: half non-compact}
        each non–compact half–leaf accumulates on a single compact leaf (with contracting or expanding holonomy).
    \end{itemize}
    \item \label{def: qms lam; it: marked}
    $\Gamma_*\subset \Gamma$ is a subset containing all compact leaves whose holonomy\footnote{The holonomy of a compact leaf $c$ is the first return map of the lamination on a sufficiently small transversal $I$ to $\cK$ intersecting $c$. We say that it is \emph{contracting} (resp.\ \emph{expanding}) if the fixed point $I\cap c$ is topologically attracting (resp.\ repelling). This is not defined for isolated compact leaves.} is contracting on one side and expanding on the other, and disjoint from compact leaves having contracting (or expanding) holonomy on both sides (for a chosen orientation).  
    We call $\Gamma_*$ the set of \emph{marked compact leaves}.
    \item \label{def: qms lam; it: in out}
    $S^\iin \sqcup S^\out$ is a partition of $S\ssm\Gamma_*$ into two disjoint open sets such that each leaf $\gamma_*\in \Gamma_*$ is adjacent to $S^\iin$ on one side and to $S^\out$ on the other.  
    We call $(S^\iin,S^\out)$ an \emph{(in,out)–splitting} of $S$ for $\cK$.
    \item \label{def: qms lam; it: dynamical orientation}
    $\chi$ is an orientation of each compact leaf in $\Gamma$ such that the holonomy (when defined) is expanding on the $S^\iin$–side and contracting on the $S^\out$–side.  
    We call $\chi$ a \emph{dynamical orientation} of $\Gamma$.
\end{enumerate}
\end{defi}

\begin{figure}[h]

    \labellist
        \small
        \pinlabel \textcolor{blue}{$\Gamma_*$} [t] at 3 0
        \pinlabel \textcolor{blue}{$\Gamma_*$} [t] at 268 0
        \pinlabel \textcolor{green}{$S^\iin$} [t] at 140 0
        \pinlabel \textcolor{red}{$S^\out$} [t] at 404 0
    \endlabellist

    \centering
    \includegraphics[width=1\linewidth]{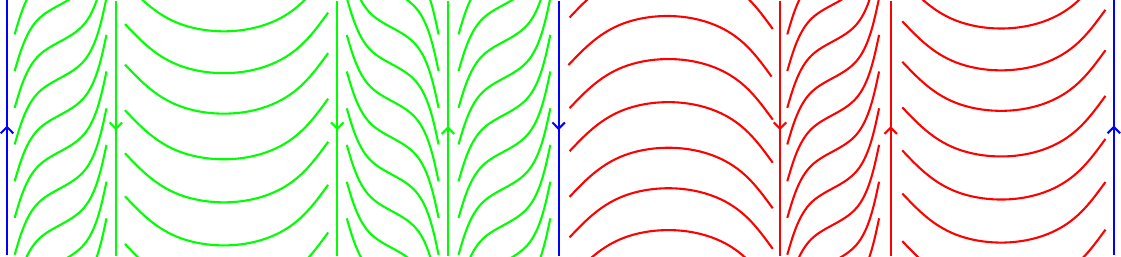}
    \vspace{2mm}
    \caption{A quasi-Morse-Smale foliation on the torus with dynamical orientation of compact leaves.}
    \label{fig: qms foliation torus}
\end{figure}

\begin{rmk}
A \qms{} lamination models the lamination obtained as the trace of the stable or unstable lamination of a piece of pseudo–Anosov flow cut along embedded quasi–transverse tori, where $\cK$ is a sublamination of the stable or unstable foliation of the pseudo–Anosov flow.

More precisely, in the case of an Anosov flow, this is made explicit in \cite[Proposition~1.11]{pauletAnosovFlowsDimension2025}, together with the fact that any piece of Anosov flow is a building block in the sense of \cite[Definition~1.2]{pauletAnosovFlowsDimension2025}.  
The lamination $\cK$ corresponds to the trace of the stable or unstable manifolds of the orbits entirely contained in the piece;  
the marked compact leaves correspond to periodic orbits tangent to the cutting surface, along which the flow switches the transverse orientation;  
the sets $S^\iin$ and $S^\out$ correspond to the regions where the flow points inside or outside the piece.

The only difference between the present definition of \qms{} lamination and \cite[Definition~1.9]{pauletAnosovFlowsDimension2025} is the definition of $\Gamma_*$.  
If $T$ is an alternating quasi-transverse torus (Definition \ref{def: qt normal forms}) for a pseudo–Anosov flow $\phi^t$, then $\Gamma$ is the set of all closed leaves of $\cK$, and $\Gamma_*$ is the subset of periodic orbits on $T$ for which the orientation of the flow changes from one side to the other.  
If $T$ does not contain any singular orbit, then $\Gamma_*=\Gamma$; in general, however, $\Gamma\smallsetminus \Gamma_*$ may still contain periodic orbits on $T$ (necessarily singular) for which the flow crosses $T$ in the same direction on both sides.  
See Remark~\ref{rem: gamma_**} for more details.
\end{rmk}

\begin{rmk}
Given a lamination $\cK$, the additional data $(\Gamma_*,S^\iin,S^\out,\chi)$ are not determined uniquely by $\cK$ in general.

For instance, compact leaves can be isolated (on one side or on both) in the lamination, in which case the holonomy is not defined.  
Such leaves may or may not be included in the marked set $\Gamma_*$ (see Item~\ref{def: qms lam; it: marked}).  
In other words, the lamination $\cK$ alone does not determine $\Gamma_*$.

Similarly, once an (in,out)–splitting $(S^\iin,S^\out)$ is fixed, the dynamical orientation is only partially determined.  
If a compact leaf $\gamma$ is non–isolated on at least one side and lies in the boundary of $S^\iin$, then its orientation must be chosen so that the holonomy is expanding on the side contained in $S^\iin$.  
If it lies in the boundary of $S^\out$, we choose the orientation so that the holonomy is contracting on that side (Item~\ref{def: qms lam; it: dynamical orientation}).  
However, if $\gamma$ is isolated on both sides, its orientation remains free.  
Thus, in general, $\chi$ is not completely determined by the (in,out)–splitting.

This is another difference with the original definition in \cite[Definition~1.9]{pauletAnosovFlowsDimension2025}, where only \emph{filling} laminations are considered, and for which $\Gamma_*$ is determined by $\cK$ and the orientation $\chi$ is determined by $\cK$ together with the (in,out)–splitting.
\end{rmk}

\begin{defi}[Morse–Smale lamination]\label{def: ms lam}
A \qms{} lamination for which the set $\Gamma_*$ is empty is called a \emph{Morse–Smale lamination}.
\end{defi}

We will often abuse notation and write simply $\cK$ for a \qms{} lamination, keeping the marked set $\Gamma_*$, the splitting $(S^\iin,S^\out)$ and the dynamical orientation $\chi$ implicit.

\medskip

Let $S$ be a closed orientable surface and $(\cK_1,\cK_2)$ a pair of \qms{} laminations on $S$.  
Denote by $\Gamma_{\cK_i}$ the compact leaves of $\cK_i$, by $\Gamma_{*,\cK_i}$ the marked compact leaves, and by
\[
S\ssm \Gamma_{*,\cK_i} = S^\iin_i \sqcup S^\out_i
\]
the corresponding (in,out)–splitting, for $i=1,2$.

\begin{defi}[Quasi–transverse laminations]\label{def: sqt laminations}
We say that the pair $(\cK_1,\cK_2)$ is \emph{quasi–transverse} if:
\begin{enumerate}
\item Denoting $\Gamma_\mm := \Gamma_{\cK_1}\cap\Gamma_{\cK_2}$, we have that 
\[
\Gamma_{*,\cK_1}= \Gamma_{*,\cK_2} \subset \Gamma_\mm.
\]
In other words, the two laminations must coincide on their marked leaves, and may coincide on more compact leaves.

    \item they are transverse to each other on the complement $S\ssm\Gamma_\mm$;
    \item the splittings are opposite in the sense that
      $S^\iin_1 = S^\out_2.$
    
\end{enumerate}
\end{defi}

\begin{rmk}\label{rem: gamma_**}
As mentioned above, a pair of quasi–transverse \qms{} laminations is intended to model the trace of the stable and unstable foliations of a pseudo–Anosov flow on a quasi–transverse embedded torus.  
The set $\Gamma_\mm$ is the set of periodic orbits that are tangent to the torus.  
In the pseudo–Anosov setting, a periodic orbit tangent to $S$ may locally split $S$ into two disjoint transverse components lying in non–adjacent quadrants, where the flow induces the same transverse orientation on both sides; this explains why $\Gamma_\mm$ may be strictly larger than the set of marked compact leaves $\Gamma_* = \Gamma_{*, \cK_i}$.  

Note that we work with laminations rather than full foliations because we will cut pieces of pseudo–Anosov flows along quasi–transverse surfaces and then restrict to suitable sublaminations of the stable and unstable foliations on each piece.
\end{rmk}

\begin{defi}[pre-foliation]
We say that a lamination $\cK$ on a closed orientable (possibly not connected) surface $S$ is a \emph{pre-foliation} if there exists a foliation $\cF$ on $S$ such that the leaves of $\cK$ are leaves of $\cF$.  
In particular, $S$ has Euler characteristic $0$, hence $S$ is a union of tori.
\end{defi}

While a pre-foliation can be a sublamination of many distinct foliations, our next goal is to show that a \qms{} pre-foliation can be completed to a canonical \qms{} foliation, in a minimal sense and unique up to homeomorphism.

Let $\cK$ be a \qms{} pre-foliation on a torus $S$.  
Cut $S$ along a compact leaf $c$, which is always non–contractible by the Poincaré–Bendixson Theorem; we obtain an annulus.  
We know from \cite[Theorem~4.2.15]{hectorIntroductionGeometryFoliations1981} that every foliation of the annulus tangent to the boundary is obtained by gluing together a finite number of Reeb components and suspension foliations along compact leaves, possibly glued together along further compact leaves.

\begin{rmk}\label{rmk: compact leaves cut in annuli}
For any two compact leaves $\gamma$ and $\gamma'$ in $\Gamma$, they are freely homotopic as unoriented closed curves in $S$.  

The set of compact leaves $\Gamma$ of a \qms{} pre-foliation cuts the torus $S$ into a finite union of annuli with non–compact leaves in their interiors, where the pre-foliation is either a suspension without fixed point in the interior, a Reeb component, or empty.
\end{rmk}

Let $\cK$ be a \qms{} pre-foliation on a torus $S$.  
Let $\Gamma_\cK$ be its compact leaves, $\Gamma_{*,\cK}$ the marked compact leaves, $S^\iin_\cK \sqcup S^\out_\cK$ the (in,out)–splitting of $S$ for $\cK$, and $\chi_\cK$ its dynamical orientation.

\begin{lem}\label{lem: extend qms pre-foliation by qms foliation}
There exists a \qms{} foliation $\cF$ on $S$ which contains $\cK$ as a sublamination and satisfies:
\begin{enumerate}[label=(\roman*)]
    \item \label{ìtem_extend_qms_same_closed}
    $\Gamma_\cF = \Gamma_\cK$ and $\Gamma_{*,\cF} = \Gamma_{*,\cK}$;
    \item \label{item_extend_qms_same_split}
    the splittings and dynamical orientations coincide:
    \[
      S^\iin_\cF = S^\iin_\cK,\quad S^\out_\cF = S^\out_\cK,\quad \chi_\cF = \chi_\cK.
    \]
\end{enumerate}
Moreover, such $\cF$ is unique up to topological equivalence\footnote{An orientation–preserving homeomorphism of $S$ mapping one foliation to the foliation}.
\end{lem}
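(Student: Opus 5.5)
The plan is to work annulus by annulus, using Remark~\ref{rmk: compact leaves cut in annuli}. Cut $S$ along all compact leaves in $\Gamma_\cK$. This gives finitely many closed annuli $A_1,\dots,A_m$, each bounded by two compact leaves. Each open annulus $\intr(A_j)$ falls into one of three cases: $\cK$ restricted to it is a suspension (a foliation without compact leaves), a Reeb component, or a lamination that is not a full foliation.

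For each $A_j$ we build a foliation $\cF_j$ with the following properties: it is tangent to $\partial A_j$, contains $\cK\cap A_j$, and has no compact leaves in $\intr(A_j)$. The construction depends on the case.

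\begin{itemize}
\item If $\cK\cap\intr(A_j)$ already fills $A_j$, we simply take $\cF_j=\cK|_{A_j}$.
\item Otherwise the non-compact half-leaves of $\cK$ in $A_j$ (if any) spiral onto the two boundary leaves. The behaviour at each boundary leaf $\gamma$ is fixed by the data. If $\gamma$ is not isolated on the $A_j$ side, the holonomy type (expanding or contracting) is fixed by the splitting and $\chi_\cK$. If $\gamma$ is isolated on that side, we use the side of $\gamma$ that $A_j$ lies on, $S^\iin$ or $S^\out$. Item~\ref{def: qms lam; it: dynamical orientation} then says which holonomy we must impose, expanding on the $S^\iin$ side and contracting on the $S^\out$ side, relative to $\chi$.
\end{itemize}

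Given the spiralling directions at the two boundary leaves, we fill $A_j$ by the standard model. If the two ends induce compatible directions, this is a suspension annulus in which all interior leaves go from one boundary to the other. If the leaves spiral in the same sense on both ends, it is a Reeb annulus. In either case we extend the existing leaves of $\cK\cap A_j$ by filling each complementary region with parallel leaves. Each such region is an open strip between two leaves, or between a leaf and a boundary collar, and is foliated as a product. Gluing the $\cF_j$ along $\Gamma_\cK$ gives $\cF$.

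Properties~\ref{ìtem_extend_qms_same_closed} and~\ref{item_extend_qms_same_split} hold by construction:
\begin{itemize}
\item no new compact leaves are created;
\item the holonomies at the marked leaves agree with those prescribed, so $\Gamma_{*,\cF}=\Gamma_{*,\cK}$;
\item the splitting and $\chi$ are inherited.
\end{itemize}
Every compact leaf is now non-isolated on both sides, so the holonomy conditions in Definition~\ref{def: qms lam} are satisfied. Every non-compact half-leaf accumulates on exactly one compact leaf.

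For uniqueness, take two such foliations $\cF,\cF'$. They have the same compact leaves, so both are cut by $\Gamma_\cK$ into the same annuli. In each annulus the foliation has no interior compact leaf, so by \cite[Theorem~4.2.15]{hectorIntroductionGeometryFoliations1981} it is either a single Reeb component or a suspension. Which of the two occurs is determined by the holonomy types at the two boundary leaves. Those types are determined by $(S^\iin,S^\out,\chi)$, which is common to $\cF$ and $\cF'$. For the suspension case, note also that any two fixed-point-free suspensions of an annulus with the same boundary holonomy types are topologically conjugate. So on each $A_j$ there is an orientation-preserving homeomorphism sending $\cF|_{A_j}$ to $\cF'|_{A_j}$ and restricting to the identity on $\partial A_j$. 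To arrange this boundary condition, we first conjugate on a collar and then adjust the homeomorphism by a leafwise isotopy. These homeomorphisms glue along $\Gamma_\cK$ to a global homeomorphism of $S$.

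The main obstacle is the existence step on annuli where $\cK$ is a proper lamination with some boundary leaf isolated on the inside. There one must check that the spiralling direction forced by $\chi$ and the splitting is compatible with whatever half-leaves of $\cK$ already exist in the annulus. This compatibility is exactly what item~\ref{def: qms lam; it: dynamical orientation} guarantees. A second point needs care: an annulus may lie entirely in $S^\iin$ with both of its boundary leaves unmarked. In that case both ends are expanding, and a Reeb annulus is forced rather than a suspension.
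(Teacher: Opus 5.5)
Your overall plan matches the paper's: cut $S$ along $\Gamma_\cK$, put the unique Reeb or suspension model on each annulus, glue, and get uniqueness from the Hector--Hirsch classification. The problem is the rule that selects the model, which is the only real content of the lemma, and you state it incorrectly. On an annulus $A_j\subset S^\iin$ (resp.\ $S^\out$), \emph{both} boundary leaves have expanding (resp.\ contracting) holonomy on the $A_j$-side relative to $\chi$, whether or not they are marked. So holonomy types measured against $\chi$ never distinguish the two models, and markedness is irrelevant. What decides the model is whether $\chi$ orients the two boundary leaves coherently (freely homotopic as oriented curves), which forces a Reeb annulus, or incoherently, which forces a suspension.

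Your closing claim, that an annulus in $S^\iin$ with unmarked boundary leaves must be Reeb, is false. Take the Morse--Smale foliation of $T^2$ with $\Gamma_*=\emptyset$, $S^\iin=S$, two compact leaves with opposite orientations, and two suspension annuli. This is exactly the configuration on the entrance torus of the Bonatti--Langevin block in Example~\ref{ex: non-filling bar code}. A Reeb annulus placed there would have holonomy expanding at one end and contracting at the other relative to the prescribed $\chi$, so your $\cF$ would fail (ii). The same confusion appears in your uniqueness step: the model is not ``determined by the holonomy types at the two boundary leaves'' unless those types are read against a common orientation of the core of $A_j$. That is precisely the coherence criterion.

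The repair is easy. Your ``compatible directions versus same sense'' rule becomes correct once the spiralling direction at a boundary leaf $\gamma$ is computed from both the prescribed holonomy type and $\chi(\gamma)$. With expanding holonomy relative to $\chi(\gamma)$, nearby leaves approach $\gamma$ while moving against $\chi(\gamma)$; with contracting holonomy, they approach moving along it. ``Same sense at both ends'' then means exactly that $\chi$ is coherent, and this is the paper's rule.

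Also, the obstacle you single out does not arise. A leaf of $\cK$ in $\intr A_j$ cannot have both ends on the same boundary circle. Hence if $\cK\cap\intr A_j\neq\emptyset$, both boundary leaves are non-isolated from the $A_j$-side. In that case you can simply restrict to $A_j$ the foliation witnessing that $\cK$ is a pre-foliation. It has no compact leaf in $\intr A_j$, since such a leaf would separate the two boundary circles and so cross a leaf of $\cK$. Your care in making the annulus conjugacies agree along shared compact leaves is fine, and the paper's proof passes over that point.
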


\begin{proof}
Cut the torus $S$ along the compact leaves $\Gamma_\cK$.  
We obtain a finite collection of annuli $A_i$, with oriented boundary components, and no compact leaves in their interiors.

Suppose first that $A_i \subset S^\iin_\cK$.  
On each boundary component of $A_i$, the induced orientation and the dynamical orientation determine whether the holonomy along this boundary is expanding or contracting on the $A_i$–side.  
By definition of $S^\iin_\cK$, we require expanding holonomy on the oriented boundary leaves.  

If the two boundary leaves of $A_i$ are coherently oriented, then the unique way (up to homeomorphism fixing the boundary) to extend $\cK$ to a foliation on $A_i$ with no compact leaves in the interior and expanding holonomy on the boundary is by a Reeb foliation.  
If the two boundary leaves are incoherently oriented, the unique such extension is a suspension foliation with no fixed point in the interior.

Similarly, if $A_i \subset S^\out_\cK$, we build the foliation on $A_i$ exactly as above, but now requiring \emph{contracting} holonomy on the $A_i$–side of the oriented boundary leaves.  

Gluing all the annuli $A_i$ back along their boundary circles, we obtain a foliation $\cF$ on $S$ that contains $\cK$, has the same compact leaves and marked set, and realizes the prescribed splitting and dynamical orientation; hence $\cF$ is a \qms{} foliation satisfying (i) and (ii).

For uniqueness, let $\cF'$ be another \qms{} foliation satisfying (i) and (ii).  
Cutting along $\Gamma_{\cF'}=\Gamma_\cK$ again yields the same collection of annuli $A_i$, and condition~(ii) forces $\cF'$ to be of the same type (Reeb or suspension, with the same behaviour of holonomy) as $\cF$ on each $A_i$.  
By \cite[Theorem~4.2.15]{hectorIntroductionGeometryFoliations1981}, two such foliations on an annulus are topologically equivalent by a homeomorphism preserving the boundary and the orientation.  
Gluing these homeomorphisms along the boundary circles gives a global topological equivalence between $\cF$ and $\cF'$.
\end{proof}

\begin{defi}[Parallel vs.\ scalloped bi–(pre)foliations]\label{def: parallel scalloped}
Let $(\cF_+,\cF_-)$ be a pair of quasi–transverse pre-foliations on an oriented torus $S$, and let $\Gamma := \Gamma_+ \cup \Gamma_-$ be the union of their compact leaves.  
We say that the pair $(\cF_+,\cF_-)$ is \emph{parallel} if every compact leaf of $\cF_+$ is freely homotopic to a compact leaf of $\cF_-$ (as unoriented curves); and \emph{scalloped} otherwise.
\end{defi}

Notice that if the set of marked leaves $\Gamma_*\neq\emptyset$, then the pair must be parallel.

\medskip

\begin{defi}[Geometric enumeration]\label{def: geometric enumeration}
Let $\cF$ be a \qms{} pre-foliation on a torus $T$. Fix a first leaf $\gamma_0$ in $\Gamma$ and a cyclic order on the compact leaves $\Gamma$.
We say that $\{\gamma_0,\dots,\gamma_{n-1}\}$ is a \emph{geometric enumeration} of $\Gamma$ if they are cyclically ordered.

Let $(\cF_+,\cF_-)$ be a pair of parallel quasi–transverse pre-foliations on $T$.
Fix a first leaf $\gamma_0$ in $\Gamma$ and a cyclic order on $\Gamma:=\Gamma_+\cup\Gamma_-$.
We say that $\{\gamma_0,\dots,\gamma_{n-1}\}$ is a \emph{geometric enumeration} of $\Gamma$ if they are cyclically ordered.
\end{defi}

\begin{rmk}
    When $T$ is oriented, a cyclic order and a geometric enumeration are determined by just a choice of a first leaf $\gamma_0$: The cyclic order we choose by convention is the one determined by the dynamical orientation of $\gamma_0$ and the orientation of $T$, so that $\gamma_1$ is the first compact leaf on the right of $\gamma_0$.
\end{rmk}

Note that, for a fixed orientation on $T$, there are as many geometric enumerations as there are choices of a first compact leaf.  
A choice of geometric enumeration will allow us to encode the data of a pair of parallel quasi–transverse \qms{} pre-foliations into an abstract combinatorial object that we call the \emph{bar code}.

\begin{defi}[Abstract bar code]\label{def: abstract bar code}
An \emph{abstract bar code} is the data of a map
\[
\sigma\colon \{0,\dots,n-1\} \longrightarrow \{+,-,*,\mm\}
\]
together with an integer $k\in\{1,\dots,n-1\}$.
\end{defi}

\begin{defi}[Bar code]\label{def: bar code}
Let $T$ be an oriented torus endowed with a parallel quasi–transverse pair of pre-foliations $(\cF^+,\cF^-)$, and let $\Gamma := \Gamma_+\cup\Gamma_-$ be the set of compact leaves of the pair.  
Choose compact leaves $\gamma_0^+\in\Gamma_+$ and $\gamma_0^-\in\Gamma_-$ as first compact leaves for $\cF^+$ and $\cF^-$.  
Let $\Gamma =\{\gamma_0,\dots,\gamma_{n-1}\}$ be the geometric enumeration obtained by taking $\gamma_0 = \gamma_0^+$.\footnote{Note that this geometric enumeration respects the cyclic order on $\Gamma_+$ given by $\gamma_0^+$, but not necessarily the cyclic order on $\Gamma_-$ given by $\gamma_0^-$.}

The \emph{bar code} of the pair $(\cF^+,\cF^-)$ is the map
\[
\sigma\colon\{0,\dots,n-1\}\longrightarrow \{+,-,*,\mm\}
\]
defined by
\[
\sigma(i)=
\begin{cases}
*  &\text{if } \gamma_i\in \Gamma_*,\\[0.2em]
\mm &\text{if } \gamma_i\in \Gamma_\mm\smallsetminus \Gamma_*,\\[0.2em]
+  &\text{if } \gamma_i\in \Gamma_+\smallsetminus \Gamma_\mm,\\[0.2em]
-  &\text{if } \gamma_i\in \Gamma_-\smallsetminus \Gamma_\mm,
\end{cases}
\]
together with the integer $k$ such that $\gamma_k = \gamma_0^-$.
\end{defi}

\begin{rmk}
The \emph{combinatorial type} of a single quasi–Morse–Smale foliation and of a quasi–transverse \qms{} bifoliation was introduced in \cite[Definitions~7.14 and~7.36]{pauletAnosovFlowsDimension2025}, but the present notion of bar code is slightly different:  
First, it is adapted to the current definition of quasi–transverse bifoliation, which includes the larger set $\Gamma_\mm$ designed for pseudo–Anosov flows (see Remark~\ref{rem: gamma_**}).  
Second, this coding is better suited to describe the bifoliation itself (see Proposition~\ref{prop: bar code determine}).  
The combinatorial type introduced in \cite{pauletAnosovFlowsDimension2025} sufficed to recover the topological equivalence class of a \qms{} bifoliation, but carried more information than actually needed for our purposes here.
\end{rmk}

\begin{figure}[h]

    \labellist
        \small
        \pinlabel $*$ [t] at 2 0
        \pinlabel $+$ [t] at 55 0
        \pinlabel $-$ [t] at 108 0
        \pinlabel $**$ [t] at 161 0
        \pinlabel $+$ [t] at 214 0
        \pinlabel $*$ [t] at 267 0
        \pinlabel $-$ [t] at 320 0
        \pinlabel $+$ [t] at 374 0
        \pinlabel $+$ [t] at 427 0
        \pinlabel $-$ [t] at 480 0

        \pinlabel ${\color{green}S^\iin(\cF^+)}={\color{red}S^\out(\cF^-)}$ [t] at 148 -17
        \pinlabel ${\color{green}S^\iin(\cF^-)}={\color{red}S^\out(\cF^+)}$ [t] at 405 -17
        
    \endlabellist

    \centering
    \includegraphics[width=1\linewidth]{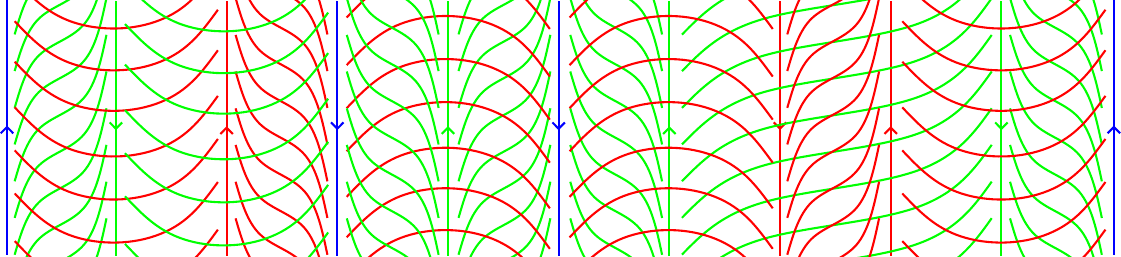}
    \vspace{0.5cm}
    \caption{Bar code of a parallel quasi-transverse pair pair of \qms{} foliations $(\cF_+, \cF_-)$}
    \label{fig: bar code}
\end{figure}

\begin{prop}[Realization of abstract bar code]\label{prop: real bar code}
An abstract bar code 
\[ 
(\sigma\colon\{0,\dots,n-1\}\to\{+,-,*,\mm\},\,k)
\]
is the bar code of a parallel quasi–transverse bifoliation $(\cF^+,\cF^-)$ on a torus $T$ if and only if $n$ is even and $\vert\{\sigma=*\}\vert$ is even.
\end{prop}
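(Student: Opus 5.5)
Both directions reduce to a local analysis on the annuli between consecutive compact leaves. Cutting $T$ along $\Gamma=\{\gamma_0,\dots,\gamma_{n-1}\}$ gives annuli $A_0,\dots,A_{n-1}$, with $A_i$ between $\gamma_i$ and $\gamma_{i+1}$ (indices mod $n$). By Remark~\ref{rmk: compact leaves cut in annuli}, on each $A_i$ each of $\cF^\pm$ is a Reeb component, a suspension, or empty, and $A_i$ lies entirely in $S^\iin_+=S^\out_-$ or entirely in $S^\out_+=S^\iin_-$. The in/out side flips exactly when crossing a leaf of $\Gamma_*$, that is, at the indices with $\sigma(i)=*$.

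\textbf{Necessity.} Going once around the torus, the side must return to its starting value, so $|\{\sigma=*\}|$ is even. For the parity of $n$, I track the dynamical orientation $\chi$ of the compact leaves. Give all the $\gamma_i$ a common reference orientation (they are freely homotopic), and record $\epsilon_i=\pm1$ according to whether $\chi(\gamma_i)$ agrees with it.

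Consider an annulus $A_i$ on which one foliation, say $\cF^+$, is transverse to both boundary leaves. That foliation is then a suspension of a fixed-point-free map. For $\cF^+$ the boundary holonomy on the $A_i$-side must be expanding at both ends if $A_i\subset S^\iin_+$, or contracting at both ends otherwise. A suspension with no interior compact leaf has holonomy expanding at one end and contracting at the other with respect to a coherent orientation. So the dynamical orientations at the two ends must be incoherent, which gives $\epsilon_{i+1}=-\epsilon_i$.

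When the boundary leaves of $A_i$ belong to different foliations, transversality of $\cF^+$ and $\cF^-$ off $\Gamma_\mm$ forces the same incoherence. A leaf of $\cF^-$ is transverse to $\cF^+$, and I will check that the holonomy conventions for the two foliations coincide via $S^\iin_+=S^\out_-$. The same holds when a boundary leaf lies in $\Gamma_*$ or $\Gamma_\mm$, where the side of $A_i$ is handled by the splitting. Granting this, $\epsilon$ alternates around the cycle, so $n$ is even.

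The main obstacle is justifying this alternation uniformly across all pairs of labels in $\{+,-,*,\mm\}$. The cases where both endpoints are compact leaves of the same foliation (a Reeb annulus for that foliation) need care: there I expect the \emph{other} foliation, which must be transverse to both boundary leaves and contain no compact leaf in the interior, to provide the incoherence.

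\textbf{Sufficiency.} Given $(\sigma,k)$ with $n$ and $|\{\sigma=*\}|$ even, I construct the bifoliation explicitly.
\begin{enumerate}
\item Take $n$ parallel circles on $T$ with alternating orientations; this is consistent because $n$ is even.
\item Assign sides: choose $A_0\subset S^\iin_+$ and flip the side at each $*$; this is consistent because $|\{\sigma=*\}|$ is even.
\item Declare $\gamma_i$ a compact leaf of $\cF^+$, of $\cF^-$, or of both, according to $\sigma(i)$.
\item On each annulus, fill in each of $\cF^\pm$ by a Reeb component (if both boundaries are its own compact leaves) or by a suspension transverse to the other foliation's leaves, with holonomies dictated by the sides. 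Lemma~\ref{lem: extend qms pre-foliation by qms foliation} guarantees existence.
\item Make the two local models transverse in the interior of each annulus, using standard local pictures such as a Reeb component crossed by a transverse suspension.
\end{enumerate}
Finally, the integer $k$ is realized by choosing $\gamma_0^-=\gamma_k$, which requires $\gamma_k\in\Gamma_-$; I will check, or impose, this as part of the data.
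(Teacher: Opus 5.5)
Your strategy matches the paper's. The in/out side flips exactly at $\Gamma_*$, consecutive leaves of $\Gamma$ carry incoherent dynamical orientations (checked on the annulus they bound), and the converse is obtained by gluing model annuli. However, the alternation step is never proved, and your sufficiency recipe uses the wrong local models.

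\textbf{Necessity.} The alternation of $\chi$ is the entire content of ``$n$ even'', and you only grant it.

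The one case you treat has the roles inverted. If $\cF^+$ is transverse to $\gamma_i$ and $\gamma_{i+1}$, these are leaves of $\cF^-$ alone. Then $\cF^+$ has no holonomy along them and is a product by arcs on $A_i$, and it is $\cF^-$ that is a suspension (of its holonomy along the $\cF^+$-arcs), whence incoherence. Corrected, this is a fine substitute for the paper's ``coherent $\Rightarrow$ Reeb $\Rightarrow$ the transverse foliation has a compact leaf''. But it is exactly the case you later single out as ``needing care'', so the other cases remain open. Relatedly, Remark~\ref{rmk: compact leaves cut in annuli} describes a single foliation cut along its own compact leaves; on a component of $T\setminus\Gamma$ a foliation can be tangent to one boundary circle and transverse to the other.

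For $\gamma_i\in\Gamma_+\setminus\Gamma_{\mm}$ and $\gamma_{i+1}\in\Gamma_-\setminus\Gamma_{\mm}$, neither foliation is tangent at both ends, so no Reeb/suspension dichotomy applies. The remark that ``the holonomy conventions coincide via $S^{\iin}_+=S^{\out}_-$'' does not exclude coherence. One argument works in the universal cover $[0,1]\times\R$ of $A_i$. A leaf of $\widetilde{\cF}^+$ issued from $\widetilde{\gamma}_{i+1}$ is a transversal to $\widetilde{\cF}^-$. With coherent orientations it would spiral onto $\widetilde{\gamma}_i$ towards the end of $\widetilde{\gamma}_i$ whose issuing $\widetilde{\cF}^-$-leaves converge to $\widetilde{\gamma}_{i+1}$. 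Suitable such leaves would then cross it both near its starting point and near its far end, which no-bigons forbids in the simply connected strip.

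When $\gamma_i,\gamma_{i+1}\in\Gamma_{\mm}$, both foliations are tangent at both ends, so your plan (``the other foliation is transverse to both boundary leaves'') is unavailable. Here coherence would make both restrictions Reeb components with opposite holonomy. A deck translate of a lifted leaf of one would then cross a lifted leaf of the other twice.

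\textbf{Sufficiency.} Step (4) fails as written. Your step (1) fixes alternating orientations. The holonomy type is fixed by the side of the annulus, and it is the same at both ends since each component of $T\setminus\Gamma$ lies on one side of the splitting. With these data, Lemma~\ref{lem: extend qms pre-foliation by qms foliation} produces a \emph{suspension} between two compact leaves of the same foliation, never a Reeb component. A Reeb component would have the wrong holonomy at one end, and in the $(+,+)$ case it is precisely the configuration your necessity argument excludes.

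The models you need are as follows. A foliation tangent at both ends is a suspension. According to the labels, the other foliation is then a product by arcs, a suspension with opposite holonomy, or a spiral tangent at one end. For a $(+,-)$ annulus you need a pair of spirals, for example the everywhere orthogonal foliations $\theta=c-\tfrac{2}{\pi}\log\sin\tfrac{\pi x}{2}$ and $\theta=c-\tfrac{2}{\pi}\log\cos\tfrac{\pi x}{2}$ of $(0,1)\times S^1$.

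Finally, the condition on $k$ cannot be ``checked'': it is missing from the statement. Since $\gamma_0=\gamma_0^+\in\Gamma_+$ and $\gamma_k=\gamma_0^-\in\Gamma_-$, every bar code has $\sigma(0)\in\{+,*,\mm\}$ and $\sigma(k)\in\{-,*,\mm\}$. For instance, $\bigl((+,+),1\bigr)$ satisfies both parity conditions without being a bar code. The converse holds only once these two conditions are imposed, and the paper's argument does not address this either.
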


\begin{proof}
If $\sigma$ is the bar code of a pair of quasi–transverse foliations $(\cF^+,\cF^-)$ on a torus $T$, then, by Definition~\ref{def: qms lam} and orientability, the number of marked compact leaves of the union is even, hence $\vert\{\sigma=*\}\vert$ is even.

Next, we show that for a geometric enumeration 
\(\Gamma=\{\gamma_0,\dots,\gamma_{n-1}\}\) of the compact leaves, two consecutive leaves $\gamma_i$ and $\gamma_{i+1}$ (with indices taken modulo $n$) have incoherent dynamical orientations, that is, they are freely homotopic with opposite orientations.  
This immediately implies that $n$ is even.

Indeed, the two leaves $\gamma_i$ and $\gamma_{i+1}$ bound an open annulus $C\subset T$ whose interior contains no compact leaves, so the restrictions of $\cF^+$ and $\cF^-$ to $C$ are elementary foliations and are transverse to each other.  
Without loss of generality we may assume that $C\subset T^\iin_+ = T^\out_-$.  

Suppose, by contradiction, that $\gamma_i$ and $\gamma_{i+1}$ have coherent dynamical orientations.  
Up to swapping $+$ and $-$ and exchanging $i$ and $i+1$, we are in one of the following situations:
\begin{enumerate}
    \item both $\gamma_i$ and $\gamma_{i+1}$ belong to $\cF^+\smallsetminus \Gamma_\mm$;
    \item $\gamma_i\in \cF^+\smallsetminus\Gamma_*$ and $\gamma_{i+1}\in \cF^-\smallsetminus\Gamma_\mm$;
    \item both $\gamma_i$ and $\gamma_{i+1}$ belong to $\Gamma_\mm$;
    \item $\gamma_i\in \cF^+\smallsetminus\Gamma_\mm$ and $\gamma_{i+1}\in\Gamma_\mm$.
\end{enumerate}
In cases (1), (3) and (4), the leaves $\gamma_i$ and $\gamma_{i+1}$ lie in the same foliation, say $\cF^+$, and bound the annulus $C$; hence the restriction of $\cF^+$ to $C$ is a Reeb component.  
This forces $\cF^-$ to admit a compact leaf inside $C$, contradicting the assumption that $\Gamma$ contains all compact leaves.

In case (2), the leaves of $\cF^+$ in $C$ accumulate on $\gamma_i$ with expanding holonomy and are transverse to $\gamma_{i+1}$, while the leaves of $\cF^-$ are transverse to $\gamma_i$ and accumulate on $\gamma_{i+1}$ with contracting holonomy.  
Again, this configuration is impossible under the transversality assumption.

Thus the union $\Gamma$ of compact leaves must have even cardinality, and therefore $n$ is even.

Conversely, given an abstract bar code satisfying the two conditions, one constructs a parallel quasi–transverse bifoliation on a torus by gluing model bi–foliated cylinders according to the pattern prescribed by $\sigma$ and $k$, using elementary Reeb and suspension pieces as in the proof of Lemma~\ref{lem: extend qms pre-foliation by qms foliation}.  
The details follow the same lines as in \cite{pauletAnosovFlowsDimension2025} and are omitted.
\end{proof}

\begin{prop} \label{prop: bar code determine}
    Suppose $(\cF_1^+, \cF_1^-)$ and $(\cF_2^+, \cF_2^-)$ are two quasi-transverse parallel bi-foliations on tori $T_1$ and $T_2$ with the same bar code. Then there exists a homeomorphism $h\colon T_1 \ssm \Gamma_{\mm,1} \to T_2 \ssm \Gamma_{\mm,2}$ preserving the orientation and mapping the restriction of the pair $(\cF_1^+, \cF_1^-)$ to the restriction of the pair $(\cF_2^+, \cF_2^-)$, preserving the geometric enumeration.
\end{prop}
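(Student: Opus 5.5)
The plan is to cut both tori along all their compact leaves, match the resulting annuli one by one using the bar code, and then glue the local matchings together away from $\Gamma_{\mm,i}$.

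\textbf{Matching the compact leaves.} Let $\Gamma^{(j)}=\{\gamma^{(j)}_0,\dots,\gamma^{(j)}_{n-1}\}$ be the geometric enumerations of the compact leaves on $T_j$, $j=1,2$, given by the common bar code $(\sigma,k)$. Since both bifoliations have the same $\sigma$, the leaves $\gamma^{(1)}_i$ and $\gamma^{(2)}_i$ have the same type for each $i$: marked ($*$), tangent but unmarked ($\mm$), or compact for only one foliation ($\pm$). Choose an orientation-preserving homeomorphism $h_0$ between regular neighbourhoods of $\bigcup_i\gamma^{(1)}_i$ and $\bigcup_i\gamma^{(2)}_i$ that sends $\gamma^{(1)}_i$ to $\gamma^{(2)}_i$ and respects the dynamical orientations $\chi$. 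This is consistent because, as in the proof of Proposition~\ref{prop: real bar code}, consecutive leaves have incoherent orientations. Once the orientation of $T$ and of $\gamma_0$ are fixed, the cyclic order fixes everything else. The integer $k$ then guarantees that the chosen first leaves of $\cF^-$ correspond as well.

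\textbf{Classifying the annuli.} Cut $T_j$ along $\Gamma^{(j)}$ to obtain annuli $C^{(j)}_i$ bounded by $\gamma^{(j)}_i$ and $\gamma^{(j)}_{i+1}$. I claim that the restricted bifoliation on $C^{(j)}_i$ is determined up to boundary-compatible homeomorphism by the pair $(\sigma(i),\sigma(i+1))$ together with whether $C_i$ lies in $T^\iin_+$ or $T^\out_+$. The latter is determined by $\sigma$ and $i$: the splitting changes exactly across the $*$ leaves, and the convention of Definition~\ref{def: geometric enumeration} fixes which side of $\gamma_0$ lies in which region. To see the claim, note that inside $C_i$ neither foliation has a compact leaf. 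By Lemma~\ref{lem: extend qms pre-foliation by qms foliation} and the argument of Proposition~\ref{prop: real bar code}, each of $\cF^\pm|_{C_i}$ is a suspension, since a Reeb component is excluded by incoherence. Each boundary circle is then either a leaf of $\cF^+$, a leaf of $\cF^-$, or of both (types $*$ and $\mm$), and is transverse to the other foliation when it is a leaf of only one. The holonomy direction is given by the in/out data and $\chi$.

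\textbf{Model uniqueness on an annulus.} For a given combination of these data, I will build a homeomorphism between the two annuli in two steps. First send the $\cF^+$ leaves to $\cF^+$ leaves, via the uniqueness in Lemma~\ref{lem: extend qms pre-foliation by qms foliation} or \cite[Theorem~4.2.15]{hectorIntroductionGeometryFoliations1981}. Then straighten $\cF^-$ by a leaf-preserving isotopy. In the open annulus, $\cF^-$ is a transverse suspension, so after parametrising one can straighten it to a standard model: for example, $\cF^+$ horizontal spiralling and $\cF^-$ spiralling the other way, with the product structure given by the pair of transverse foliations.

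This step is the main obstacle, and I expect to handle it by a transverse product-structure argument. At boundary circles of type $\pm$ one foliation is transverse, so the homeomorphism extends to that circle. At circles of type $*$ or $\mm$ both foliations accumulate tangentially, and a continuous extension across the circle may fail. This is exactly why the statement only asks for $h$ defined on $T_j\ssm\Gamma_{\mm,j}$.

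\textbf{Gluing.} Glue the annulus maps along the circles of type $\pm$, where they can be adjusted to agree with $h_0$ by an isotopy along the transverse foliation. The result is the desired $h$ on $T_1\ssm\Gamma_{\mm,1}$. By construction it preserves orientation, both foliations and the enumeration.
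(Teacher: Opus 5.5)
Your route (cut along the compact leaves, identify the bifoliated annuli one at a time, glue away from $\Gamma_{\mm}$) is the same as the paper's. The paper's proof simply asserts that model cylinders with the same boundary types are homeomorphic, and that the homeomorphisms extend except at $\mm$-leaves.

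\textbf{The gap.} You claim that whether $C_i$ lies in $S^{\iin}_+$ or $S^{\out}_+$ is determined by $\sigma$ and $i$. This is false. The convention of Definition~\ref{def: geometric enumeration} uses $\chi(\gamma_0)$ and the orientation of $T$ only to fix the \emph{direction} of the enumeration. Nothing in $(\sigma,k)$ records whether the holonomy of $\cF^+$ along $\chi(\gamma_0)$ is expanding or contracting. Equivalently, nothing records on which side of the splitting $\gamma_0$ lies (or, when $\sigma(0)=*$, the annulus $C_0$).

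Here is a construction showing this. Let $R$ be an orientation-reversing homeomorphism of $T$ that preserves each compact leaf and the transverse cyclic order (a reflection in the leaf direction). Push a bifoliation forward by $R$. Then exchange $S^{\iin}$ and $S^{\out}$ for both foliations and reverse every dynamical orientation.
\begin{itemize}
\item The result is again a parallel quasi-transverse bifoliation on the oriented torus $T$.
\item $\chi(\gamma_0)$ has been reversed twice, so the enumeration, the types of the $\gamma_i$ and $k$ are unchanged. The bar code is therefore the same.
\item The splitting, however, has been exchanged.
\end{itemize}

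\textbf{Why this datum matters.} It genuinely changes the annulus models. Suppose $n\ge 4$ and $h$ is orientation-preserving with $h(\gamma^{(1)}_i)=\gamma^{(2)}_i$ for all $i$.
\begin{itemize}
\item $h$ must send $\chi_1(\gamma_0)$ to $\chi_2(\gamma_0)$. Otherwise it would send the first leaf on the right of $\gamma_0$ to the first leaf on its left, and $\gamma_{n-1}\neq\gamma_1$.
\item $h$ must send $C^{(1)}_0$ to $C^{(2)}_0$.
\item Hence $h$ must preserve whether the holonomy along $\chi(\gamma_0)$ on the $C_0$-side is expanding or contracting.
\end{itemize}
Take $\sigma=(+,-,+,-)$, $k=1$, and $T=S^{\iin}_+$; each foliation is then a union of two Reeb annuli. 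This bifoliation and its modification above have the same bar code, but the holonomy is expanding for one and contracting for the other. So no $h$ as in the statement exists. In your construction, an $h_0$ respecting $\chi$ extends over none of the annuli.

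\textbf{How to repair it.} Your argument, like the paper's two-line proof, only works after adding a hypothesis: the splittings correspond under the enumeration (for example, $\gamma^{(1)}_0$ and $\gamma^{(2)}_0$ lie on the same side). This is harmless in the proof of Theorem~\ref{thmintro: uniqueness of gluing}. There, $P^{\iin}$, $P^{\out}$ and the orbit equivalence provide it, and only compact leaves need to be matched.

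Under that hypothesis your plan goes through, with two corrections.
\begin{itemize}
\item $\cF^\pm|_{C_i}$ is a suspension only when both boundary circles are leaves of that foliation. On a $(+,-)$ annulus, each foliation is a spiral transverse to one end. On a $(+,+)$ annulus, $\cF^-$ is a product of arcs, which couples the boundary maps at the two ends.
\item Sliding along one foliation to match boundary maps destroys the other. Instead, build each annulus map with prescribed boundary values. In the universal cover of $C_i$, each $\cF^+$-leaf meets each $\cF^-$-leaf at most once, which gives a product structure on the leaf spaces.
\end{itemize}
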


\begin{proof}
All bi-foliated model cylinders with the same bar code on the boundary circles are homeomorphic.
The homeomorphism extends to the boundary except maybe for the case where the boundary is a double marked circle leaf ($\sigma = \mm$).
\end{proof}

\begin{rmk}
In general, the homeomorphism given by Proposition~\ref{prop: bar code determine} does not extend across the compact leaves in $\Gamma_\mm$.
Indeed, let $\gamma_0\in\Gamma_\mm$, and choose local coordinates
$(x,\theta)\in [0,1] \times S^1$
in a tubular neighborhood of $\gamma_0=\{x=0\}$.
Consider a quasi-transverse pair $(\cF_1^+,\cF_1^-)$ such that, in these coordinates, the other leaves of $\cF_1^+$ are the curves $\theta = -\ln(x) +c,$ and those of $\cF^-_1$ are $\theta = \ln(x) +c.$
Now consider another pair $(\cF_2^{+},\cF_2^{-})$ where $\cF_2^+ = \cF^+_1$ but the leaves of $\cF_2^{-}$ are instead given by $\theta = 2\ln(x) +c.$

The two pairs are topologically equivalent on the annulus
$]0,1]\times S^1$.
However, any homeomorphism conjugating the two pairs cannot extend continuously across the compact leaf $x=0$.
To see this, one can consider the sequence of intersection points $\{p_n\}$ of a pair of leaves $(l^+,l^-)$ of $(\cF_1^+,\cF_1^-)$ and the sequence of intersection points $\{p'_n\}$ of a pair $(l'{}^+,l'{}^-)$ of $(\cF_2^+,\cF_2^-)$  : the first converges to a single point in $\{x=0\}$ and the second has distinct points in its limit set.
\end{rmk}

\subsection{Foliation induced on quasi-transverse tori}
\label{subsec: foliation induced}

Let $T$ be a weakly embedded maximally transverse quasi-transverse torus for a pseudo-Anosov flow $(M,\phi)$, and let $\cF^s,\cF^u$ be the stable and unstable (singular) foliations.
Set
\[
\cF_T^s := \cF^s\cap T,\qquad \cF_T^u := \cF^u\cap T.
\]

\begin{prop}\label{prop: trace foliation on qt torus in pa flow}
 $\cF_T^s$ and $\cF_T^u$ are Morse-Smale foliations on $T$ (in the sense of Definition~\ref{def: ms lam}), they are transverse on $T\setminus \cO$, and they are tangent along the periodic orbits $\cO$ of $\phi$ contained in $T$.
\end{prop}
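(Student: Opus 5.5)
We prove the statement by working locally and then globally.

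\textbf{Transversality.} Let $\cO=\{O_1,\dots,O_n\}$ be the periodic orbits in $T$. On $T\setminus\cO$ the surface is transverse to $\phi$. The leaves of $\cF^s$ and $\cF^u$ are $2$-dimensional, saturated by orbits, and topologically transverse to each other, with intersection exactly along orbits. Away from singular orbits we use a local flow box adapted to the pair of foliations. On a small transverse disc in $T$ the traces of $\cF^s,\cF^u$ are then the images of the two transverse $1$-dimensional foliations of the orbit space (locally $\cQ^s,\cQ^u$). Hence $\cF^s_T$ and $\cF^u_T$ are $1$-dimensional foliations of $T\setminus\cO$ and are transverse there. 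Singular orbits lie in $T$ only if they are among the $O_i$, so this covers all of $T\setminus\cO$.

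\textbf{Near each $O_i$.} We use the local model of a ($p$-prong or regular) periodic orbit. Since $T$ is maximally transverse, near $O_i$ the surface consists of two half-annuli contained in non-adjacent (or opposite, in the regular case) quadrants bounded by local stable and unstable half-leaves of $O_i$. In each quadrant the traces of the stable and unstable leaves spiral onto $O_i$. Taking first return along $O_i$ on a transversal arc in one half-annulus, the stable trace has holonomy equal to the action of the return map on the unstable direction, which is expanding. The unstable trace has contracting holonomy. So $O_i$ is a compact leaf of both $\cF^s_T$ and $\cF^u_T$, and the two foliations are tangent there.

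Gluing these pieces, $\cF^s_T$ and $\cF^u_T$ extend to foliations of all of $T$ with $O_i$ as common leaves. Transversality outside $\cO$ is already established.

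\textbf{Morse--Smale structure.} We must show that there are finitely many compact leaves and that every non-compact half-leaf accumulates on a single compact leaf. For this we use the trace description, namely the proposition on the trace of a quasi-transverse torus. A lift $\widetilde T$ projects to a chain of lozenges $C$ invariant under $\pi_1(T)\cong\Z^2$. Compact leaves of $\cF^s_T$ correspond to stable sides of lozenges in $C$ (or to corners lying in $\widehat T$) that are fixed by a nontrivial element of $\pi_1(T)$. Since $C/\pi_1(T)$ has finitely many lozenges, there are only finitely many such sides up to the action, hence finitely many compact leaves.

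A non-compact leaf of $\cF^s_T$ lies inside the projection of one lozenge $L$. Inside $L$ it is a stable segment crossing from one unstable side to the other. The stabilizer of $L$ in $\pi_1(T)$ is cyclic, generated by some $g$ acting with the corners as fixed points. In the quotient annulus the leaf spirals toward the compact leaves given by the $g$-invariant unstable sides of $L$. Each end therefore accumulates on exactly one compact leaf, with the expanding or contracting holonomy computed above.

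Finally, $\Gamma_*=\emptyset$ is forced: $T$ is maximally transverse and the flow is transverse everywhere else, so the in/out splitting is not separated along marked leaves. This is where we expect to use the alternating/maximally-transverse structure. Hence the lamination is Morse--Smale in the sense of Definition~\ref{def: ms lam}.

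\textbf{Main obstacle.} The hardest step is the accumulation claim for non-compact half-leaves. It requires identifying the stabilizer dynamics in each lozenge and excluding leaves that are dense or wander between several compact leaves. We would handle it through the lozenge structure, using that each lozenge's sides are fixed by its stabilizer, as in the standard lozenge theory.
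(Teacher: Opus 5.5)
Your proposal has a genuine gap at exactly the step you flag, and the sketch you give for it is incorrect.

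\textbf{The accumulation step.} A non-compact leaf of $\cF^s_T$ need not lie over a single lozenge. When consecutive lozenges of the trace chain share an \emph{unstable} side (a $u$-line, in the terminology of Section~\ref{subsec: trace qt tori}), that side lies in $\widehat T$ and is the projection of the lift of a compact leaf of $\cF^u_T$. Moreover, every stable leaf crossing one lozenge of the line crosses all of them. This happens in the parallel case whenever a compact leaf of $\cF^u_T$ not in $\cO$ sits between two compact leaves of $\cF^s_T$, as in the bar codes of Remark~\ref{rmk: finiteness not uniqueness}; the non-compact stable leaves then cross it.

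Your description of where the ends accumulate is also wrong. A leaf of $\cF^s_T$ can only accumulate on compact leaves of $\cF^s_T$. An unstable side, by contrast, is either contained in $\widehat T$ (and is then crossed) or corresponds to no leaf of $T$ at all. The correct orbit-space picture would be as follows: the stable segment crosses a whole maximal $u$-line, and its ends approach the outer unstable sides of the two extremal lozenges. Each end then spirals onto the trace of the stable leaf through the corresponding corner. That trace is the periodic orbit if the corner is in $\widehat T$, or the compact leaf given by a shared stable side otherwise. This still has to be proved, including the exclusion of wandering leaves you mention.

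\textbf{The missing idea.} The paper's argument is purely two-dimensional.
\begin{itemize}
\item Each compact leaf of $\cF^s_T$ lies in a stable leaf containing a periodic orbit. Its holonomy is therefore conjugate to the return map in the unstable direction, hence hyperbolic.
\item Hyperbolic compact leaves are isolated, so there are finitely many on the compact surface $T$; no lozenge counting is needed.
\item Cutting $T$ along them gives annuli with no interior compact leaves. By the structure theorem for foliations of an annulus tangent to the boundary (Hector--Hirsch), each carries a Reeb component or a suspension-type foliation.
\item In both cases every half-leaf accumulates on one boundary circle. This disposes of dense or wandering leaves at once.
\end{itemize}

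\textbf{The condition $\Gamma_*=\emptyset$.} The same holonomy fact is what gives this. Definition~\ref{def: qms lam} needs every compact leaf to have holonomy of the same type on both sides. This holds because the return map expands the unstable direction on both sides of the stable leaf (for the flow orientation). Maximal transversality is not the reason. Even at an alternating orbit $O_i$, $O_i$ is expanding on both sides as a leaf of $\cF^s_T$. It becomes marked only for the mixed foliations $\cF^\pm_T$ of Proposition~\ref{prop: qt bifoliation on qt surface in pA flow}. You also never treat compact leaves outside $\cO$.

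\textbf{A smaller point on transversality.} Your flow-box argument does not cover points where a singular orbit crosses $T$, where the traces would have prongs. These must be excluded, for instance by Poincar\'e--Hopf on $T$, since prongs have negative index. The paper takes this from \cite[Proposition~5.2.2]{barthelmePseudoAnosovFlowsPlane2025}.
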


\begin{proof}
By Definition~\ref{def: qt surface} and Definition~\ref{def: weakly embedded quasi transverse surface}, the complement
$T\setminus\cO$ is embedded and transverse to $\phi$.
Write $A_i$ for the open Birkhoff annuli in $T$ bounded by
$O_{i-1}$ and $O_i$.
By \cite[Proposition 5.2.2]{barthelmePseudoAnosovFlowsPlane2025},
the stable and unstable foliations induce a pair of one-dimensional (non-singular) transverse foliations
$(f_i^s,f_i^u)$ on the interior of $A_i$, extending to a pair of one-dimensional (possibly singular) foliations on $A_i$
whose closed leaves include the $O_i$ (and along which the trace is tangent).
Doing this on each $A_i$ gives us a pair of one-dimensional foliations $(\cF_T^s, \cF_T^u)$ on $T$, transverse on $T \setminus \cO$ and tangent along $\cO$.

Consider $\cF_T^s$ (the argument for $\cF_T^u$ is identical). Compact leaves $\gamma$ of $\cF_T^s$ occur as intersections with $T$ of stable cylinder leaves that contain periodic orbits.
Take a small transverse segment $\tau\subset T$ crossing $\gamma$ once. The holonomy return map of $\cF_T^s$ along $\gamma$ is conjugate to the first return map of the flow on a transversal inside the corresponding stable leaf; hence is hyperbolic (contracting for one orientation, expanding for the opposite). In particular, the holonomy is either contracting on both sides or expanding on both sides for a chosen orientation of $\gamma$.
Hyperbolic holonomy implies such compact leaves are isolated, hence there can be only finitely many on the compact surface $T$.

Remove the compact leaves of $\cF_T^s$ from $T$. We obtain a finite union of open annuli, and on each such annulus the foliation has no compact leaves in its interior.
By the standard structure theorem for foliations on an annulus tangent to the boundary (see \cite{hectorIntroductionGeometryFoliations1981}), the restriction of $\cF_T^s$ to such an annulus is (up to topological equivalence fixing the boundary) either a Reeb component, or a suspension foliation without fixed point in the interior.
In all cases, every non-compact half-leaf accumulates on a compact boundary leaf with contracting / expanding holonomy.

This is precisely Definition \ref{def: ms lam} of Morse-Smale foliation.
\end{proof}

Let $T$ be a weakly embedded maximally transverse quasi-transverse torus in a pseudo-Anosov flow $(M,\phi)$.
We actually want to consider \qms{} foliations on $T$, so we will consider the trace $\cF_T^s$ on some transverse annuli of $T$ and the trace $\cF_T^u$ on some other.

Fix a transverse orientation of $T$, and let $\cO_a$ be the set of periodic orbits contained in $T$
along which the transverse orientation of $\phi$ is alternating.
Let $T^+$ be the union of components of $T\setminus \cO_a$ where the transverse orientation of $\phi$
coincides with that of $T$, and let $T^-$ be the union of the remaining components.
Define
\begin{align}\label{eq: quasi transverse bifoliation in pa flow}
\begin{array}{cc}
            \cF_T^+ =& (\cF^s \cap T^+) \cup (\cF^u \cap T^-) \cup \cO_a,\\
            \cF_T^- =& (\cF^u \cap T^+) \cup (\cF^s \cap T^-) \cup \cO_a.
\end{array}
\end{align}

We have

\begin{prop}\label{prop: qt bifoliation on qt surface in pA flow}
The pair $(\cF_T^+,\cF_T^-)$ is a pair of quasi-transverse quasi-Morse-Smale foliations on $T$.
\end{prop}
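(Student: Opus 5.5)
The plan is to check the axioms of Definition~\ref{def: qms lam} for each of $\cF_T^+$ and $\cF_T^-$, and then the three conditions of Definition~\ref{def: sqt laminations} for the pair. The data we attach are the obvious ones. The marked set is $\Gamma_{*,\pm} := \cO_a$. The (in,out)-splitting is $S^\iin_+ := T^+$, $S^\out_+ := T^-$, with the opposite choice for $\cF_T^-$. The set $\Gamma_\mm$ is the full set $\cO$ of periodic orbits contained in $T$. The dynamical orientation $\chi$ on each compact leaf is induced by the flow direction; non-isolated compact leaves that are not periodic orbits get the orientation forced by their holonomy.

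First step: the foliation and leaf axioms. By Proposition~\ref{prop: trace foliation on qt torus in pa flow}, $\cF_T^s$ and $\cF_T^u$ are Morse--Smale foliations on $T$. They are tangent along $\cO$, and in particular along $\cO_a$. Hence on each component of $T\setminus \cO_a$, the lamination $\cF_T^+$ coincides with one of these two foliations, and $\cO_a$ is a common closed leaf of both. The union $\cF_T^+$ is therefore a genuine foliation of $T$ with finitely many compact leaves. Every non-compact half-leaf lives in a single component of $T\setminus\cO_a$, so it accumulates on one compact leaf with hyperbolic holonomy. The same holds for $\cF_T^-$.

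Second step: the behaviour at an orbit $O\in\cO_a$. This is the heart of the argument, and I expect it to be the main obstacle. Orient $O$ by the flow. Take the side of $O$ lying in $T^+$. There the flow exits through $T$ in the positive transverse direction, so the adjacent annulus near $O$ lies in a quadrant in which the flow leaves $O$ along the unstable direction. From the local picture, the stable trace leaves then spiral away from $O$, so the holonomy of $\cF^s\cap T^+$ on that side is expanding for the flow orientation. Symmetrically, on the $T^-$ side the unstable trace has contracting holonomy.

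In the proof I would do this carefully with the local model of a (possibly $p$-prong) periodic orbit, treating the maximally transverse case where the two sides lie in non-adjacent quadrants. The claim to establish is that exactly when the transverse orientation alternates, the holonomy of $\cF_T^+$ is expanding on one side and contracting on the other. This gives both the condition on $\Gamma_*$ and the dynamical-orientation axiom at $O$. It also shows that such a leaf sits between $S^\iin_+$ and $S^\out_+$ as required.

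For an orbit in $\cO\setminus\cO_a$, the same local analysis gives holonomy of the same type on both sides. So these orbits are correctly excluded from $\Gamma_*$, and they lie in $\Gamma_\mm$. For the remaining compact leaves, which are transverse to $\phi$, the same local argument along the periodic orbit whose stable/unstable leaf they lie in shows that the holonomy is expanding on $T^+$ for $\cF^s$ with the flow-induced orientation. This matches item~(4) of Definition~\ref{def: qms lam}.

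Last step: the quasi-transversality conditions for the pair. Condition~(1) holds because $\Gamma_{*,+}=\Gamma_{*,-}=\cO_a\subset\cO$, and $\cO$ is exactly the set of common compact leaves $\Gamma_\mm$: a compact leaf transverse to the flow cannot be both a stable and an unstable leaf. Condition~(2) follows from Proposition~\ref{prop: trace foliation on qt torus in pa flow}, since on each component of $T\setminus\cO_a$ the two foliations are $\cF^s\cap T$ and $\cF^u\cap T$, which are transverse off $\cO$. Condition~(3) holds by construction, since $S^\iin_+ = T^+ = S^\out_-$.
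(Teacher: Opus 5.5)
Your proposal is correct and follows essentially the same route as the paper: the same marked set $\cO_a$, the same splitting $(T^+,T^-)$ (swapped for $\cF_T^-$), the flow-induced dynamical orientation, $\Gamma_{\mm}=\cO$, and the same three checks for quasi-transversality. The only difference is that you plan to derive the holonomy signs from the local model, whereas the paper cites the standard fact that a stable (resp.\ unstable) cylinder leaf has expanding (resp.\ contracting) holonomy along its flow-oriented periodic orbit, and transfers this to the trace; that sign does not depend on the quadrant or on the transverse orientation, so the appeal to the flow exiting through $T$ in your second step is not actually needed.
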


\begin{proof}
Write $\cF_T^s:=\cF^s\cap T$ and $\cF_T^u:=\cF^u\cap T$ for the trace foliations on $T$.
By Proposition~\ref{prop: trace foliation on qt torus in pa flow}, each of $\cF_T^s$ and $\cF_T^u$ is a Morse-Smale
foliation on $T$, transverse to each other away from the periodic orbits of the flow contained in $T$.

\smallskip
\noindent\textbf{Claim 1.} Each of $\cF_T^\pm$ is a one-dimensional quasi-Morse-Smale foliation on $T$.

On each connected component $A$ of $T\setminus \cO_a$, the foliation $\cF_T^+$ is defined to be either
$\cF_T^s|_A$ (if $A\subset T^+$) or $\cF_T^u|_A$ (if $A\subset T^-$); similarly for $\cF_T^-$.

The compact leaves of $\cF_T^\pm$ are contained in compact leaves of the union $\cF_T^s \cup \cF_T^u$, hence there are finitely many of them. Each such compact leaf has hyperbolic holonomy
on each $A$.

We now specify the quasi-Morse-Smale structure (marked leaves, splitting, and dynamical orientations).
Declare the marked set to be
$\Gamma_*(\cF_T^\pm):=\cO_a$.
Cutting along $\cO_a$ gives the decomposition
$T\setminus \cO_a = T^+ \sqcup T^-$,
which we take as the $(\text{in},\text{out})$-splitting for $\cF_T^+$ by setting
\[
T^{\mathrm{in}}({\cF_T^+}):=T^+,\qquad T^{\mathrm{out}}({\cF_T^+}):=T^-,
\]
and for $\cF_T^-$ by swapping:
\[
T^{\mathrm{in}}({\cF_T^-}):=T^-,\qquad T^{\mathrm{out}}(\cF_T^-):=T^+.
\]
Finally, the dynamical orientation on each compact leaf is the one induced by the flow direction
on the unique periodic orbit inside the same stable / unstable leaf, i.e. such that the oriented compact leaf and the oriented periodic orbit are isotopic inside the leaf.

We know from standard pseudo-Anosov theory that the holonomy of a cylinder leaf of $\cF^s$ is expanding along the oriented periodic orbit, and contracting for $\cF^u$.
By taking the trace on $T$ we get conjugated holonomy of the oriented compact leaves of $\cF_T^s$ and $\cF_T^u$.
With our choice of splitting, we get that the holonomy is expanding on the $\iin$-side and contracting on the $\out$-side.

With these choices, $\cF_T^\pm$ satisfies the definition of a quasi-Morse-Smale foliation on the torus.
\qed

\smallskip
\noindent\textbf{Claim 2.} The pair $(\cF_T^+,\cF_T^-)$ is quasi-transverse.

Let $\Gamma_{**} := \cO$ be the set of periodic orbits of $\phi$ contained in $T$ (equivalently, the compact leaves common to $\cF_T^s$ and $\cF_T^u$).
Every orbit in $\cO_a$ is contained in $\cO$
so $\Gamma_{*,\cF_T^+}=\Gamma_{*,\cF_T^-}=\cO_a\subset \Gamma_{**}$.
Moreover, on $T\setminus \Gamma_{**}$, stable and unstable traces are transverse, and on each component of
$T\setminus \cO_a$ we have arranged that $\cF_T^+$ coincides with one of $\{\cF_T^s,\cF_T^u\}$ while $\cF_T^-$ coincides with the other.
Hence $\cF_T^+$ and $\cF_T^-$ are transverse on $T\setminus \Gamma_{**}$.

Finally, by the definition of the splittings above we have
\[
T^{\mathrm{in}}({\cF_T^+})=T^+=T^{\mathrm{out}}({\cF_T^-}),
\]
so the splittings are opposite. Therefore $(\cF_T^+,\cF_T^-)$ is a quasi-transverse pair.

This concludes the proof.
\end{proof}

\section{Pseudo-Anosov pieces} \label{sec: pA pieces}

\subsection{Pieces of pseudo-Anosov flow}
\label{subsec: pap}

\begin{defi}[Weakly embedded tori, good collection] \label{def: weakly embedded}
Let $\cT=\{T_1,\dots,T_n\}$ be a collection of quasi-transverse tori in a closed $3$-manifold $M$ endowed with a pseudo-Anosov flow $\varphi$.
We say that $\cT$ is \emph{weakly embedded} if the union $\bigcup_i T_i$ is embedded except possibly along periodic orbits of $\varphi$ contained in the $T_i$ (in particular, distinct tori may intersect only along periodic orbits).

We say that $\cT$ is a \emph{good collection} if, moreover, each $T_i$ is homotopic to an embedded torus $T_i'$ such that the collection
$\cT'=\{T_1',\dots,T_n'\}$ is embedded and pairwise disjoint.
\end{defi}

\begin{defi}[Boundary and corner periodic orbits] \label{defi: boundary corner periodic orbits}
Let $\cT$ be a weakly embedded collection of quasi-transverse tori for a pseudo-Anosov flow $\varphi$.
The set of \emph{boundary periodic orbits} is the set
$\cO$ of periodic orbits of $\varphi$ contained in $\bigcup_i T_i$.
The subset of \emph{corner periodic orbits} is the set
$\cO_c$ of periodic orbits of $\varphi$ contained in $T_i\cap T_j$ for some $i\neq j$.
\end{defi}

\begin{defi}[Pseudo-Anosov piece]\label{def: pseudo anosov piece}
Let $\phi$ be a pseudo-Anosov flow on a closed $3$-manifold $M$ and let $\cT$ be a finite good collection of weakly embedded quasi-transverse tori.
Let $\cT'=\{T_1',\dots,T_n'\}$ be the associated embedded disjoint collection as in Definition~\ref{def: weakly embedded}.
Let $P'$ be a union of closures of connected components of $M\setminus \cT'$ and let $\varphi':=\res{\phi}{P'}$.

For each connected component $P_i'$ of $P'$, push each boundary torus of $P_i'$ corresponding to some $T_j'$ back into the quasi-transverse position corresponding to $T_j$, by a homotopy supported in a collar of $\partial P_i'$.
Denote by $P_i$ the resulting set and by $\varphi:=\res{\phi}{P}$ the restricted flow, where $P:=\bigcup_i P_i$.
We call $(P,\varphi)$ a \emph{pseudo-Anosov piece}, and refer to $(P',\varphi')$ as the \emph{associated embedded piece}.
Any finite disjoint union of pseudo-Anosov pieces is declared to be a pseudo-Anosov piece.
\end{defi}

The purpose of this definition is to be compatible with the modified JSJ decomposition:
every modified JSJ piece is a pseudo-Anosov piece in the above sense (see Section~\ref{sec: preli}).

\begin{convention}
\label{conv: phi varphi}
    As we both consider pseudo-Anosov flows on closed manifolds and pseudo-Anosov pieces, which are flows on manifolds with boundary, we use the following mnemonic convention: We will generally use $\phi$ (possibly with subscripts) to denote flows on a \emph{closed} manifold and $\varphi$ (with matching subscripts) to denote the flows on manifolds with boundary.
\end{convention}

\begin{rmk}\label{rmk: no klein bottle}
In this paper, we cut pieces only along tori.
In the non-orientable setting, incompressible Klein bottles may also appear and one can do analogous definitions and statements.
All arguments can be adapted to that more general framework, but we restrict to the torus case in order to keep the exposition lighter.
\end{rmk}

\begin{rmk}\label{rmk: pap not manifold}
We allow $\cT$ to be only weakly embedded, so a \pap{} $P$ need not be a manifold.
Topologically, $P$ is a $3$-manifold with finitely many \emph{corner circles} coming from the identifications
along $\cO_c$.
\end{rmk}

The embedded piece $P'$ is a submanifold of $M$.
There is a homotopy $H\colon P' \to P$.

\begin{defi}[Interior and boundary of a piece]\label{def: interior boundary pap}
The \emph{interior} $\intr P$ is the image $H(\intr P')$.
The \emph{formal boundary} $\partial P$ is the collection $\{ H(S_i) \}$ where each $S_i$ is a boundary component of $\partial P'$. 
The \emph{real boundary} $\partial_r P$ is the image $H(\partial P')$.
\end{defi}

Note that $\partial P$ is a {formal} subset of $P$.
If $P$ is a manifold without singularity (isotopic to $P'$), then
$\partial_r P=\partial P$ and coincides with the usual boundary of $P$ as a submanifold.

\begin{rmk}\label{rmk: pap components}
A \pap{} $P$ need not be connected, and we do not require it to contain all connected components of $M\setminus \cT$:
we allow $P'$ to be any union of closures of components.
If $P'$ contains all components of $M\setminus \cT'$, then $\intr P' \to M\setminus \cT'$ is a diffeomorphism and
the map $\partial P' \to \cT$ has degree $2$.
\end{rmk}

\begin{defi}
    We define $\Pin \subset \partial P$ the union of boundary surfaces where the flow points inside of $P$,
and $\Pout \subset \partial P$ those where it points outside of $P$.
These two subsets are adjacent along the alternating periodic orbits $\cO_a\subset \cO$.
\end{defi}

We illustrate the potential differences between a piece $P'$, the modified piece $P$ and the different definitions of boundaries in the following example.
\begin{example}

Consider a manifold $M$ and a pseudo-Anosov flow $\phi$ such that the collection $\cT$ in $M$ is made of two separating quasi-transverse tori $T_1$ and $T_2$, each embedded, each containing two periodic orbits, and such that the collection is weakly embedded but not embedded: $T_1$ and $T_2$ intersect along one common periodic orbit $Q$. 
The tori $T_1$ and $T_2$ are homotopic to disjoint embedded tori $T'_1$ and $T'_2$.
Let $P'$ be the union of all closures of connected components of $M \ssm \cT'$.
The set $P'$ admits 3 connected components with boundary tori which are copies of $T'_i$. The set $P$ is then obtained by pushing each copy of $T'_i$ to a quasi-transverse position by homotopy. See Figure \ref{fig: cut pap}.

The formal boundary of $P$ is the collection of 4 tori, which are copies of $T_i$.
The formal boundary and the (real) boundary are different: The tori bounding the central piece intersect in the real boundary along the corner periodic orbits projecting to $Q$, hence corresponding to a unique connected component of the real boundary, but not of the formal boundary. 
The tori bounding the side pieces also contain a corner periodic orbit corresponding to $Q$, but they do not bound the same connected component, hence they do not intersect in the boundary of $P$.
\end{example}

\begin{figure}[h]
\labellist
\small
\pinlabel $\textcolor{monviolet}{T_1}$ [tr] at 107 260
\pinlabel $\textcolor{monviolet}{T_2}$ [tl] at 172 260

\pinlabel $\textcolor{monviolet}{T_1'}$ [tr] at 420 260
\pinlabel $\textcolor{monviolet}{T_2'}$ [tl] at 487 260

\pinlabel $P_1$ [r] at 50 60
\pinlabel $P_3$ [l] at 217 60
\pinlabel $P_2$ [t] at 135 39

\pinlabel $P'_1$ [r] at 380 60
\pinlabel $P'_3$ [l] at 546 60
\pinlabel $P'_2$ [t] at 460 39

\endlabellist

    \centering
    \includegraphics[width=1\linewidth]{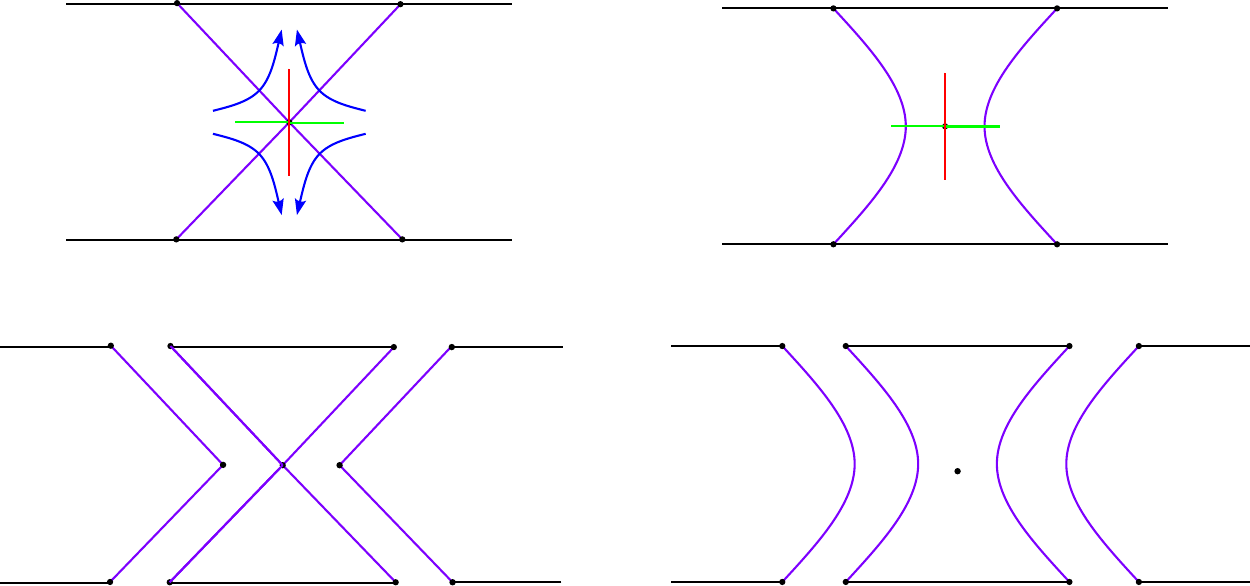}
    \caption{A pseudo-Anosov piece (left) and associated embedded piece (right).}
    \label{fig: cut pap}
\end{figure}

\begin{defi}[Pseudo-Anosov gluing map]
\label{def: gluing map}
A \emph{\pa{} gluing map of a \pap{} $(P,\varphi)$} is a $\cC^1$-involution 
$f \colon \partial P \to \partial P$ pairing the boundary surfaces of the formal boundary $\partial P$, so that the quotient $P/f$ is a closed manifold and $\varphi$ induces a flow $\varphi_f$ on $P/f$ which is pseudo-Anosov.
We say that $(P,\varphi,f)$ is a \textit{\pa{} triple}.
\end{defi}

\begin{rmk}
    The formal boundary is a formal subset of $P$, hence the quotient $P/f$ is well defined.
\end{rmk}

\begin{nota}[Partition of boundary] \label{nota: pairing gluing}
    We denote by $\partial_+P$ and $\partial_-P$ a partition of the boundary surfaces of $\partial P$ corresponding to a pairing of a \pagp{} $f$ for a \pap{} $(P,\varphi)$.
\end{nota}

In particular, a \pa{} gluing map maps periodic orbits contained in $\partial P$ to other periodic orbits in $\partial P$, preserving the flow orientation, and maps $\Pout$ to $\Pin$.

\begin{rmk}
Note that the flow $(P/f, \varphi_f)$ does not, in general, coincide with $(M,\phi)$, the original pseudo-Anosov flow from which the piece $(P,\varphi)$ was cut, even when $P/f$ and $M$ are diffeomorphic.
\end{rmk}

Let $\cF^s$ and $\cF^u$ be the stable and unstable foliations on $P_f := P/f$ for the pseudo-Anosov flow $\varphi_f$ induced by $\varphi$.
Consider $\pi\colon P \to P_f$ the projection, and denote $\cT = \pi(\partial P)$, and $\hat \cT =  \pi(\partial P \ssm \cO)$.
Denote $(\cF_\cT^s, \cF_\cT^u)$ the trace of $(\cF^s, \cF^u)$ on $\cT$ (see Proposition \ref{prop: trace foliation on qt torus in pa flow}).
Denote $\pi^\out$ the restriction of the projection $\pi$ to $\Pout$, and $\pi^\iin$ the restriction of the projection $\pi$ to $P^\iin$. 
Both are homeomorphisms onto their image $\hat \cT $.
Denote $\cF_\partial$ the foliation on $\pP$ formed by taking the union of the lifts of $\cF_\cT^s$ in $\Pin$ together with the lifts of $\cF_\cT^u$ in $\Pout$, as well as the tangent periodic orbits. More precisely
\begin{equation} \label{eq: boundary qms foliation on pa piece}
    \cF_\partial = (\pi^\out)^\inv (\cF_\cT^u \cap \hat \cT)  \cup (\pi^\iin)^\inv (\cF_\cT^s \cap \hat \cT) \cup \cO
\end{equation}
It is a \qms{} foliation, as it projects to the foliation $\cF_\cT^\pm$ on $\cT$ defined in \eqref{eq: quasi transverse bifoliation in pa flow}.

Note that the foliation $\cF_\partial$ contains much more than just the information about the flow $\varphi$ on the piece $P$, as it depends on the foliations of the complete flow $\varphi_f$ on the closed manifold $P/f$, hence strongly depends on the choice of the gluing map $f$. 
The flow $\varphi$ on $P$ only gives rise to a sublamination of $\cF_\partial$:
\begin{defi}[Boundary foliation and boundary prefoliation] \label{def: boundary lam and fol}
    We say that $\cF_\partial$ defined by \eqref{eq: boundary qms foliation on pa piece} is the \textit{boundary foliation} of the \pa{} triple $(P,\varphi,f)$.
    The \emph{boundary prefoliation}, denoted $\cK_\partial$, of the piece $P$ is the sublamination of $\cF_\partial$ formed by the trace of stable and unstable manifolds of orbits entirely contained in $P$.
\end{defi}

\begin{rmk} \label{rmk: dynamic of the boundary prefoliation}
The lamination $\cK_\partial$ corresponds to the trace of stable and unstable invariant laminations of the hyperbolic maximal invariant set for $\varphi$ in $P$ (possibly with pronged singular orbits) on the boundary.
    In other words, leaves of $\cK_\partial$ are leaves of $\cF^s \cap \Pin$ such that the forward orbit of the point never crosses $\partial P$ again, and leaves of $\cF^u \cap \Pout$ such that the backward orbit of the point never crosses $\partial P$ again.
    The projection on $\cT$ in $M$ corresponds to leaves of the trace of $\cF^s$ whose orbits never cross $\cT$ again in the future, and orbits on the trace of $\cF^u$ never crossing $\cT$ again in the past.
    Notice that the projection is not a lamination, but the union of a sublamination of $\cF^s \cap \cT$ and a sublamination of $\cF^u \cap \cT$ (intersecting each other).
    Note also that $\cF_\partial$ will depend on the \pa{} gluing map $f$, whereas $\cK_\partial$ only depends on the piece $(P,\varphi)$.
\end{rmk}

The boundary prefoliation $\cK_\partial$ is a closed sublamination of a \qms{} foliation. From \cite[Proposition 1.16]{pauletAnosovFlowsDimension2025}, we know that a connected component $C$ of the complement $\partial P \ssm \cK_\partial$ is either an annulus bounded by compact leaves\footnote{We could also have a Moebius strip in the case of a Klein bottle.}, or homeomorphic to $\R^2$ and the accessible boundary\footnote{Here, the \emph{accessible boundary} is defined as the set of points on the boundary of a connected component $C$ which can be obtained as the endpoints of a segment entirely contained in $C$.} is the union of two non-compact leaves $l$ and $l'$ which are asymptotic to each other at both ends.
In this latter case, we say that $C$ is a \textit{strip}.

\begin{rmk} \label{rmk: projection of boundary foliation}
One can easily see that the pair $(\cF_\partial, f_*\cF_\partial)$ on $\pP$ projects to $(\cF_\cT^+, \cF_\cT^-)$ in $\cT$ defined in \eqref{eq: quasi transverse bifoliation in pa flow}.
\end{rmk}

Having connected components of $\partial P \setminus \cK_\partial$ that are annuli bounded by compact leaves is often more delicate to handle.
In \cite{beguinBuildingAnosovFlows2017} and \cite{pauletAnosovFlowsDimension2025} this situation is excluded by imposing a \emph{filling} condition on the boundary prefoliation: it is not a necessary hypothesis in general to construct Anosov flows by gluing building blocks with (quasi-)transverse boundary, but a technical assumption to make the proof work in the general setting of \cite{beguinBuildingAnosovFlows2017} and \cite{pauletAnosovFlowsDimension2025}.
In the present paper, this condition will also be useful to encode the information of the boundary foliation (depending on the triple $(P,\varphi,f)$) from the one coming from the boundary prefoliation (depending only on the piece $(P,\varphi)$).

We recall the following terminology coming from \cite{beguinBuildingAnosovFlows2017} and \cite{pauletAnosovFlowsDimension2025}.

\begin{defi}[Filling boundary prefoliation, filled \pap{}]\label{def: filling lamination, filled piece}
Let $(P,\varphi)$ be a \pap{} and let $\cK_\partial$ be its boundary prefoliation.
We say that $\cK_\partial$ is \emph{filling} if every connected component of $\partial P \setminus \cK_\partial$ is a strip.
A \pap{} with filling boundary prefoliation is called a \emph{filled \pap{}}.
\end{defi}

\begin{lem} \label{lem: triple quasi transiverse bi foliation}
    Let $(P,\varphi,f)$ be a \pa{} triple.  
    Then the pair
    $(\cK_\partial, f_* \cK_\partial)$ is a pair of quasi-transverse pre-foliations on $\pP$.
\end{lem}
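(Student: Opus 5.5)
The plan is to reduce the statement to Proposition~\ref{prop: qt bifoliation on qt surface in pA flow} and Remark~\ref{rmk: projection of boundary foliation}. Write $M_f=P/f$, $\pi\colon P\to M_f$, and $\cT=\pi(\pP)$. By Proposition~\ref{prop: qt bifoliation on qt surface in pA flow}, applied to each torus of $\cT$, the pair $(\cF_\cT^+,\cF_\cT^-)$ is a quasi-transverse pair of \qms{} foliations on $\cT$. By Remark~\ref{rmk: projection of boundary foliation}, the pair $(\cF_\partial,f_*\cF_\partial)$ on $\pP$ is the lift of this pair under $\pi$, which is a homeomorphism on $\Pin$ and on $\Pout$ separately. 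Since $f$ exchanges $\Pin$ and $\Pout$, we use the lift through $\pi^{\iin}$ on one side and through $\pi^{\out}$ on the other. This will show that $(\cF_\partial,f_*\cF_\partial)$ is a quasi-transverse pair of \qms{} foliations on $\pP$. Concretely, the data are pulled back as follows:
\begin{itemize}
\item the marked set is the set of alternating orbits $\cO_a$ on $\pP$;
\item the common compact leaves $\Gamma_\mm$ are the boundary periodic orbits $\cO$;
\item the (in,out)-splitting of $\cF_\partial$ is $(\Pin,\Pout)$, and for $f_*\cF_\partial$ it is the opposite one;
\item the dynamical orientations are those induced by the flow direction.
\end{itemize}

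Next I restrict to sublaminations. By Definition~\ref{def: boundary lam and fol}, $\cK_\partial\subset\cF_\partial$, hence $f_*\cK_\partial\subset f_*\cF_\partial$. Both are closed sublaminations of foliations of the tori of $\pP$, so both are pre-foliations. I then check the items of Definition~\ref{def: qms lam} for $\cK_\partial$.
\begin{enumerate}
\item The compact leaves of $\cK_\partial$ are among those of $\cF_\partial$, so there are finitely many.
\item A non-compact half-leaf of $\cK_\partial$ is a half-leaf of $\cF_\partial$, so it accumulates on a single compact leaf of $\cF_\partial$. This limit leaf lies in $\cK_\partial$ because $\cK_\partial$ is closed.
\item The marked leaves $\cO_a$ lie in $\cK_\partial$, since these periodic orbits are entirely contained in $P$. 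For such a leaf, holonomy in $\cK_\partial$, when it is defined (non-isolated sides), coincides with holonomy in $\cF_\partial$. Therefore the splitting and dynamical orientation restrict to valid data on $\cK_\partial$.
\end{enumerate}
The same argument applies to $f_*\cK_\partial$. For quasi-transversality (Definition~\ref{def: sqt laminations}):
\begin{itemize}
\item the marked sets agree, both being $\cO_a$, and are contained in $\Gamma_{\cK_\partial}\cap\Gamma_{f_*\cK_\partial}$, since periodic orbits in $\cO$ are fixed leaves on both sides;
\item transversality off $\Gamma_\mm$ is inherited from the ambient pair;
\item the splittings are opposite by construction.
\end{itemize}

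The main obstacle is bookkeeping the set $\Gamma_\mm$. Definition~\ref{def: sqt laminations} defines it as the set of common compact leaves of the two laminations. For the ambient pair this set is $\cO$, but for the sublaminations it could a priori shrink. This happens if a periodic orbit in $\cO$ is isolated in $\cK_\partial$, or if some orbit of $\cO$, say a corner orbit, is absent from one of the laminations. I would argue that every orbit of $\cO$ is a periodic orbit contained in $P$, so it lies in both $\cK_\partial$ and $f_*\cK_\partial$. Hence the common compact leaves are exactly $\cO$, plus possibly nothing else, since two transverse leaves cannot coincide outside $\cO$. I would also need to make sure that the marked set is chosen compatibly on isolated compact leaves, where holonomy is undefined; the definition leaves this freedom, so I fix the choice to be $\cO_a$. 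Finally, if $P$ has corner orbits, I would run the argument on the formal boundary $\pP$, each torus separately, so that the real-boundary identifications play no role.
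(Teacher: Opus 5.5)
Your proposal is correct and follows the same route as the paper. You identify $(\cF_\partial, f_*\cF_\partial)$ as the pull-back of the quasi-transverse pair $(\cF_\cT^+,\cF_\cT^-)$ of Proposition~\ref{prop: qt bifoliation on qt surface in pA flow}, then pass to the sublaminations $\cK_\partial$ and $f_*\cK_\partial$; the paper settles that second step in one line, and your explicit checks (closedness for the limit leaves, the marked set $\cO_a$, and the fact that every orbit of $\cO$ lies in both sublaminations so $\Gamma_\mm$ does not shrink) supply the details it leaves implicit.
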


\begin{proof}
By construction, the pair $(\cF_\partial,f_*\cF_\partial)$ on $\partial P$ projects under $\pi$ to the pair
$(\cF_\cT^+,\cF_\cT^-)$ on $\pi(\partial P)$ induced by the traces of $\cF^s$ and $\cF^u$
(see Proposition~\ref{prop: qt bifoliation on qt surface in pA flow}).
Hence $(\cF_\partial,f_*\cF_\partial)$ is a quasi-transverse bifoliation on $\partial P$.
Since $\cK_\partial$ is a sublamination of $\cF_\partial$, the lemma follows.
\end{proof}

Recall (Definition~\ref{def: bar code}) that the topological type of a pair of quasi-transverse \qms{} prefoliations
on a torus is encoded by its bar code.  Associated to a \pa{} triple, we will use two related combinatorial invariants:
one defined from the boundary prefoliation and one defined from the boundary foliation.

\begin{defi}[Bar code and complete bar code of a \pa{} triple]\label{def: pA bar code}
Let $(P,\varphi,f)$ be a \pa{} triple.  Denote by $\cK_\partial$ its boundary prefoliation and by $\cF_\partial$ its boundary foliation.
\begin{enumerate}
    \item The \emph{bar code} of $(P,\varphi,f)$ is the bar code of the pair $(\cK_\partial,f_*\cK_\partial)$ on each parallel boundary component,
    after choosing a first compact leaf $\gamma_0$ for $\cK_\partial$ and taking $f(\gamma_0)$ as the first compact leaf for $f_*\cK_\partial$.
    \item The \emph{complete bar code} of $(P,\varphi,f)$ is the bar code of the pair $(\cF_\partial,f_*\cF_\partial)$ on each parallel boundary component,
    after choosing a first compact leaf $\gamma_0$ of $\cF_\partial$ that belongs to $\cK_\partial$ and taking $f(\gamma_0)$ as the first compact leaf for $f_*\cF_\partial$.
\end{enumerate}
\end{defi}

\begin{rmk}
Note that the (complete) bar code depends only on the choice of a first compact leaf $\gamma_0$ of $\cK_\partial$:
by convention we take $f(\gamma_0)$ as the first compact leaf for $f_*\cK_\partial$ (together with a choice of orientation of the boundary torus).
\end{rmk}

As pointed out earlier, the filling case is more rigid:

\begin{prop}\label{prop: bar code and complete bar code for filling piece}
Let $(P,\varphi)$ be a \pap{} whose boundary prefoliation $\cK_\partial$ is filling.
Then for any \pa{} gluing map $f$, the bar code and the complete bar code associated to the triple $(P,\varphi,f)$ coincide.
\end{prop}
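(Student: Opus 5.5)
The plan is to show that when $\cK_\partial$ is filling, the compact leaves of $\cF_\partial$ and of $\cK_\partial$ are the same. The marked sets $\Gamma_*$ and $\Gamma_\mm$ and the dynamical orientations then coincide as well, so the two bar codes, being defined purely from this data and the common choice of first leaf $\gamma_0$, agree. Concretely, by Definition~\ref{def: pA bar code}, the bar code is computed from the pair $(\cK_\partial, f_*\cK_\partial)$ and the complete bar code from $(\cF_\partial, f_*\cF_\partial)$. The same first leaf $\gamma_0\in\cK_\partial$ and $f(\gamma_0)$ are used in both. Since $\cK_\partial\subset \cF_\partial$ and $f_*\cK_\partial\subset f_*\cF_\partial$, it suffices to prove
\[
\Gamma_{\cF_\partial}=\Gamma_{\cK_\partial}\quad\text{and}\quad \Gamma_{f_*\cF_\partial}=\Gamma_{f_*\cK_\partial}.
\]
The second equality follows from the first by applying $f$.

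The key step is to show that no compact leaf of $\cF_\partial$ lies outside $\cK_\partial$. Suppose $c$ is a compact leaf of $\cF_\partial$ not contained in $\cK_\partial$. Since $\cK_\partial$ is closed and $c\cap\cK_\partial=\emptyset$ (leaves of a foliation are either contained in the sublamination or disjoint from it), $c$ lies in a single connected component $C$ of $\partial P\setminus\cK_\partial$. By the filling hypothesis (Definition~\ref{def: filling lamination, filled piece}), $C$ is a strip, homeomorphic to $\R^2$. A closed curve in $C$ is then null-homotopic in $\partial P$. But compact leaves of a foliation on a torus are essential by Poincaré--Bendixson, a fact already used before Remark~\ref{rmk: compact leaves cut in annuli}. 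This contradiction shows $\Gamma_{\cF_\partial}\subset\cK_\partial$, hence $\Gamma_{\cF_\partial}=\Gamma_{\cK_\partial}$.

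It then remains to check that the remaining bar-code data agree.
\begin{enumerate}
\item The marked leaves $\Gamma_*$ are the alternating orbits $\cO_a$. These lie in $\cK_\partial$ because they are periodic orbits of $\varphi$ contained in $P$.
\item The set $\Gamma_\mm=\Gamma_{\cF_\partial}\cap\Gamma_{f_*\cF_\partial}$ equals the corresponding intersection for the prefoliations, by the equality of compact leaves above.
\item The dynamical orientations $\chi$ are induced from those of $\cF_\partial$, so they restrict consistently.
\end{enumerate}
Consequently, the geometric enumerations of $\Gamma_+\cup\Gamma_-$ coincide, the labels $\sigma(i)\in\{+,-,*,\mm\}$ agree, and the index $k$ of $f(\gamma_0)$ agrees.

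The only delicate point is the claim that a compact leaf of $\cF_\partial$ avoiding $\cK_\partial$ sits inside one component of the complement; I expect this to be the main obstacle. This needs closedness of the lamination together with the fact that a leaf meeting $\cK_\partial$ belongs to it. I would verify both directly from the description in Remark~\ref{rmk: dynamic of the boundary prefoliation}: a trace leaf either has all its orbits avoid $\partial P$ in the future (respectively past), or none do, since this property is constant along a stable (respectively unstable) leaf.
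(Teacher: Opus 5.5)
Your proof is correct and follows the paper's approach. The paper only asserts that for filling $\cK_\partial$ every compact leaf of $\cF_\partial$ is a compact leaf of $\cK_\partial$, with the same dynamical orientations; your argument that an extra compact leaf would lie in a strip $\cong\R^2$ and hence be inessential is exactly the justification it leaves implicit. The step you flag as delicate is immediate, because $\cK_\partial$ is by definition a closed union of leaves of $\cF_\partial$. Your proposed justification via constancy of the exit property along a whole stable leaf is false as stated, since a single stable leaf can meet $P^{\mathrm{in}}$ in several trace leaves, some of whose points later exit.
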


\begin{proof}
It suffices to remark that if $\cK_\partial$ is a filling \qms{} sublamination of a \qms{} foliation $\cF_\partial$, then the compact leaves of $\cF_\partial$ are all compact leaves of $\cK_\partial$, and the dynamical orientations coincide. 
In other words, all foliations extending the prefoliation $\cK_\partial$ will satisfy Lemma \ref{lem: extend qms pre-foliation by qms foliation}. We refer to the proof of that lemma for more details.
\end{proof}

The conclusion of Proposition~\ref{prop: bar code and complete bar code for filling piece} is false in general when the boundary prefoliation is not filling, as shown in the following example.
    \begin{example}
        \label{ex: non-filling bar code}
We will build a pseudo-Anosov triple  $(P,\varphi,f)$, using the gluing constructions of \cite{beguinBuildingAnosovFlows2017}, for which the bar code and the complete bar code are distinct. We refer to Figure \ref{fig: gluing new compact leaves}.

\begin{figure}[h]
\labellist
\small

\pinlabel $(P_1,\varphi_1)$ at 114 778
\pinlabel $(P_2,\varphi_2)$ at 676 778
\pinlabel $(P_\mathrm{BL},\varphi_\mathrm{BL})$ at 399 778

\pinlabel $f_1$ [b] at 278 718
\pinlabel $f_2$ [b] at 528 722

\pinlabel $f_2$ [b] at 420 397

\pinlabel ${\color{red}\partial^\out P_1}$ [b] at 273 869
\pinlabel ${\color{green}\partial^\iin P_{\mathrm{BL}}}$ [b] at 375 869
\pinlabel ${\color{red}\partial^\out P_{\mathrm{BL}}}$ [b] at 514 869
\pinlabel ${\color{green}\partial^\out P_2}$ [b] at 621 869

\endlabellist

    \centering
    \includegraphics[width=\linewidth]{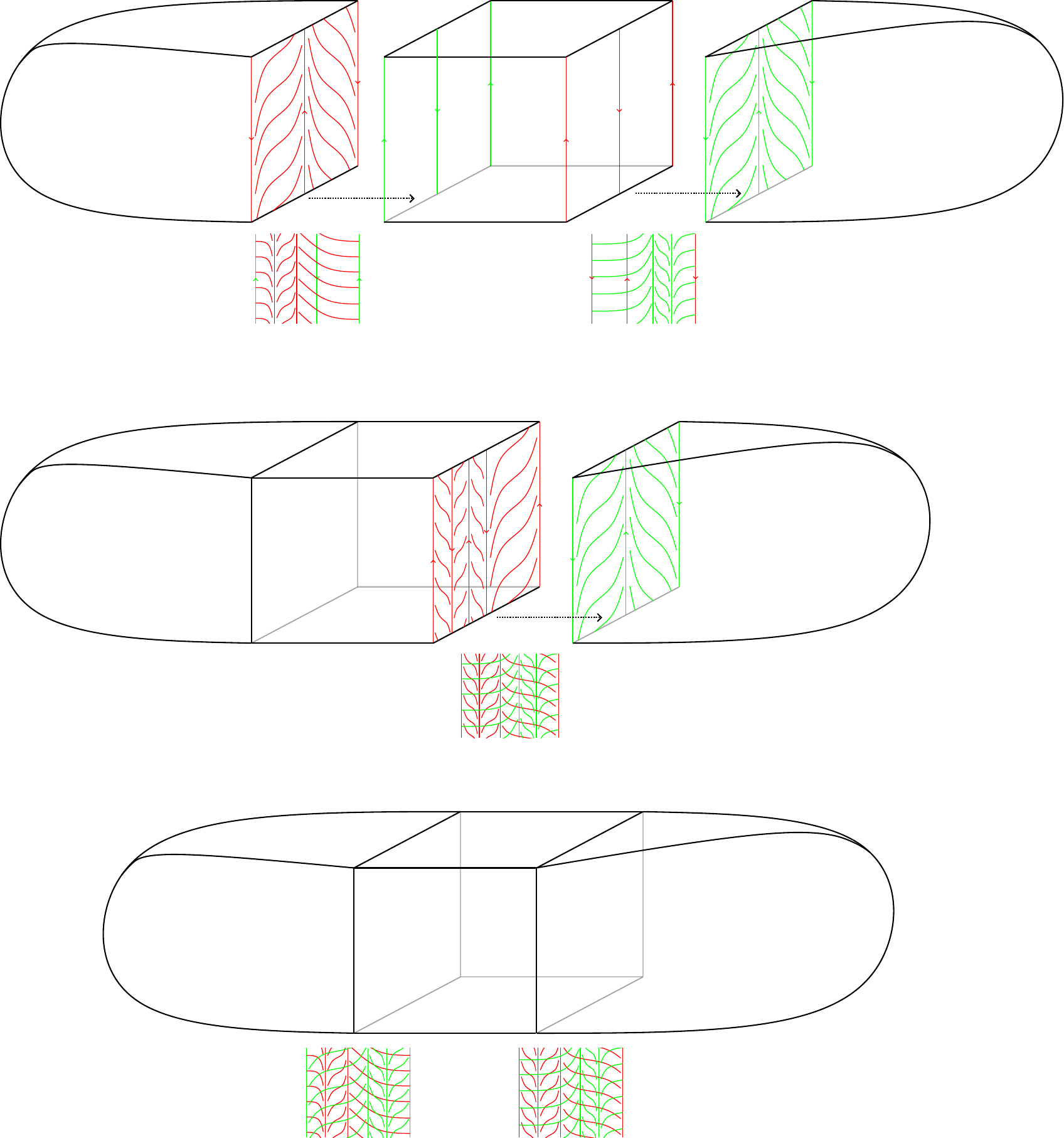}
    \caption{Gluing construction of Example \ref{ex: non-filling bar code}.}
    \label{fig: gluing new compact leaves}
\end{figure}

Let $(P_{\mathrm{BL}},\varphi_{\mathrm{BL}})$ be the Bonatti--Langevin building
block (see \cite{bonattiExempleFlotAnosov1994}). It has one entrance boundary torus
$T^{\mathrm{in}}_{\mathrm{BL}}$ and one exit boundary torus
$T^{\mathrm{out}}_{\mathrm{BL}}$. On each of these tori, the boundary
prefoliation has exactly two compact leaves with opposite dynamical orientations.
These leaves divide each boundary torus into two open annuli, which we denote by
\[
A^{\mathrm{in}}_1,\ A^{\mathrm{in}}_2
    \subset T^{\mathrm{in}}_{\mathrm{BL}},
\qquad
A^{\mathrm{out}}_1,\ A^{\mathrm{out}}_2
    \subset T^{\mathrm{out}}_{\mathrm{BL}},
\]
indexed so that every orbit entering through $A^{\mathrm{in}}_i$ exits through
$A^{\mathrm{out}}_i$.

Choose a repelling building block $(P_-,\varphi_-)$ with one exit boundary torus,
whose boundary lamination is a foliation with two compact leaves having opposite
dynamical orientations. Let $(P_+,\varphi_+)$ be the corresponding attracting block,
obtained by reversing the flow. Such blocks can be obtained from
\cite[Theorem~1.10]{beguinBuildingAnosovFlows2017}.

We first glue the exit boundary of $P_-$ to
$T^{\mathrm{in}}_{\mathrm{BL}}$ by a transverse gluing map which places
the two compact leaves coming from $P_-$ inside the annulus
$A^{\mathrm{in}}_1$. By
\cite[Proposition 1.10]{beguinBuildingAnosovFlows2017}, the resulting flow is again a building block (hyperbolic plug) with remaining exit boundary $T^{\mathrm{out}}_{\mathrm{BL}}$.
The two compact leaves inserted in $A^{\mathrm{in}}_1$ are transported through
the Bonatti-Langevin block and give rise to two additional compact leaves in
$A^{\mathrm{out}}_1$. Thus the induced boundary foliation on
$T^{\mathrm{out}}_{\mathrm{BL}}$ has four compact leaves, whereas the boundary
prefoliation intrinsic to $P_{\mathrm{BL}}$ has only the original two.

We then glue $T^{\mathrm{out}}_{\mathrm{BL}}$ to the entrance boundary of $P_+$
by a strongly transverse gluing map which places the two compact leaves of the
attracting block inside the other complementary annulus
$A^{\mathrm{out}}_2$. The gluing theorem of
\cite{beguinBuildingAnosovFlows2017} produces an Anosov flow on the resulting
closed manifold.

Equivalently, this construction may be viewed as a single gluing of the
disconnected pseudo-Anosov piece
\[
(P,\varphi)
=
(P_-,\varphi_-)
\sqcup
(P_{\mathrm{BL}},\varphi_{\mathrm{BL}})
\sqcup
(P_+,\varphi_+).
\]
Its boundary prefoliation $\cK_\partial$ records only the invariant laminations
intrinsic to these three pieces. The boundary foliation $\cF_\partial$ associated
with the complete gluing, however, contains the two additional compact leaves
created by the passage through the Bonatti-Langevin block. Consequently, the
pairs
\[
(\cK_\partial,f_*\cK_\partial)
\qquad\text{and}\qquad
(\cF_\partial,f_*\cF_\partial)
\]
have different bar codes. Hence the bar code and the complete bar code of the
triple do not coincide; see Figure~\ref{fig: Bar code vs complete bar code}.

\begin{figure}[h]
\labellist
\small

\pinlabel $+$ [t] at 0 110
\pinlabel $-$ [t] at 15 110
\pinlabel $-$ [t] at 31 110
\pinlabel $+$ [t] at 47 110

\pinlabel $-$ [t] at 129 110
\pinlabel $-$ [t] at 156 110
\pinlabel $+$ [t] at 175 110
\pinlabel $+$ [t] at 190 110

\pinlabel $+$ [t] at 0 0
\pinlabel $-$ [t] at 15 0
\pinlabel $-$ [t] at 31 0
\pinlabel $+$ [t] at 47 0
\pinlabel $+$ [t] at 62 0

\pinlabel $-$ [t] at 129 0
\pinlabel $-$ [t] at 144 0
\pinlabel $-$ [t] at 160 0
\pinlabel $+$ [t] at 175 0
\pinlabel $+$ [t] at 190 0

\endlabellist

    \centering
    \includegraphics[width=0.5\linewidth]{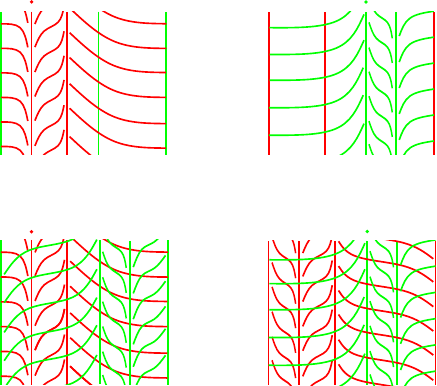}
    \vspace{5mm}
    \caption{Bar code (top) and complete bar code (bottom). The dot marks the first compact leaf of the image lamination.}
    \label{fig: Bar code vs complete bar code}
\end{figure}

\end{example}

In the above example, while the complete bar code differed from the bar code, the complete bar code was completely determined by the bar code together with the isotopy class of the gluing map, and one may wonder whether this is a general fact. It turns out that it is not, as one can even build examples with a fixed bar code and a fixed isotopy class of gluing and still obtain distinct complete bar codes, as illustrated in the following example:

\begin{example}
    \label{ex: same bar code different complete bar codes}

In this example we construct a pseudo-Anosov piece $(P,\varphi)$ and two isotopic pseudo-Anosov
gluing maps $f$ and $f'$ such that the triples $(P,\varphi,f)$ and $(P,\varphi,f')$
have the same bar code but distinct complete bar codes.

For $i=1,2,3$, let $(P_i,\varphi_i)$ be three copies of the Bonatti--Langevin
building block introduced in Example \ref{ex: non-filling bar code}. Denote by
$T_i^{\mathrm{in}}$ and $T_i^{\mathrm{out}}$ their entrance and exit boundary
tori. The two compact leaves of the boundary prefoliation divide each of these
tori into two open annuli
\[
A_i^{\mathrm{in}}, B_i^{\mathrm{in}}
    \subset T_i^{\mathrm{in}},
\qquad
A_i^{\mathrm{out}}, B_i^{\mathrm{out}}
    \subset T_i^{\mathrm{out}},
\]
indexed so that orbits entering through $A_i^{\mathrm{in}}$ exit through
$A_i^{\mathrm{out}}$, and similarly for the annuli $B_i^{\mathrm{in}}$ and
$B_i^{\mathrm{out}}$. See Figure \ref{fig: different complete bar codes}.

\begin{figure}[h]
\labellist
\small

\pinlabel $P_+$ [b] at 221 1190
\pinlabel $P_1$ [b] at 417 1190
\pinlabel $P_2$ [b] at 659 1190
\pinlabel $P_3$ [b] at 880 1190
\pinlabel $P_-$ [b] at 1089 1190

\pinlabel $f_{12}$ [b] at 504 1050

\pinlabel $f_{23}$ [r] at 562 703
\pinlabel $f'_{23}$ [r] at 562 613

\pinlabel $f_+$ [b] at 269 396

\pinlabel $f_-$ [b] at 696 118

\endlabellist

    \centering
    \includegraphics[width=1\linewidth]{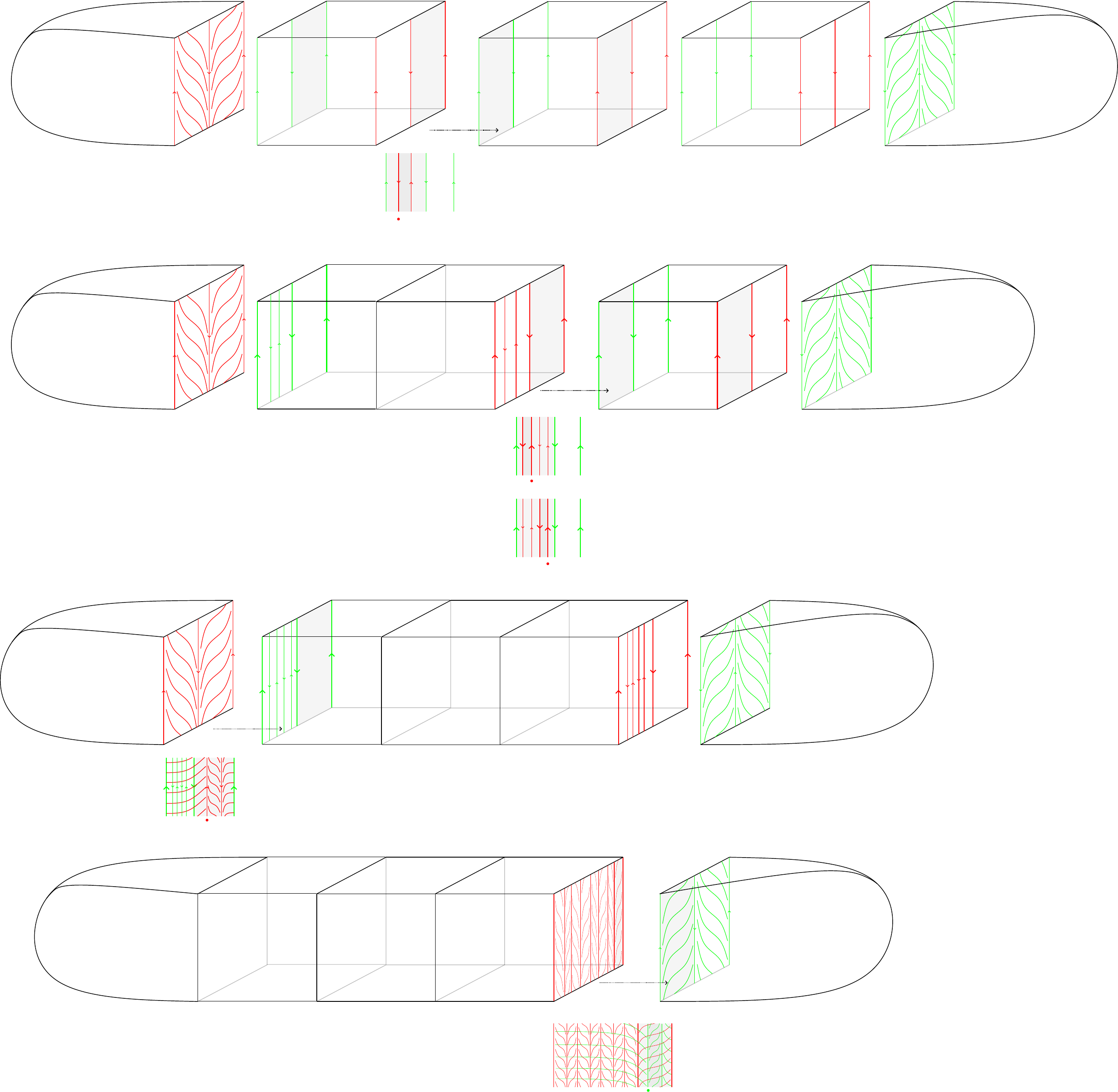}
    \caption{Gluing construction of Example \ref{ex: same bar code different complete bar codes}}
    \label{fig: different complete bar codes}
\end{figure}

We first glue $T_1^{\mathrm{out}}$ to $T_2^{\mathrm{in}}$ by a
transverse map
$f_{12}\colon T_1^{\mathrm{out}}\to T_2^{\mathrm{in}}$
which glues the two compact leaves coming from $P_1$ inside
$A_2^{\mathrm{in}}$. As in Example~\ref{ex: non-filling bar code}, the gluing
construction of \cite{beguinBuildingAnosovFlows2017} produces a new building
block with entrance boundary $T_1^{\mathrm{in}}$ and exit boundary
$T_2^{\mathrm{out}}$.

The two compact leaves inserted in $A_2^{\mathrm{in}}$ are transported through
$P_2$ and produce two additional compact leaves in $A_2^{\mathrm{out}}$.
The induced boundary foliation on $T_2^{\mathrm{out}}$ has now four
compact leaves
$c_1,c_2,c_3,c_4$
in cyclic order, with alternating dynamical
orientations. We choose the enumeration so that $c_1$ and $c_4$ are the two
compact leaves intrinsic to the boundary prefoliation of $P_2$, while $c_2$
and $c_3$ are the additional leaves transported from $P_1$. Notice that
$c_2$ and $c_3$ lie in the annulus $A_2^{\mathrm{out}}$ bounded by $c_1$ and
$c_4$.

We now choose two transverse gluing maps
\[
f_{23},f_{23}'\colon
T_2^{\mathrm{out}}\longrightarrow T_3^{\mathrm{in}}
\]
which are isotopic. We choose them so that
$f_{23}^{-1}(\Gamma_{\mathcal K_{\partial,3}})$
consists of two compact leaves contained in the annulus bounded by
$c_1$ and $c_2$, whereas
$(f_{23}')^{-1}(\Gamma_{\mathcal K_{\partial,3}})$
consists of two compact leaves contained in the annulus bounded by
$c_3$ and $c_4$. Such maps are clearly isotopic.

The two gluings produce building blocks $(Q,\psi)$ and $(Q',\psi')$, each with
remaining boundary components $T_1^{\mathrm{in}}$ and
$T_3^{\mathrm{out}}$. We close both blocks by attaching a repelling block to
$T_1^{\mathrm{in}}$ and an attracting block to $T_3^{\mathrm{out}}$ similarly to the construction of Example \ref{ex: non-filling bar code}, using
strongly transverse gluing maps chosen in the same isotopy classes for the two
constructions. See Figure \ref{fig: comparing complete bar codes}. The gluing theorem of
\cite{beguinBuildingAnosovFlows2017} then produces two Anosov flows on the same closed
manifold.

Equivalently, we can see those two constructions as one gluing of one disconnected
pseudo-Anosov piece
\[
(P,\varphi)
=
(P_-,\varphi_-)
\sqcup
(P_1,\varphi_1)
\sqcup
(P_2,\varphi_2)
\sqcup
(P_3,\varphi_3)
\sqcup
(P_+,\varphi_+).
\]
Let $f$ and $f'$ denote the resulting pseudo-Anosov gluing maps from the two constructions. The
maps $f$ and $f'$ agree on every boundary component except for the pairing
between $T_2^{\mathrm{out}}$ and $T_3^{\mathrm{in}}$, where they are given by
the isotopic maps $f_{23}$ and $f_{23}'$, so $f$ and $f'$ are isotopic.

The two triples have the same bar code. Indeed, on
$T_2^{\mathrm{out}}$, the boundary prefoliation intrinsic to $P_2$ contains
only the leaves $c_1$ and $c_4$. In both constructions, the two compact leaves
coming from $P_3$ are placed inside the same complementary annulus
$A_2^{\mathrm{out}}$, with the same dynamical orientations and the same cyclic
order relative to $c_1$ and $c_4$.

The complete bar codes are however distinct. The boundary foliation
$\mathcal F_\partial$ also contains the additional compact leaves $c_2$ and
$c_3$. For $f$, the stable compact leaves coming from $P_3$ occur between $c_1$ and
$c_2$, whereas for $f'$ they occur between $c_3$ and $c_4$. Taking $c_1$ as
the first compact leaf, the corresponding geometric enumerations are therefore
different.

\begin{figure}[h]
\labellist
\small

\pinlabel $\cL^s_1,f_-(\cL^u_-))$ [b] at 39 333
\pinlabel $(\cL^s_2,f_{12}(\cL^u_1))$ [b] at 185 333
\pinlabel $(\cL^s_3,f'_{23}(\cL^u_2))$ [b] at 325 366
\pinlabel $(\cL^s_3,f_{23}(\cL^u_2))$ [b] at 325 385
\pinlabel $(\cL^s_+,f_+(\cL^u_3))$ [b] at 478 333

\pinlabel $(\textcolor{green}{\cF_\partial},\textcolor{red}{f_*(\cF_\partial)})$ [r] at 0 123
\pinlabel $(\textcolor{green}{\cF_\partial},\textcolor{red}{f'_*(\cF_\partial)})$ [r] at 0 36

\pinlabel $(\textcolor{green}{\cL_\partial},\textcolor{red}{f_*(\cL_\partial)})$ [r] at 0 320
\pinlabel $(\textcolor{green}{\cF_\partial},\textcolor{red}{f'_*(\cF_\partial)})$ [r] at 0 283

\endlabellist

    \centering
    \vspace{1cm}
    \includegraphics[width=1\linewidth]{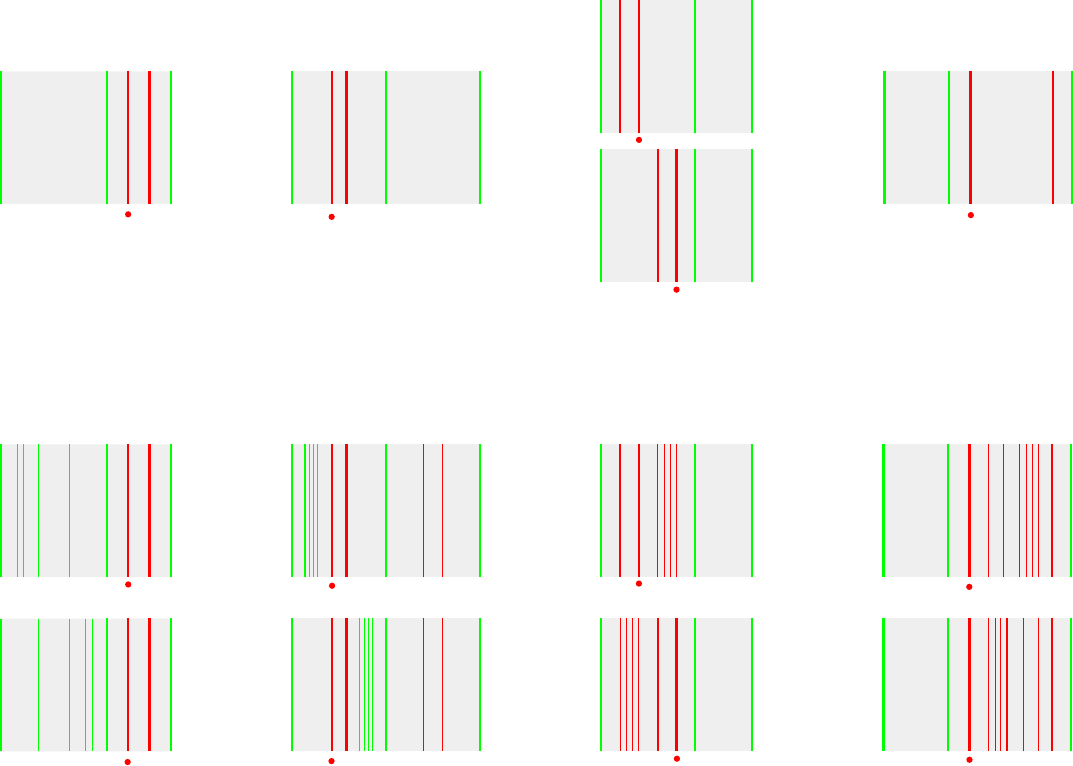}
    \caption{Comparing the two complete bar codes. The bar codes of the prefoliation are the same. The dot marks the first compact leaf of the image. }
    \label{fig: comparing complete bar codes}
\end{figure}

\end{example}

We define a notion of equivalent triple.

\begin{defi}[Equivalent triples]\label{def: equivalent triple}
Two \pa{} triples $(P_0,\varphi_0,f_0)$ and $(P_1,\varphi_1,f_1)$ are \emph{equivalent} if
\begin{enumerate}[label=(\roman*)]
    \item $(P_0, \varphi_0)$ and $(P_1, \varphi_1)$ are orbit equivalent;
    \label{def: eq triple; it: orbit eq}
    \item there exists an orbit equivalence $h:P_0 \to P_1$ such that the maps $h_*f_0$ and $f_1$ are homotopic on $\partial P_1$.
     \label{def: eq triple; it: isotopic gluing}
\end{enumerate}
\end{defi}

As a direct consequence, if $(P_0,\varphi_0,f_0)$ and $(P_1,\varphi_1,f_1)$ are equivalent triples,
then the quotient manifolds $P_0/f_0$ and $P_1/f_1$ are diffeomorphic.

\begin{rmk} \label{rmk: equivalent triple same block}
    If $(P_0, \varphi_0, f_0)$ and $(P_1, \varphi_1, f_1)$ are two equivalent triples, we can always assume that $P_0 = P_1 =P$, that $\varphi_0$ and $\varphi_1$ are isotopically equivalent on $P$, and that $f_0$ and $f_1$ are isotopic.

    Indeed, consider any diffeomorphism $g\colon P_0 \to P_1$ in the isotopy class of the orbit equivalence $h$ of Item \ref{def: eq triple; it: isotopic gluing} of Definition \ref{def: equivalent triple}.
    We replace $(P_0, \varphi_0, f_0)$ by the push-forward $(g_*P_0, g_*\varphi_0, g_*f_0) = (P_1, \varphi_0', f_0')$.
    This is a pseudo-Anosov triple inducing a pseudo-Anosov flow $\varphi_0'$ on $P_1 / f_0'$ which is orbit equivalent to the induced flow $\varphi_0$ of the pseudo-Anosov triple, where the orbit equivalence is just the quotient of the diffeomorphism $g$.
    This triple is clearly equivalent to $(P_1, \varphi_1, f_1)$ in the sense of Definition \ref{def: equivalent triple}, and satisfies that the orbit equivalence from Item \ref{def: eq triple; it: isotopic gluing} is isotopic to the identity in $P$.
\end{rmk}

\begin{rmk}\label{rmk: orbit equivalence boundary lam}
In general, an orbit equivalence between two \paps{} need not preserve the boundary foliation
associated to specific gluing maps $f_0$ and $f_1$, since the boundary foliation $\cF_\partial$ depends on the choice of gluing map.
However, such an orbit equivalence does preserve the boundary prefoliation $\cK_\partial$ (Definition~\ref{def: boundary lam and fol}),
because the boundary prefoliation is intrinsic to the piece.
\end{rmk}

If $(P,\varphi)$ is a \pap{}, let $\cO$ be the collection of periodic orbits of $\varphi$ contained in $\partial P$.

\begin{defi}[Same (complete) bar code]\label{def: same bar code}
Let $(P,\varphi_0,f_0)$ and $(P,\varphi_1,f_1)$ be two equivalent triples.
For $i=0,1$, let $\cK_{\partial,i}$ (resp.\ $\cF_{\partial,i}$) denote the boundary prefoliation (resp.\ boundary foliation) of $(P,\varphi_i,f_i)$.
\begin{itemize}
    \item We say that the triples have the \emph{same bar code} if the following holds on each boundary component:
    choose a first compact leaf $\gamma_0^0$ of $\cK_{\partial,0}$, and let $\gamma_0^1$ be the corresponding compact leaf of $\cK_{\partial,1}$
    given by the orbit equivalence between $\varphi_0$ and $\varphi_1$ (Remark \ref{rmk: orbit equivalence boundary lam}).
    Choose an orientation of the boundary component of $\partial P$. The bar codes of the pairs
    $(\cK_{\partial,0},(f_0)_*\cK_{\partial,0})$ and 
    $(\cK_{\partial,1},(f_1)_*\cK_{\partial,1})$
    are equal.
    \item We say that the triples have the \emph{same complete bar code} if the same condition holds with $\cK_{\partial,i}$ replaced by $\cF_{\partial,i}$.
\end{itemize}
\end{defi}

\begin{lem}\label{lem: carac same bar code}
The triples $(P,\varphi_0,f_0)$ and $(P,\varphi_1,f_1)$ have the same bar code if and only if the following holds.
Let $h\colon P \to P$ be an orbit equivalence between $\varphi_0$ and $\varphi_1$.
For any compact leaf $\gamma^-$ of $\cK_{\partial,0}$ on $\partial_-P$ and any compact leaf $\gamma^+$ of $\cK_{\partial,0}$ on $\partial_+ P$:
\begin{enumerate}[label=(\roman*)]
    \item $\gamma^+$ is the $k$-th compact leaf of the pair $(\cK_{\partial,0},(f_0)_*\cK_{\partial,0})$ if and only if
    $h(\gamma^+)$ is the $k$-th compact leaf of the pair $(\cK_{\partial,1},(f_1)_*\cK_{\partial,1})$ on $\partial_+P$ ;
    \item $f_0(\gamma^-)$ is the $k$-th compact leaf of the pair $(\cK_{\partial,0},(f_0)_*\cK_{\partial,0})$ if and only if
    $f_1\bigl(h(\gamma^-)\bigr)$ is the $k$-th compact leaf of the pair $(\cK_{\partial,1},(f_1)_*\cK_{\partial,1})$ on $\partial_+P$.
\end{enumerate}
\end{lem}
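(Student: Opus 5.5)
This lemma is essentially an unwinding of Definition~\ref{def: same bar code}, so the proof will be a direct comparison of definitions. First I fix the normalizations allowed by Remark~\ref{rmk: equivalent triple same block}. We take $P_0=P_1=P$, $f_0$ isotopic to $f_1$, and $h$ isotopic to the identity, hence preserving the orientation of each boundary torus and the pairing $\partial_-P\leftrightarrow\partial_+P$. By Remark~\ref{rmk: orbit equivalence boundary lam}, $h$ maps $\cK_{\partial,0}$ to $\cK_{\partial,1}$. In particular it maps compact leaves to compact leaves. It preserves dynamical orientations, because these are induced by the flow direction on the periodic orbits. It also preserves marked leaves and the in/out splitting, because these come from the transverse orientation of the flow on $\partial P$. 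Consequently, on each boundary torus of $\partial_+P$, $h$ induces a bijection between compact leaves of $\cK_{\partial,0}$ and of $\cK_{\partial,1}$ that respects the cyclic order. The same holds for leaves of $\cK_{\partial,0}$ on $\partial_-P$, which are pushed to $\partial_+P$ by $f_0$, resp.\ by $f_1$ after applying $h$.

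Next I record what the bar code of $(\cK_{\partial,i},(f_i)_*\cK_{\partial,i})$ on a torus $T\subset\partial_+P$ encodes. It encodes the merged cyclic order on $\Gamma_+\cup\Gamma_-$, where $\Gamma_+$ consists of the compact leaves of $\cK_{\partial,i}$ on $T$ and $\Gamma_-=f_i(\text{compact leaves on } f_i^{-1}(T))$. It also records which index each leaf occupies, its type among $\{+,-,*,\mm\}$, and the position $k$ of $f_i(\gamma_0^-)$. With first leaf $\gamma_0^0$ on the $0$ side and $\gamma_0^1=h(\gamma_0^0)$ on the $1$ side, equality of bar codes is then the statement that the $k$-th leaf is of the same type on both sides and the same shift $k$ appears.

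For ``$\Leftarrow$'': conditions (i) and (ii) say that the index map on each side is compatible with the bijections $\gamma^+\mapsto h(\gamma^+)$ and $f_0(\gamma^-)\mapsto f_1(h(\gamma^-))$. Hence $\sigma_0(k)$ and $\sigma_1(k)$ are the images of corresponding leaves. A leaf is in $\Gamma_+$ only, in $\Gamma_-$ only, or in $\Gamma_\mm$ or $\Gamma_*$, and $h$ preserves each of these memberships (the periodic orbits in $\partial P$ and their alternation). So the symbols agree. The integer $k$ also agrees by applying (ii) to $\gamma_0^-$.

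For ``$\Rightarrow$'': equal bar codes give equal sequences. Within $\Gamma_+$ the enumeration on each side starts at $\gamma_0^0$, resp.\ $h(\gamma_0^0)$, and follows the cyclic order, which $h$ preserves. So the $j$-th $\Gamma_+$-leaf on side $1$ is $h$ of the $j$-th on side $0$. Since both sequences have the same pattern, these leaves occupy the same global index, which gives (i). Similarly, $\Gamma_-$ is enumerated cyclically starting from the leaf at position $k$, and its cyclic order is preserved by $f_1\circ h\circ f_0^{-1}$. This gives (ii). Here a leaf of $\Gamma_\mm$ counts in both enumerations consistently.

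The main obstacle is justifying that $f_1\circ h\circ f_0^{-1}$ preserves the cyclic order of compact leaves on $T$ without reversing orientation. I would argue this from the isotopy of $f_0$ and $f_1$ together with $h$ being isotopic to the identity, so that this composition is orientation preserving on $T$. In the parallel case all compact leaves are homotopic, and the cyclic order is then determined by orientation, which completes the argument.
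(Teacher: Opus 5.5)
Your proposal is correct and takes the paper's approach: the paper's proof is the single line ``This is obvious from the definition of bar code,'' and you unwind that definition directly. Your added details supply what the paper leaves implicit. These are the normalization that $h$ is isotopic to the identity (Remark~\ref{rmk: equivalent triple same block}), which makes both $h|_T$ and $f_1\circ h\circ f_0^{-1}|_T$ isotopic to the identity and hence preserving the enumeration direction, and the index-by-index comparison of the $\Gamma_+$ and $\Gamma_-$ leaves.
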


\begin{proof}
    This is obvious from the definition of bar code.
\end{proof}

\subsection{Return map}

Let $(M,\phi)$ be a pseudo-Anosov flow and let $\cT$ be a good collection of quasi-transverse
tori in~$M$.
Recall that $\phi$ comes with stable and unstable (possibly singular) foliations
$(\cF^s,\cF^u)$, and we denote by
\[
\cF_\cT^s:=\cF^s\cap \cT,
\qquad
\cF_\cT^u:=\cF^u\cap \cT
\]
their traces on~$\cT$.
In this subsection, we will describe the behavior of the return map of the flow on $\cT$, and more precisely near the boundary of its domain of definition.

This will later allow us to control how orbit segments travel between specific regions of the
lifts of~$\cT$ in the universal cover.

We denote by $\Theta$ the first-return map on $\cT$. It is defined on the set of points $x\in \cT$ whose positive $\phi$-orbit meets $\cT$ again.

\begin{lem}\label{lem: domain of theta}
There exist sublaminations $\cK_\cT^s\subset \cF_\cT^s$ and $\cK_\cT^u\subset \cF_\cT^u$ such that the first return map
$\Theta$ is a homeomorphism
\[
\Theta\colon \cT\setminus \cK_\cT^s \longrightarrow \cT\setminus \cK_\cT^u .
\]
\end{lem}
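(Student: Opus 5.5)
We would define the sublaminations explicitly and then argue by continuity of the flow together with a flow-box argument. Let $\cK_\cT^s$ be the set of points $x\in\cT$ whose forward $\phi$-orbit never returns to $\cT$, together with the periodic orbits $\cO$ contained in $\cT$. Symmetrically, $\cK_\cT^u$ is the set of points whose backward orbit never returns to $\cT$, together with $\cO$. Equivalently, lifting to the pieces via $\pi$, $\cK_\cT^s$ is the projection of the part of the boundary prefoliation $\cK_\partial$ lying in $\Pin$, and $\cK_\cT^u$ is the projection of the part lying in $\Pout$ (Remark \ref{rmk: dynamic of the boundary prefoliation}).

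\textbf{Sublaminations.} The first step is to check that $\cK_\cT^s$ is a union of leaves of $\cF_\cT^s$ and is closed. Saturation holds because if $x\in\cT$ has forward orbit avoiding $\cT$, then so does every $y$ on the same leaf of $\cF^s\cap\cT$. Indeed, such a $y$ stays in the same component $P$ of the cut manifold: its forward orbit is asymptotic to that of $x$, and crossing a torus would force $x$ to cross it too, because stable leaves in the universal cover intersect the lift of $\cT$ in connected sets. This uses the chain-of-lozenges description of the trace of a quasi-transverse torus and the fact that a flow line meets a lift of a quasi-transverse torus at most once.

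Closedness follows from openness of the complement. If the orbit of $x$ returns transversally to $\cT$ at a point of $\cT\setminus\cO$, then by transversality and continuity of the flow, nearby points return too. The case where $x$ hits $\cT$ exactly at a point of a tangent periodic orbit cannot occur for $x\notin\cO$, since $\cO$ is invariant. This shows that $\cK_\cT^s$ and $\cK_\cT^u$ are closed sublaminations. They coincide with the traces of the stable and unstable laminations of the maximal invariant sets of the pieces, which also gives the lamination structure directly.

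\textbf{Homeomorphism.} By definition, $\Theta$ is defined exactly on $\cT\setminus\cK_\cT^s$. Its image lies in $\cT\setminus\cK_\cT^u$, since $\Theta(x)$ has a backward return, namely $x$. Moreover, every point of $\cT\setminus\cK_\cT^u$ is $\Theta$ of its last backward hit, so $\Theta$ is surjective onto this set. The inverse is the first-return map of the reversed flow, so $\Theta$ is a bijection. Continuity of $\Theta$ and of $\Theta^{-1}$ comes from the flow-box argument above, since the return time is continuous near points with a transverse return. Because $\cT$ is only weakly embedded, the argument has to be carried out on the pieces $P$ (where $\partial P$ is split into $\Pin$ and $\Pout$) and then projected.

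\textbf{Main obstacle.} The delicate step is uniformity near $\cO$ and near the corner orbits $\cO_c$, where $\cT$ is not transverse. We must rule out a sequence of returning points whose return times blow up, or whose return points approach a tangent periodic orbit, while the limit point lies outside $\cK_\cT^s$. Our plan is to use the local model of a quasi-transverse torus near a periodic orbit: it is a union of half-leaves in non-adjacent quadrants. In this model, an orbit leaving a transverse annulus near $O_i$ either crosses $\cT$ within bounded time or follows the stable leaf of $O_i$. The latter happens only for points on the stable leaf of $O_i$, which lie in $\cK_\cT^s$.
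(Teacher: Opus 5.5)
Your proposal is correct and follows essentially the same route as the paper. The paper also takes $\cK^s_\cT$ and $\cK^u_\cT$ to be the projections of the boundary prefoliation $\cK_\partial$ (points whose forward, resp.\ backward, orbit never meets $\cT$ again), reads off that $\Theta$ is defined exactly on $\cT\setminus\cK^s_\cT$ with image $\cT\setminus\cK^u_\cT$, and gets continuity from transversality of $\cT$ away from $\cO$; it simply inherits the lamination property from the definition of $\cK_\partial$ instead of re-deriving saturation and closedness as you do.

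Your ``main obstacle'' is already handled by your own flow-box argument. If $x\notin\cK^s_\cT$, its first return point lies off $\cO$ because $\cO$ is invariant. Transversality at that point, together with compactness of the intervening orbit segment, then gives nearby points first returns with nearby return times and return points, so no local model near the tangent orbits is needed.
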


\begin{rmk}\label{rmk:KsKu_vs_boundarylam}
The pair $(\cK_\cT^s,\cK_\cT^u)$ lives on $\cT\subset M$ and should not be confused with the boundary prefoliation
$\cK_\partial$ of a pseudo-Anosov piece (Definition~\ref{def: boundary lam and fol}), which lives on the boundary of a piece.
If $(M,\phi)$ is obtained by gluing a pseudo-Anosov triple $(P,\varphi,f)$ and $\cT=\pi(\partial P)$,
then $\cK_\partial$ and $f_*\cK_\partial$ project to laminations on $\cT$ which, after exchanging stable and unstable traces on the
appropriate transverse components of $\cT$, recover precisely $\cK_\cT^s$ and $\cK_\cT^u$.
\end{rmk}

\begin{proof}
Cut $(M,\phi)$ open along $\cT$ and let $(P,\varphi)$ be the resulting pseudo-Anosov piece.
Let $\cK_\partial$ be the boundary prefoliation of $(P,\varphi)$ (Definition~\ref{def: boundary lam and fol}) on $\partial P$,
and let $\pi\colon \partial P\to \cT$ be the natural projection induced by the gluing.

Set $\cK_\cT = \pi(\cK_\partial)$ for the induced lamination on $\cT$, and define
\[
\cK_\cT^s := \cK_\cT\cap (\cF^s\cap \cT),
\qquad
\cK_\cT^u := \cK_\cT\cap (\cF^u\cap \cT).
\]
By definition of the boundary prefoliation (see also Remark~\ref{rmk: dynamic of the boundary prefoliation}),
a point $x\in \cT$ lies in $\cK_\cT^s$ if and only if its positive $\phi$-orbit never meets $\cT$ again, and
$x\in \cK_\cT^u$ if and only if its negative $\phi$-orbit never meets $\cT$ again.

Therefore $\Theta$ is defined exactly on $\cT\setminus \cK_\cT^s$ and its image is exactly $\cT\setminus \cK_\cT^u$.
Moreover, away from finitely many periodic orbits where $\cT$ is only quasi-transverse, $\cT$ is genuinely transverse
to $\phi$; on each transverse component the return time is continuous, hence $\Theta$ restricts to a homeomorphism
$\cT\setminus \cK_\cT^s \to \cT\setminus \cK_\cT^u$.
\end{proof}

\medskip

We now pass to the universal cover $\widetilde M\simeq \R^3$.
Let $\widetilde \phi$ be the lift of $\phi$, and let $\widetilde \cT$ be the complete lift of the good
collection~$\cT$.

The next claim ensures that $\widetilde \cT$ behaves like a collection of ``walls'' in $\R^3$:
even though distinct lifts may meet along lifts of periodic orbits (coming from weak embeddedness in $M$),
they cannot cross transversely.

\begin{claim}\label{claim:liftedT_planes_noncrossing}
The collection $\widetilde \cT$ is a collection of properly embedded planes in $\widetilde M$.
Moreover, the planes are \emph{non-crossing}: two distinct planes of $\widetilde \cT$ cannot intersect transversely.
\end{claim}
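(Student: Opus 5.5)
Two things need to be established: each component of $\widetilde \cT$ is a properly embedded plane, and two distinct components never cross transversely. Both reduce to properties of a single torus $T\in\cT$ together with the good-collection hypothesis.

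\emph{Planes.} Each $T\in\cT$ is incompressible by Definition~\ref{def: qt surface}, so $\pi_1(T)\to\pi_1(M)$ is injective. Hence each lift $\widetilde T$ of $T$ (an immersed copy of the universal cover of $T$) is an immersed plane in $\widetilde M$. It is properly immersed because it is invariant under the stabilizer $\Z^2$, which acts cocompactly on it.

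\emph{Embeddedness of a single lift.} Use the good-collection hypothesis. By Definition~\ref{def: weakly embedded}, $T$ is homotopic to an embedded incompressible torus $T'$, and the lifts of $T'$ are disjoint properly embedded planes. To transfer this to $\widetilde T$, I would work in the orbit space. The projection $\widehat T$ of $\widetilde T$ lies in a chain of lozenges, by the trace proposition quoted in Section~\ref{sec: preli}, and $\widetilde T\setminus\widetilde\cO$ is transverse to $\widetilde\phi$. Each orbit of $\widetilde \phi$ meets a lift of a transverse surface at most once; I would use the standard fact for quasi-transverse surfaces in \cite{barthelmePseudoAnosovFlowsPlane2025}. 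Consequently $\widetilde T\setminus\widetilde\cO$ projects injectively into $\cQ_\phi$. Self-intersections of $\widetilde T$ could then only occur along lifts of periodic orbits. Such self-intersections are exactly the corners shared by two lozenges of the same chain where two sheets meet. Weak embeddedness of $T$ lets the sheets touch only along these orbits, and the lozenge-chain structure, with no three lozenges sharing a corner, shows the sheets meet tangentially there rather than crossing. I would then argue that they do not actually meet: a second sheet through the same orbit would give two distinct lifts of $T$ (equivalently, a deck element not in the stabilizer) sharing an orbit. This can only happen for distinct lifts, so a single lift is embedded.

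\emph{Non-crossing.} Two distinct planes $\widetilde T_1,\widetilde T_2$ can meet only along lifts of boundary periodic orbits, since $\bigcup_i T_i$ is embedded off $\cO$ and the transverse parts are embedded. Near such an orbit $\widetilde O$, each plane is a union of two half-planes lying in quadrants of the (possibly prong) stable/unstable structure around $\widetilde O$, because the transverse parts avoid the local stable and unstable leaves. A transverse crossing would force the half-planes of $\widetilde T_1$ to separate those of $\widetilde T_2$ in the cyclic order of quadrants around $\widetilde O$. I would rule this out using the homotopy to the disjoint embedded tori $T_1',T_2'$. Their lifts $\widetilde T'_1,\widetilde T'_2$ are disjoint planes, hence non-separating in the sense that each lies in a single complementary half-space of the other. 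The boundary at infinity, or the stabilizer subgroups, of $\widetilde T_i$ and $\widetilde T'_i$ agree, since the homotopy moves points a bounded distance. A transverse crossing at $\widetilde O$ would propagate, under the stabilizer of $\widetilde O$, to a configuration in which $\widetilde T_1$ has points at unbounded distance on both sides of $\widetilde T_2$. That contradicts $\widetilde T'_1$ lying on one side of $\widetilde T'_2$.

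The main obstacle is this last step: turning the local crossing at a lifted periodic orbit into a coarse-geometric separation statement. I would first try to handle it in the orbit space. There the two chains of lozenges share the corner $\widetilde O$, and a crossing means their lozenges at that corner interleave. Interleaving makes the two chains' $\Z$-invariant lines of lozenges through $\widetilde O$ cross in $\cQ_\phi$, which should contradict the disjointness of the corresponding embedded planes $\widetilde T'_i$, since these also project to bounded neighbourhoods of the same chains.
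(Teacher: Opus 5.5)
Your non-crossing step follows the paper's route: you compare with the disjoint lifts $\widetilde T_i'$ of the embedded tori, which stay at bounded distance. The embeddedness of a single lift, however, has a genuine gap. Once $p$ is injective on $\widetilde T\setminus\widetilde\cO$, the only remaining case is that two sheets of the \emph{same} lift meet along some $\widetilde O$. That is, two distinct lines $\ell_1,\ell_2$ in the universal cover of the domain torus have the same image $\widetilde O$; they lift circles $c_1,c_2$ that both map onto $O$.

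Your sentence that this ``would give two distinct lifts of $T$'' restates what has to be proved, and the parenthetical is false. The elements of $\pi_1(T)$ preserving $\ell_1$ and $\ell_2$ are both sent into the cyclic group $\mathrm{Stab}(\widetilde O)$. By injectivity of $\pi_1(T)\to\pi_1(M)$ they are therefore commensurable, so $\ell_1,\ell_2$ are parallel lines invariant under a common element of $\mathrm{Stab}(\widetilde T)$. No deck transformation outside the stabilizer ever appears, so no group-theoretic argument of this kind can exclude the configuration; you need input from the foliations.

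The paper uses Novikov's theorem. The strip of $\widetilde T$ between the two sheets carries an arc from $\widetilde O$ back to $\widetilde O$, which closes up to a curve transverse to $\widetilde\cF^s$. Its projection would be a null-homotopic closed transversal in $M$.

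Staying in your orbit-space language, you could argue differently. By injectivity, the regions of $\widetilde T$ on both sides of $\ell_1$ and of $\ell_2$ project to four distinct lozenges of the single chain $\overline{\widehat T}$ with common corner $p(\widetilde O)$. The two inner ones are distinct, since a lozenge cannot have $p(\widetilde O)$ as both corners. This contradicts ``no three lozenges share a corner'', provided that property is available for a lift not yet known to be embedded. You do invoke that property, but only to argue the sheets meet tangentially, which settles nothing here.

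For the non-crossing part, the stabilizer $\langle g\rangle$ of $\widetilde O$ only translates along $\widetilde O$. By itself it cannot produce points of $\widetilde T_1$ far from $\widetilde T_2$ on both sides, because a half-plane of $\widetilde T_1$ leaving $\widetilde O$ on the wrong side could a priori re-cross $\widetilde T_2$ elsewhere. What is really being used, and what the paper states directly, is that a transverse crossing of the two planes cannot be undone by homotopies of bounded displacement. Your orbit-space fallback does not help here. The planes $\widetilde T_i'$ are not transverse to $\widetilde\phi$, and disjointness of $\widetilde T_1'$ and $\widetilde T_2'$ in $\widetilde M$ says nothing about their images in $\cQ_\phi$.
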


\begin{proof}
Each $T\in\cT$ is incompressible, hence every lift $\widetilde T\subset \widetilde M$ is a plane.

We first show that $\widetilde T$ is embedded.
By weak embeddedness, the only self-intersections of $T$ occur along finitely many periodic orbits $O_1, \dots, O_n$ of $\phi$
contained in~$T$.
Suppose that some lift $\widetilde T$ has a self-intersection in $\widetilde M$ along a lift of a periodic orbit $\tilde O_i$.
Then one can find a simple closed curve $\eta\subset \widetilde T$ which is transverse to (say) the lifted stable
foliation $\widetilde \cF^s$.
Projecting to $M$, this gives a null-homotopic closed transversal to $\cF^s$, contradicting Novikov's theorem.
Hence $\widetilde T$ is embedded.

We now prove non-crossing.
Since $\cT$ is a {good collection} (Definition~\ref{def: weakly embedded}), for each $T_i\in\cT$ there exists an
embedded torus $T_i'$ homotopic to $T_i$ such that the collection $\cT'=\{T_1',\dots,T_n'\}$ is embedded and pairwise disjoint.
Lift the homotopies to $\widetilde M$.
Each plane $\widetilde T_i$ lies at bounded distance from a corresponding lift $\widetilde T_i'$.
Since the $\widetilde T_i'$ are disjoint planes in $\R^3$, no two of them cross.
If two planes $\widetilde T,\widetilde S$ in $\widetilde \cT$ crossed transversely, then any planes at bounded distance
and homotopic to them would still intersect, contradicting disjointness of the lifted collection $\widetilde{\cT'}$.
\end{proof}

\medskip

Denote by $\widetilde{\cO}$ the complete lift of the periodic orbits contained in $\cT$, and by
$\widetilde{\cO}_a\subset \widetilde{\cO}$ the complete lift of the alternating ones (Subsection~\ref{subsec: foliation induced}).
Since $\widetilde O\in \widetilde{\cO}$ projects to a periodic orbit in $M$, its stabilizer in $G:=\pi_1(M)$ is infinite cyclic;
we will call such orbits \emph{periodic} in this equivariant sense.

Let $\tilde \Theta\colon \widetilde \cT\to \widetilde \cT$ be the lifted first return map.
Denote by $(\widetilde \cF_\cT^s,\widetilde \cF_\cT^u)$ the lifts of $(\cF_\cT^s,\cF_\cT^u)$ to $\widetilde \cT$, and similarly
$(\widetilde \cK_\cT^s,\widetilde \cK_\cT^u)$ the lifts of $(\cK_\cT^s,\cK_\cT^u)$. By Lemma \ref{lem: domain of theta}, we get:

\begin{fact}\label{fact: domain of lift of theta}
The map $\widetilde\Theta$ is a homeomorphism from
$\widetilde \cT\setminus \widetilde \cK_\cT^s$ to $\widetilde \cT\setminus \widetilde \cK_\cT^u$.
\end{fact}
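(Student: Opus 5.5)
The statement is a direct lift of Lemma~\ref{lem: domain of theta}, so I would deduce it by covering-space arguments rather than by redoing any dynamics. The plan has three steps: first define $\widetilde\Theta$ precisely, then identify its domain and image with the lifted laminations, and finally transfer bijectivity and continuity from $\Theta$.

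\emph{Definition.} For $\tilde x\in\widetilde\cT$ with projection $x\in\cT$, set $\widetilde\Theta(\tilde x)$ to be the first point where the forward $\widetilde\phi$-orbit of $\tilde x$ meets $\widetilde\cT$ again. Since the covering map $\widetilde M\to M$ sends $\widetilde\phi$-orbits to $\phi$-orbits and $\widetilde\cT$ is the complete preimage of $\cT$, the forward orbit of $\tilde x$ meets $\widetilde\cT$ exactly at lifts of the points where the forward orbit of $x$ meets $\cT$. Hence the first return times agree: $\tilde x$ returns to $\widetilde\cT$ if and only if $x$ returns to $\cT$, and in that case $\widetilde\Theta(\tilde x)$ is the lift of $\Theta(x)$ obtained by lifting the orbit segment from $x$ to $\Theta(x)$ starting at $\tilde x$.

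\emph{Domain and image.} By the previous step the domain of $\widetilde\Theta$ is the preimage of $\cT\setminus\cK_\cT^s$, which is $\widetilde\cT\setminus\widetilde\cK_\cT^s$ because $\widetilde\cK_\cT^s$ is the full lift of $\cK_\cT^s$. Running the same argument for the flow reversed in time shows the image is $\widetilde\cT\setminus\widetilde\cK_\cT^u$. The inverse map is the backward first return, which is well defined for the same reason, so $\widetilde\Theta$ is a bijection between these two sets.

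\emph{Continuity.} Near a point $\tilde x$ of the domain, $\widetilde\Theta$ is given by lifting $\Theta$ along the orbit segment through $\tilde x$. On a small evenly covered neighbourhood the lifts vary continuously, because the return time is locally continuous by Lemma~\ref{lem: domain of theta}. Therefore $\widetilde\Theta$ is locally the composition of $\Theta$ with local sections of the covering, and it is continuous. The same argument applies to $\widetilde\Theta^{-1}$, so $\widetilde\Theta$ is a homeomorphism.

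\emph{Main obstacle.} The one point needing care is the quasi-transverse periodic orbits in $\widetilde\cO$, where several lifted planes may meet and "first hit" might be ambiguous. These orbits lie inside $\widetilde\cT$ and never cross it transversally, and they belong to both $\widetilde\cK_\cT^s$ and $\widetilde\cK_\cT^u$, so they are excluded from both the domain and the image. The non-crossing of Claim~\ref{claim:liftedT_planes_noncrossing} ensures that an orbit arriving at an intersection of planes is still hitting $\widetilde\cT$ at a single well-defined point, so the first hit is unambiguous.
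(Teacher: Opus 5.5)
Your proposal is correct and follows the paper's route. The paper deduces this fact directly from Lemma~\ref{lem: domain of theta} by lifting, and your covering-space argument supplies exactly the details it leaves implicit: return times upstairs and downstairs agree, so $\widetilde\Theta(\tilde x)=\widetilde\phi^{\tau(x)}(\tilde x)$ with $\tau$ continuous on the transverse part.

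The appeal to Claim~\ref{claim:liftedT_planes_noncrossing} in your last paragraph is unnecessary. Distinct planes of $\widetilde\cT$ meet only along lifts of periodic orbits, which are entire orbits in $\widetilde\cO$. So the forward orbit of a point outside $\widetilde\cO$ never reaches such an intersection, and no ambiguity about the first hit can arise.
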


\begin{defi}[Periodic leaf, periodic bands, and nonperiodic bands]\label{def: periodic leaf}
A leaf of $\widetilde \cF_\cT^s$ (resp.\ $\widetilde \cF_\cT^u$) whose stabilizer in $G=\pi_1(M)$ is nontrivial
(hence infinite cyclic) is called \emph{periodic}.

By the description of $\cK_\cT^s$, each half of any \emph{nonperiodic} leaf of $\widetilde \cK_\cT^s$ is asymptotic
(for a fixed $G$-invariant metric on $\widetilde M$) to a periodic leaf of $\widetilde \cK_\cT^s$.
Consequently, every connected component of $\widetilde \cT\setminus \widetilde \cK_\cT^s$ is either
\begin{itemize}
\item a band bounded by periodic leaves, called a \emph{periodic band}, or
\item a band bounded by two nonperiodic leaves that are asymptotic to each other at both ends,
called a \emph{nonperiodic band}.
\end{itemize}
The analogous statement holds for $\widetilde \cK_\cT^u$.
\end{defi}
Note that a periodic band has an infinite cyclic stabilizer in $G$, whereas a nonperiodic band has trivial stabilizer.

From now on, fix a connected component $C$ of $\widetilde \cT\setminus \widetilde \cK_\cT^s$.

\begin{defi}[Periodic boundary]\label{def: periodic boundary}
The \emph{periodic boundary} of $C$ is the union of the periodic leaves of $\widetilde \cF_\cT^s$ which are asymptotic to $C$.
\end{defi}

We refer to Figure \ref{fig: periodic boundary}.

\begin{figure}[h]
\labellist
\small

\pinlabel $C$ at 112 48

\endlabellist

    \centering
    \includegraphics[width=0.5\linewidth]{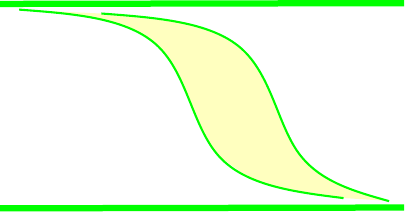}
    \caption{Periodic boundary of a non-periodic band $C$ in bold green.}
    \label{fig: periodic boundary}
\end{figure}

\begin{rmk}\label{rmk: periodic boundary}
Since $\widetilde \cK_\cT^s$ is a closed sublamination of $\widetilde \cF_\cT^s$, any periodic leaf of $\widetilde \cF_\cT^s$
contained in $\overline C$ must lie in $\widetilde \cK_\cT^s$.
If $C$ is a periodic band, its periodic boundary lies in the accessible boundary of $C$.
If $C$ is a nonperiodic band, the periodic leaves given by the asymptotics lie in the (non-accessible) ideal boundary of~$C$.
\end{rmk}

\begin{defi} \label{def: maximal band}
    We say that $B$ is a maximal band if it is a periodic band such that its interior only contains compact leaves of either $\widetilde \cF_\cT^s$ (in which case we call it a \emph{$s$-maximal band}) or $\widetilde \cF_\cT^u$ (called a \emph{$u$-maximal band}), and there is no periodic band strictly containing it.
\end{defi}

Notice that it follows that the boundary of an $s$-maximal band $B$ consists of periodic leaves of $\widetilde \cF_\cT^u$ or orbits of the flow, and reciprocally for a $u$-maximal band. 

\begin{lem}[Action of the lifted return map]\label{lem: action lift return map transverse segment}
Let $C$ be a connected component of $\widetilde \cT\setminus \widetilde \cK_\cT^s$ and set
$C'=\widetilde\Theta(C)$, a connected component of $\widetilde \cT\setminus \widetilde \cK_\cT^u$.
Let $\sigma^u\subset C$ be a half-open segment contained in a single leaf of $\widetilde \cF_\cT^u$,
with its non-compact end on $\partial C$.
Then $\widetilde\Theta(\sigma^u)\subset C'$ is a half-leaf of $\widetilde \cF_\cT^u$, and its non-compact end
is asymptotic to a periodic boundary leaf of~$C'$.
\end{lem}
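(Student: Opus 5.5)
The plan is to use the flow structure along a single unstable leaf. Since $\sigma^u$ lies in a leaf of $\widetilde\cF^u_\cT$, it lies in a leaf $\widetilde L$ of $\widetilde\cF^u$. By Fact~\ref{fact: domain of lift of theta}, $\widetilde\Theta$ is defined on all of $C\supset\sigma^u$ and is a homeomorphism of $C$ onto $C'$. The map $\widetilde\Theta$ is obtained by flowing along orbits of $\widetilde\phi$, which preserve the leaves of $\widetilde\cF^u$. Hence $\widetilde\Theta(\sigma^u)$ is a connected arc contained in $\widetilde L\cap\widetilde\cT$ and in $C'$. It is therefore contained in a single leaf of $\widetilde\cF^u_\cT$, namely the component of $\widetilde L\cap\widetilde T'$ for the relevant lift $\widetilde T'$.

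The first step is to show that $\widetilde\Theta(\sigma^u)$ is a half-leaf, meaning its non-compact end leaves every compact set of that leaf. Parametrize $\sigma^u$ by $t\in[0,1)$ with $\sigma^u(t)\to\partial C$ as $t\to1$. The limit point (or end) lies on a leaf $\ell$ of $\widetilde\cK^s_\cT$, whose forward orbits never return to $\widetilde\cT$. The return times $\tau(t)$ therefore tend to $+\infty$: if they stayed bounded along a subsequence, continuity of the flow and closedness of $\widetilde\cT$ would force the limit point to return as well. Flowing a long time inside $\widetilde L$ expands unstable distances. More topologically, in the orbit space, the projections of $\sigma^u(t)$ converge to the projection of $\ell$, which is a stable leaf segment not meeting the next wall. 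So the image points $\widetilde\Theta(\sigma^u(t))$ must escape along $\widetilde L\cap\widetilde T'$ rather than accumulate. If they accumulated at some point $y$, applying $\widetilde\Theta^{-1}$, a homeomorphism near $y$ when $y\notin\widetilde\cK^u_\cT$, would give convergence of $\sigma^u(t)$ in $C$, a contradiction. If instead $y\in\widetilde\cK^u_\cT$, the backward orbit of $y$ never meets $\widetilde\cT$, which contradicts the bounded backward times to reach $\sigma^u$.

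The second step, which I expect to be the main obstacle, is the asymptotics: showing that the escaping end of $\widetilde\Theta(\sigma^u)$ is asymptotic to a periodic boundary leaf of $C'$. My approach is to work in $\cQ_\phi$. Here $\widetilde T'$ projects to a chain of lozenges, and $C'$ projects into it. The orbits through $\sigma^u(t)$ with $t\to1$ have stable leaves converging to the stable leaf of $\ell$, and $\ell$ is asymptotic to a periodic leaf of $\widetilde\cK^s_\cT$ by Definition~\ref{def: periodic leaf}. Their unstable projection is a fixed unstable leaf $\lambda=p(\widetilde L)$. The image arc is $\lambda$ intersected with the projection of $C'$. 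It must exit every compact part of the lozenge chain by going toward a side of a lozenge in the chain. That side is a periodic stable leaf, corresponding to a periodic leaf of $\widetilde\cF^s_\cT$ on the plane, namely a corner of the chain or a side fixed by a stabilizer. Translating back, the end of $\widetilde\Theta(\sigma^u)$ is asymptotic in $\widetilde T'$ to that periodic leaf.

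Finally, to check that this leaf lies in the periodic boundary of $C'$ (Definition~\ref{def: periodic boundary}, with the roles of $s,u$ swapped for $C'$), I would use that $C'$ is a component of $\widetilde\cT\setminus\widetilde\cK^u_\cT$ and apply Remark~\ref{rmk: periodic boundary}. The leaf is approached from inside $C'$, so either it lies on the accessible boundary when $C'$ is a periodic band, or it is one of the periodic leaves to which the nonperiodic boundary leaves of $C'$ are asymptotic. Care will be needed to handle the case where the lozenge side is attained at a tangent periodic orbit in $\widetilde\cO$.
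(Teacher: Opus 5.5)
Your Step 1 is essentially the paper's argument: the return time blows up as you approach $\partial C\subset\widetilde\cK^s_\cT$, so the image is stretched to a half-leaf. The genuine gap is in Step 2, at ``translating back''.

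Since $\widetilde\Theta$ moves points along orbits, $p(\widetilde\Theta(\sigma^u))=p(\sigma^u)$. The stable leaf of $\cQ_\phi$ on which this unstable segment ends is a frontier leaf of the chain of $\widetilde T'$. It is not a shared side: if it were, $\lambda$ would cross it into the next lozenge and the leaf of $\widetilde\cF^u_\cT$ would continue. So, apart from a possible tangent corner, it contains no point of $p(\widetilde T')$, and no leaf of $\widetilde\cF^s_\cT$ lies over it.

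More basically, a non-compact leaf of $\widetilde\cF^u_\cT$ cannot be asymptotic on $\widetilde T'$ to a leaf of the transverse foliation $\widetilde\cF^s_\cT$, since near such a leaf it would have to cross it. The only exception is a lifted tangent orbit in $\widetilde\cO$, which is a leaf of both foliations. So Step 2 produces the wrong kind of leaf. The periodic boundary of $C'$ (with $s$ and $u$ swapped, as you yourself say in your last paragraph) consists of periodic leaves of $\widetilde\cF^u_\cT$, and your final step does not apply to a stable leaf.

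The correct dictionary is this: running into such an ideal stable leaf in $\cQ_\phi$ corresponds, on $\widetilde T'$, to becoming asymptotic to a lift of a compact leaf of $\cF^u_\cT$ (or of a tangent orbit). This is what Claim~\ref{claim: corresponding nbh to side of periodic boundary} records. The paper bypasses the orbit space here. By Proposition~\ref{prop: trace foliation on qt torus in pa flow}, $\cF^u_\cT$ is Morse--Smale on each torus, so every non-compact half-leaf accumulates on a compact leaf of the \emph{same} foliation. Lifting then gives asymptoticity to a periodic leaf of $\widetilde\cF^u_\cT$.

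There are two smaller points.

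First, ``the leaf is approached from inside $C'$'' does not put it in $\widetilde\cK^u_\cT$. A periodic band of $\widetilde\cT\setminus\widetilde\cK^u_\cT$ can contain periodic leaves of $\widetilde\cF^u_\cT$ in its interior: these are extra compact leaves of $\cF_\partial$ not in $\cK_\partial$, as in Example~\ref{ex: non-filling bar code}, and leaves of $C'$ do accumulate on them. To rule this out you must use that the half-leaf is $\widetilde\Theta(\sigma^u)$. Suppose it were asymptotic to such an interior leaf $c^*$ with stabilizer $\langle g\rangle$.
\begin{itemize}
\item Then $g$ preserves $C'$, hence also $C=\widetilde\Theta^{-1}(C')$ and $\partial C$.
\item Choose $t_n\to 1$ and $k_n$ with $g^{k_n}\widetilde\Theta(\sigma^u(t_n))\to y^*\in c^*$.
\item Equivariance and continuity of $\widetilde\Theta^{-1}$ give $g^{k_n}\sigma^u(t_n)\to\widetilde\Theta^{-1}(y^*)\in C$.
\item But for a $\pi_1(M)$-invariant metric, the distance from $g^{k_n}\sigma^u(t_n)$ to $\partial C$ tends to $0$, a contradiction.
\end{itemize}

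Second, in your topological variant of Step 1, the case $y\in\widetilde\cK^u_\cT$ cannot be excluded by ``bounded backward times''. The backward times from $\widetilde\Theta(\sigma^u(t))$ to $\sigma^u$ are exactly $\tau(t)\to+\infty$. The correct reason is that $\widetilde\cK^u_\cT$ is a union of leaves, hence disjoint from the leaf through $\widetilde\Theta(\sigma^u)\subset C'$.
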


\begin{proof}
Write $\widetilde\Theta(x)=\widetilde \phi^{\tau(x)}(x)$ on its domain, where $\tau(x)>0$ is the first return time of $x$
to $\widetilde \cT$.
Since $\widetilde\Theta$ is obtained by flowing along $\widetilde \phi$, it preserves the unstable foliation,
so $\widetilde\Theta(\sigma^u)$ is contained in a single leaf of $\widetilde \cF_\cT^u$ and is a connected half-open segment in~$C'$.

Let $p\in \partial C\cap \overline{\sigma^u}$ be the endpoint on $\partial C$.
By definition, $\partial C\subset \widetilde \cK_\cT^s$, and points of $\widetilde \cK_\cT^s$ are precisely those whose
positive orbit never meets $\widetilde \cT$ again.
By continuity of the return time on~$C$, we have $\tau(x)\to +\infty$ as $x\to p$ in~$\sigma^u$.

In particular, $\widetilde\Theta(\sigma^u)$ has infinite length inside its unstable leaf, hence is a half-leaf.
Finally, $\partial C'\subset \widetilde \cK_\cT^u$ by definition of $C'$.
By the structure of $\widetilde \cK_\cT^u$, any such nonperiodic unstable half ray in $C'$ accumulates on (and is asymptotic to)
a periodic leaf of $\widetilde \cK_\cT^u$, i.e.\ on a periodic boundary leaf of $C'$.
\end{proof}

\begin{figure}[h]
\labellist
\small

\pinlabel $\textcolor{blue}{\sigma^u}$ [bl] at 42 117
\pinlabel $\textcolor{blue}{\sigma^u}$ [b] at 401 61
\pinlabel $\textcolor{blue}{\tilde\Theta(\sigma^u)}$ [bl] at 205 120
\pinlabel $\textcolor{blue}{\tilde\Theta(\sigma^u)}$ [bl] at 576 79

\pinlabel $C$ at 18 82
\pinlabel $C'$ at 263 82

\pinlabel $C$ at 390 11
\pinlabel $C'$ at 575 11 

\pinlabel $\tilde\Theta$ [b] at 140 91
\pinlabel $\tilde\Theta$ [b] at 517 91

\pinlabel ${\color{green}\tilde\cK^s}$ [l] at 98 186
\pinlabel ${\color{red}\tilde\cK^u}$ [l] at 281 186
\pinlabel ${\color{green}\tilde\cK^s}$ [l] at 476 186
\pinlabel ${\color{red}\tilde\cK^u}$ [l] at 658 186

\endlabellist

    \centering
    \includegraphics[width=\linewidth]{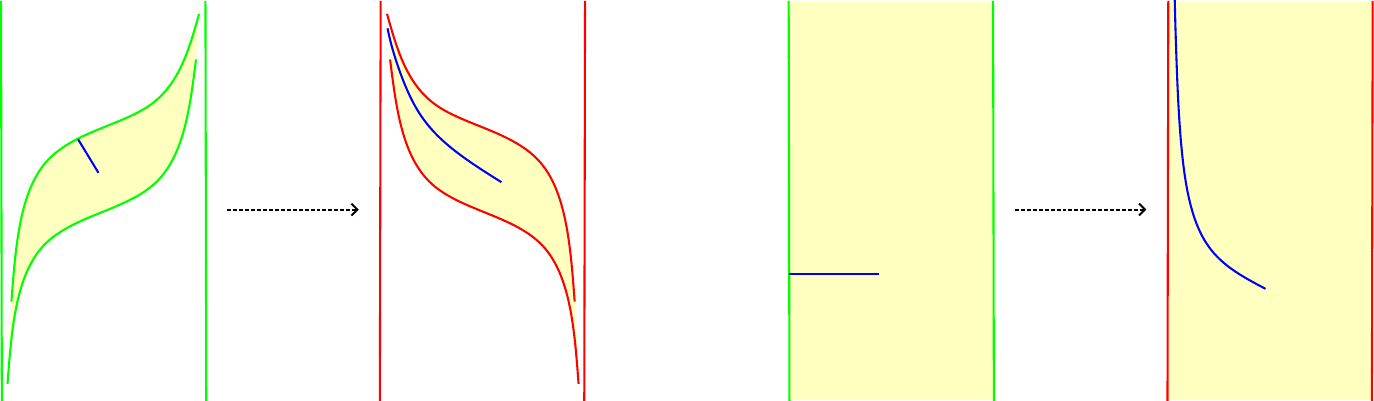}
    \caption{Action of the return map $\widetilde \Theta$ on a segment $\sigma^u$ of $\widetilde \cF_\cT^u$ in a non-periodic band (left) and in a periodic band (right).}
    \label{fig: return map transverse segment}
\end{figure}

The same argument yields a correspondence between accessible boundary leaves of $C$ and periodic boundary leaves of $C'$:

\begin{claim}\label{claim: corresponding nbh to side of periodic boundary}
For any accessible boundary leaf $\gamma$ of $C$, there exists a periodic boundary leaf $\gamma_p$ of $C'$ such that
if $(x_n)$ is any sequence in $C$ converging to a point in $\gamma$, then $\widetilde \Theta(x_n)$ accumulates on $\gamma_p$ in $C'$.
\end{claim}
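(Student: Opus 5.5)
The plan is to argue exactly as in the proof of Lemma~\ref{lem: action lift return map transverse segment}, but keeping track of the whole leaf $\gamma$ instead of a single segment.

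\emph{Step 1: continuity forces blow-up of the return time.} Since $\gamma\subset\partial C\subset\widetilde\cK_\cT^s$, every point of $\gamma$ has positive orbit never meeting $\widetilde\cT$ again. By continuity of the return time $\tau$ on $C$, $\tau(x_n)\to+\infty$ whenever $x_n\to p\in\gamma$.

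\emph{Step 2: locate the limits of $\widetilde\Theta(x_n)$ dynamically.} The leaf $\gamma$ lies in a stable leaf $\widetilde\cF^s(\gamma)$. The forward orbits of points of $\gamma$ stay inside a single complementary region of $\widetilde\cT$ in $\widetilde M$; by Claim~\ref{claim:liftedT_planes_noncrossing} this region is bounded by non-crossing planes. By the description of $\cK_\cT^s$ (the trace of the stable lamination of the maximal invariant set of the piece), the stable leaf of $\gamma$ contains, or is asymptotic to, a periodic orbit $\widetilde O$ inside that region. As $\tau(x_n)\to\infty$, the orbit segment from $x_n$ to $\widetilde\Theta(x_n)$ shadows the forward orbit of a point of $\gamma$ for longer and longer. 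The orbit therefore follows $\widetilde O$ for a long time before leaving along its unstable leaf $\widetilde\cF^u(\widetilde O)$.

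Hence $\widetilde\Theta(x_n)$ lies close, inside $\widetilde\cT$, to the trace $\widetilde\cF^u(\widetilde O)\cap\widetilde\cT$ on the relevant side. That trace is a periodic leaf of $\widetilde\cF^u_\cT$ whose stabilizer contains the stabilizer of $\widetilde O$, and it lies in $\widetilde\cK_\cT^u$. We set $\gamma_p$ to be the component of this trace that is asymptotic to $C'$. To check that $\gamma_p$ is a periodic boundary leaf of $C'$, apply Lemma~\ref{lem: action lift return map transverse segment} to unstable segments $\sigma^u\subset C$ ending on $\gamma$: their images are half-leaves asymptotic to a periodic boundary leaf of $C'$, and by the shadowing this leaf must be $\gamma_p$.

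\emph{Step 3: independence of $\gamma_p$ from the point and the sequence.} Two points of $\gamma$ lie on the same stable leaf, so they determine the same $\widetilde O$, or in the pronged case the same prong sector, since the approach is from the single side of $\gamma$ facing $C$. Moreover, $\gamma_p$ is discrete among periodic leaves. Using this, a diagonal argument shows that any sequence $x_n\to\gamma$ has $\widetilde\Theta(x_n)$ accumulating only on $\gamma_p$, which gives the conclusion of Claim~\ref{claim: corresponding nbh to side of periodic boundary}.

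\emph{Main obstacle.} I expect Step~2 to be the hardest: making the shadowing precise near boundary periodic orbits, where $\widetilde\cT$ is only quasi-transverse. There $\widetilde O$ may itself lie in $\widetilde\cT$, so the relevant unstable side must be chosen via the quadrant structure. I would handle this using the local product structure in the orbit space $\cQ_\phi$: $\gamma$ projects to a half-leaf of $\cQ^s$ ending at the periodic point, and the $x_n$ project into the adjacent lozenge. Under iteration by the stabilizer of $\widetilde O$, those points are pushed toward the unstable side of that lozenge, which singles out $\gamma_p$.
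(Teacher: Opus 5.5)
\textbf{There is a genuine gap, in Step 2.} Your whole argument hangs on one assertion: that $\widetilde\cF^s(\gamma)$ ``contains, or is asymptotic to'' a periodic orbit $\widetilde O$. The shadowing, the choice of $\gamma_p$, and the independence in Step 3 all depend on it. You justify it only by ``the description of $\cK^s_\cT$'', and that does not suffice.

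For a periodic band $C$ the assertion is automatic. The claim has real content when $C$ is a nonperiodic band, so that $C'$ has two distinct periodic boundary leaves. There $\gamma$ is a \emph{nonperiodic} leaf of $\widetilde\cK^s_\cT$. The only description available (Definition~\ref{def: periodic leaf}) is that $\gamma$ is asymptotic, \emph{inside $\widetilde\cT$}, to periodic leaves $c$ of $\widetilde\cK^s_\cT$. This says nothing about $\widetilde\cF^s(\gamma)$ itself:
\begin{itemize}
\item A typical leaf of the stable lamination of a hyperbolic set contains no periodic orbit.
\item A stable leaf in $\widetilde M$ whose orbits are forward asymptotic to a periodic orbit already contains it, so your alternative ``or is asymptotic to'' adds nothing.
\item If you take for $\widetilde O$ the periodic orbit lying in $\widetilde\cF^s(c)$, the shadowing breaks. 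The forward orbits of points of $\gamma$ lie in a different stable leaf and never approach that orbit.
\end{itemize}

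The fact you need is true, but it is the real content of the claim. Here is why it holds. Since $\widetilde\Theta$ moves points along orbits, $p(C)=p(C')$. A point $z\in\gamma$ has $p(z)$ in the closure of $p(C')$ but not in $p(\widetilde T')$, where $\widetilde T'\supset C'$, because its forward orbit never meets $\widetilde\cT$ again. Consequently $p(\gamma)$ lies on a stable side, at some corner $a'$, of a lozenge $L'$ of the chain $p(\widetilde T')$. The leaf $\gamma_p$ is then the periodic leaf over the unstable side of $L'$ at $a'$, or the orbit $a'$ itself.

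So the correct $\widetilde O$ is the orbit $a'$, a corner of the chain of $\widetilde T'$. The periodic orbits in the stable leaves of the leaves $c\subset\widetilde T$ are corners of the chain $p(\widetilde T)$. They are different orbits, since $p(\gamma)$ crosses the interior of a lozenge having $p(c)$ as a side.

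You have two ways to close the gap.
\begin{itemize}
\item Prove this orbit-space picture. It supplies the right periodic orbit and then makes your shadowing argument and Step 3 legitimate.
\item Avoid periodic orbits, as the paper does. It only says the argument of Lemma~\ref{lem: action lift return map transverse segment} applies: return times blow up, so images of unstable arcs ending at $z\in\gamma$ are half-leaves asymptotic to a periodic boundary leaf $\gamma_p(z)$ of $C'$.
\end{itemize}
On the second route you still owe two things: that $z\mapsto\gamma_p(z)$ is locally constant along $\gamma$, and that arbitrary sequences $x_n\to z$ behave like points on these arcs. Your ``discreteness plus a diagonal argument'' does not provide this continuity by itself. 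What does provide it is the following: join unstable arcs ending at nearby points of $\gamma$ by short stable arcs in a product chart. Their return times go to infinity, so stable contraction makes their images asymptotic.
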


\section{Free homotopy data of blocks}

\label{sec: free data block}

The goal of this section is to prove Theorem~\ref{thmintro: tool thm for free homotopy data}.
Roughly speaking, it provides a criterion ensuring that two \pafs{} on a given manifold $M$
have the same free homotopy data globally, provided that their free homotopy data agree \emph{piecewise}
on a block decomposition and that the dynamics along the boundary tori match in the appropriate sense.
The main tool is a partial order on certain subchains (lines) inside the chains of lozenges associated to
the boundary tori.
This order is designed to encode which orbit segments can travel from one transverse lift to another in the universal cover,
and therefore to detect fixed points of deck transformations in the orbit space, which is exactly what records free homotopy classes of periodic orbits.

\subsection{Trace of quasi-transverse tori}\label{subsec: trace qt tori}

In the previous section, we described the geometry of the lifts $\widetilde \cT$ inside $\widetilde M$ in terms of
bands (periodic/nonperiodic bands, maximal bands, sides of periodic boundary leaves).
The goal of the present subsection is to translate this ``band picture'' into the orbit space.
This translation is crucial for the proof of Theorem~\ref{thmintro: tool thm for free homotopy data}:
the partial order introduced later is defined purely in the orbit space, and it is specifically designed to
detect when an orbit segment can travel from one transverse lift to another.

Let $\cQ_\phi$ be the orbit space of the lifted flow $\widetilde\phi$ and let $(\cQ^s,\cQ^u)$ be the induced stable and unstable bifoliation on~$\cQ_\phi$.
We denote by $\pi\colon \widetilde M\to M$ and $p\colon \widetilde M\to \cQ_\phi$ the projections.
See Section~\ref{sec: preli} for the setup.
We use the \emph{adjacency type} of consecutive lozenges in a chain to indicate whether they share exactly one corner, a stable side, or an unstable side.
We refer to Definitions~\ref{def: lozenge} and~\ref{def: chain lozenges} for lozenges and chains of lozenges.

\begin{defi}[Minimal chain]\label{def:minimal_chain}
Let $\cC=\{L_i\}_{i\in\Z}$ be a bi-infinite chain of lozenges in $\cQ_\phi$.
We say that $\cC$ is \emph{minimal} if it contains no proper bi-infinite subchain of lozenges.
\end{defi}

Equivalently, for every $i$, the three consecutive lozenges $L_{i-1},L_i,L_{i+1}$ do not all share a common corner
(see \cite[Lemma~3.2.3]{barthelmePseudoAnosovFlowsPlane2025}).

\begin{defi}[Line of lozenges]\mbox{}\label{def: line lozenge}
A \emph{line of lozenges} is a (finite or infinite) chain of lozenges $\cL=\{L_j\}$ such that any two consecutive lozenges share a side of the same foliation. More precisely
\begin{itemize}
    \item A \emph{$u$-line of lozenges} is a line $\cL$ such that any two consecutive lozenges share an \emph{unstable} side.
    \item A \emph{$s$-line of lozenges} is a line $\cL$ such that any two consecutive lozenges share a \emph{stable} side.
\end{itemize}
\end{defi}

Notice that $\cL$ is a $u$-line if and only if every stable leaf crossing one lozenge of $\cL$
crosses every lozenge of $\cL$; equivalently, $\cL$ is contained in the stable saturation of any lozenge $L\in\cL$\footnote{Note that this is the opposite convention to the one chosen in \cite{barbotPseudoAnosovFlowsToroidal2013a}, i.e., their $s$-scalloped bi-infinite chain of lozenges would correspond to a bi-infinite $u$-line for us.}.
Similarly for $s$-lines (with stable and unstable swapped).
Also, with this convention a single lozenge is both a $u$-line and an $s$-line.

\begin{defi}[Maximal line inside a chain]\label{def: maximal line}
Let $\cC$ be a chain of lozenges in $\cQ_\phi$.
\begin{enumerate}
    \item If $\cL\subset \cC$ is a line of lozenges, we say that $\cL$ is \emph{maximal in $\cC$} if there is no line of lozenges $\cL'\subset\cC$ with $\cL\subsetneq \cL'$.
    \item A lozenge $L\in \cL$ is \emph{extremal} if it is the first or last lozenge of the (indexed) line $\cL$.
    \item A corner $p$ in a maximal line $\cL$ is \emph{extremal} if it is the first or last corner of the line
    (for the natural indexing where $p_i$ is the unique corner common to $L_i$ and $L_{i+1}$).
\end{enumerate}
\end{defi}

Since a lozenge in a minimal chain is contained in at most two maximal lines of lozenges, we immediately obtain:

\begin{claim}[Decomposition of a minimal chain into maximal lines]\label{claim: chain decompo lines}
Let $\cC=\{L_i\}_i$ be a minimal chain of lozenges.
Either $\cC$ is itself a bi-infinite line, or there is a unique decomposition
$\cC=\bigcup_j \cL_j$
into maximal lines of lozenges $\cL_j$, where each consecutive pair $\cL_j,\cL_{j+1}$
meets either at an extremal corner or along an extremal lozenge.
\end{claim}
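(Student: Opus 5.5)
The approach is to cut the minimal chain at the places where the adjacency type changes, and to use the two facts already recorded just before the statement. A lozenge in a minimal chain lies in at most two maximal lines. By \cite[Lemma~3.2.3]{barthelmePseudoAnosovFlowsPlane2025}, no three consecutive lozenges $L_{i-1},L_i,L_{i+1}$ share a common corner.

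Index $\cC=\{L_i\}_{i\in\Z}$ and record, for each $i$, the adjacency type $\tau_i\in\{s,u,c\}$ of the pair $(L_i,L_{i+1})$: shared stable side, shared unstable side, or only a corner. If $\tau_i$ is constantly $s$ or constantly $u$, then $\cC$ is a bi-infinite line and we are in the first alternative. Otherwise we look at the maximal runs $\{i : a\le i< b\}$ on which $\tau_i$ is constantly $s$ (respectively $u$). Each such run gives an $s$-line (respectively $u$-line) $\{L_a,\dots,L_b\}$. Any lozenge $L_i$ that is not contained in such a run, that is, with $\tau_{i-1},\tau_i\in\{c\}$ or at a change of type flanked by corners, is taken as a one-lozenge line.

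Next I would check maximality and that these are all the maximal lines. A line inside $\cC$ is a subchain whose consecutive members share a side of a fixed foliation. Because $\cC$ is minimal, hence has no branching, consecutive members of a line must be consecutive in the indexing. Concretely, if $L_i$ and $L_j$ with $|i-j|\ge2$ shared a side, the lozenges in between would close up a bi-infinite subchain skipping them, contradicting minimality. So lines correspond exactly to intervals of indices with constant type $s$ or constant type $u$, and maximal lines are the maximal such intervals (together with the isolated singletons). Uniqueness follows because this description depends only on $\cC$.

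It remains to describe how consecutive maximal lines meet. There are two cases.
\begin{enumerate}
\item If $\tau_{b-1}=s$ and $\tau_b=u$, the lozenge $L_b$ is the last lozenge of one line and the first of the next, so the two lines meet along an extremal lozenge.
\item If $\tau_b=c$, the lines ending at $L_b$ and starting at $L_{b+1}$ share only the corner $p_b$. This corner is extremal in both lines by the indexing convention of Definition~\ref{def: maximal line}.
\end{enumerate}
The statement that a lozenge lies in at most two maximal lines rules out overlaps larger than one lozenge. Triple corners are excluded by minimality.

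I expect the main obstacle to be the step excluding non-consecutive lozenges sharing sides, i.e.\ showing that a line in $\cC$ is an index interval. I would first try to argue it with the planar topology of lozenges: two lozenges sharing a side, together with the chain linking them, bound a region that forces a skipped lozenge to share a corner with three others, which \cite[Lemma~3.2.3]{barthelmePseudoAnosovFlowsPlane2025} forbids.
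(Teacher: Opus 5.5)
Your proposal is correct and follows essentially the reasoning the paper intends: the paper simply asserts the decomposition as immediate from the fact that a lozenge of a minimal chain lies in at most two maximal lines, and your splitting into maximal runs of constant adjacency type is a fleshed-out version of that. The step you flag as the main obstacle is already settled by your own argument, so no planar-topology argument is needed: a shared side forces a shared corner, and skipping the intermediate lozenges then produces a proper bi-infinite subchain, contradicting minimality.
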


\begin{figure}[h]
\labellist
\small

\pinlabel {\colorbox{jauneclair}{$\cL_1$}} at 130 95
\pinlabel {\colorbox{orangeclair}{$\cL_2$}} at 190 156
\pinlabel {\colorbox{roseclair}{$\cL_3$}} at 331 250

\endlabellist

    \centering
    \includegraphics[width=0.5\linewidth]{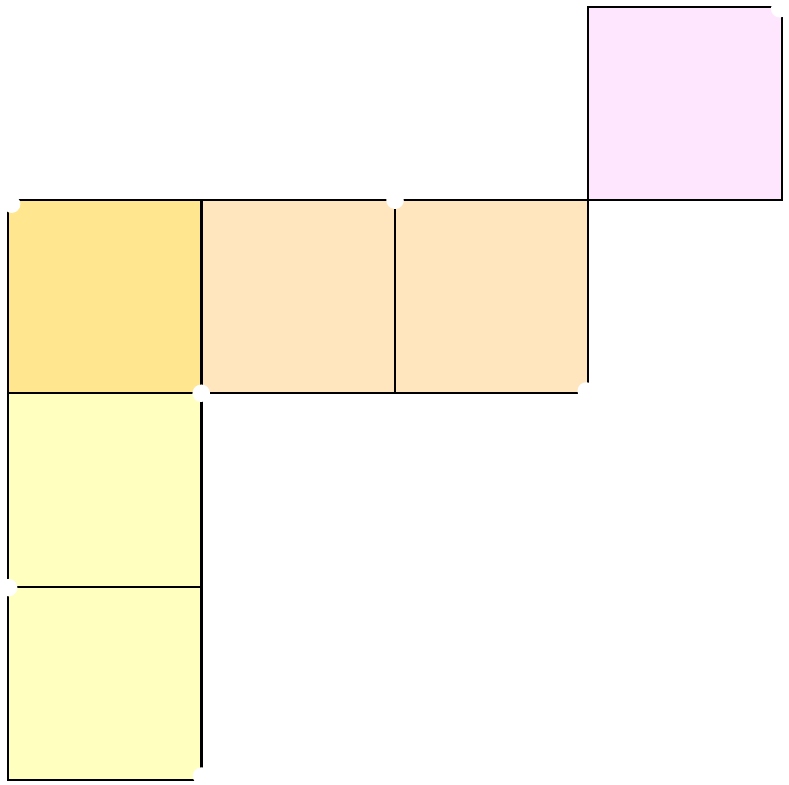}
    \caption{Decomposition of a minimal chain into maximal lines of lozenges.}
    \label{fig: line in chain}
\end{figure}

\medskip

We now return to the good collection $\cT$ of quasi-transverse tori in $M$, its complete lift $\widetilde \cT$ in $\widetilde M$,
and its \emph{trace} in the orbit space which is the collection of chains
\[
\cC := p(\widetilde \cT) \subset \cQ_\phi.
\]
By \cite[Proposition~5.3.5]{barthelmePseudoAnosovFlowsPlane2025}, $\cC$ is a collection of minimal chains of lozenges,
finite up to the $\pi_1(M)$-action.
We now describe how the periodic band decomposition of $\widetilde \cT$ translates in $\cQ_\phi$.

Let $B\subset \widetilde \cT$ be a periodic band.
If $B$ has no periodic leaves in its interior, then $B$ projects to a single lozenge in $\cQ_\phi$;
we call such a band \emph{elementary}.
If $B$ is a union of $n$ elementary bands $B_1,\dots,B_n$ glued along a periodic boundary leaves
(for instance along a periodic unstable leaf),
then its projection is a $u$-line of $n$ lozenges sharing unstable sides.
In particular we obtain:

\begin{claim}\label{claim: cylinder and lines}
The projection of any maximal periodic band of $\widetilde \cT$ to $\cQ_\phi$ is a maximal line of lozenges in the trace $\cC$.
\end{claim}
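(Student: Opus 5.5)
The plan is to understand the projection $p$ restricted to a periodic band, then to assemble elementary bands into lines, and finally to check maximality on both sides.

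\emph{Step 1: elementary bands.} Let $B$ be an elementary periodic band. Its two boundary leaves are periodic leaves of $\widetilde \cF_\cT^s$ or $\widetilde \cF_\cT^u$, or lifts of tangent periodic orbits, and each is fixed by an element $g\in\pi_1(M)$. Since $B$ contains no periodic leaf in its interior, the stabilizer of $B$ is the cyclic group generated by such a $g$. The restriction of $p$ to the interior of $B$ is injective, because the interior of a lift of a quasi-transverse torus is transverse to $\widetilde\phi$ and meets each orbit at most once. It is also open, with image invariant under $g$. Hence $p(B)$ is an open subset of $\cQ_\phi$ bounded by the projections of the two periodic boundary leaves, which are half-leaves through the $g$-fixed points. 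By \cite[Proposition~5.3.5]{barthelmePseudoAnosovFlowsPlane2025}, the closure of $p(\widetilde T)$ is a chain of lozenges, so $p(B)$ must be exactly one lozenge $L$ of the trace $\cC$, with corners the two $g$-fixed points. Each of its sides is either the projection of a periodic leaf bounding $B$ or the perfect-fit side asymptotic to it. This is the statement for elementary bands given in the text.

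\emph{Step 2: gluing elementary bands into lines.} Write a maximal periodic band $B$ as $B_1\cup\dots\cup B_n$, a union of consecutive elementary bands glued along interior periodic leaves $\ell_1,\dots,\ell_{n-1}$. By Definition~\ref{def: maximal band}, these leaves all belong to one foliation, say $\widetilde \cF_\cT^u$. Then $p(\ell_j)$ is an unstable half-leaf lying in the boundary of both $p(B_j)$ and $p(B_{j+1})$, so these two lozenges share an unstable side. Therefore $p(B)$ is a $u$-line of $n$ lozenges in $\cC$, with $n$ possibly infinite.

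\emph{Step 3: maximality, the main obstacle.} I need to show that no lozenge of $\cC$ outside $p(B)$ shares an unstable side with an extremal lozenge of $p(B)$. Suppose a lozenge $L'$ shares the unstable side $p(\ell)$ with the last lozenge $p(B_n)$, where $\ell$ is the boundary leaf of $B_n$ not in the interior of $B$. By Claim~\ref{claim:liftedT_planes_noncrossing} and minimality of the chain, $L'$ is the projection of an elementary band $B'$ of the same plane $\widetilde T$ adjacent to $B_n$ along a lift of $\ell$. That lift is then an unstable periodic leaf of $\widetilde T$, so $B\cup B'$ is a periodic band whose interior contains only unstable compact leaves. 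This contradicts the maximality of $B$.

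The delicate point is a converse case: the boundary leaf of $B$ could be a lift of a tangent periodic orbit, or a stable leaf, at which $\widetilde T$ turns. There I would use the characterisation of corners in $\widehat T$ given by \cite[Proposition~5.3.5]{barthelmePseudoAnosovFlowsPlane2025}. Such a boundary leaf projects to a corner point, and the adjacent lozenges meet either diagonally or along a stable side. In neither case do they share an unstable side, so the line stops there, as Claim~\ref{claim: chain decompo lines} requires. Checking carefully that tangent orbits in $\Gamma_\mm$ cannot produce a hidden shared unstable side is what I expect to take the most work.
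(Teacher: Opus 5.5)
Your Steps 1 and 2 are the paper's whole argument. The paper only notes that an elementary band projects to one lozenge, and that elementary bands glued along unstable periodic leaves project to a $u$-line. It then asserts maximality with ``in particular''. Step 3 therefore goes beyond the text, and as written it does not close.

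\textbf{The first case.} Suppose the outer boundary leaf $\ell$ of $B_n$ were an unstable periodic leaf. Then the band of $\widetilde T$ across $\ell$ directly contradicts maximality of $B$, so the hypothetical $L'$ plays no role. Also, what identifies a lozenge containing $p(\ell)$ with that band is injectivity of $p$ on $\widetilde T$ off the tangent orbits, not Claim~\ref{claim:liftedT_planes_noncrossing}.

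\textbf{The gap is in the converse case.} There $\ell$ is a stable periodic leaf or a tangent orbit. You only check that $L_n=p(B_n)$ and the lozenge $L_{n+1}$ coming from the band across $\ell$ share no unstable side. But $L_n$ still has a second unstable side. Maximality in the chain requires that \emph{no} lozenge of the chain has it as a side, whether or not that lozenge comes from a band adjacent in $\widetilde T$. So ``the line stops there'' does not follow. Separately, a stable periodic leaf projects to a stable side, not to a corner point.

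\textbf{How to close it.} Let $a,b$ be the corners of $L_n$ and $g$ a generator of its stabilizer.
\begin{enumerate}
\item The $g$-orbits in $L_n$ run between the two perfect-fit ideal points. So the two ends of the annulus $L_n/\langle g\rangle\cong B_n/\langle g\rangle$ have frontiers $l^u_a\cup\{a\}\cup l^s_a$ and $l^s_b\cup\{b\}\cup l^u_b$. The two boundary leaves of $B_n$ project into different frontiers.
\item Normalize so that $p(\ell_{n-1})=l^u_a$. Then $p(\ell)$ is $l^s_b$ or $\{b\}$, since $l^u_b$ is excluded by the first case. Hence $L_{n+1}$ has $b$ as a corner.
\item Any lozenge with the remaining unstable side $l^u_b$ also has corner $b$. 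It lies in the quadrant at $b$ across $l^u_b$, so it is distinct from both $L_n$ and $L_{n+1}$.
\item Three lozenges of the chain would then share $b$, contradicting \cite[Proposition~5.3.5]{barthelmePseudoAnosovFlowsPlane2025}.
\end{enumerate}

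The same argument works at $L_1$. You also need the case $n=1$, since a single lozenge is an $s$-line as well as a $u$-line. Maximality of $B$ forces both boundary leaves to be tangent orbits there. The argument then rules out lozenges sharing any of the four sides.
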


\subsection{Lines and order} \label{subsec_lines_order}

We now introduce a partial order on lines of lozenges.
This order generalizes the partial orders used in \cite{BM24,BFM25},
and it serves the same purpose: controlling the free homotopy data by detecting when deck transformations have fixed points in appropriate regions of the orbit space.

\begin{defi}[Simple collection of chains]\label{def:simple_collection}
A collection $\cC$ of chains of lozenges is \emph{simple} if for every corner $c$ of a lozenge in $\cC$,
the $\pi_1(M)$-orbit of $c$ does not meet the interior of any lozenge of $\cC$.
\end{defi}

\begin{defi}[Order on simple lines]\label{def:order_lines}
Let $\cL$ and $\cL'$ be two simple lines of lozenges.
We say that $\cL \prec \cL'$ if
$\cL \subset \cQ^u(\cL')$
and
$\cL' \subset \cQ^s(\cL).$
\end{defi}

The relation $\prec$ is clearly transitive and $\pi_1(M)$-invariant.
Moreover, two lines are comparable for $\prec$ if and only if they have a Markovian intersection.

Using the non-corner criterion (see \cite[Lemma~2.4.7]{barthelmePseudoAnosovFlowsPlane2025}),
one checks the following case-by-case description (see Figure~\ref{fig: markov intersection}):

\begin{lem}\label{lem: line intersection type}
Suppose $\cL \prec \cL'$. Then:
\begin{enumerate}
    \item\label{lem: line intersection; it: u_u}
    If both $\cL$ and $\cL'$ are $u$-lines, then there exists a lozenge $L'\in \cL'$ such that $\cL \prec L'$.
    \item\label{lem: line intersection; it: s_s}
    If both $\cL$ and $\cL'$ are $s$-lines, then there exists a lozenge $L\in \cL$ such that $L \prec \cL'$.
    \item\label{lem: line intersection; it: s_u}
    If $\cL$ is an $s$-line and $\cL'$ is a $u$-line, then there exist unique lozenges $L\in \cL$ and $L'\in \cL'$ such that $L \prec L'$.
    \item\label{lem: line intersection; it: u_s}
    If $\cL$ is a $u$-line and $\cL'$ is an $s$-line, then for every $L\in \cL$ and $L'\in \cL'$ we have $L \prec L'$.
\end{enumerate}
\end{lem}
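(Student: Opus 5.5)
The plan is to reduce everything to the order on individual lozenges, where $L\prec L'$ means $L\subset\cQ^u(L')$ and $L'\subset\cQ^s(L)$. Concretely, $L$ crosses the unstable saturation of $L'$ "vertically" and $L'$ crosses the stable saturation of $L$ "horizontally". First we record the structure of the saturations of a line. If $\cL$ is a $u$-line, then by the remark after Definition~\ref{def: line lozenge} we have $\cQ^s(\cL)=\cQ^s(L)$ for every $L\in\cL$. By contrast, $\cQ^u(\cL)$ is the union of the pairwise disjoint unstable saturations $\cQ^u(L_j)$, which are separated by the shared unstable sides. The symmetric statement holds for $s$-lines. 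The four items then follow by combining these saturation identities with the non-corner criterion (\cite[Lemma~2.4.7]{barthelmePseudoAnosovFlowsPlane2025}). That criterion is what forbids a lozenge crossing a saturation from straddling a shared side: such a lozenge would contain a corner of $\cL$ or $\cL'$ in its interior, or in the interior of a stable or unstable segment through it, and this contradicts the simplicity assumption.

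Item (\ref{lem: line intersection; it: u_s}) is direct. Since $\cL$ is a $u$-line, $\cQ^s(\cL)=\cQ^s(L)$ for every $L\in\cL$. Since $\cL'$ is an $s$-line, $\cQ^u(\cL')=\cQ^u(L')$ for every $L'\in\cL'$. Hence $L\subset\cL\subset\cQ^u(\cL')=\cQ^u(L')$ and $L'\subset\cL'\subset\cQ^s(\cL)=\cQ^s(L)$, so $L\prec L'$.

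For item (\ref{lem: line intersection; it: u_u}), both lines are $u$-lines. The second condition gives $L'\subset\cQ^s(\cL)$ for every $L'\in\cL'$. The first condition says $\cL$ lies in $\cQ^u(\cL')=\bigsqcup_j\cQ^u(L'_j)$, up to the shared unstable sides. The key step is to show that the connected set $\cL$ meets the interior of only one piece $\cQ^u(L'_j)$. Suppose instead that $\cL$ crossed a shared unstable side $l$ of $L'_j$ and $L'_{j+1}$. The stable leaves through points of $\cL$ near $l$ fill an open band of stable leaves. On the other hand, $l$ is an unstable half-leaf ending at the corner of $L'_j$ and $L'_{j+1}$, and that corner lies on a stable leaf which must then intersect $\cL$. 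Using $\cL'\subset\cQ^s(\cL)$ and the perfect-fit structure at that corner, the non-corner criterion then places a corner in the interior of a lozenge of $\cL$ or produces a contradiction with minimality. So $\cL\subset\cQ^u(L'_{j_0})$ for some single $j_0$, and then $\cL\prec L'_{j_0}$. Item (\ref{lem: line intersection; it: s_s}) is the same argument with stable and unstable exchanged, or equivalently follows from item (\ref{lem: line intersection; it: u_u}) by reversing the flow.

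For item (\ref{lem: line intersection; it: s_u}), $\cL$ is an $s$-line, so $\cQ^s(\cL)=\bigsqcup_i\cQ^s(L_i)$. The connected $u$-line $\cL'$ sits inside this set, and the same no-straddling argument as in item (\ref{lem: line intersection; it: u_u}) puts it in a single $\cQ^s(L_{i_0})$. Symmetrically, $\cL\subset\cQ^u(L'_{j_0})$ for a single $j_0$. Then $L_{i_0}\prec L'_{j_0}$. Uniqueness holds because a different $L_i$ has stable saturation disjoint from $\cQ^s(L_{i_0})\supset L'_{j_0}$, and similarly for $L'_j$.

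The main obstacle is the no-straddling step: showing that a line cannot cross a shared side of the other line. This is exactly where the non-corner criterion and the simplicity of the collection must be used carefully, with a case analysis on which side of the shared corner the perfect fits lie.
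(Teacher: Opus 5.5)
Your overall strategy (saturation identities for $u$- and $s$-lines, then simplicity plus the non-corner criterion) is the one the paper intends; its own proof is only the sentence ``one checks case by case''. Your argument for item~(4) is complete. The gap is the no-straddling step, on which items~(1)--(3) rest entirely.

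\textbf{The reduction misses a case.} You reduce ``$\cL$ meets two pieces $\cQ^u(L'_j)$ and $\cQ^u(L'_{j+1})$'' to ``some lozenge of $\cL$ crosses the shared side $l$''. But $\cL$ can also pass from $\cQ^u(L'_j)$ to $\cQ^u(L'_{j+1})$ through one of its \emph{own} shared unstable sides lying on the leaf $\cQ^u(b')\supset l$.
\begin{itemize}
\item In that configuration no corner of $\cL'$ enters the interior of a lozenge of $\cL$, so simplicity and the non-corner criterion give nothing.
\item The conclusion is then false as literally stated. Take $\cL=\cL'$, a maximal $u$-line with at least two lozenges. By the definition, $\cL\prec\cL$, since every set lies in its own saturations. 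The collection $\{\cL\}$ is simple. Yet $\cL\not\subset\cQ^u(L')$ for every $L'\in\cL$.
\item So item~(1) cannot follow from the hypotheses as you use them. You must make explicit the non-degeneracy the paper uses implicitly (``comparable iff Markovian intersection'', i.e.\ distinct lines sharing no lozenge).
\item You then need an input you never invoke to exclude this configuration. Corners of the traces are periodic points (fixed by nontrivial deck transformations), and a leaf of $\cQ^u$ carries at most one periodic point. Hence the corner of $\cL$ on $\cQ^u(b')$ equals $b'$.
\item Next, $\cL'\subset\cQ^s(\cL)$ forces the shared sides of $\cL$ and $\cL'$ at $b'$ to be the same half-leaf, because the two unstable half-leaves at $b'$ are crossed by disjoint families of stable leaves. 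So the lozenges of $\cL$ and $\cL'$ at $b'$ lie in the same quadrants and coincide, contradicting non-degeneracy.
\end{itemize}

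\textbf{The case you do treat is also incomplete.} From $b'\in\cL'\subset\cQ^s(\cL)$ you only get that $\cQ^s(b')$ meets the \emph{closed} line $\cL$.
\begin{itemize}
\item If $\cQ^s(b')$ crosses the open strip, then, since $\cL$ is a $u$-line, it crosses the very lozenge $L$ that meets $\cQ^u(b')$. Both leaves through $b'$ then meet $L$, so $b'\in L$, contradicting simplicity. This part is fine.
\item However, $\cQ^s(b')$ may instead be a stable boundary leaf of the strip. For example, $b'$ may lie in the interior of a stable side of $L$, and simplicity (which only concerns interiors of lozenges) does not forbid this.
\item This case must be ruled out separately. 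You can use periodicity of corners again, or the perfect fit of the stable side of $L'_j$ or $L'_{j+1}$ at $b'$ combined with $\cL'\subset\cQ^s(L)$.
\end{itemize}
The appeal to ``minimality'' does no work in either case. Items~(2) and~(3) inherit both issues, since you derive them from the same no-straddling claim.
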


\begin{figure}[h]
\labellist
\small

\pinlabel $\cL$ [br] at 93 691
\pinlabel $\cL'$ [br] at 2 623

\pinlabel $\cL$ [bl] at 753 691
\pinlabel $\cL'$ [bl] at 808 664

\pinlabel $\cL$ [br] at 171 405
\pinlabel $\cL'$ [br] at 92 315

\pinlabel $\cL'$ [bl] at 816 243
\pinlabel $\cL$ [bl] at 679 381

\pinlabel $L'$ [br] at 314 546
\pinlabel $L$ [br] at 246 92

\pinlabel $L$ [br] at 679 98
\pinlabel $L'$ [br] at 748 167

\endlabellist

    \centering
    \includegraphics[width=0.75\linewidth]{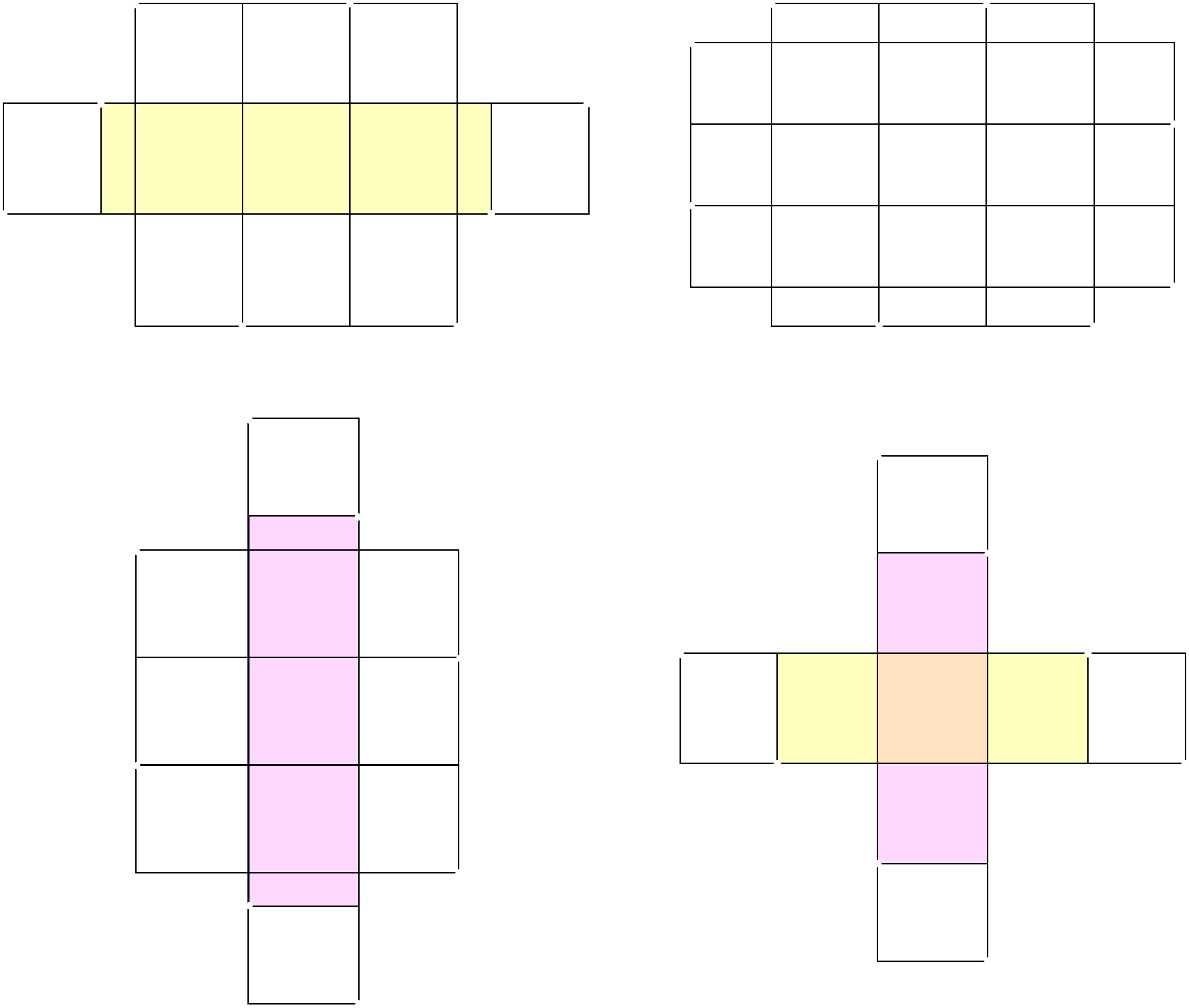}
    \caption{The four possible Markovian intersection types for lines $\cL \prec \cL'$. Here $\cQ^u$ is the vertical foliation and $\cQ^s$ is the horizontal foliation. Top left: Case \ref{lem: line intersection; it: u_u}. Bottom left: Case \ref{lem: line intersection; it: s_s}.Bottom right: Case \ref{lem: line intersection; it: s_u}. Top right: Case \ref{lem: line intersection; it: u_s}.}
    \label{fig: markov intersection}
\end{figure}

In particular, the order on lines is equivalent to the existence of a comparable pair of lozenges:

\begin{lem}\label{lem: order line equiv order lozenge}
Let $\cL,\cL'$ be simple lines of lozenges.
Then $\cL \prec \cL'$ if and only if there exist lozenges $L\in\cL$ and $L'\in\cL'$ such that $L \prec L'$.
\end{lem}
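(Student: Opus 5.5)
The plan is to prove the two implications separately. The reverse direction should be immediate from Lemma~\ref{lem: line intersection type}, and the forward direction is a short unwinding of Definition~\ref{def:order_lines}.

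\emph{($\Leftarrow$).} Suppose $L\in\cL$ and $L'\in\cL'$ satisfy $L\prec L'$, that is, $L\subset\cQ^u(L')$ and $L'\subset\cQ^s(L)$. We want $\cL\subset\cQ^u(\cL')$ and $\cL'\subset\cQ^s(\cL)$, and I would split according to the types of the two lines.

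\begin{enumerate}
\item Suppose $\cL$ is a $u$-line. By the remark after Definition~\ref{def: line lozenge}, every lozenge of $\cL$ lies in $\cQ^s(L)$. Hence $\cL'\subset\cQ^s(L)\subset\cQ^s(\cL)$ for the lozenge $L'$, and the remaining lozenges of $\cL'$ need a separate argument. If $\cL'$ is an $s$-line, then $\cL'\subset\cQ^u(L')$. I would argue that $\cL'$ is contained in the saturated rectangle determined by $L$ and $L'$, using the fact that the stable leaves crossing $L$ cross all of $\cL$.
\item If $\cL'$ is a $u$-line, the claim for $\cL'$ is the symmetric statement.
\end{enumerate}

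The cleanest way to organize this is the observation that two lines are comparable exactly when they have a Markovian intersection. The saturations $\cQ^s(\cL)$ and $\cQ^u(\cL')$ are connected unions of saturations of consecutive lozenges. Two consecutive lozenges share a side, or share only a corner at the ends of a line. Starting from the pair $(L,L')$, I would propagate the Markovian intersection along each line one lozenge at a time. At each step I would use the non-corner criterion \cite[Lemma~2.4.7]{barthelmePseudoAnosovFlowsPlane2025} together with simplicity: because of simplicity, a corner of $\cL$ cannot lie in the interior of a lozenge of $\cL'$. Consequently the next lozenge cannot ``escape'' the saturation of the other line across a corner.

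\emph{($\Rightarrow$).} Here I would apply Lemma~\ref{lem: line intersection type} case by case.

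\begin{itemize}
\item[(1)] When both are $u$-lines, the lemma gives $L'$ with $\cL\prec L'$. Then any $L\in\cL$ satisfies $L\subset\cQ^u(L')$. For the other half, $L'\subset\cQ^s(\cL)=\cQ^s(L)$, since for a $u$-line all lozenges have the same stable saturation.
\item[(2)] When both are $s$-lines, the argument is symmetric.
\item[(3)] When $\cL$ is an $s$-line and $\cL'$ a $u$-line, the lemma gives the comparable pair directly.
\item[(4)] When $\cL$ is a $u$-line and $\cL'$ an $s$-line, any pair of lozenges works.
\end{itemize}

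The main obstacle is the propagation step in ($\Leftarrow$), in the case where $\cL$ is an $s$-line and $\cL'$ a $u$-line. There the saturations of individual lozenges differ, so containment in a saturation does not transfer for free. I would first try to show that the Markovian-intersection rectangle determined by $L\cap L'$ already witnesses the global containments, with simplicity excluding the degenerate configurations in which a corner lies in the interior of a lozenge.
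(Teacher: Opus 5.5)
Your ($\Rightarrow$) direction is correct and matches the paper, which reads it off from Lemma~\ref{lem: line intersection type} case by case. Your opening sentence has the two directions swapped, but the body is consistent.

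The gap is in ($\Leftarrow$). This direction carries all the content, and you have placed the difficulty in the wrong case. When $\cL$ is an $s$-line and $\cL'$ a $u$-line, nothing needs to be propagated. All lozenges of an $s$-line have the same \emph{unstable} saturation, and all lozenges of a $u$-line have the same \emph{stable} saturation. These are exactly the saturations that occur in Definition~\ref{def:order_lines}, so
\[
\cL\subset\cQ^u(L)\subset\cQ^u(L')\subset\cQ^u(\cL'),\qquad
\cL'\subset\cQ^s(L')\subset\cQ^s(L)\subset\cQ^s(\cL).
\]
This is how the paper disposes of that case in one line. The saturations that ``differ'' there are the ones that are never used.

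The non-trivial containments are the other two, and your proposal establishes neither.
\begin{itemize}
\item If $\cL$ is a $u$-line, you need $\cL\subset\cQ^u(\cL')$: every lozenge of the row $\cL$, not just $L$, must lie in $\cQ^u(L')$. The paper shows that every lozenge of $\cL$ has Markovian intersection with $L'$. Your case~1 never addresses this containment.
\item If $\cL'$ is an $s$-line, you need the other lozenges of $\cL'$ to lie in $\cQ^s(\cL)$. In your case~1, the fact you invoke (stable leaves crossing $L$ cross all of $\cL$) only gives $\cQ^s(\cL)=\cQ^s(L)$. It does not explain why the other lozenges of $\cL'$ land there. The case where both lines are $s$-lines is not treated at all. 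The paper shows $\cL'\cap\cL=\cL'\cap L$, so that in fact $\cL'\subset\cQ^s(L)$.
\end{itemize}

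In both cases the point is that a line cannot extend past a side of the single lozenge $L'$ (resp.\ $L$). If it did, the corner of $L'$ (resp.\ $L$) from which that side emanates would have to sit inside a lozenge of $\cL$ (resp.\ $\cL'$). The non-corner criterion \cite[Lemma~2.4.7]{barthelmePseudoAnosovFlowsPlane2025}, together with simplicity, rules this configuration out.

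Your ``propagate one lozenge at a time'' paragraph names the right tools. As written, though, it is a plan rather than an argument, and it is aimed at the one case where it is not needed.
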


\begin{proof}
The forward implication follows from Lemma~\ref{lem: line intersection type}.
For the converse, assume $L \prec L'$.

If $\cL$ is an $s$-line, then $\cQ^u(\cL)=\cQ^u(L)$ and $\cQ^s(L)\subset \cQ^s(\cL)$.
If $\cL'$ is a $u$-line, then
\[
\cL \subset \cQ^u(L) \subset \cQ^u(L') \subset \cQ^u(\cL'),
\qquad
\cL' \subset \cQ^s(L') \subset \cQ^s(L) \subset \cQ^s(\cL),
\]
hence $\cL \prec \cL'$.
If $\cL'$ is an $s$-line, the non-corner criterion implies that $\cL'\cap \cL=\cL'\cap L$,
and the same conclusion follows.

If instead $\cL$ is a $u$-line, then the non-corner criterion forces every lozenge of $\cL$ to have Markovian intersection with $L'$,
which again yields $\cL \prec \cL'$.
\end{proof}

\medskip

The key point is that the relation $\prec$ admits a dynamical reformulation in $\widetilde M$.
Since $p\colon \widetilde M\to \cQ_\phi$ is a locally trivial fibration, to any line of lozenges $\cL$ one can associate a (generally non-proper) topological plane $B\subset \widetilde M$ transverse to $\widetilde\phi$ whose projection to $\cQ_\phi$ is the interior of $\cL$.
The boundary of $B$ consists of the orbits corresponding to the extremal corners of~$\cL$.
See \cite{barbotMisePositionOptimale1995} or \cite[Chapter~6]{barthelmePseudoAnosovFlowsPlane2025} for details.

We can now give the dynamical reformulation of the order $\prec$: 

\begin{prop}\label{prop: order line and orbit}
Let $\cL$ and $\cL'$ be two lines of lozenges such that the collection $\{\cL,\cL'\}$ is simple, and let $B,B'\subset \widetilde M$ be transverse lifts of $\cL,\cL'$.
Then $\cL \prec \cL'$ if and only if there exists a positive orbit segment of $\widetilde\phi$ from $B$ to $B'$
(i.e., an orbit segment meeting $B$ first and $B'$ later). 
\end{prop}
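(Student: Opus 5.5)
The plan is to reduce both directions to a statement about a single orbit of $\widetilde\phi$, using that $p\colon\widetilde M\to\cQ_\phi$ sends each orbit to a point and that $B$ (resp.\ $B'$) meets each orbit projecting to $\intr\cL$ (resp.\ $\intr\cL'$) exactly once. An orbit segment from $B$ to $B'$ exists if and only if some point $x\in \intr\cL\cap\intr\cL'$ has its orbit $p^{-1}(x)$ crossing $B$ before $B'$. So the statement splits into two parts. The first is that the interiors intersect if and only if $\cL$ and $\cL'$ are comparable; the paper notes that comparability is equivalent to a Markovian intersection. The second is that the time order of the crossings along such an orbit is constant and determined by which of $\cL\prec\cL'$ or $\cL'\prec\cL$ holds.

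For the first part, I would use Lemma~\ref{lem: order line equiv order lozenge} to reduce to single lozenges $L\in\cL$, $L'\in\cL'$. If the open lozenges intersect, the non-corner criterion (\cite[Lemma~2.4.7]{barthelmePseudoAnosovFlowsPlane2025}) applies, together with simplicity of $\{\cL,\cL'\}$, which forbids a corner of one from lying in the interior of the other. These force the intersection to be Markovian, so either $L\prec L'$ or $L'\prec L$. If the two lines are not comparable, the interiors are disjoint and no orbit meets both transverse planes, so neither order can hold dynamically.

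For the second part, assume $L\prec L'$. Take $x$ in the intersection and compare it with the corners. Because $L\subset\cQ^u(L')$, the orbit $p^{-1}(x)$ lies in an unstable leaf meeting $L'$. I would track $x$ toward a corner $c'$ of $L'$ along the stable leaf of $x$, using $L'\subset\cQ^s(L)$, staying inside $L\cap L'$ until hitting a side. In $\widetilde M$ this produces a path of orbits on which $B$ and $B'$ both have crossing times, and these times vary continuously. The difference of the two crossing times then never vanishes, since $B\cap B'=\emptyset$; I expect this to follow from the transversality of $B$ and $B'$ to $\widetilde\phi$ and from the fact that they are lifts of disjoint (or non-crossing, as in Claim~\ref{claim:liftedT_planes_noncrossing}) surfaces. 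Hence the sign of the difference is constant on the connected set $\intr L\cap\intr L'$.

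It remains to identify that sign. Near the sides of $L'$ that perfect-fit with corners, the stable and unstable asymptotics apply: orbits of $\widetilde\phi$ converge in forward time along stable leaves and in backward time along unstable leaves. Because $L$ sits in the unstable saturation of $L'$, points of $L$ approaching the sides of $L'$ force $B$ to be crossed in the past relative to $B'$, meaning the orbit crosses $B$ first. I expect this sign determination to be the main obstacle. I would try to settle it by pushing one crossing point to infinity: follow a sequence $x_n\to$ a stable side of $L'$ inside $L$, and note that the $B'$-crossing time goes to $+\infty$ while the $B$-crossing time stays bounded, in the spirit of Lemma~\ref{lem: action lift return map transverse segment}. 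Symmetry under reversing time, which swaps $s$ and $u$, then yields the converse implication.
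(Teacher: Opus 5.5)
Your proposal follows the same route as the paper: reduce to single lozenges via Lemma~\ref{lem: order line equiv order lozenge}, get a Markovian intersection from simplicity and the non-corner criterion, and then let hyperbolicity fix the time direction. The paper only asserts that last step, and you make it explicit by showing the sign of the crossing-time difference is constant on the connected intersection and then taking a limit at a boundary side. The constancy step needs $B\cap B'=\emptyset$; this is not among the stated hypotheses, but it holds for the bands of $\widetilde\cT$ to which the paper applies the result, and the paper uses it implicitly.

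The one adjustment needed is in the limit. If $\cL'$ is an $s$-line, the stable sides of a non-extremal $L'$ are shared with neighbouring lozenges, and $B'$ extends across them. You must therefore approach a stable side lying on the boundary of $\intr\cL'$, or symmetrically an unstable side of $L$ lying on the boundary of $\intr\cL$, where the $B$-crossing time tends to $-\infty$. When $\cL$ is a bi-infinite $u$-line and $\cL'$ a bi-infinite $s$-line, as in a scalloped region, neither such side exists. In that case the statement itself is only true in the paper's implicit setting, so this is a limitation of the statement rather than of your argument.
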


Note that this result generalizes to our context \cite[Lemma 6.2]{BFM25} (which treats the case of $\cL$ and $\cL'$ being scalloped regions) and \cite[Observation 3.14]{BM24} (which treats the case of $\cL$ and $\cL'$ being single lozenges). The proof is essentially the same in every case and we quickly recall the argument and refer to \cite[Lemma 6.2]{BFM25} for more details.

\begin{proof}
By Lemma \ref{lem: order line equiv order lozenge}, we have that $\cL \prec \cL'$ if and only if $L\prec L'$ for lozenges $L\in \cL$ and $L'\in \cL'$.

In particular, if $L\prec L'$, $L\cap L' \neq \emptyset$ and therefore, there exists an orbit intersecting both $B$ and $B'$. Since the flow (eventually) contracts the stable direction and expands the unstable, the condition $L \subset \cQ^u(L')$ implies that the orbit segment from $B$ to $B'$ must be in the positive direction.

Now, assume instead that there exists a positive segment of an orbit intersecting $B$ and $B'$ in this order. Then there are lozenges $L\in \cL$ and $L'\in \cL'$ such that $L\cap L' \neq \emptyset$. Now, as the collection $\{\cL,\cL'\}$ is assumed to be simple, it implies that the corners of $L$ are not contained in $L'$ and neither are the corners of $L'$ in $L$. Thus the intersection of $L$ and $L'$ is Markovian, and once again, hyperbolicity of the flow implies that $L \subset \cQ^u(L')$, thus $L\prec L'$ and $\cL\prec \cL'$.\qedhere

\end{proof}

As a corollary that will be essential later, we can now recognize the existence of some periodic orbits using this partial order:
\begin{coro} \label{cor: characterization fixed point with line}
Let $\cC$ be a simple chain of lozenges in an orbit space $\cQ$, and let $\cL\subset \cC$ be a maximal line of lozenges.
For $g\in \pi_1(M)$, the following are equivalent:
\begin{enumerate}
    \item $g$ fixes a point in $\cL$;
    \item $\cL \prec g\cL$ or $\cL \prec g^{-1}\cL$.
\end{enumerate}
\end{coro}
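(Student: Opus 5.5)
We prove the two implications separately, working directly in the orbit space. The tools are Lemma~\ref{lem: order line equiv order lozenge}, which reduces comparisons of lines to comparisons of lozenges, and the standard fact that a nontrivial deck transformation fixing a point of $\cQ_\phi$ acts on the stable and unstable leaves through that point, contracting one and expanding the other (possibly after passing to $g^{-1}$ or $g^2$).

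\emph{(1)$\Rightarrow$(2).} Let $x\in\cL$ be fixed by $g$. Since $\cL$ is a union of closed lozenges and the collection is simple, $x$ must be a corner of some lozenge. A fixed point in the interior of a lozenge would be impossible: a lozenge contains no periodic orbit in its interior, and in any case simplicity rules this out. So $x$ is a corner shared by $L_i$ and $L_{i+1}$, or an extremal corner of $\cL$. Then $g$ preserves $\cQ^s(x)$ and $\cQ^u(x)$.

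Up to replacing $g$ by $g^{-1}$, assume $g$ expands $\cQ^u(x)$ and contracts $\cQ^s(x)$ near $x$. We will allow a power of $g$ if the prongs are permuted, and check at the end that this does not affect the conclusion. Take a lozenge $L\in\cL$ with corner $x$. Then $gL$ is a lozenge with corner $x$ whose sides lie on the same half-leaves, so it is a lozenge of $g\cL$.

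The stable side of $gL$ at $x$ is the image of that of $L$. Contraction along $\cQ^s$ makes $g(L)$ \say{thinner} in the stable direction and longer in the unstable one. Using that lozenges with a common corner and common half-leaves are nested in the appropriate sense, we get the Markovian intersection $L\subset \cQ^u(gL)$ and $gL\subset\cQ^s(L)$, that is, $L\prec gL$. Lemma~\ref{lem: order line equiv order lozenge} then gives $\cL\prec g\cL$.

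The delicate point in this direction is $gL = L$, which happens when $L$ is a lozenge with both corners fixed. In that case the comparison must be made with the adjacent lozenge, or we use that $L\prec L$ holds trivially under the convention of Definition~\ref{def:order_lines}, since $L\subset\cQ^u(L)$ and $L\subset\cQ^s(L)$. We will adopt this reflexive reading.

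\emph{(2)$\Rightarrow$(1).} Assume $\cL\prec g\cL$; the case $\cL\prec g^{-1}\cL$ is symmetric. By Proposition~\ref{prop: order line and orbit}, a positive orbit segment goes from a transverse lift $B$ of $\cL$ to $gB$. By Lemma~\ref{lem: line intersection type}, there is a lozenge $L\in\cL$ with $L\prec g\cL$; in the mixed cases we get a pair $L\prec gL'$.

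The plan is a Brouwer/Markov-rectangle argument. The relation $\cL\subset\cQ^u(g\cL)$ together with $g\cL\subset \cQ^s(\cL)$ says that $g$ maps the stable saturation of $\cL$ across $\cL$ in a Markovian way. Iterating, the sets $g^n\cL\cap\cL$ are nested. Because $\prec$ is $\pi_1$-invariant and transitive, $\cL\prec g^n\cL$ for all $n\ge1$. Intersecting $\bigcap_n g^n(\cQ^u\text{-part})$ with $\bigcap_n g^{-n}(\cQ^s\text{-part})$ then yields a nonempty compact $g$-invariant set in $\overline{\cL}$: a point where a $g$-invariant unstable leaf meets a $g$-invariant stable leaf. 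That point is fixed by $g$.

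\textbf{Main obstacle.} The hard step is making this nested-intersection argument rigorous. Lines may be infinite, so the nested sets need not be compact a priori. The way around this is to reduce to a single pair of lozenges via Lemma~\ref{lem: line intersection type}: find $L$ with $L\prec gL$, or $L\prec g L'$ with $L'$ in the same line. Then apply the known result for single lozenges (a Markovian self-intersection of a lozenge $L\prec gL$ forces a fixed corner; cf. \cite[Observation 3.14]{BM24}). The remaining task is to rule out the fixed point escaping to a corner outside $\cL$. Simplicity of $\cC$ excludes corners landing inside lozenges, and maximality of $\cL$ in $\cC$ guarantees that the fixed corner lies in $\cL$ itself.
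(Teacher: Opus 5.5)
Your direction (1)$\Rightarrow$(2) has a genuine gap. You claim that a point of $\cL$ fixed by $g$ must be a corner, but this is false, and it discards exactly the case the corollary is designed for. Simplicity only says that $\pi_1(M)$-translates of corners of $\cC$ avoid the interiors of lozenges of $\cC$; it says nothing about fixed points of an arbitrary $g$. In fact, if a periodic orbit of $\phi$ crosses $T$ transversely, a lift of it meets $\widetilde T$, and the deck transformation stabilizing that lift fixes a point in the interior of a lozenge of the trace. This is precisely how the corollary is invoked in the proof of Theorem~\ref{thmintro: tool thm for free homotopy data}. The missing argument is the paper's, and it is short. If $gx=x$ with $x$ in a lozenge $L\subset\cL$, then $x\in L\cap gL$, so the $\widetilde\phi$-orbit of $x$ meets a transverse lift $B$ of $\cL$ and the transverse lift $gB$ of $g\cL$. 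Since $\{\cL,g\cL\}$ is simple, Proposition~\ref{prop: order line and orbit} gives $\cL\prec g\cL$ or $g\cL\prec\cL$, and the latter is $\cL\prec g^{-1}\cL$ by $\pi_1(M)$-invariance of $\prec$. (Concretely, simplicity forces $L\cap gL$ to be a Markovian intersection, and the hyperbolic action of $g$ at $x$ decides which way it goes.)

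Your corner analysis does not rescue this direction. When $gL$ has the same corner and the same two half-leaves there as $L$, one has $gL=L$, since a lozenge is determined by a corner and its two sides at it. So the ``thinner/longer'' argument is vacuous, and everything rests on reading $\prec$ reflexively. That reading is incompatible with (2)$\Rightarrow$(1): it makes every $g$ with $g\cL=\cL$ satisfy (2). An example is an element of $\pi_1(T)$ translating along a bi-infinite maximal line (the trace of a scalloped torus), which fixes no point. Your own fixed-point argument also needs a genuine crossing $gL\neq L$.

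For (2)$\Rightarrow$(1), reducing to a single lozenge is what the paper does, and it is easier than you fear. Since $\cL$ and $g\cL$ are lines of the same type, only the first two cases of Lemma~\ref{lem: line intersection type} occur. There the $u$-line (resp.\ $s$-line) property, $\cQ^s(\cL)=\cQ^s(L_0)$ (resp.\ $\cQ^u(g\cL)=\cQ^u(gL_0)$), directly produces $L_0\in\cL$ with $L_0\prec gL_0$. The Markovian intersection of $L_0$ and $gL_0$ then gives a fixed point of $g$ in $L_0\cap gL_0\subset\cL$. This is not a fixed corner: if $g$ fixed a corner of $L_0$ and preserved its quadrant, then $gL_0=L_0$ and there would be no crossing. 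So the nested-intersection scheme, the worry about the fixed point escaping, and the appeal to maximality are all unnecessary.
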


\begin{proof}
Assume first that $\cL \prec g\cL$. As in the proof of Proposition \ref{prop: order line and orbit}, by Lemma~\ref{lem: line intersection type}, there exists a lozenge $L\subset \cL$ such that $L\prec gL$, which implies that $L$ and $gL$ have a Markovian intersection, and hence a fixed point in $L$.

Conversely, if $g$ fixes a point $x$ in a lozenge $L\subset \cL$, then Proposition \ref{prop: order line and orbit} implies that $\cL \prec g\cL$ or $g\cL \prec \cL$.
\end{proof}

\subsection{Proof of Theorem \ref{thmintro: tool thm for free homotopy data}}

\label{subsec: free data block}

In this section we recall the notion of free homotopy data of a flow and how it translates in the orbit space: periodic orbits correspond to conjugacy classes in $\pi_1(M)$ which acts in the orbit space with fixed points.
The chains of lozenges coming from quasi-transverse tori provide a convenient place to detect these fixed points via the order relation~$\prec$ introduced above.

Recall from the introduction the following definitions:
\begin{defi}[Free homotopy data]\label{def: free homotopy data}
Let $\phi$ be a pseudo-Anosov flow on a closed $3$-manifold $M$.
\begin{itemize}
    \item The \emph{free homotopy data} of $\phi$ is the subset
\[
\Per(\phi)
=\Bigl\{\gamma\in \pi_1(M)\ \Bigm|\ \gamma \text{ or } \gamma^{-1}
\text{ is represented by a periodic orbit of }\phi\Bigr\}.
\]
\item If $\cT$ is a good collection of quasi-transverse tori, we denote by
$\Per \bigl(\res{\phi}{M\setminus \cT}\bigr)$
the subset of free homotopy classes represented by periodic orbits of $\phi$ whose orbits do \emph{not} cross $\cT$ transversely (i.e.\ they are contained in a component of $M\setminus \cT$).
\end{itemize}
\end{defi}

\medskip

For $i=1,2$, let $\phi_i$ be Anosov flows on $M$, with stable and unstable foliations
$(\cF^s_i,\cF^u_i)$, and let $\cT_i$ be good collections of quasi-transverse tori for~$\phi_i$.
Denote by $\cQ_i$ the orbit spaces of $\widetilde\phi_i$ and by $\cC_i\subset \cQ_i$
the traces of $\tilde\cT_i$, i.e., the corresponding $\pi_1(M)$-invariant collections of minimal chains of lozenges that correspond to the projections of $\tilde\cT_i$ to $\cQ_i$.

\begin{defi}[Isomorphism of minimal chains]\label{def: chain isomorphism}
Let $\cC_1=\{L_{1,k}\}_{k\in\Z}$ and $\cC_2=\{L_{2,k}\}_{k\in\Z}$ be two bi-infinite minimal chains of lozenges in $\cQ_1$ and $\cQ_2$ respectively.
An \emph{isomorphism of minimal chains} is a bijection $\bar \xi\colon \cC_1\longrightarrow \cC_2$ satisfying:
\begin{enumerate}[label=(\roman*)]
    \item there exists $k_0\in \Z$ such that $\bar \xi (L_{1,k})= L_{2,k+k_0}$, and
    \item  $\bar \xi$ preserves the adjacency type of consecutive lozenges, i.e., $\bar \xi (L_{1,k})$ and $\bar \xi (L_{1,k+1})$ share a stable (resp.~unstable) side if and only if $L_{1,k}$ and $L_{1,k+1}$ share a stable (resp.~unstable) side.
\end{enumerate}
\end{defi}

\begin{lem}\label{lem: isomorphism of chain}
Suppose that $\xi\colon M\to M$ is a homeomorphism homotopic to the identity such that:
\begin{enumerate}
    \item $\xi(\cT_1)=\cT_2$, and
    \item $\xi$ maps compact leaves of $\cF^{s}_1\cap \cT_1$ to compact leaves of $\cF^{s}_2\cap \cT_2$,
    and compact leaves of $\cF^{u}_1\cap \cT_1$ to compact leaves of $\cF^{u}_2\cap \cT_2$.
\end{enumerate}
Then $\xi$ induces, for each $T\in \cT_1$, an isomorphism $\bar \xi$ of minimal chains between the trace of $T$ in $\cQ_1$ and the trace of $\xi(T)$ in $\cQ_2$.
\end{lem}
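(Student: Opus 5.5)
The plan is to lift $\xi$ to the universal cover and read the combinatorics of a chain directly off the band decomposition of a lifted torus. Fix $T\in\cT_1$ and a lift $\widetilde T\subset\widetilde M$. Since $\xi$ is homotopic to the identity, it has a lift $\widetilde\xi\colon\widetilde M\to\widetilde M$ that is $\pi_1(M)$-equivariant and lies at bounded distance from the identity. Then $\widetilde\xi(\widetilde T)$ is a lift of $\xi(T)\in\cT_2$, and $\widetilde\xi$ conjugates the stabilizer of $\widetilde T$, a $\Z^2\subset\pi_1(M)$, to itself as the identity. By Claim \ref{claim:liftedT_planes_noncrossing}, both lifts are properly embedded planes, and $p_i$ maps them onto the minimal chains $C_1=\overline{p_1(\widetilde T)}$ and $C_2=\overline{p_2(\widetilde\xi(\widetilde T))}$.

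Next I will describe each chain from the foliated plane. Decompose $\widetilde T$ into elementary periodic bands. These are the closures of the components of $\widetilde T$ minus the union of all periodic (lifts of compact) leaves of $\widetilde\cF^s_{1,T}\cup\widetilde\cF^u_{1,T}$ and of the lifted periodic orbits. The observation preceding Claim \ref{claim: cylinder and lines} says that each elementary band projects to a single lozenge. Two consecutive lozenges share a stable side exactly when the common boundary of the two bands is a lifted compact leaf of $\cF^s_1\cap T$. They share an unstable side exactly when that boundary is a lifted compact leaf of $\cF^u_1\cap T$. They share only a corner when the boundary is a lifted periodic orbit of $\phi_1$ contained in $T$.

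Here is what must be checked carefully. A boundary leaf that is a periodic orbit is a compact leaf of both traces. By hypothesis (2), $\xi$ sends it to a leaf that is compact for both $\cF^s_2\cap\xi(T)$ and $\cF^u_2\cap\xi(T)$. Such a leaf must be a periodic orbit of $\phi_2$, since transverse stable and unstable traces only share leaves along tangent orbits. Conversely, applying the same reasoning to $\xi^{-1}$ shows that no new compact leaves appear, so the three boundary types are in bijection. This is also where the main obstacle lies. Hypothesis (2) as stated only asserts that compact leaves map into compact leaves, so I would first argue that $\xi$ induces a bijection between compact leaves of each type. I will do this by counting: $\xi(T)$ is homotopic to $T$, the number of compact leaves is determined by the chain modulo $\Z^2$, and symmetry in the roles of $\phi_1$ and $\phi_2$ lets one compare the counts in both directions. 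I will also need the natural $\Z$-indexing of the bands, transverse to the common free homotopy class of the compact leaves (Remark \ref{rmk: compact leaves cut in annuli}), to be preserved.

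For the indexing, I use that $\widetilde\xi$ is a homeomorphism of planes carrying the locally finite family of periodic boundary leaves of $\widetilde T$ bijectively and order-preservingly onto that of $\widetilde\xi(\widetilde T)$. The bands are linearly ordered in the transverse direction, so order preservation follows from $\widetilde\xi$ being a homeomorphism, up to a possible reversal of that ordering. Such a reversal would be absorbed into the choice of indexing. The $\Z^2$-equivariance then shows that $\widetilde\xi$ maps elementary bands to elementary bands, because it maps complementary components to complementary components. This defines $\bar\xi(L_{1,k})=L_{2,k+k_0}$, where $k_0$ is fixed by the image of a base band. Preservation of adjacency type is exactly the boundary-type bijection established in the previous step. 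Finally, minimality of the chains ensures that the decomposition into elementary bands matches the lozenge enumeration of the chain, so $\bar\xi$ is an isomorphism of minimal chains in the sense of Definition \ref{def: chain isomorphism}.
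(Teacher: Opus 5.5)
Your route is the paper's: you take the $\pi_1(M)$-equivariant lift $\widetilde\xi$, send elementary periodic bands to elementary periodic bands, and project to get the lozenge bijection. The $\Z$-indexing then comes from order preservation, and the adjacency type from the stable/unstable type of the separating compact leaves.

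\textbf{The gap.} The step that fails is the one you add beyond the paper: deriving by counting that $\xi$ is a \emph{bijection} on compact leaves. Injectivity of $\xi$ only gives $n_1\le n_2$, where $n_1$ and $n_2$ are the numbers of compact leaves of $\phi_1$ on $T$ and of $\phi_2$ on $\xi(T)$. Nothing gives the reverse inequality.
\begin{itemize}
\item $n_1$ and $n_2$ are dynamical quantities attached to two different flows, and nothing in the hypotheses relates them beyond this injection.
\item Saying that each count is ``determined by its chain modulo $\Z^2$'' compares nothing, since $C_1$ and $C_2$ live in different orbit spaces.
\item The hypothesis is not symmetric. Exchanging the roles of $\phi_1$ and $\phi_2$ would require $\xi^{-1}$ to send compact leaves of $\phi_2$ to compact leaves of $\phi_1$, which is exactly the surjectivity you are trying to prove.
\end{itemize}
This is not cosmetic. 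Suppose $\phi_2$ had a compact leaf on $\xi(T)$ outside the image of the compact leaves of $\phi_1$. A lift of it would meet the interior of $\widetilde\xi(B)$ for some elementary band $B$. Then $\widetilde\xi(B)$ would not be an elementary band of $\phi_2$, and the band-to-lozenge correspondence, and hence the chain isomorphism, would break.

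\textbf{The fix.} Read hypothesis (2) as a type-preserving bijection between compact leaves. The paper's proof tacitly uses exactly this when it asserts that $\widetilde\xi$ sends elementary bands to elementary bands. It also holds in every application: the map of Claim \ref{claim: map f comaptible orbit equivalence} is built from equal complete bar codes, and the Dehn twist of Corollary \ref{coro: 3D dehn twist} preserves the set of compact leaves. With that reading, the rest of your argument goes through.

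\textbf{A smaller point.} Your rule that a lifted periodic orbit in $T$ separates two lozenges sharing \emph{only} a corner uses maximal transversality, as assumed in Section \ref{subsec: foliation induced}. If the two annuli adjacent to a periodic orbit lie in adjacent quadrants, the corresponding lozenges share a side. Hypothesis (2) alone does not force $\xi$ to preserve that configuration.
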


\begin{rmk}
More explicitly, enumerate $\cT_1=\{T_{1,k}\}_k$ and $\cT_2=\{T_{2,k}\}_k$ so that $\xi(T_{1,k})=T_{2,k}$.
Let $\cC_{1,k}$ (resp.\ $\cC_{2,k}$) be the trace of $T_{1,k}$ (resp.\ $T_{2,k}$) in $\cQ_1$ (resp.\ $\cQ_2$).
Then the induced chain isomorphism
$\bar \xi_k\colon \cC_{1,k}\to \cC_{2,k}$
is characterized as follows:
$\bar \xi_k$ sends a lozenge $L\in \cC_{1,k}$ to the lozenge $L'\in \cC_{2,k}$ if and only if $\xi$ sends the two consecutive compact leaves of $\cF^{s/u}_1\cap T_{1,k}$ that bound $L$ to the two consecutive compact leaves of $\cF^{s/u}_2\cap T_{2,k}$ that bound $L'$.
This description applies to both parallel and scalloped components (Definition~\ref{def: parallel scalloped}).
In the parallel case, having an isomorphism of minimal chains is equivalent to saying that the corresponding bar codes are equal for a consistent enumeration of the compact leaves.
\end{rmk}

\begin{proof}
Since $\xi$ is homotopic to the identity, we can choose a lift $\widetilde \xi\colon \widetilde M\to \widetilde M$ that commutes with the $\pi_1(M)$-action.
Then $\widetilde \xi$ sends the complete lift $\widetilde \cT_1$ to $\widetilde \cT_2$.
By assumption, $\widetilde \xi$ sends lifts of periodic orbits in $\widetilde \cT_1$ to lifts of periodic orbits in $\widetilde \cT_2$, and it preserves whether a periodic leaf comes from the stable or unstable trace.
In particular, $\widetilde \xi$ sends elementary periodic bands in $\widetilde \cT_1$ to elementary periodic bands in $\widetilde \cT_2$.
Projecting to the orbit spaces, this yields a bijection between lozenges in the trace chains.

Because $\xi$ is homotopic to the identity, it preserves the cyclic order of compact leaves on each component of $\cT_i$.
Therefore the induced correspondence respects the $\Z$-indexing up to translation.
Finally, since $\xi$ sends stable compact leaves to stable compact leaves and unstable compact leaves to unstable compact leaves, it preserves the adjacency type of consecutive lozenges.
Hence we obtain a chain isomorphism in the sense of Definition~\ref{def: chain isomorphism}.
\end{proof}

\medskip

We now have all the terminology needed to state and prove Theorem \ref{thmintro: tool thm for free homotopy data} that we restate here.

\toolthm*

\begin{rmk}
The collections of chains $\cC_i$ are simple, because they come from a good collection of tori.
In particular, the relation $\prec$ is well-defined on maximal lines in the trace.
\end{rmk}

The argument reduces the comparison of free homotopy data to detecting fixed points of deck transformations in the orbit space.

\begin{proof}[Proof of Theorem~\ref{thmintro: tool thm for free homotopy data}]

Let $g\in\pi_1(M)$ represent a periodic orbit of $\phi_1$.
If the orbit does not cross $\cT_1$ transversely, then by Assumption~\ref{thm:tool_assumption_piecewise}
the same free homotopy class is represented by a periodic orbit of $\phi_2$.

It remains to consider the periodic orbits of $\phi_1$ that cross $\cT_1$ transversely.
Such an orbit corresponds to a deck transformation $g$ fixing a point in the trace $\cC_1$ in $\cQ_1$.
By Corollary~\ref{cor: characterization fixed point with line}, there exists a maximal line $\cL\subset \cC_1$ such that
$\cL\prec g^{\pm 1}\cL$; replacing $g$ by $g^{-1}$ we may assume $\cL\prec g\cL$.
Let $\cL'=\bar \xi(\cL)$ be the corresponding maximal line in the trace $\cC_2\subset \cQ_2$.
By Assumption~\ref{thm:tool_assumption_order}, $\bar \xi$ preserves $\prec$, hence $\cL'\prec g\cL'$.
Applying Corollary~\ref{cor: characterization fixed point with line} in $\cQ_2$, we deduce that $g$ fixes a point in $\cL'$,
so $g$ represents a periodic orbit of $\phi_2$ as well.
Thus every free homotopy class represented by a periodic orbit of $\phi_1$ is also represented by one of $\phi_2$.
By symmetry, the reverse inclusion holds, proving $\Per(\phi_1)=\Per(\phi_2)$.
\end{proof}

This theorem implies the following corollary giving a large class of self-orbit equivalences for pseudo-Anosov flows on toroidal manifolds (the reader can compare with \cite[Theorem 1.4]{BM24} or \cite[Section 11]{BFP23}).

\begin{coro} \label{coro: 3D dehn twist}
   Let $T$ be a quasi-transverse torus for a transitive pseudo-Anosov flow $\phi$ on $M$. Let $\alpha$ be the free homotopy class of a compact leaf in $T$.
    Then a (3-dimensional) Dehn twist along $T$ in the direction of $\alpha$ can be represented by a self-orbit equivalence of $\phi$.
\end{coro}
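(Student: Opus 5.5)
The plan is to apply Theorem~\ref{thmintro: tool thm for free homotopy data} to the pair $\phi_1=\phi$ and $\phi_2 = D\circ\phi\circ D^{-1}$, where $D\colon M\to M$ is the Dehn twist along $T$ in the direction of $\alpha$. Since $\phi$ is transitive, so is $\phi_2$. By the result of \cite{BM24,BFrM25} recalled in the introduction, equal free homotopy data for two transitive pseudo-Anosov flows then yields an orbit equivalence homotopic to the identity. Composing it with $D$ gives a self-orbit equivalence of $\phi$ in the isotopy class of $D$, up to the usual upgrade from homotopy to isotopy on Haken manifolds.

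\textbf{Setting up the twist.} We take $\cT_1=\cT_2=\{T\}$ and choose $D$ supported in a thin collar $T\times[0,1]$ lying on one side of $T$, away from the periodic orbits contained in $T$. Then $D$ fixes $T$ pointwise, so $\phi_2$ has exactly the same trace foliations and compact leaves on $T$ as $\phi$. The complication is that $D$ is \emph{not} homotopic to the identity, whereas Theorem~\ref{thmintro: tool thm for free homotopy data} is stated for flows on the same $M$ compared via such a map. We handle this by identifying $\pi_1(M)$ through $D_*$, working equivariantly. On $\pi_1(M\setminus T)$, $D_*$ acts by conjugation by $\alpha^k$ on one side and trivially on the other; for loops crossing $T$, it inserts powers of $\alpha$ at each crossing.

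\textbf{Checking the hypotheses.} Assumption~(\ref{thm:tool_assumption_piecewise}) holds because periodic orbits not crossing $T$ lie in $M\setminus T$, where $D$ is isotopic to the identity up to conjugation. Assumption~(\ref{thm:tool_assumption_homeo}) holds with $\xi=\mathrm{id}$. The heart of the argument is Assumption~(\ref{thm:tool_assumption_order}), and this is where the main obstacle lies.

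\textbf{Order preservation.} The key fact is that $\alpha$ is freely homotopic to a compact leaf of $T$, which lies in a stable or unstable leaf of a periodic orbit $\beta$ of $\phi$. On the orbit space $\cQ_\phi$, $\alpha$, or a conjugate of it, therefore acts as the deck transformation fixing the corner $p(\tilde\beta)$ of the chains. It preserves each maximal line containing that corner, and more generally it preserves the lines of the trace $\cC$ through $\alpha$-periodic corners.

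One must then show that for maximal lines $\cL,\cL'$ of $\cC$ and any $g$, $\cL\prec g\cL'$ holds if and only if $\cL\prec D_*(g)\cL'$. Since $D_*(g)$ differs from $g$ by inserting $\alpha^{k}$ factors at crossings of $T$, and each such factor stabilizes the relevant line, the order relation is unchanged. I expect this bookkeeping to be the hardest step: one must check that the inserted factors $\alpha^{k}$ always act at a line through the corresponding fixed corner, rather than at an arbitrary lozenge of the chain.

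To organize it, I would first reduce via Proposition~\ref{prop: order line and orbit} to orbit segments from one lift $B$ to another $B'$. Such a segment crosses $T$ once and is unaffected by $D$ except for the twist in the collar. That twist slides $B'$ along the $\alpha$-direction of $\tilde T$, which is exactly the action of the stabilizer of the periodic leaf, so the existence of such segments is preserved. With this in hand, Theorem~\ref{thmintro: tool thm for free homotopy data} gives $\Per(\phi)=\Per(\phi_2)$ (after the $D_*$ identification), and transitivity concludes.
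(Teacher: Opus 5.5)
Your proposal follows the paper's own proof: apply Theorem~\ref{thmintro: tool thm for free homotopy data} to $\phi$ and $D\circ\phi\circ D^{-1}$ with $\cT_1=\cT_2=\{T\}$ and $\xi=\mathrm{id}$, then conclude by transitivity. The paper does not verify hypothesis~(\ref{thm:tool_assumption_order}) explicitly, so your sketch of it goes beyond the written proof, and two points simplify it: the ``complication'' is illusory, and the bookkeeping obstacle you anticipate disappears.

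\emph{The complication.} The comparison map is $\xi=\mathrm{id}$, and $D$ only enters through $\phi_2$. The theorem therefore gives the literal equality $\Per(\phi)=\Per(\phi_2)$ in $\pi_1(M)$, which is exactly what you need; read only modulo $D_*$, that equality would be a tautology.

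\emph{The bookkeeping.} The element of $\mathrm{Stab}(\tilde T)\cong\pi_1(T)$ representing $\alpha$ preserves every maximal band of $\tilde T$, hence every maximal line of $p(\tilde T)$. In the parallel case this holds because all compact leaves of $T$ are freely homotopic to $\alpha$ and $\pi_1(T)$ is abelian; in the scalloped case $p(\tilde T)$ is a single line. Now normalize the lift $\tilde D$ to be the identity on the complementary region between two consecutive lifts of $T$. It then acts on each of those two lifts either trivially or by such an element. Hence it carries a $\tilde\phi$-segment from a band $B$ to a band $B'$ to a $\tilde\phi_2$-segment from $B$ to $B'$, and transitivity of $\prec$ handles general pairs.
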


\begin{proof}
Let $h$ be a representative of the Dehn twist which preserves the set of compact leaves on $T$ and is the identity outside of a small neighborhood of $T$.
Note that $h$ leaves invariant the set of free homotopy classes that can be made disjoint from a neighborhood of $T$.

Let $\phi_1 = \phi$ and $\phi_2 = h \circ \phi \circ h^{-1}$, and let $\xi \colon M \to M$ be the identity. Then we can apply Theorem \ref{thmintro: tool thm for free homotopy data} to $\phi_1,\phi_2$ and $\xi$.
By transitivity, we deduce that $\phi_1$ and $\phi_2$ are orbit equivalent, which concludes the proof.
\end{proof}

\section{Finiteness of block gluing}

\label{sec: block gluing}

Given Theorem \ref{thmintro: tool thm for free homotopy data}, we will now deduce Theorem \ref{thmintro: uniqueness of gluing}. We start by proving Proposition \ref{propintro: finiteness of bar code}, that we restate here.

\finitenessbarcode*

\begin{rmk}\label{rmk: finiteness not uniqueness}
This proposition is sharp in the sense that the complete bar code is
\emph{not} uniquely determined by the equivalence class of a triple.
As an example, for every $n\geq 1$, B\'eguin, Bonatti and Yu construct in
\cite[Section~11]{beguinBuildingAnosovFlows2017} a fixed building block
$(P,\phi)$
and $n$ gluing maps
$f_1,\ldots,f_n,$
all belonging to the same isotopy class, so that the triples $(P, \varphi, f_k)$ are all equivalent, but they induce Anosov flows $\psi_k$ which are all pairwise non-orbit equivalent (see proof of \cite[Theorem 1.13]{beguinBuildingAnosovFlows2017} for more details).

The distinction between these flows is visible in the combinatorics of the compact leaves under the gluing maps: the pairs
$\bigl(\cK_\partial,(f_k)_*\cK_\partial\bigr)$
have pairwise distinct bar codes (see Figure \ref{fig: different bar codes}).

\begin{figure}[h]
\labellist
\small
\pinlabel $f_i$ [l] at 165 122

\pinlabel $\textcolor{red}{\cL_\partial^-}$ [l] at 147 202
\pinlabel $\textcolor{green}{\cL_\partial^+}$ [l] at 147 44

\pinlabel $(P,\varphi,f_1)$ [b] at 276 170
\pinlabel $(P,\varphi,f_2)$ [b] at 464 170
\pinlabel $(P,\varphi,f_3)$ [b] at 652 170

\tiny 

\pinlabel $(+---+-+-++)$ [t] at 280 69
\pinlabel $(+-+---+-++)$ [t] at 463 69
\pinlabel $(+-+-+---++)$ [t] at 648 69

\endlabellist

    \centering
    \includegraphics[width=1\linewidth]{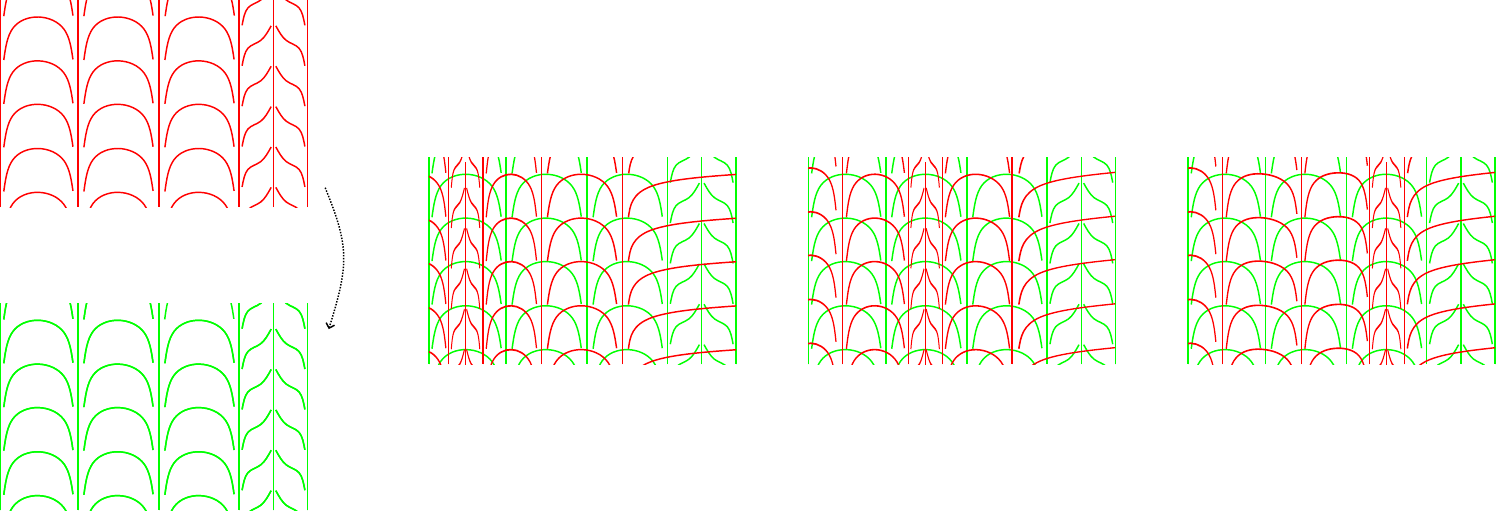}
    \caption{For $n=3$ we have 3 equivalent triples with 3 distinct bar codes.}
    \label{fig: different bar codes}
\end{figure}

Since the boundary is transverse and the boundary laminations are filling,
the bar code agrees with the complete bar code by
Proposition \ref{prop: bar code and complete bar code for filling piece}.
Thus, for every $n$, there exists an equivalence class of triples containing
at least $n$ distinct complete bar codes and at least $n$ distinct orbit equivalence class of induced pseudo-Anosov flows.
\end{rmk}

\begin{proof}[Proof of Proposition~\ref{propintro: finiteness of bar code}]
Let $(P,\varphi,f)$ be a pseudo-Anosov triple, and let $\cF_\partial$ and $\cK_\partial$ denote respectively
the boundary foliation and the boundary prefoliation of the triple.

By Definition~\ref{def: pA bar code}, the complete bar code is determined by the combinatorics of the compact leaves of the pair
$(\cF_\partial,f_*\cF_\partial)$ on each parallel boundary component.
By definition of equivalent triple, the number of compact leaves of the boundary
prefoliation $\cK_\partial$ is the same by the correspondence induced by the orbit equivalence of the pieces.
Hence the only difference in bar codes comes from the compact leaves of the boundary
foliation $\cF_\partial$ which do {not} belong to $\cK_\partial$.
We will prove that the number of extra compact leaves is uniformly bounded.

Let $M:=P/f$ and $\phi$ the induced pseudo-Anosov flow on $M$. Up to taking a finite cover, which does not impact the result, we will assume, using Proposition \ref{prop: cover embedded jsj tori}, that $M$ is orientable and all the Seifert pieces in the JSJ decomposition of $M$ are products of an orientable surface with $S^1$.
Denote by $\pi\colon P\to M$ the quotient map and by $\cT:=\pi(\partial P)$ the image of the boundary.
The pair of foliations $(\cF_\partial,f_*\cF_\partial)$ projects to the pair $(\cF_\cT^+,\cF_\cT^-)$ on $\cT$, and the pair $(\cK_\partial,f_*\cK_\partial)$ projects to
a pair of sublaminations $(\cK_\cT^+,\cK_\cT^-)$ of $(\cF_\cT^+,\cF_\cT^-)$.
Consider a compact leaf $\gamma$ of $\cF_\cT^+\setminus \cK_\cT^+$ on, say, the stable foliation $\cF^s$. Recall that $\cK_T^+\cup \cK_T^-$ can be decomposed as the union $\cK_T^s \cup \cK_T^u$, where $\cK_T^s$ is the trace of the stable lamination and $\cK_T^u$ is the trace of the unstable lamination on $T$.
Since $\gamma\not\subset \cK_\cT^s$, the positive $\phi$-orbit of $\gamma$ crosses $\cT$ again.
Let $\Theta$ be the first return map from $\cT\setminus \cK_\cT^s$ to $\cT \setminus \cK_\cT^u$ given by Lemma~\ref{lem: domain of theta}.
Then the iterates
$\gamma,\ \Theta(\gamma),\ \Theta^2(\gamma),\dots$
are compact leaves of the trace of $\cF^s$, all contained in the same stable cylindrical leaf of $\cF^s$.

We first claim that this forward orbit is finite.
Indeed, otherwise as $\Theta$ is contracting along the stable trace, the lengths of these curves
tend to $0$. But they are all homotopically nontrivial simple closed curves, in particular their lengths are bounded
from below by a positive constant depending only on the topology of $M$, a contradiction.
So there exists $n\geq 0$ such that
$\gamma_n:=\Theta^n(\gamma)$
is a compact leaf of $\cK_\cT^s$.
Thus every compact leaf of $\cF_\cT^+\setminus \cK_\cT^+$ is associated with a compact leaf of $\cK_\cT^s$: the first reached by forward iteration under $\Theta$.

Fix now a compact leaf $c\subset \cK_\cT^s$ on a component $T\subset \cT$, and consider all compact leaves
of $\cF_\cT^+\setminus \cK_\cT^+$ which lie in the same stable cylindrical leaf of $\cF^s$ as $c$ and whose forward
$\Theta$-orbit ends at $c$.
We claim that the number of such leaves is uniformly bounded.
Suppose first that the stable cylindrical leaf containing $c$ meets the same torus $T$
in two distinct compact leaves $\gamma_i,\gamma_j$.
Let $A_{i,j}$ be the annulus in that stable cylindrical leaf bounded by $\gamma_i$ and $\gamma_j$,
and let $A^T_{i,j}\subset T$ be the annulus in $T$ bounded by the same two curves.
Then
\[
T_{i,j}:=A_{i,j}\cup A^T_{i,j}
\]
is an immersed incompressible torus in $M$.
Since it meets the JSJ torus $T$ essentially, it follows from the JSJ decomposition that
$T_{i,j}$ and $T$ lie in the same Seifert piece of $M$ and are not JSJ tori.
This Seifert piece is not free: by the description of quasi-transverse
tori in free Seifert pieces (see \cite{barbotFreeSeifertPieces2021}), the compact leaves on such a torus are exactly the periodic orbits of the flow,
hence there are no extra compact leaves coming from $\cF_\partial\setminus \cK_\cT$.
Thus $T$ must lie in a periodic Seifert piece $Q$ of $M$.

Let $Z\subset Q$ be the spine of $Q$ and let $\cG\subset \Sigma$ be the associated fatgraph
given by the projection of $Z$ to the base surface (see Section \ref{subsec: spine}).
The torus $T$ projects to a simple closed curve
\[
\alpha:=p(T)\subset \Sigma,
\]
whose isotopy class is determined by the isotopy class of the triple $(P, \varphi, f)$.
Now let $L$ be a stable cylindrical leaf of $\cF^s$ contributing extra compact leaves on $T$.
Its projection to $\Sigma$ is an embedded arc $\delta$ joining either one boundary component of $\Sigma$ to another boundary component, or one boundary component of $\Sigma$ to a periodic orbit of the spine.
Such an arc $\delta$ crosses at most once a vertex of the fatgraph.
Hence $\delta$ is determined, up to isotopy rel.\ endpoints, by the topology of the fatgraph.
Since the isotopy class of the boundary prefoliation is fixed inside the isotopy class of the triple,
the combinatorics of these projected arcs is fixed as well.
Hence there are only finitely many possibilities for the isotopy class of $\delta$.

For each such arc $\delta$, the number of intersections of $\alpha$ (which is fixed in the isotopy class of the triple) with $\delta$ is bounded by a constant
depending only on the topology of the fatgraph.
Therefore the number of compact leaves of the stable cylinder $L$ intersecting $T$ is uniformly bounded.
This proves that, for each compact leaf $c$ of $\cK_\cT^s$, only finitely many compact leaves of
$\cF_\cT^+\setminus \cK_\cT^+$ can be associated with $c$.
The same argument applies to compact leaves of $\cF_\cT^-\setminus \cK_\cT^-$ using unstable cylinders.
Since the total number of compact leaves is uniformly bounded inside the isotopy class of the triple, only finitely many complete bar codes
can occur.
This proves Proposition~\ref{propintro: finiteness of bar code}.
\end{proof}

We can now prove Theorem \ref{thmintro: uniqueness of gluing}, i.e., the fact that the complete bar code uniquely determines the free homotopy data.

\uniquenessgluing*

\begin{proof}[Proof of Theorem \ref{thmintro: uniqueness of gluing}]
Let $(P_0, \varphi_0, f_0)$ and $(P_1, \varphi_1, f_1)$ be two equivalent \pa{} triples.
By Remark \ref{rmk: equivalent triple same block} we can assume that $P_0 = P_1 = P$, the flows are isotopically equivalent and the gluing maps are isotopic.
Set $M_i=P/f_i$ for $i=0,1$.
Denote by $f_t \colon \pP \to \pP$ the isotopy between $f_0$ and $f_1$, and $\pi_i\colon P \to M_i$ the projection ($i=0,1$).

Let $\phi_i$ be the \pa{} flow on $M_i$ obtained from $\varphi_i$, and by $(\cF^s_i, \cF^u_i)$ the stable and unstable foliations of $\phi_i$ on $M_i$.
Consider $\cT_i = \pi_i (\partial P)$. It is a good collection of quasi-transverse tori for $\phi_i$.
Denote by $\pi_i^\pm$ the restriction of the projection $\pi_i$ to $\partial_\pm P$:\begin{equation} \label{eq: pi+ and pi-}
\begin{aligned}
         &\pi_i^+ \colon \partial_+P \to \cT_i \\
     &\pi_i^- \colon \partial_-P \to \cT_i 
\end{aligned}
\end{equation} Note that $\pi_i^\pm$ is a homeomorphism onto its image $\cT_i$.

\begin{lem} \label{lem: H diff of pieces}
There exists a diffeomorphism $H\colon P\to P$ isotopic to the identity such that
\begin{enumerate}
    \item $\res{H}{\partial_+P} = \text{id} $, \label{claim: H, it: H 1}
    \item $\res{H}{\partial_-P} = f_1^{-1} \circ f_0\colon \partial_-P \to \partial_-P $, \label{claim: H, it: H 2}
\end{enumerate}
In particular, $H\circ f_0=f_1\circ H$ on $\partial P$, hence $H$ descends to a diffeomorphism
$\overline H\colon M_0 \to M_1$ mapping $\cT_0$ to $\cT_1$.
\end{lem}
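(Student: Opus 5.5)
The map $H$ will be built as a \emph{collar twist}: a diffeomorphism of $P$ that agrees with the prescribed maps on $\partial P$ and is the identity outside a thin collar of $\partial_-P$. First, reduce to the embedded piece. As in Definition~\ref{def: pseudo anosov piece}, $P$ is the image of the embedded piece $P'$ under a homotopy supported in a collar of the boundary. So we may work with a genuine compact $3$-manifold with torus boundary and a collar $\partial_-P\times[0,1]$, with $\partial_-P\times\{0\}=\partial_-P$. Formally, $\partial P$ is still a disjoint union of tori, so the boundary data make sense there. We also choose the collar disjoint from a collar of $\partial_+P$; this is possible because $\partial_+P$ and $\partial_-P$ are distinct components of the formal boundary.

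Next, we produce the boundary isotopy. By Remark~\ref{rmk: equivalent triple same block}, $f_0$ and $f_1$ are isotopic through involutions-compatible maps $f_t$. In particular, $g_t:=f_1^{-1}\circ f_t|_{\partial_-P}\colon \partial_-P\to\partial_-P$ is an isotopy of self-diffeomorphisms of $\partial_-P$. It runs from $g_0=f_1^{-1}\circ f_0$ to $g_1=\mathrm{id}$. Here we use that $f_t$ maps $\partial_-P$ to $\partial_+P$, so the composition is defined. After a standard smoothing argument, we may assume $t\mapsto g_t$ is smooth in $t$ and constant near $t=0$ and $t=1$.

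We then define $H$ on the collar by $H(x,t)=(g_t(x),t)$ for $(x,t)\in\partial_-P\times[0,1]$, and set $H=\mathrm{id}$ elsewhere. This is a diffeomorphism because each slice map is a diffeomorphism and it is the identity near $t=1$. It satisfies $H|_{\partial_+P}=\mathrm{id}$ and $H|_{\partial_-P}=g_0=f_1^{-1}\circ f_0$. It is isotopic to the identity through $H_s(x,t)=(g_{\max(s,t)}(x),t)$, suitably reparametrized to remain smooth. For the relation $H\circ f_0=f_1\circ H$, the two cases check directly. On $\partial_-P$, we have $f_1(H(x))=f_0(x)=H(f_0(x))$, since $f_0(x)\in\partial_+P$ where $H$ is the identity. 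On $\partial_+P$, write $y=f_0^{-1}(x)\in \partial_-P$; then $f_1(H(x))=f_1(x)$, while $H(f_0(x))=f_1^{-1}f_0(f_0(x))$. These agree using that $f_0$ and $f_1$ are involutions, which gives $f_1^{-1}f_0 f_0(x)=f_1(x)$. Hence $H$ descends to a homeomorphism $\overline H$ of the quotients taking $\cT_0=\pi_0(\partial P)$ to $\cT_1$. Smoothness across $\cT_i$ holds because $H$ is a product near the collars, and we may take compatible collars so that the quotient smooth structures match.

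The main obstacle is the non-manifold situation of Remark~\ref{rmk: pap not manifold}: corner circles where distinct formal boundary tori of $P$ meet along orbits in $\cO_c$. There the collar construction must be carried out on $P'$ and pushed forward. I would handle this by requiring the isotopy $g_t$ to fix the periodic orbits in $\partial_-P$, or at least to move them compatibly with $f_t$. This should be possible because $f_t$ maps boundary periodic orbits to boundary periodic orbits throughout, up to isotopy. I would then check that the collar twist commutes with the homotopy $P'\to P$ near the corners.
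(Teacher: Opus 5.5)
Your proposal is essentially the paper's proof: a collar twist supported near $\partial_-P$ that interpolates between $f_1^{-1}\circ f_0$ and the identity (the paper uses the slices $f_{1-t}^{-1}\circ f_0$ where you use $f_1^{-1}\circ f_t$), extended by the identity elsewhere, and your explicit check of $H\circ f_0=f_1\circ H$ via the involution property, your smoothing near $t=0,1$, and your compatible collars for the quotient smooth structures fill in details the paper leaves implicit. In the corner case the paper removes a tubular neighbourhood of $\cO_c$, applies the same construction and extends over the solid tori, relying as you do on the isotopy being stationary on $\cO_c$; note, however, that this amounts to $f_0=f_1$ on the corner orbits where a $\partial_+P$-torus meets a $\partial_-P$-torus (the prescribed boundary values force it), which is a property of the given gluing maps that the paper asserts ``by construction'' rather than something you can arrange by choosing $g_t$, so your justification via $f_t$ preserving periodic orbits does not supply it.
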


\begin{proof}
Assume first that $\partial P$ is an embedded surface (i.e., that $P$ is a smooth manifold, not one with corners).
Choose disjoint collar neighborhoods
\[
\cV^\pm \cong \partial_\pm P\times[0,1]\subset P,
\qquad \partial_\pm P \cong \partial_\pm P\times\{0\},
\]
so that $P\setminus(\cV^+\cup \cV^-)$ is a compact manifold with boundary.
Define $H\colon P\to P$ by
\begin{itemize}[label=-]
\item for $(x,t)\in \cV^+$, set $H(x,t)=(x,t)$;
\item for $(x,t)\in \cV^-$, set
$H(x,t)=\bigl( f_{1-t}^{-1}\circ f_0(x),\,t\bigr)\in \cV^-$;
\item on $P\setminus(\cV^+\cup \cV^-)$, set $H=\text{id}$.
\end{itemize}
We check that these definitions glue.
On $\cV^+$ there is no overlap with $\cV^-$ by construction.
On the interface $(x,1)\in \partial_\pm P\times\{1\}$ we have
$H(x,1)=\bigl(f_{0}^{-1}\circ f_0(x),\,1\bigr)
      = (x,1),$
 hence $H$ agrees with the identity on the boundary of the collar, and therefore
glues smoothly with the interior definition.
On $\partial_+P$ (i.e.\ $t=0$ in $\cV^+$) we get $H=\text{id}$.
On $\partial_-P$ (i.e.\ $t=0$ in $\cV^-$) we get
$H(x,0)=f_1^{-1}\circ f_0(x)$,
which proves Item \ref{claim: H, it: H 1} and Item \ref{claim: H, it: H 2}.
This map is clearly isotopic to the identity on $P$.

Assume now that $\partial P$ has corners, i.e. the formal boundary surfaces meet along
periodic orbits. Let $\cO_c \subset \partial P$ be the union of those corner periodic orbits.
Choose a small tubular neighborhood $\cN(\cO_c)$ in a continuation $\tilde P$ of $P$, a disjoint union of solid tori
$S^1\times D^2$, small enough that $\cN(\cO_c)$ meets $\partial P$ in annuli.
Set
\[
P^\circ := P\setminus \mathrm{int}\,\cN(\cO_c).
\]
Then $\partial P^\circ$ is an embedded surface (the corner intersections have been removed), and
the construction of Step~1 applies to produce a diffeomorphism
$H^\circ\colon P^\circ \longrightarrow P^\circ$
satisfying the boundary prescriptions on $\partial_\pm P\cap P^\circ$.

It remains to extend $H^\circ$ over each solid torus component of $\cN(\cO_c)$.
$H^\circ$ induces a diffeomorphism
between the boundary tori of corresponding components of $\cN(\cO_c)$ and $\cN(h(\cO_c))$ which preserves
the longitude, therefore extends to a diffeomorphism of the solid tori.
By construction, the conditions coming from the $\partial_+$ and $\partial_-$ sides coincide on the core orbit, meaning that if a corner orbit $O \in \cO_c$ is at the intersection of a $\partial_+ P$ and $\partial_- P$ component, the isotopy $f_t$ is the identity on $\cO_c$.
It follows that we can do this extension so that it satisfies the boundary condition inside $\cN(O_c)$.
Performing this on every component, we obtain a diffeomorphism
$H\colon P\to P$ isotopic to the identity, extending $H^\circ$, satisfying conditions \ref{claim: H, it: H 1} and \ref{claim: H, it: H 2}.

We have on $\partial_+P$ that $H\circ f_0 = f_1\circ H$,
and on $\partial_-P$ the same equality holds.
Hence $H\circ f_0=f_1\circ H$ on $\partial P$, so $H$ descends to a diffeomorphism
$\overline H\colon P/f_0 \to P/f_1$, i.e., $\overline H\colon M_0\to M_1$.
\end{proof}

Up to composing with the map from Lemma \ref{lem: H diff of pieces}, 
we can, and will, replace $\phi_0$ by its push-forward by the diffeomorphism $\bar H$ on $M_1$.
From now on, we still denote by $\phi_0$ the flow $\bar H_* \phi_0$ on $M=M_1$ and $\cT = \cT_1$.

\begin{rmk} \label{rmk: push on M and isotopic leaves}
Note that $\cT$ is a good collection of quasi-transverse tori for \emph{both} $\phi_0$ and $\phi_1$.
Moreover, by equivalence of the triples we know that the compact leaves of the foliation associated to $\phi_0$ and $\phi_1$ on $\cT$ are isotopic.
Without loss of generality, and to simplify notations, we can assume that the periodic orbits or $\phi_0$ and $\phi_1$ contained in $\cT$ coincide, and denote them by $\cO$.
\end{rmk}

Our goal will now be to construct the map $\xi$ from Theorem~\ref{thmintro: tool thm for free homotopy data}. 
We denote $(\cF^s_0, \cF^u_0)$ and $(\cF^s_1, \cF^u_1)$ the stable and unstable \pa{} foliations for $\phi_0$ and $\phi_1$ on $M$.
We denote $(\cF_{\cT,0}^s, \cF_{\cT,0}^u)$ and $(\cF_{\cT,1}^s, \cF_{\cT,1}^u)$ the trace on $\cT$. 
Recall by Remark \ref{rmk: push on M and isotopic leaves} that the compact leaves of each pair are assumed to be the same.

The gluing maps $f_0$ and $f_1$ realize the (complete) same bar codes, which means that there are geometric enumerations of the boundary foliations $\cF_{\partial,0}$ and $\cF_{\partial,1}$ on $\pP$ and $\pP$, compatible with the \pap{} orbit equivalence such that the pair $(\cF_{\partial,0}, (f_0)_*\cF_{\partial,0})$ on $\pP$ and $(\cF_{\partial,1}, (f_1)_*\cF_{\partial,1})$ have the same bar codes on the parallel components (see Definition \ref{def: same bar code}).

Denote by $h$ the orbit equivalence between $\varphi_0$ and $\varphi_1$ isotopic to the identity on $P$.
Define $h^\iin$ the restriction of $h$ to $P^\iin$ and $h^\out$ the restriction of $h$ to $P^\out$.

Let $\Gamma_i \colon P^\iin \to P^\out$ be the crossing map for the flow of $\varphi_i$ on $P$. 
Recall that $\cK_\partial$ denote the boundary prefoliation (Definition \ref{def: boundary lam and fol}).

\begin{claim} \label{claim: equivalence on boundary}
    The map $h^\iin$ maps $ \cK_{\partial,0} \cap  P^\iin$ to $ \cK_{\partial,1} \cap  P^\iin$ and $h^\out$ maps $ \cK_{\partial,0} \cap P^\out$ to $\cK_{\partial,1} \cap P^\out$.
    It commutes with the crossing maps:
    $$\forall\, x \in  P^\iin \ssm \cK_{\partial,0}, \qq   \Gamma_1(h^\iin(x)) =  h^\out ( \Gamma_0 (x))$$
\end{claim}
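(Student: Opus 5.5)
The claim follows from the facts that an orbit equivalence preserves the intrinsic dynamics of the piece and that the crossing map is defined purely in terms of orbits of the piece. I will use only the definition of the boundary prefoliation (Definition~\ref{def: boundary lam and fol} and Remark~\ref{rmk: dynamic of the boundary prefoliation}) and the fact that $h\colon P\to P$ is an orbit equivalence between $\varphi_0$ and $\varphi_1$, isotopic to the identity. First I check that $h$ preserves the boundary pieces, $h(P^\iin)=P^\iin$ and $h(P^\out)=P^\out$. Since $h$ is a homeomorphism of $P$, it maps $\partial P$ to $\partial P$; the corner and boundary periodic orbits $\cO$ are mapped to themselves by Remark~\ref{rmk: push on M and isotopic leaves}. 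On $\partial P\smallsetminus\cO$, an orbit of $\varphi_0$ enters $P$ exactly at points of $P^\iin$, meaning its past leaves $P$ while its future stays in $P$ for a short time. An orbit equivalence sends orbits to orbits and preserves their orientation, so $h$ sends entrance points to entrance points and exit points to exit points. Hence $h^\iin$ and $h^\out$ are well defined, and they are homeomorphisms of $P^\iin$ and of $P^\out$ respectively.

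Next I show that $h$ preserves the boundary prefoliations. By Remark~\ref{rmk: dynamic of the boundary prefoliation}, a point $x\in P^\iin$ lies in $\cK_{\partial,0}$ if and only if its forward $\varphi_0$-orbit never meets $\partial P$ again, that is, it stays in $P$ for all positive time. Because $h$ maps the forward $\varphi_0$-orbit of $x$ inside $P$ onto the forward $\varphi_1$-orbit of $h(x)$ inside $P$, preserving time orientation, the first orbit is trapped in $P$ exactly when the second is. Thus $h^\iin(\cK_{\partial,0}\cap P^\iin)=\cK_{\partial,1}\cap P^\iin$. The same argument in backward time gives $h^\out(\cK_{\partial,0}\cap P^\out)=\cK_{\partial,1}\cap P^\out$. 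For the boundary periodic orbits in $\cO$, which lie in both prefoliations, this is immediate from the previous paragraph. This is the precise content of Remark~\ref{rmk: orbit equivalence boundary lam}.

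Finally I prove that $h$ commutes with the crossing maps. Take $x\in P^\iin\smallsetminus\cK_{\partial,0}$. Then $\Gamma_0(x)$ is the first point of $P^\out$ reached by the forward $\varphi_0$-orbit segment starting at $x$, and this segment lies entirely in $P$. Its image under $h$ is a forward $\varphi_1$-orbit segment in $P$ from $h^\iin(x)$ to $h^\out(\Gamma_0(x))\in P^\out$, and it meets $\partial P$ only at its endpoints because $h(\intr P)=\intr P$. By the previous paragraph $h^\iin(x)\notin\cK_{\partial,1}$, so $\Gamma_1(h^\iin(x))$ is defined. By uniqueness of the first exit point, $\Gamma_1(h^\iin(x))=h^\out(\Gamma_0(x))$.

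The only delicate point is near the periodic orbits $\cO$ and the corner orbits, where $\partial P$ is not transverse and $P$ may fail to be a manifold. There I would argue that $h$ preserves the sectors at each orbit in $\cO$ that lie in $P$. This holds because $h$ preserves stable and unstable leaves and the in/out sides. As a result, the in/out labeling of the adjacent transverse annuli is respected, and the argument above applies verbatim on $\partial P\smallsetminus\cO$.
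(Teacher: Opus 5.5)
Your proposal is correct and follows the same route as the paper. The paper also characterizes $\cK_{\partial,i}\cap P^\iin$ (resp.\ $\cK_{\partial,i}\cap P^\out$) as the boundary points whose forward (resp.\ backward) orbit never leaves $P$, which an orbit equivalence preserves, and gets $\Gamma_1\circ h^\iin = h^\out\circ\Gamma_0$ because $h$ carries the orbit segment from $x$ to its first exit point onto such a segment for $\varphi_1$; you simply spell out details the paper calls obvious, such as $h$ preserving $P^\iin$ and $P^\out$.
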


\begin{proof}
   The set $\cK_{\partial,0} \cap P^\iin$ corresponds to the orbits of $\varphi_0$ that enter $P$ and never cross the boundary again. This property is obviously preserved by the orbit equivalence.
   The commutation relation is obvious by definition of orbit equivalence. 
\end{proof}

Let $\Theta_i$ be the first return map for $\phi_i$ on $\hat \cT$ in $M$. It is a diffeomorphism from $\cT \ssm \cK_{\cT,i}^s$ to $\cT \ssm \cK_{\cT,i}^u$. 
Recall that $\cK_{\cT,i}^s$ is the projection of $\cK_{\partial,i} \cap \Pin$ and $\cK_{\cT,i}^u$ is the projection of $\cK_{\partial,i} \cap \Pout$.
Let $H \colon P \to P$ be the diffeomorphism given by Lemma \ref{lem: H diff of pieces}.
Define $h^s\colon \cT \to \cT$ and $h^u \colon \cT \to \cT$ by
\begin{equation} \label{eq: Hs and Hu}
    h^s := \pi_1^\iin \circ h^\iin \circ (\bar H \circ \pi_0^\iin)^\inv , \quad h^u := \pi_1^\out \circ h^\out \circ (\bar H \circ \pi_0^\out)^\inv
\end{equation}

\begin{claim} \label{claim: equivalence on projection torus}
 The map $h^s$ maps $\cK_{\cT,0}^s$ to $\cK_{\cT,1}^s$ and 
 $h^u$ maps $\cK_{\cT,0}^u$ to $\cK_{\cT,1}^u$.
 They satisfy the following commutation relation
$$\forall\, x \in  \cT \ssm \cK_{\cT,0}^s, \qq  \Theta_1(h^s(x)) = h^u (\Theta_0 (x))$$
\end{claim}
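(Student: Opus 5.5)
The plan is to unwind the definitions in \eqref{eq: Hs and Hu} and transport Claim~\ref{claim: equivalence on boundary} from $\partial P$ to $\cT$ through the projections $\pi_i^{\iin}$ and $\pi_i^{\out}$. Since $\pi_i^\iin\colon \Pin\to\cT$ and $\pi_i^\out\colon\Pout\to\cT$ are homeomorphisms onto $\cT$ (away from the periodic orbits, where one extends by continuity), $h^s$ and $h^u$ are well-defined homeomorphisms of $\cT$.

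\textbf{Laminations.} By Lemma~\ref{lem: domain of theta} and Remark~\ref{rmk:KsKu_vs_boundarylam}, $\cK_{\cT,i}^s=\pi_i^\iin(\cK_{\partial,i}\cap\Pin)$ and $\cK_{\cT,i}^u=\pi_i^\out(\cK_{\partial,i}\cap\Pout)$. The flow $\phi_0$ has been pushed forward by $\bar H$, so the relevant projection for $\phi_0$ on $M=M_1$ is $\bar H\circ\pi_0^{\iin/\out}$. Hence
$\cK^s_{\cT,0}=\bar H\pi_0^\iin(\cK_{\partial,0}\cap\Pin)$. Applying $h^s$ gives $\pi_1^\iin h^\iin(\cK_{\partial,0}\cap\Pin)$, which equals $\pi_1^\iin(\cK_{\partial,1}\cap\Pin)=\cK^s_{\cT,1}$ by Claim~\ref{claim: equivalence on boundary}. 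The unstable case is identical.

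\textbf{Return maps.} Factor each first return map through the piece. For $x\in\cT\smallsetminus\cK^s_{\cT,i}$, lift $x$ to $\Pin$, cross $P$ with $\Gamma_i$, land in $\Pout$, and project back. This gives
\[
\Theta_1=\pi_1^\out\circ\Gamma_1\circ(\pi_1^\iin)^{-1},\qquad \Theta_0=(\bar H\pi_0^\out)\circ\Gamma_0\circ(\bar H\pi_0^\iin)^{-1},
\]
and the second identity holds because $\bar H$ conjugates the flows. Substituting these together with the definitions of $h^s,h^u$, and writing $y=(\bar H\pi_0^\iin)^{-1}(x)$, the relation $\Theta_1 h^s=h^u\Theta_0$ becomes $\pi_1^\out\Gamma_1 h^\iin(y)=\pi_1^\out h^\out\Gamma_0(y)$. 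This follows directly from the commutation relation in Claim~\ref{claim: equivalence on boundary}.

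\textbf{Main obstacle.} The step needing the most care is justifying the factorization of $\Theta_i$ through the crossing map. The issue is at boundary periodic orbits and corner orbits, where the projections are not injective and $\cT$ is only quasi-transverse. There I would restrict to $\hat\cT=\pi(\partial P\smallsetminus\cO)$, where all maps are genuine homeomorphisms. I would then use that the periodic orbits in $\cO$ lie in both $\cK^s$ and $\cK^u$ (they are compact leaves of the boundary prefoliation), so they are not in the domain of $\Theta_i$ anyway, and the lamination statement extends to them by continuity and closedness.
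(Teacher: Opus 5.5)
Your proposal is correct and follows essentially the same route as the paper, which also just unwinds the definitions in \eqref{eq: Hs and Hu}, invokes Claim~\ref{claim: equivalence on boundary}, and factors the first return map through the crossing map as $\Theta_i = \pi_i^{\mathrm{out}}\circ\Gamma_i\circ(\pi_i^{\mathrm{in}})^{-1}$. Your explicit insertion of $\bar H$ into this factorization for $i=0$ (since $\phi_0$ has been replaced by $\bar H_*\phi_0$) is the correct reading of the paper's identity, and your remark that the boundary periodic orbits lie in both $\mathcal{K}^s_{\mathcal{T},i}$ and $\mathcal{K}^u_{\mathcal{T},i}$ treats the quasi-transverse locus at least as carefully as the paper does.
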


\begin{proof}
    Follows from Claim \ref{claim: equivalence on boundary}, Equation \eqref{eq: Hs and Hu}, and the identity
    \[
    \Theta_i = \pi_i^\out \circ \Gamma_i \circ (\pi_i^\iin)^{-1}\qedhere
    \]
\end{proof}

Consider the pair $(\cF_{\cT,i}^+, \cF_{\cT,i}^-)$ induced by the stable and unstable foliations of $\phi_i$ on $\cT_i$ from Proposition \ref{prop: qt bifoliation on qt surface in pA flow}.
The pair $(\cF_{\partial,i}, (f_i)_*(\cF_{\partial,i}))$ on $\pP_i$ (from Definition \ref{def: boundary lam and fol}) projects to $(\cF_{\cT,i}^+, \cF_{\cT,i}^-)$ on $\cT$ in $M$ (after possibly composing by the diffeomorphism of Lemma \ref{lem: H diff of pieces}).

Choose a first leaf of $\cK_{\partial,0}$; take its image under $h$ as the corresponding first leaf in $\cK_{\partial,1}$.
This gives a choice of a first leaf for $\cF_{\cT,0}^+$ and $\cF_{\cT,1}^+$ which defines a bar code for $(\cF_{\cT,0}^+, \cF_{\cT,0}^-)$ and $(\cF_{\cT,1}^+, \cF_{\cT,1}^-)$.
According to the assumption of Theorem \ref{thmintro: uniqueness of gluing}, the bar codes of $(\cF_{\cT,0}^+, \cF_{\cT,0}^-)$ and $(\cF_{\cT,1}^+, \cF_{\cT,1}^-)$ are the same.

\begin{claim} \label{claim: map f comaptible orbit equivalence}
    There is a map $\xi\colon \cT \to \cT$ homotopic to the identity on $\cT$ and mapping compact leaves of $(\cF_{\cT,0}^s,\cF_{\cT,0}^u)$ to compact leaves of $(\cF_{\cT,1}^s,\cF_{\cT,1}^u)$, and satisfying 
    \begin{enumerate}
        \item $\xi(c^s_0) = h^s(c^s_0)$ for $c^s_0$ compact leaf of $\cK_{\cT,0}^s$
        \item $\xi(c^u_0) = h^u(c^u_0)$ for $c^u_0$ compact leaf of $\cK_{\cT,0}^u$
    \end{enumerate}
\end{claim}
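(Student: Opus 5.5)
The plan is to build $\xi$ torus by torus, as a homeomorphism of each component $T$ of $\cT$ that matches the compact leaves of the two traces in the order dictated by the common complete bar code. On a torus, a homeomorphism homotopic to the identity is determined up to isotopy by its action on homology. So it suffices to produce an orientation-preserving homeomorphism that sends each compact leaf to an isotopic one, preserves the cyclic order of the compact leaves, and fixes the slope of the compact leaves.

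\textbf{Parallel components.} All compact leaves of $\cF_{\cT,i}^{s}\cup\cF_{\cT,i}^u$ are freely homotopic (Remark~\ref{rmk: compact leaves cut in annuli}), and they cut $T$ into annuli. Fix the first leaf $\gamma_0$ of $\cK_{\cT,0}^s$ and take $h^s(\gamma_0)$ as the first leaf for $\phi_1$.
\begin{enumerate}
\item The geometric enumerations $\{\gamma^0_j\}$ and $\{\gamma^1_j\}$ of the compact leaves of $(\cF^+_{\cT,0},\cF^-_{\cT,0})$ and $(\cF^+_{\cT,1},\cF^-_{\cT,1})$ have the same bar code by hypothesis. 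In particular they have the same length and the same symbols, so stable leaves match stable leaves, unstable match unstable, and leaves in $\Gamma_\mm$ match leaves in $\Gamma_\mm$.
\item Define $\xi$ by sending $\gamma^0_j$ to $\gamma^1_j$ by a homeomorphism of circles, for instance the restriction of $h^s$ or $h^u$ when the leaf lies in $\cK$. Then extend over each complementary annulus by any homeomorphism of annuli extending the boundary maps, with the twist chosen to be zero relative to a fixed transversal. Since all leaves are isotopic and the cyclic order is preserved, the result is homotopic to the identity.
\item For items (1) and (2), check that $h^s$ and $h^u$ respect the enumeration. The bar code of $(\cK_{\partial,i},(f_i)_*\cK_{\partial,i})$ is read off inside the complete bar code by forgetting the leaves not in $\cK$. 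The orbit equivalence $h$ is isotopic to the identity on $P$, and by Claim~\ref{claim: equivalence on boundary} it preserves $\cK_\partial$. Using Lemma~\ref{lem: carac same bar code}, $h^s$ sends the $k$-th compact leaf of $\cK^s_{\cT,0}$ to the $k$-th compact leaf of $\cK^s_{\cT,1}$, and similarly for $h^u$.
\end{enumerate}

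\textbf{Scalloped components.} There are no marked leaves, and the stable and unstable compact leaves have different slopes. Here we apply the same construction separately to the stable family and the unstable family. First map the stable compact leaves in cyclic order, then isotope so that the unstable compact leaves also go to their counterparts. This is possible because each family cuts $T$ into annuli crossed by the other family in a standard grid pattern, and the intersection pattern is determined by the slopes.

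\textbf{Main obstacle.} The hard part is the compatibility in step 3. The maps $h^s$ and $h^u$ are defined through the different projections $\pi^\iin$ and $\pi^\out$ and through $\bar H$, so I must verify that their induced assignments on compact leaves agree with a single cyclic enumeration on $T$. In other words, the index $k$ for $(f_i)_*\cK$ relative to $\gamma_0$ must be preserved; this is exactly condition (ii) of Lemma~\ref{lem: carac same bar code}, applied with $f_1\circ H=H\circ f_0$. Non-compact leaves are irrelevant, because the statement only constrains compact leaves.
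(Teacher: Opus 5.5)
Your construction is the paper's. The paper builds $\xi^+$ on $\partial_+P$, sending the $k$-th compact leaf of the lifted pair for $\phi_0$ to the $k$-th one for $\phi_1$, and then sets $\xi=\pi_1^+\circ\xi^+\circ(\bar H\circ\pi_0^+)^{-1}$; that is your torus-by-torus map in other coordinates. Your zero-twist extension over the complementary annuli correctly gives homotopy to the identity. The three properties you list at the outset do not suffice on their own, since a Dehn twist along the leaf slope has them, but the zero-twist choice repairs this.

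The gap is in step 3, the step you flagged yourself.
\begin{itemize}
\item A complete bar code is only a word in $\{+,-,*,\mm\}$ together with an integer. It does not record which positions are leaves of $\cK_{\partial,i}$ or $(f_i)_*\cK_{\partial,i}$ and which are extra leaves of $\cF_{\partial,i}\ssm\cK_{\partial,i}$. So you cannot ``read off'' the bar code of $(\cK_{\partial,i},(f_i)_*\cK_{\partial,i})$ by forgetting leaves.
\item Even granting equal bar codes, Lemma~\ref{lem: carac same bar code} only says that $h$ (hence $h^s$, $h^u$) preserves the index of a leaf \emph{among the compact leaves of} $(\cK_{\partial,i},(f_i)_*\cK_{\partial,i})$.
\item Your $\xi$ is defined by the index $j$ in the \emph{complete} enumeration. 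So $\xi(c)=h^s(c)$ requires $h^s(\gamma^0_j)=\gamma^1_j$, i.e.\ that the $\cK$-leaves sit at the same positions in both complete enumerations.
\item Nothing in your argument rules out the extra leaves being interleaved differently for $\phi_0$ and $\phi_1$ while the symbol words coincide. Compare Example~\ref{ex: same bar code different complete bar codes}, where what varies is precisely the position of the $\cK$-leaves relative to the extra leaves $c_2,c_3$.
\end{itemize}

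To close the step, use the hypothesis in the stronger form the paper adopts at the start of the proof of Theorem~\ref{thmintro: uniqueness of gluing}: the complete geometric enumerations are chosen \emph{compatibly with the orbit equivalence} $h$. Concretely, $h$ sends a leaf of $\cK_{\partial,0}$ at complete position $k$ to the leaf at complete position $k$ for $\phi_1$, and $f_1\circ h\circ f_0^{-1}$ does the same for leaves of $(f_0)_*\cK_{\partial,0}$. This is the complete-bar-code analogue of Lemma~\ref{lem: carac same bar code}, which is how the paper invokes it. With this, items (1) and (2) follow at once from $\xi$ preserving the complete enumeration, after unwinding $h^s,h^u$ through $\pi_i^{\iin}$, $\pi_i^{\out}$, $\bar H$ and $H\circ f_0=f_1\circ H$ as you describe. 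It cannot be derived from the $\cK$-level bar code alone.
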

\begin{proof}
Consider a parallel component of $\partial_+ P$.
By equivalence of the \paps{} and equality of the bar codes, we can choose $$\xi^+ \colon \partial_+ P \to \partial_+ P$$ a homeomorphism mapping compact leaves of the lift of $(\cF_{\cT,0}^+, \cF_{\cT,0}^-)$ in $\partial_+P$ to compact leaves of the lift of $(\cF_{\cT,1}^+, \cF_{\cT,1}^-)$ in $\partial_+P$, preserving the geometric enumeration: $\xi^+$ maps the $k$-th compact leaf of the lift of $(\cF_{\cT,0}^+, \cF_{\cT,0}^-)$ to the $k$-th compact leaf of the lift of $(\cF_{\cT,1}^+, \cF_{\cT,1}^-)$. 
It suffices to map the first oriented compact leaf to the associated first oriented compact leaf of the boundary lamination by the isotopy equivalence $h$ between $\phi_0$ and $\phi_1$, and preserve the cyclic order given by the orientation of each component of $\partial_+P$.
As $\xi^+$ preserves the geometric enumeration of the pair of foliations, Lemma \ref{lem: carac same bar code} implies that it satisfies
\begin{enumerate}[label=(\arabic*)]
    \item \label{it: f^+ = varphi^s} $\xi^+ (c_0) = h (c_0)$ for $c_0$ a compact leaf of $\cK_{\partial,0}$, and
    \item \label{it: f^+ = varphi^u} $\xi^+ (c'_0) = f_1 \circ h \circ f_0^\inv (c'_0)$ for $c'_0$ a compact leaf of $(f_0)_* \cK_{\partial,0}$
\end{enumerate} 
As the flows $\phi_0$ and $\phi_1$ are isotopically equivalent, we can choose the map $\xi^+$ to be homotopic to the identity.
Similarly, on each scalloped component we can also build a homeomorphism $\xi^+$ homotopic to the identity mapping compact leaves to compact leaves and satisfying the same identities.
It suffices to remark that by isotopy equivalence of the \paps{} the compact leaves of $\cK_{\partial,0}$ and $\cK_{\partial,1}$ on a scalloped component are all isotopic in $\partial P$, 
and by isotopy of the gluing maps, the compact leaves of $(f_0)_*\cK_{\partial,0}$ and $(f_1)_*\cK_{\partial,1} $ are all isotopic in $\partial P$.

We now define the map $\xi$ by:
$$ \xi := \pi_1^+ \circ \xi^+ \circ (\bar H \circ \pi_0^+)^\inv \colon \cT \to \cT $$ 
It is clear that $\xi$ is homotopic to the identity on $\cT$.
Now recall that $\cF_{\cT,0}^+$ on $\cT$ coincides with the trace of $\cF^s_0$ on some \ccs{} of $\cT \ssm \cO_a$ and with the trace of $\cF^u_0$ on the other such that they alternate. 
Moreover, $\cF_{\cT,0}^+$ is equal to the trace of $\cF^s_0$ on a connected component $A$ of $\cT \ssm \cO_a$ if and only if $\cF_{\cT,1}^+$ is equal to the trace of $\cF^s_1$ on $A$.
The properties of the claim are consequences of properties \ref{it: f^+ = varphi^s} and \ref{it: f^+ = varphi^u} of $\xi^+$.
\end{proof}

We can extend $\xi$ to a homeomorphism of $M$ homotopic to the identity.
Let $\bar \xi$ be the map associated to $\xi$ given by Lemma \ref{lem: isomorphism of chain}. The map $\bar \xi$ is an isomorphism of chains $\cC_0$ and $\cC_1$, where $\cC_0$ and $\cC_1$ are the projections of the complete lift $\widetilde \cT$ in the orbit space $\cQ_0$ and $\cQ_1$ respectively.
It remains to show that $\bar \xi$ respects the order $\prec$ on maximal lines of lozenges of $\cC_i$, so that we can apply Theorem \ref{thmintro: tool thm for free homotopy data} to deduce Theorem \ref{thmintro: uniqueness of gluing}.\\

We lift everything in $\widetilde M$.
Suppose $\cL_0 \prec \cL_0'$ in $\cC_0$ and set $\cL_1 = \bar \xi (\cL_0)$ and $\cL_1' = \bar \xi(\cL_0')$ the corresponding maximal lines on $\cC_1$.
Take $B_0$ and $B_0'$ maximal bands (Definition \ref{def: maximal band}) in $\widetilde M$ contained in the lift $\widetilde \cT$ of $\cT$, transverse to $\phi_0$ and projecting to $\cL_0$ and $\cL_0'$ on $\cQ_0$. Since the order is transitive, we may assume that $\cL_0$ and $\cL_0'$ are chosen so that there is an orbit of $\tilde \phi_0$ crossing $B_0$ at point $x$ then $B_0'$ at point $x'$, and never crossing the collection $\widetilde \cT$ between those two points.
We therefore have $x' = \tilde \Theta_0 (x)$.
Let $C_0$ be the \cc{} of $\widetilde \cT \ssm \widetilde \cK_{\cT,0}^s$ containing $x$, and $C'_0 = \tilde \Theta_0 (C_0)$ the \cc{} of $\widetilde \cT \ssm \widetilde \cK_{\cT,0}^u$ containing $x'$.
The lamination $\widetilde \cK_{\cT,0}^s$ is closed in $\widetilde \cF_{\cT,0}^\pm$, hence the periodic boundary leaves of a \cc{} $C_0$ of $\cT \ssm \cK_{\cT,0}^s$ belong to the lamination $\widetilde \cK_{\cT,0}^s$ (Remark \ref{rmk: periodic boundary}).

\begin{nota} \label{nota: side periodic boundary}
Let $C$ be a connected componentof $\tilde \cT \setminus \tilde \cK^{s/u}_\cT$ intersecting $B$, and $c$ a periodic boundary of $C$. We denote $s(c, C)$ the side of $c$ in $\tilde \cT$ intersecting $C$.
\end{nota}

From Claim \ref{claim: corresponding nbh to side of periodic boundary}, there is a periodic boundary $c_0 \in \widetilde \cK_{\cT,0}^s$ of $C_0$ and a periodic boundary $c'_0 \in \widetilde \cK_{\cT,0}^u$ of $C'_0$ such that the side $s(c_0, C_0)$ is contained in $B_0$ and similarly the side $s(c'_0, C'_0)$ is contained in $B'_0$. See Figure \ref{fig: side periodic}.\\

\begin{figure}[h]
\labellist
\small

\pinlabel $C$ at 171 109
\pinlabel $C$ at 423 113

\pinlabel $\textcolor{green}{c}$ [b] at 131 192
\pinlabel $\textcolor{red}{c}$ [b] at 320 192

\pinlabel $B$ at 15 176
\pinlabel $B$ at 505 176

\endlabellist

    \centering
    \includegraphics[width=0.75\linewidth]{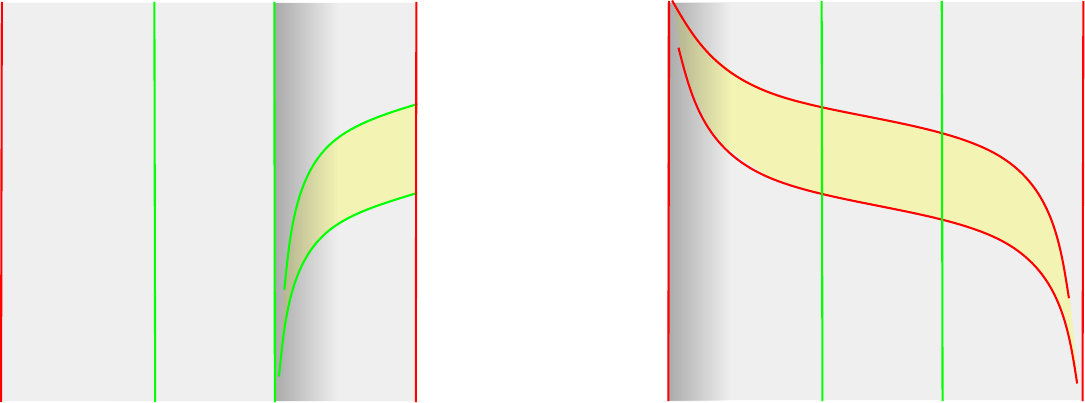}
    \caption{Side $s(c,C)$ (shaded) of a periodic boundary $c$ of a component $C$ contained in a maximal band $B$}
    \label{fig: side periodic}
\end{figure}

Let $\tilde \xi$ be the lift of $\xi$ in $\widetilde M$ commuting with $\pi_1(M)$.
It maps the periodic leaves of $(\widetilde \cF_{\cT,0}^s, \widetilde \cF_{\cT,0}^u)$, the trace of $(\widetilde \cF^s_0, \widetilde \cF^u_0)$ on $\widetilde \cT$, to periodic leaves of $(\widetilde \cF_{\cT,1}^s, \widetilde \cF_{\cT,1}^u)$, the trace of $(\widetilde \cF^s_1, \widetilde \cF^u_1)$ on $\widetilde \cT$. Hence, $\tilde \xi$ maps maximal periodic bands to maximal periodic bands (see Definition \ref{def: maximal band}).

\begin{claim} \label{claim: compatibility f and orbit equivalence} 
The map $\tilde \xi$ satisfies the following:
 \begin{enumerate}
     \item $\tilde \xi( c_0 ) = \tilde h^s (c_0)$ and $\tilde \xi( c'_0 ) = \tilde h^u (c'_0)$
     \item a maximal band $B$ contains $s(c_0, C_0)$ if and only if $\tilde \xi(B)$ contains $s(\tilde h^s (c_0), \tilde h^s (C_0))$
     \item a maximal band $B$ contains $s(c'_0, C'_0)$ if and only if $\tilde \xi(B)$ contains $s(\tilde h^u (c'_0), \tilde h^u (C'_0))$
 \end{enumerate}
\end{claim}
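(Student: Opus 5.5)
We prove Claim \ref{claim: compatibility f and orbit equivalence} by lifting Claim \ref{claim: map f comaptible orbit equivalence} equivariantly to $\widetilde M$ and then using that $\tilde \xi$ and $\tilde h^{s}$ (resp. $\tilde h^u$) are homeomorphisms of $\widetilde\cT$ that agree on the relevant periodic leaves and respect the cyclic order of periodic leaves on each lifted plane.

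\textbf{Item (1).} Let $\tilde h^s,\tilde h^u$ be the lifts of $h^s,h^u$ that commute with the $\pi_1(M)$-action. Such lifts exist because $h^s,h^u$ are homotopic to the identity: $h$ is isotopic to the identity on $P$, and $\bar H$ is the quotient of a map isotopic to the identity. By Claim \ref{claim: map f comaptible orbit equivalence}, $\xi$ and $h^s$ agree on each compact leaf of $\cK_{\cT,0}^s$. Hence $\tilde\xi$ and $\tilde h^s$ agree on every lift of such a leaf, up to a deck transformation. Both lifts are equivariant and homotopic to the identity, so they move points a bounded distance. A deck transformation preserving a lifted periodic leaf within bounded distance must lie in its stabilizer, which preserves that leaf setwise. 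This gives $\tilde\xi(c_0)=\tilde h^s(c_0)$ as sets. The statement for $c_0'$ and $\tilde h^u$ is identical.

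\textbf{Items (2) and (3).} By definition, $s(c_0,C_0)$ is the side of $c_0$ in the lifted plane $\widetilde T\supset c_0$ that meets $C_0$. The map $\tilde h^s$ is a homeomorphism of $\widetilde\cT$ sending $\widetilde\cK^s_{\cT,0}$ to $\widetilde\cK^s_{\cT,1}$ (Claim \ref{claim: equivalence on projection torus}). So it sends $C_0$ to a component $\tilde h^s(C_0)$ with periodic boundary leaf $\tilde h^s(c_0)$, and $s(c_0,C_0)$ to the side $s(\tilde h^s(c_0),\tilde h^s(C_0))$. It then suffices to show that $\tilde\xi$ and $\tilde h^s$ send the side $s(c_0,C_0)$ to the same side of the common leaf $\tilde\xi(c_0)=\tilde h^s(c_0)$.

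Each lifted plane is divided by its periodic leaves into a linearly ordered family of elementary bands. Both maps preserve this linear order: they are homotopic to the identity on each torus and, by Claim \ref{claim: map f comaptible orbit equivalence}, preserve the geometric enumeration and orientation. Both maps also fix $c_0$ and send compact leaves of $\cK^s_{\cT,0}$ to the same images. Therefore they induce the same order-preserving map on the periodic leaves adjacent to $c_0$, and so they send the side containing a given neighbouring leaf to the same side.

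Finally, $\tilde\xi$ maps maximal bands to maximal bands, as stated just before the claim. Since the side of $c_0$ is mapped consistently, $B\supset s(c_0,C_0)$ if and only if $\tilde\xi(B)\supset \tilde\xi(s(c_0,C_0))$, and this image is contained in the same maximal band as $s(\tilde h^s(c_0),\tilde h^s(C_0))$.

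\textbf{Main obstacle.} The delicate point is the case of marked or $\mm$ leaves, where the two sides of $c_0$ lie in different transverse components of $\cT\setminus\cO_a$. There $h^s$ is defined only through $\Pin$, and the other side is governed by $h^u$. We would handle it by using property \ref{it: f^+ = varphi^u} of $\xi^+$ on the adjacent component, and the fact that the bar code records the type $*$ or $\mm$. Together these force both maps to select the same side.
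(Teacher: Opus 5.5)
Your reduction matches the paper's: item (1) is read off Claim~\ref{claim: map f comaptible orbit equivalence}, and items (2)--(3) come down to showing that $\tilde\xi$ and $\tilde h^s$ (resp.\ $\tilde h^u$) send $s(c_0,C_0)$ to the same side of the common image leaf. However, both justifications you add for these steps fail.

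\textbf{Items (2)--(3).} Your key step is that both maps preserve the linear order of \emph{all} periodic leaves on a lifted plane, and therefore agree on the leaves adjacent to $c_0$. This is false for $\tilde h^s$. That map comes from the orbit equivalence $h$ of the pieces, which preserves the prefoliation $\cK_\partial$ but not the gluing-dependent foliation $\cF_\partial$ (Remark~\ref{rmk: orbit equivalence boundary lam}). Respecting the geometric enumeration is a property built into $\xi^+$, not into $h$.

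So $\tilde h^s$ need not send periodic leaves outside $\widetilde\cK^s_{\cT,0}$ to periodic leaves at all; examples are unstable compact leaves and stable compact leaves inside $C_0$. The leaves adjacent to $c_0$ on the side of $C_0$ need not lie in $\widetilde\cK^s_{\cT,0}$, so Claim~\ref{claim: map f comaptible orbit equivalence} gives you no control over them.

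The paper's argument uses only $c_0$. Both maps preserve the orientation of $\widetilde\cT$, and both send $c_0$ with its dynamical orientation to $c_1$ with its dynamical orientation. An orientation-preserving homeomorphism that carries an oriented line onto the same oriented line carries each side germ to the same side.

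Your idea can be rescued another way. Replace the adjacent leaf by a deck translate $gc_0$, $g\in\pi_1(T)$, lying on the side of $C_0$. It is a leaf of $\widetilde\cK^s_{\cT,0}$, and by equivariance both maps send it to $gc_1$. Hence both send the component of the complement of $c_0$ containing $gc_0$ to the component of the complement of $c_1$ containing $gc_1$.

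Your ``main obstacle'' is also misdiagnosed. $\pi^\iin$ is a homeomorphism of $\Pin\setminus\cO$ onto all of $\hat\cT$. So $h^s$ is defined on both sides of an alternating orbit, through the two paired boundary tori, and $h^u$ plays no role in item (2).

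\textbf{Item (1).} Your assertion that a deck transformation moving a lifted periodic leaf a bounded distance lies in its stabilizer is false. Take $c\subset T$ and $g\in\pi_1(T)\setminus\langle c\rangle$. Then $g\tilde c\neq\tilde c$ is another lift of $c$ in the same plane $\widetilde T$, at bounded Hausdorff distance from $\tilde c$, because $g$ commutes with the generator of the stabilizer of $\tilde c$. All lifts of $c$ in $\widetilde T$ are parallel lines. So bounded displacement of $\tilde\xi$ and $\tilde h^s$ cannot determine which lift of $\xi(c)=h^s(c)$ is hit.

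The matching of lifts has to come from how $\xi$ is chosen relative to $h$: the first oriented leaf is sent to its $h$-image and the enumeration is respected. It also depends on the choice of the lift $\tilde h^s$. The paper takes this directly from Claim~\ref{claim: map f comaptible orbit equivalence}; coarse geometry cannot replace it.
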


\begin{proof}
The first item comes from Claim \ref{claim: map f comaptible orbit equivalence} and the fact that $c_0$, $c'_0$ are leaves of $\widetilde \cK_{\cT,0}^s$ and $\widetilde \cK_{\cT,0}^u$ respectively.
For the second item, we know that $\tilde \xi$ maps $c_0$ to $\tilde h^s(c_0)$. 
As $\tilde h^s$ preserves the stable lamination, this leaf is a periodic boundary of the \cc{} $\tilde h^s(C_0)$ of $\widetilde \cT \ssm \cK_{\cT,1}^s$.
Set $c_1 := \tilde \xi (c_0)=\tilde h^s(c_0)$. This is a periodic boundary leaf of $C_1 := \tilde h^s(C_0)$.
Suppose $B$ is a maximal band that contains $s(c_0, C_0)$. 
Both $\tilde \xi$ and $\tilde h^s$ preserve the orientation on $\widetilde \cT$ and map the oriented leaf $c_0$ to the same oriented leaf $c_1$.
It follows that they map the side $s(c_0, C_0)$ to the same side $s(c_1, C_1)$, which shows the second item. The proof of the third item is identical.
\end{proof}

\begin{figure}[h]
\labellist
\small

\pinlabel $\textcolor{blue}{\tilde\phi_0}$ [r] at 8 237
\pinlabel $\textcolor{blue}{\tilde\phi_1}$ [r] at 333 237

\pinlabel $B_0$ [l] at 2 10
\pinlabel $B'_0$ [l] at 2 325

\pinlabel $B_1=\tilde\xi(B_0)$ [l] at 320 10
\pinlabel $B'_1=\tilde\xi(B'_0)$ [l] at 320 325

\pinlabel $C_0$ at 122 90
\pinlabel $C_0'=\tilde\Theta_0(C_0)$ at 125 421

\pinlabel $C_1=\tilde{h}^s(C_0)$ at 472 110
\pinlabel $C_1'=\tilde{h}^u(C'_0)$ at 428 421

\pinlabel $\textcolor{green}{c_0}$ [b] at 90 191
\pinlabel $\textcolor{green}{c_1}$ [b] at 408 191

\pinlabel $\textcolor{blue}{c_0'}$ [b] at 2 503
\pinlabel $\textcolor{blue}{c_1'}$ [b] at 320 503

\pinlabel $x$ [tl] at 172 115
\pinlabel $x'$ [l] at 63 431

\pinlabel $\tilde h^u$ [b] at 265 392
\pinlabel $\tilde \xi$ [b] at 265 357

\pinlabel $\tilde h^s$ [b] at 265 83
\pinlabel $\tilde \xi$ [b] at 265 49

\pinlabel $\tilde\Theta_0$ [l] at 124 255

\endlabellist

    \centering
    \vspace{5mm}
    \includegraphics[width=1\linewidth]{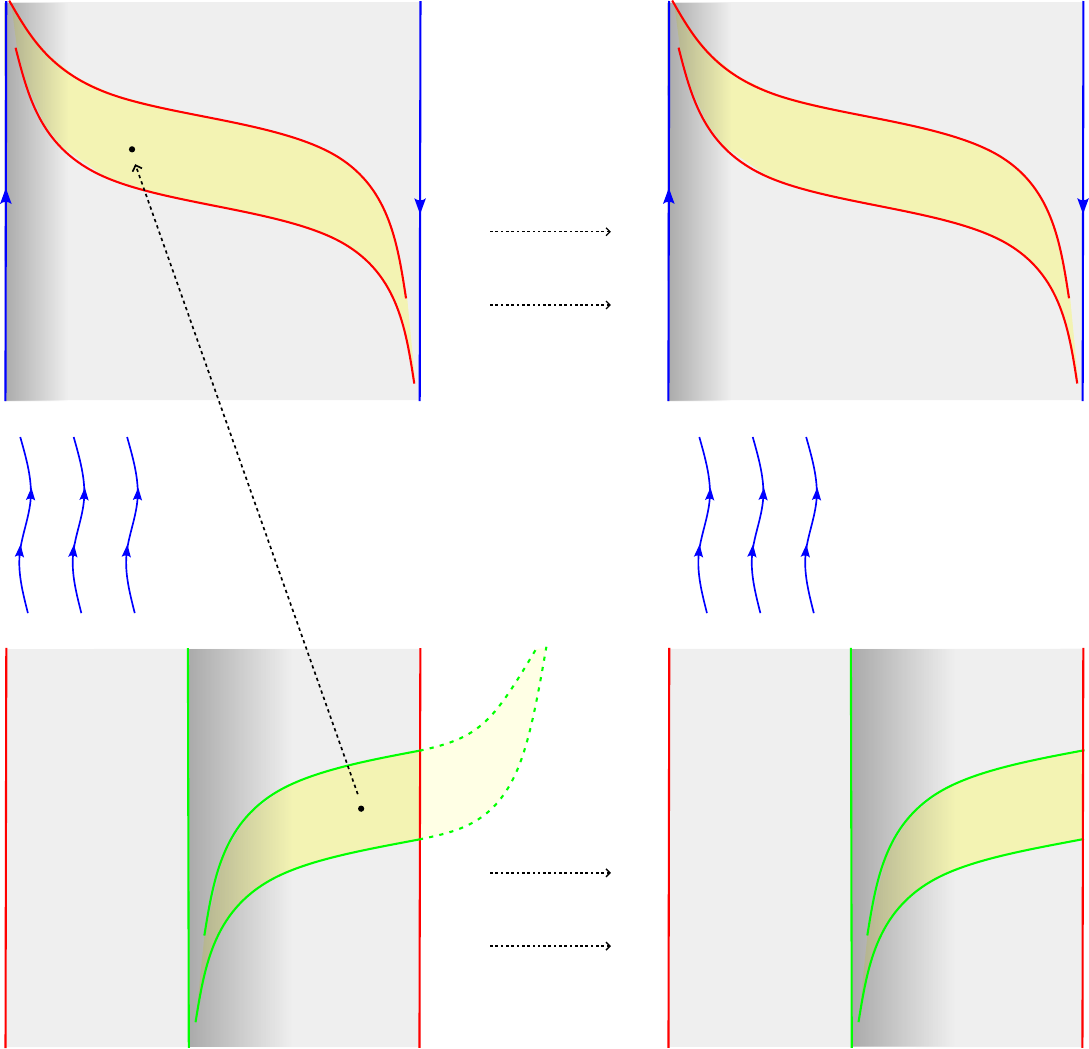}
    \caption{Relations between the maximal bands, components, and periodic boundary on the collection $\cT$. The sides $s(c,C)$ are shaded.}
    \label{fig: homeo band}
\end{figure}

Set $c_1 = \tilde \xi (c_0)$ and $c'_1 = \tilde \xi (c'_0)$. Those are periodic boundary leaves of $C_1= \tilde h^s(C_0)$ and of $C_1'= \tilde h^u(C'_0)= \tilde \Theta_1 (C_1)$ respectively.
Set $B_1 = \tilde \xi(B_0)$ and $B_1' = \tilde \xi(B_0')$. Those are maximal bands which project to the maximal lines $\tilde \cL_1$ and $\tilde \cL_1'$ in the orbit space of $\phi_1$ (see construction of $\bar \xi$ in Lemma \ref{lem: isomorphism of chain}).
See Figure \ref{fig: homeo band}.

We must show that there exists an orbit of $\tilde \phi_1$ from $B_1$ to $B_1'$.
By Claim \ref{claim: compatibility f and orbit equivalence}, $B_1$ contains $s(c_1, C_1)$, and $B'_1$ contains $s(c'_1, C'_1)$.
Now, by Lemma \ref{lem: action lift return map transverse segment}, the image under the map $\Theta_1$ of a small transversal $\sigma^u$ contained in $B_1$ will accumulate to $c_1'$ on the side $s(c'_1, C'_1)$. In particular, one can find an orbit of $\tilde \phi_1$ from $B_1$ to $B'_1$.

We conclude that the pseudo-Anosov flows $\phi_0$ and $\phi_1$ satisfy the assumptions of Theorem \ref{thmintro: tool thm for free homotopy data}, therefore they have the same free homotopy data.

\begin{claim} 
If one of the flows is transitive, then they are orbit equivalent.
\end{claim}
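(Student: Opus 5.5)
The plan is to reduce the claim to the rigidity theorem of \cite{BM24,BFrM25}: two transitive pseudo-Anosov flows with equivalent free homotopy data are orbit equivalent, via a homeomorphism inducing that equivalence. We have just shown $\Per(\phi_0)=\Per(\phi_1)$ as subsets of $\pi_1(M)$, where $M=M_1$ and $\phi_0$ denotes $\bar H_*\phi_0$; the identity of $M$ realizes this equivalence. So the remaining work is to check that both flows are transitive, and then invoke the theorem.

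\textbf{Step 1: transfer transitivity.} Assume without loss of generality that $\phi_0$ is transitive. Transitivity of a pseudo-Anosov flow is read off its orbit space: $\phi$ is transitive if and only if the $\pi_1(M)$-orbits of fixed points of elements of $\pi_1(M)$ are dense in $\cQ_\phi$. The non-transitive case, by contrast, produces a proper attractor. I would argue by contradiction. If $\phi_1$ is not transitive, it has a basic set decomposition with a proper attracting basic set, so there is a transverse incompressible torus (or surface) separating the basic sets. This can be detected by free homotopy data: in a transitive flow, for any two periodic orbits $\alpha,\beta$ there is a periodic orbit whose free homotopy class is a product of (powers of) conjugates of $\alpha$ and $\beta$, obtained by shadowing a pseudo-orbit. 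This property, ``$\Per$ is closed under a suitable mixing of classes'', fails across a transverse separating surface. I expect this step to be the main obstacle.

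A cleaner route would be to use a known statement from \cite{BFrM25}. There, equality of free homotopy data with one flow transitive already forces the other to be transitive, because non-transitive flows have periodic orbits in repelling basic sets which are not freely homotopic into the transitive flow's dynamics. Alternatively, one can cite that the theorem there is stated with only one flow assumed transitive. I would first check whether that hypothesis suffices in the cited result, and only fall back on the shadowing argument if needed.

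\textbf{Step 2: conclude.} With both flows transitive and $\Per(\phi_0)=\Per(\phi_1)$, apply the theorem of \cite{BFrM25} to get an orbit equivalence between $\bar H_*\phi_0$ and $\phi_1$, homotopic to the identity. Composing with $\bar H$ then gives an orbit equivalence between the flows on $M_0=P/f_0$ and $M_1=P/f_1$ induced by the original triples, which proves the claim. Theorem~\ref{thmintro_advertisement} follows in the same way, by taking the triples given by the modified JSJ decomposition.
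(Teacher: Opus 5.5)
\textbf{The gap is in the theorem you reduce to.} Equality of free homotopy data does \emph{not} by itself force two transitive pseudo-Anosov flows to be orbit equivalent. The results of \cite{BFrM25} and \cite{BFM25} give orbit equivalence only when there is no periodic Seifert piece all of whose boundary tori are scalloped and on which the two flows have different \emph{signs}. For Anosov flows, this exception means the flows differ by a periodic Seifert flip (\cite[Theorem~1.1]{BFM25}). This is why the introduction calls free homotopy data an ``often complete'' invariant. So your Step~2 does not follow from $\Per(\phi_0)=\Per(\phi_1)$ alone, and this exceptional case is exactly what the proof of the claim has to rule out.

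What is missing is an argument that on every such scalloped periodic Seifert piece $P$ the signs of $\phi_0$ and $\phi_1$ agree. That argument must use the structure of the triples, not only the free homotopy data. The paper proceeds as follows.
\begin{itemize}
\item The sign is determined by the orientations of the periodic orbits in $P$.
\item Equal free homotopy data already makes the spines of $\phi_0$ and $\phi_1$ on $P$ match, up to these orientations.
\item If the interior of $P$ is disjoint from $\cT$, the two flows are isotopically equivalent on $M\setminus\cT$, since they come from equivalent pieces. Hence the signs agree.
\item If $P$ meets a torus $T\in\cT$, the periodic orbits of $\phi_0$ and $\phi_1$ on $\cT$ coincide and have the same orientations, because they are boundary orbits of equivalent pieces (Remark~\ref{rmk: push on M and isotopic leaves}). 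This fixes the orientation of the spine, and hence the sign.
\end{itemize}

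Your Step~1 is secondary by comparison, and as written it is only heuristic. You do not justify that the claimed shadowing property fails across a separating transverse torus. The paper does not treat transitivity of the second flow separately at all. Its actual content is the sign argument above, which your proposal omits.
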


\begin{proof}
Two transitive pseudo-Anosov flows with the same (equivalent) free homotopy data are orbit equivalent, unless there exists a periodic Seifert piece $P$ all of whose boundary surfaces are scalloped and such that the \emph{signs} of the associated trees of scalloped regions differ (see \cite[Theorem 1.3]{BFrM25} and \cite[Theorem 2.26]{BFM25})\footnote{For Anosov flows, one may describe this much more succinctly as the flows are orbit equivalent unless they differ by a periodic Seifert flip \cite[Theorem 1.1]{BFM25}.}. We refer to Section 6.2 of \cite{BFM25} for the precise definition of the sign associated to a scalloped periodic Seifert piece. Here it is enough to know that the sign is determined by the orientations of the periodic orbits contained in $P$.

Assume by contradiction that there is a scalloped periodic Seifert piece $P$ of the JSJ decomposition of $M$ on which the signs of $\phi_0$ and $\phi_1$ differ.

If the interior of $P$ is disjoint from the collection $\cT$, this is impossible: indeed, the restrictions of
$\phi_0$ and $\phi_1$ to $M\setminus \cT$ come from equivalent pseudo-Anosov pieces (in particular, they are isotopically equivalent on the restriction $M \setminus \cT$), so the signs must match.

So we may assume that $P$ meets $\cT$ along a component $T$.
By uniqueness of the JSJ decomposition, the incompressible torus $T$ is contained in $P$.
Inside a periodic Seifert piece, a quasi-transverse torus has the following property:
every orbit meeting $T$ either is contained in $T$, or exits the piece both in the past and in the future.

Since $\phi_0$ and $\phi_1$ have the same free homotopy data, their spines on $P$ are diffeomorphic, and may only differ by the orientation of matching periodic orbits in $P$. 
The periodic orbits on $T$ come from periodic orbits of the boundary of the pseudo-Anosov pieces, and for
two isotopic pseudo-Anosov pieces this data is fixed: In particular, the orientation of
the periodic orbits contained in a boundary torus is preserved, which implies that the orientation on the spines of $\phi_0$ and $\phi_1$ and thus the signs of $\phi_0$ and $\phi_1$ must be equal on $P$.
\end{proof}

This finishes the proof of Theorem \ref{thmintro: uniqueness of gluing}.
\end{proof}

We showed in the particular case of filling \paps{} that the bar code of a triple is equal to the complete bar code (Proposition \ref{prop: bar code and complete bar code for filling piece}).
In that case, we can replace complete bar code by bar code:

\begin{coro} \label{coro: uniqueness of filling gluing}
Two equivalent filling triples with the same bar code induce \pa{} flows with the same free homotopy data. 
If one of them is transitive, then they are isotopically equivalent.
\end{coro}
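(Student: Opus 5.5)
The plan is to reduce this corollary to Theorem \ref{thmintro: uniqueness of gluing} via Proposition \ref{prop: bar code and complete bar code for filling piece}. Let $(P_0,\varphi_0,f_0)$ and $(P_1,\varphi_1,f_1)$ be equivalent triples whose pieces are filled. By Remark \ref{rmk: equivalent triple same block}, we may assume $P_0=P_1=P$, that $\varphi_0,\varphi_1$ are isotopically equivalent via some $h$, and that $f_0$ and $f_1$ are isotopic.

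Since $\cK_{\partial,i}$ is filling, Proposition \ref{prop: bar code and complete bar code for filling piece} shows that the compact leaves of $\cF_{\partial,i}$ are exactly those of $\cK_{\partial,i}$, with the same dynamical orientations. The same holds for $(f_i)_*\cF_{\partial,i}$ versus $(f_i)_*\cK_{\partial,i}$. Hence for each $i$ the pairs $(\cK_{\partial,i},(f_i)_*\cK_{\partial,i})$ and $(\cF_{\partial,i},(f_i)_*\cF_{\partial,i})$ have the same compact leaves and the same labels $+,-,*,\mm$.

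We must check that this equality is compatible with the normalizations in Definition \ref{def: same bar code}. In both the bar code and the complete bar code, the first leaf is a compact leaf $\gamma_0^0$ of $\cK_{\partial,0}$, transported by $h$ to $\gamma_0^1$. The first leaf of the image foliation is $f_i(\gamma_0^i)$ in both cases, and the enumeration depends only on the compact leaves and the orientation of the torus. So "same bar code" gives "same complete bar code". Theorem \ref{thmintro: uniqueness of gluing} then yields equality of the free homotopy data, and orbit equivalence when one flow is transitive.

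The main obstacle is upgrading the conclusion from "orbit equivalent" to "isotopically equivalent". I would not rely only on the final statement of Theorem \ref{thmintro: uniqueness of gluing}, but on what its proof produces. It produces the equality $\Per(\phi_0)=\Per(\bar H_*\phi_0)$ set-theoretically, on the same manifold, with $\bar H$ isotopic to the identity relative to the identification. For transitive flows, \cite[Theorem 1.3]{BFrM25} says that equality of $\Per$ as subsets of $\pi_1(M)$ gives an orbit equivalence inducing the identity on $\pi_1$ up to conjugacy, except for the periodic Seifert sign obstruction; the final claim in that proof already rules this obstruction out. An orbit equivalence acting trivially on $\pi_1$ is homotopic to the identity. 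Upgrading homotopic to isotopic may require a standard homotopy-implies-isotopy fact for Haken manifolds (Waldhausen), together with \cite[Proposition 2.5]{BTZ26}-type arguments. This last point is where I expect to need the most care.
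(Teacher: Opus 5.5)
Your proposal is correct and follows the paper's route: the paper obtains the corollary by combining Proposition \ref{prop: bar code and complete bar code for filling piece} (for filled pieces the bar code and the complete bar code coincide, with the same normalization of first leaves) with Theorem \ref{thmintro: uniqueness of gluing}. Your extra justification of the word ``isotopically'' is sound and makes explicit a step the paper leaves implicit. It consists of applying \cite{BFrM25} to get an orbit equivalence inducing an inner automorphism of $\pi_1$, hence homotopic to the identity, hence isotopic to it by Waldhausen since the glued manifold is Haken. Note that the equality to feed into \cite{BFrM25} is $\Per(\bar H_*\phi_0)=\Per(\phi_1)$ in $\pi_1(M_1)$, not $\Per(\phi_0)=\Per(\bar H_*\phi_0)$ as you wrote.
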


Putting together Proposition \ref{propintro: finiteness of bar code} and Theorem \ref{thmintro: uniqueness of gluing}, we obtain 

\begin{coro} \label{coro: finiteness of free homotopy data}
    For a fixed manifold $M$, there are finitely many free homotopy data of pseudo-Anosov flows constructed from equivalent \pa{} triples $(P,\phi,f)$ such that $P/f\simeq M$.
\end{coro}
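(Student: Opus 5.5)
The plan is to combine Proposition \ref{propintro: finiteness of bar code} with Theorem \ref{thmintro: uniqueness of gluing}. We also need Proposition \ref{prop: finiteness jsj} to control the possible gluings. The point is that free homotopy data only depend on the equivalence class of the triple together with its complete bar code. So it suffices to show that the pairs (equivalence class of triple with $P/f\simeq M$, complete bar code) fall into finitely many classes.

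First, I fix a pseudo-Anosov piece $(P,\varphi)$, or more precisely its orbit equivalence class, since this is what ``constructed from equivalent triples'' fixes.

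Second, I would show that only finitely many equivalence classes of triples $(P,\varphi,f)$ satisfy $P/f\simeq M$. By Definition \ref{def: equivalent triple}, the equivalence class is determined by the homotopy class of $f$ modulo conjugation by orbit equivalences of the piece. On each boundary torus the homotopy class of $f$ is a mapping class. Applying Proposition \ref{prop: finiteness jsj} torus by torus, and iterating over the finitely many tori of $\partial P$, bounds the number of isotopy classes of involutions $f$ with $P/f$ diffeomorphic to $M$. A subtlety is that $\partial P$ may consist of modified, only weakly embedded JSJ tori rather than genuine JSJ tori. I would handle this by passing to the associated embedded piece $P'$ of Definition \ref{def: pseudo anosov piece}, which is homotopy equivalent to $P$, so isotopy classes of gluings correspond. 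If some tori in $\partial P$ are not JSJ tori, I would instead rely on the hypothesis as stated: the triples are already assumed equivalent, i.e.\ all lie in a single equivalence class. In that reading this step is vacuous.

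Third, within a single equivalence class, Proposition \ref{propintro: finiteness of bar code} gives finitely many complete bar codes. By Theorem \ref{thmintro: uniqueness of gluing}, two equivalent triples with the same complete bar code induce flows with equivalent free homotopy data. Hence the set of free homotopy data, up to the equivalence via homeomorphisms of $M$ of Definition \ref{def: free homotopy data}, injects into the finite set of complete bar codes. Taking the finite union over the classes from the second step finishes the proof.

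The main obstacle is the bookkeeping in the second step. One must make precise that ``equivalent triples with $P/f\simeq M$'' ranges over finitely many classes, and reconcile the homotopy condition in Definition \ref{def: equivalent triple} with the isotopy-class finiteness of Proposition \ref{prop: finiteness jsj}; on tori homotopy and isotopy agree, so this should be routine. If the statement is read as already fixing one equivalence class, the proof reduces immediately to the third step.
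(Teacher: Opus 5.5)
Your third step is exactly the paper's argument. Within the single equivalence class that the statement fixes, Proposition~\ref{propintro: finiteness of bar code} gives finitely many complete bar codes, and Theorem~\ref{thmintro: uniqueness of gluing} shows that each complete bar code determines the free homotopy data up to equivalence, which proves the corollary. Drop your second step: it is not needed, and applying Proposition~\ref{prop: finiteness jsj} torus by torus is not justified as written, since that result concerns cutting along a single JSJ torus, whereas $\partial P$ may consist of several tori, some of which need not be JSJ tori.
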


\subsection{Comparison with previous gluing constructions} 
\label{subsec: equivalent triple and isotopic triple}

The notion of \emph{equivalent triple} that we decided to use here
(Definition~\ref{def: equivalent triple}) differs in subtle ways from the notions of
(strongly) isotopic building blocks and triples from
\cite[Definitions~1.27 and~1.35]{pauletAnosovFlowsDimension2025},
and of strongly isotopic hyperbolic plugs from
\cite{beguinBuildingAnosovFlows2017}.
We chose to work with orbit equivalence rather than isotopy, as in \cite{beguinBuildingAnosovFlows2017,pauletAnosovFlowsDimension2025}, because this is the relation that appears naturally in Section~\ref{sec: free data block} and in the finiteness arguments of
Section~\ref{sec: block gluing} making for a more straightforward definition.

Here, we will quickly point out the differences between the different notions, and explain how Theorem \ref{thmintro: uniqueness of gluing} can be adapted to these different notions.

First, consider $(P,X)$ and $(P',X')$ two building blocks in the sense of
\cite[Definition~1.2]{pauletAnosovFlowsDimension2025}, and assume that they are
isotopic in the sense of \cite[Definition~1.27]{pauletAnosovFlowsDimension2025}.
Denote by $\cO$ and $\cO'$ the collections of boundary periodic orbits of $(P,X)$ and
$(P',X')$, and let
\[
\hat P:=P\setminus \cO, \qquad \hat P':=P'\setminus \cO'
\]
with $\hat X$ and $\hat X'$ the restrictions of the vector fields.
Then \cite[Proposition~1.30]{pauletAnosovFlowsDimension2025} gives an orbit
equivalence
$h\colon \hat P \to \hat P'$ between the flow of $\hat X$ and $\hat X'$.
In general, however, this orbit equivalence does not extend to the whole blocks.

Conversely, if $(P,\varphi,f)$ and $(P',\varphi',f')$ are two pseudo-Anosov triples in our sense (Definition \ref{def: gluing map}) whose
pieces are orbit equivalent, they need not be isotopic in the sense of
\cite[Definition~1.27]{pauletAnosovFlowsDimension2025}. Nevertheless, after replacing
one of the triples by a smoothly orbit equivalent representative, one may arrange
that the two triples become isotopic in that sense.

Despite these differences, a version of Theorem \ref{thmintro: uniqueness of gluing} can also be formulated using isotopic
triples instead of equivalent triples. Indeed, our arguments above only use the following key elements: We need to have 
the same free homotopy data for periodic orbits inside the blocks,
an orbit equivalence on the entering and exiting components of the boundary of the blocks, and
an isotopy of the gluing maps on the boundary. Each of these properties is also implied by the isotopy framework.
For this reason, the arguments adapt with no essential change to that setting, and one gets Theorem \ref{thmintro: uniqueness of gluing} replacing equivalent triples by isotopic triples.

In particular, one can recover uniqueness results for the gluing procedures
considered in \cite{pauletAnosovFlowsDimension2025} and
\cite{beguinBuildingAnosovFlows2017}: 
If two isotopic triples have the same bar codes (in the natural sense
obtained by comparing compact leaves through the isotopy), then they are strongly
isotopic in the sense of \cite{pauletAnosovFlowsDimension2025} and
\cite{beguinBuildingAnosovFlows2017}.
In the transverse boundary case, this gives back the
uniqueness theorem for Béguin--Bonatti--Yu blocks
\cite[Theorem~1.2]{beguinUniquenessTheoremTransitive2023}. 
For quasi-transverse boundary, we obtain the following analog of \cite[Theorem 1.5]{beguinUniquenessTheoremTransitive2023}.
\begin{coro}
    Suppose that $(P,X,f)$ and $(P',X',f')$ are strongly isotopic triples in the sense of \cite{pauletAnosovFlowsDimension2025} inducing transitive Anosov flows on $P/f$ and $P'/f'$. Then they are orbit equivalent.
\end{coro}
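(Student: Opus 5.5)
The plan is to deduce the corollary from Theorem \ref{thmintro: uniqueness of gluing}, adapted to the isotopy framework as discussed just above, rather than to redo the orbit-space argument. Let $(P,X,f)$ and $(P',X',f')$ be strongly isotopic triples in the sense of \cite{pauletAnosovFlowsDimension2025}, with induced transitive Anosov flows $\phi$ on $P/f$ and $\phi'$ on $P'/f'$. The goal is to check the three ingredients identified above:
\begin{enumerate}
\item the free homotopy data of periodic orbits inside the blocks agree;
\item there is an orbit equivalence on the entering and exiting components of the boundary;
\item the gluing maps are isotopic.
\end{enumerate}
Once these hold, it remains to show that the two triples have the same complete bar code.

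\textbf{Step 1.} Transport everything to a single block. The isotopy of blocks provides, via \cite[Proposition~1.30]{pauletAnosovFlowsDimension2025}, an orbit equivalence $h\colon \hat P\to\hat P'$ away from the boundary periodic orbits, isotopic to the identity. This already gives $\Per$ of the restricted flows, the correspondence $\cK_\partial\leftrightarrow\cK'_\partial$, and the commutation with the crossing maps of Claim \ref{claim: equivalence on boundary}. That is exactly what is used in Claims \ref{claim: equivalence on boundary}--\ref{claim: compatibility f and orbit equivalence}. The failure of $h$ to extend across $\cO$ is harmless, because those arguments only use $h$ on $\partial P\setminus\cO$ and on compact leaves, which are determined up to isotopy.

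\textbf{Step 2.} Show that strong isotopy of triples forces equal complete bar codes. Strong isotopy means the gluing maps are isotopic \emph{through maps preserving the relevant quasi-transversality of the pairs} $(\cK_\partial, f_{t*}\cK_\partial)$. Along such a path, no compact leaf of $f_{t*}\cK_\partial$ can cross a compact leaf of $\cK_\partial$: a crossing would break quasi-transversality at the time of crossing. Hence the bar code of $(\cK_\partial,f_{t*}\cK_\partial)$ is constant in $t$.

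For the complete bar code there is an extra point. The additional compact leaves of $\cF_\partial$ not in $\cK_\partial$ arise, by the argument of Proposition \ref{propintro: finiteness of bar code}, as finite backward/forward $\Theta$-iterates of compact leaves of $\cK_\partial$. Under strong isotopy the induced flows vary continuously, and these finitely many iterates move continuously along the path. They cannot collide with other compact leaves without violating transversality, so the complete bar code is also constant.

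\textbf{Main obstacle.} I expect Step 2 to be the hardest part. What needs checking is that the combinatorics of the extra compact leaves is locally constant along a strong isotopy. This is precisely what fails for merely isotopic gluings, as Example \ref{ex: same bar code different complete bar codes} shows. I would first try to use hyperbolic continuation of the compact leaves of $\cF_\partial$ along the path of flows, and argue that their cyclic order on each torus can only change through a tangency, which strong isotopy excludes.

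\textbf{Step 3.} Given equal complete bar codes, the argument of Theorem \ref{thmintro: uniqueness of gluing} applies verbatim: build $\xi$ as in Claim \ref{claim: map f comaptible orbit equivalence}, then check that $\prec$ is preserved, and apply Theorem \ref{thmintro: tool thm for free homotopy data}. This yields equal free homotopy data. Transitivity together with \cite[Theorem 1.1]{BFM25} then gives orbit equivalence. Periodic Seifert flips are excluded exactly as in the final claim of that proof, since the orientations of the boundary periodic orbits are fixed by the isotopy.
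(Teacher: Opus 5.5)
Your overall architecture is the paper's: the isotopic-triple version of Theorem \ref{thmintro: uniqueness of gluing}, which only needs the interior free homotopy data, the orbit equivalence on $\pP\setminus\cO$ commuting with the crossing maps, and an isotopy of the gluing maps. Your first paragraph of Step 2 is also fine: the bar code of $(\cK_\partial,f_{t*}\cK_\partial)$ is constant along a strong isotopy.

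The gap is the second paragraph of Step 2, on the extra compact leaves of $\cF_\partial\setminus\cK_\partial$. Strong quasi-transversality constrains only the pair $(\cK_\partial,f_{t*}\cK_\partial)$. The extra leaves are invisible to it, so nothing stops a leaf of $f_{t*}\cK_\partial$ from sweeping across them, and your sentence ``they cannot collide without violating transversality'' has no hypothesis behind it.

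Example \ref{ex: same bar code different complete bar codes} shows exactly this. The obvious isotopy from $f_{23}$ to $f'_{23}$ slides the two pulled-back leaves of $\cK_{\partial,3}$ inside the annulus bounded by $c_1$ and $c_4$. The pair therefore stays quasi-transverse and the bar code is constant, yet the isotopy crosses $c_2$ and $c_3$ and the complete bar code changes. You read that example as a failure for merely isotopic gluings; it is really a failure for isotopies that are strong with respect to $\cK_\partial$.

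At the crossing times the glued flow cannot be Anosov, since an unstable leaf coming from $P_1$ and a stable leaf of $P_3$ would coincide along an annulus. So there is no path of Anosov flows along which to run hyperbolic continuation. In any case, an isotopy of blocks in the sense of \cite{pauletAnosovFlowsDimension2025} only provides an orbit equivalence $\hat P\to\hat P'$, not a continuous family of flows. Moreover, a $C^1$-continuous path of Anosov flows would give the conclusion at once by structural stability, with no need for your Steps 1 and 3 or for transitivity. That is a sign the step assumes too much.

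The missing ingredient is the filling condition that \cite{pauletAnosovFlowsDimension2025} (like \cite{beguinBuildingAnosovFlows2017}) imposes on boundary prefoliations in its gluing framework; see the discussion before Definition \ref{def: filling lamination, filled piece}. For filled pieces every component of $\pP\setminus\cK_\partial$ is a strip. Hence $\cF_\partial$ has no compact leaves outside $\cK_\partial$, and the complete bar code equals the bar code by Proposition \ref{prop: bar code and complete bar code for filling piece}.

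With that, your first paragraph of Step 2 already yields equal complete bar codes. The isotopic-triple version of Theorem \ref{thmintro: uniqueness of gluing} (the analogue of Corollary \ref{coro: uniqueness of filling gluing}) then gives equal free homotopy data, and transitivity finishes as in your Step 3. This is the route the paper indicates. Without filling, constancy of the complete bar code along a strong isotopy fails in general, so Step 2 cannot be repaired along the lines you propose.
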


\section{Finiteness on manifolds with skew hyperbolic pieces}\label{sec: finiteness flow}

In this section, we will prove Theorems \ref{thmintro: finiteness orbit equivalence} and \ref{thmintro: finiteness extra conditions} from the introduction.
We start by giving the precise definition of what we call a skew piece, and refer to \cite{barthelmeSkewPiecesPseudoAnosov} for a list of equivalent characterizations of it.

\begin{defi}\label{def: skew pap}
    Let $(P,\varphi)$ be a \pap{}.  
    We say that $(P,\varphi)$ is a \emph{skew} piece if it is isotopic to a \pap{} $(P',\varphi')$ obtained by cutting a skew Anosov flow along a good collection of quasi-transverse tori.
\end{defi}

We refer to Section~\ref{subsec: pap} for the general definitions and notation concerning \paps{} and good collections.

Recall the statement of Theorem \ref{thmintro: finiteness orbit equivalence} that we are aiming to prove:
\finitenessgeneral*

As before, the transitivity assumption in Theorem \ref{thmintro: finiteness orbit equivalence} is only in order to be able to apply \cite{BFrM25} and deduce orbit equivalence from having the same free homotopy data, so what we actually will prove in this section is the following:

\begin{thm}\label{thm: finiteness free homotopy data}
    Let $M$ be a compact $3$-manifold and $\{\phi_i\}$ a family of smooth pseudo-Anosov flows on $M$.
    Assume that each $\phi_i$ is skew on each hyperbolic piece of the JSJ decomposition of $M$.
    
    Then there exists a finite cover $\hat M$ and an integer $n$, both depending only on the topology of $M$, such that the free homotopy data $\cP(\hat\phi_i)$ of the lifted flows $\{\hat \phi_i\}$ belong to at most $n$ distinct equivalent classes of free homotopy data in $\hat M$.
\end{thm}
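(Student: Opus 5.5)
The plan is to reduce Theorem \ref{thm: finiteness free homotopy data} to Corollary \ref{coro: finiteness of free homotopy data}. It suffices to show that, after passing to a suitable finite cover, the lifted flows are built from only finitely many equivalence classes of pseudo-Anosov triples.

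First, fix the cover $\hat M$ given by Proposition \ref{prop: cover embedded jsj tori}. This cover depends only on $M$, is orientable, and every Seifert piece of $\hat M$ is $\Sigma\times S^1$ with $\Sigma$ orientable. Moreover, for every pseudo-Anosov flow $\phi$ on $M$, the modified JSJ tori of the lift $\hat\phi$ are embedded. For each $\hat\phi_i$, take the modified JSJ decomposition $J_{\hat\phi_i}$ of Theorem \ref{thm: modified JSJ}. Since these tori are embedded, they form a good collection. Cutting along them gives a pseudo-Anosov triple $(P_i,\varphi_i,f_i)$ with $P_i/f_i\simeq \hat M$. By Proposition \ref{prop: finiteness jsj}, applied torus by torus, and by uniqueness of the JSJ decomposition up to isotopy, the isotopy class of the gluing $f_i$ ranges over finitely many possibilities. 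It therefore remains to show that the pieces $(P_i,\varphi_i)$ fall into finitely many orbit equivalence classes, with equivalences compatible with the gluing isotopy classes. Up to finitely many choices, we can then arrange condition \ref{def: eq triple; it: isotopic gluing} of Definition \ref{def: equivalent triple}.

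The finiteness of pieces will be checked piece by piece, in three cases.

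\textbf{Periodic Seifert pieces.} By Theorem \ref{thm_same_spine_implies_same_flow}, the restricted flow is determined by the fatgraph, up to a homeomorphism of $\Sigma$, together with one orbit orientation. Lemma \ref{lem:finite_number_fat_graph} gives finitely many fatgraphs up to diffeomorphism, so there are finitely many classes. To match the gluing isotopy classes, the homeomorphism of $\Sigma$ must be controlled. Here I would use that the mapping classes of $\Sigma\times S^1$ fixing the boundary tori up to vertical Dehn twists act on the boundary by finitely many maps, and Lemma \ref{lem: vertical annular twists orbit equivalences} absorbs the vertical twists as self-orbit equivalences.

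\textbf{Free Seifert pieces.} I would invoke the Barbot--Fenley description \cite{barbotFreeSeifertPieces2021}: such a piece is orbit equivalent to a finite cover of a geodesic flow type model, which is determined by the topology up to finitely many choices.

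\textbf{Hyperbolic pieces.} This is where the skew hypothesis enters. A skew piece is isotopic to a piece cut out of a skew Anosov flow. The piece is then controlled by the topology of the hyperbolic JSJ piece together with the finitely many bar-code data on its boundary, and the mapping class group $\mathrm{MCG}(Q;\partial)$ is finite by Mostow--Prasad. For this step I would appeal to the characterizations in \cite{barthelmeSkewPiecesPseudoAnosov}, which I expect to show that there are only finitely many skew pieces on a given hyperbolic piece up to isotopic equivalence.

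With finitely many equivalence classes of triples in hand, Proposition \ref{propintro: finiteness of bar code} gives finitely many complete bar codes in each class. Theorem \ref{thmintro: uniqueness of gluing} then shows that equal complete bar codes yield equivalent free homotopy data, which produces the integer $n$. The main obstacle is the hyperbolic (skew) case, together with making the orbit equivalences of the pieces simultaneously compatible with the gluing isotopy classes. If the direct finiteness of skew pieces is not available, I would first try to bound the number of orbit equivalence classes of skew pieces via the finitely many possible combinatorial types of their boundary traces, that is, bar codes on the boundary tori, and the rigidity of skew flows with a prescribed boundary.
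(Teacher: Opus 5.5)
Your overall architecture matches the paper: pass to the cover of Proposition \ref{prop: cover embedded jsj tori}, get finitely many equivalence classes of triples whose orbit equivalences are compatible with the gluings, then conclude with Proposition \ref{propintro: finiteness of bar code} and Theorem \ref{thmintro: uniqueness of gluing}. However, the step that carries the real content is not proved. That step is finiteness on the hyperbolic pieces, the only place where skewness is used. You only say you expect \cite{barthelmeSkewPiecesPseudoAnosov} to supply it, and your fallback (``rigidity of skew flows with a prescribed boundary'', counting bar codes) is not an argument. In particular, the number of periodic orbits on the boundary tori of these pieces is not a priori bounded, so there is no finite list of boundary bar codes to start from.

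The missing idea is a reduction to \emph{closed} skew Anosov flows. The paper proceeds as follows.
\begin{enumerate}
    \item It fixes, by pigeonhole, which pieces are periodic.
    \item It groups all non-periodic pieces, hyperbolic and free Seifert, into the complement $P$ of the periodic ones.
    \item It shows $P_i$ is a genuine manifold and a skew building block (Lemmas \ref{lem: Pi building block} and \ref{lem: Pi skew}). There are no corner orbits: hyperbolic pieces are acylindrical, and a corner orbit in a free piece would be a power of the fiber.
    \item It doubles $P_i$ via Paulet's gluing theorem \cite{pauletAnosovFlowsDimension2025}, composing with Dehn twists along the boundary periodic orbits so that all doubles are one fixed closed manifold.
    \item The resulting skew Anosov flows fall into finitely many orbit equivalence classes by \cite[Corollary~C]{martySkewedAnosovFlows2023}.
    \item Since orbit equivalences respect the JSJ decomposition, they restrict to orbit equivalences of the $P_i$.
    \item McCullough's theorem \cite{mcculloughHomeomorphismsWhichAre2006a} then forces these restrictions to be isotopic to the identity on $\partial P$. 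A nontrivial twist along the periodic slope would require a vertical annulus, i.e.\ a periodic piece.
\end{enumerate}
This also replaces your free Seifert case. There, both the finiteness claim and the absorption of vertical Dehn twists in $\mathrm{MCG}(Q;\partial)$ are unjustified, since Lemma \ref{lem: vertical annular twists orbit equivalences} applies only to periodic pieces.

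Your periodic Seifert case has a second gap. Lemma \ref{lem:finite_number_fat_graph} counts only fatgraphs without valence-$2$ vertices. Spine fatgraphs do have such vertices, one for each periodic orbit on a boundary torus adjacent to a non-periodic piece, and their number must be bounded. The paper bounds it using the finiteness already obtained on the skew complement: the number of boundary periodic orbits is an orbit-equivalence invariant of $(P_i,\varphi_i)$. So the cases cannot be treated independently. The skew side must be done first, and its output feeds the periodic side (Proposition \ref{prop: finite on one periodic piece}).
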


Here, we need to assume that the flows are \textit{smooth} (see Section \ref{sec: preli; subsec: pa flows}) to be able to use the construction in \cite{pauletAnosovFlowsDimension2025}. That smoothness assumption is not needed for transitive pseudo-Anosov flows, thanks to the result of Shannon \cite{Sha25} and Agol--Tsang \cite{AT24}, which gives that transitive pseudo-Anosov flows are always orbit equivalent to smooth ones.

\medskip

We now work toward the proof of Theorem~\ref{thm: finiteness free homotopy data}.  

Let $\{\phi_i\}_{i\in\N}$ be an infinite family of smooth pseudo-Anosov flows on $M$ such that, on every hyperbolic JSJ piece of $M$, each $\phi_i$ is skew. By Proposition~\ref{prop: cover embedded jsj tori}, there exists a  finite cover $\hat M$ of $M$ such that
\begin{itemize}
        \item $\hat M$ is orientable;
        \item every Seifert piece of the JSJ-decomposition of $\hat M$ has orientable base and orientable fibers;
        \item for each $i$, the lifted flow $\hat \phi_i$ admits a modified JSJ-decomposition with \emph{embedded} JSJ tori.
\end{itemize}
Thus, to prove Theorem \ref{thm: finiteness free homotopy data}, we need to show that there can be only finitely many distinct free homotopy data among the flows $\hat\phi_i$ on $\hat M$.
Our proof will be by contradiction. In order to avoid the use of hat decorations throughout (and at the same time, show Theorem \ref{thmintro: finiteness extra conditions}), we now assume that $\hat M= M$ and $\hat\phi_i = \phi_i$. Up to taking a subsequence of the $\phi_i$, we will now work with the following assumptions:
\begin{convention} \label{conv: finite cover}
For the rest of the proof of Theorem \ref{thm: finiteness free homotopy data}, we suppose that $M$ is a closed $3$-manifold carrying infinitely many pseudo-Anosov flows $\{\phi_i\}_{i\in\N}$ that satisfy:
 \begin{enumerate}[label=(\roman*)]
     \item $M$ is orientable;
     \item Each Seifert piece $P$ of the JSJ decomposition of $M$ has base an orientable surface and orientable fibers;
     \item Each $\phi_i$ is smooth;
     \item Each $\phi_i$ is skew on each hyperbolic piece of the JSJ decomposition of $M$;
     \item Each torus $T$ of the JSJ decomposition is isotopic to an \emph{embedded} quasi-transverse torus for $\phi_i$;
     \item There is a fixed collection of JSJ pieces, denoted $P_{\per}$, such that, for each $i$, the flow $\phi_i$ is periodic on all the pieces in $P_\per$;
     \item The free homotopy data $\cP(\phi_i), i \in \N$ of the flows are all distinct.
 \end{enumerate}
\end{convention}

Our proof will now consist of the following steps: First, we will show that, up to taking a subsequence, all the $\phi_i$ must be orbit equivalent when restricted to the complement of the periodic pieces. Then, we will show that, again up to passing to a subsequence, the flows are also all orbit equivalent in each periodic Seifert piece. Putting this together, we will finally obtain a contradiction with Theorem \ref{thmintro: uniqueness of gluing}. If we further assume that all the flows are transitive, then Theorem \ref{thmintro: uniqueness of gluing} gives us Theorem \ref{thmintro: finiteness orbit equivalence}.

\subsection{Finiteness of the skew pieces}

Let $P$ be a connected component of $M \setminus P_\per$. Note that $P$ is a union of JSJ pieces and we denote by $P_i$ the corresponding union of modified JSJ pieces with respect to the flow $\phi_i$. Let $\varphi_i = \phi_i|_{P_i}$, so that $(P_i,\varphi_i)$ is a \pap{}.

Recall that by Convention \ref{conv: finite cover} each boundary torus of each $P_i$ is embedded, while the collection of all the boundary tori may a priori fail to be embedded. This potential failure would be problematic for us, as we want to be able to use the gluing results of \cite{pauletAnosovFlowsDimension2025}, which require $P_i$ to be a manifold, which is equivalent in our setting to having the union of the boundary tori of $P_i$ be embedded. Our first lemma shows that $P_i$ is indeed a manifold, and that we can use the results of \cite{pauletAnosovFlowsDimension2025}.

\begin{lem}\label{lem: Pi building block}
    For each $i$, $P_i$ is a manifold and the pseudo-Anosov piece $(P_i, \varphi_i)$ is a building block in the sense of \cite[Definition 1.2]{pauletAnosovFlowsDimension2025}.
\end{lem}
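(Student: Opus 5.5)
We have to show two things: that the union of the boundary tori of $P_i$ is embedded, so $P_i$ is a genuine manifold; and that $(P_i,\varphi_i)$ then satisfies the axioms of a building block in \cite[Definition 1.2]{pauletAnosovFlowsDimension2025}.

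\textbf{Step 1: $P_i$ is a manifold.} By Convention~\ref{conv: finite cover}, each boundary torus of $P_i$ is embedded. So the only possible failure of embeddedness of the union is a corner periodic orbit $O\in\cO_c$ lying in two distinct tori $T,T'\subset\partial P_i$ (Definition~\ref{defi: boundary corner periodic orbits}). By \cite[Theorem 6.10]{barbotPseudoAnosovFlowsToroidal2013a}, already used in the proof of Proposition~\ref{prop: cover embedded jsj tori}, two distinct modified JSJ tori can share a periodic orbit only when that orbit is a vertex of the spine of a periodic Seifert piece adjacent to both tori. Such an orbit is freely homotopic to a regular fiber of that piece.

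We then argue on which side of $T$ and $T'$ that Seifert piece lies. The piece $P$ is a connected component of $M\setminus P_\per$, and $P_\per$ contains every JSJ piece periodic for $\phi_i$. Hence the periodic Seifert piece containing the corner orbit lies in $P_\per$, on the other side of the tori. Its spine, and hence the corner orbit, belongs to the closure of that periodic piece. Within $P_i$ itself, the tori $T$ and $T'$ are pushed off in the collar of the complementary region. Concretely, we would show that the two copies of $O$ in the formal boundary of $P_i$ are never identified. In the example following Remark~\ref{rmk: pap components}, such identifications happen only for the central piece containing the common orbit's neighborhood, and never for the side pieces. Here $P_i$ plays the role of a side piece, because the spine lies in $P_\per$.

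This step is the main obstacle. One must check carefully that a corner orbit of $\partial P_i$ always comes from a spine outside $P$, and never from an interior configuration. For that I would try to use the fact that $P$ contains no periodic piece, so every torus of $\partial P_i$ adjacent to a corner orbit bounds a periodic piece on its other side. I would then use the local picture of elementary Birkhoff annuli near the spine, from Theorem~\ref{thm_spines_exist}, to see that the tori separate locally.

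\textbf{Step 2: the building block conditions.} Once $P_i$ is a compact manifold with quasi-transverse embedded torus boundary, I would verify the conditions one by one.
\begin{enumerate}
\item The flow is smooth: this is item (iii) of Convention~\ref{conv: finite cover}.
\item The maximal invariant set is saturated hyperbolic: it is a closed invariant subset of the pseudo-Anosov flow $\phi_i$, and $P$ contains no periodic Seifert piece, so it has no singular orbits there. I would check this using the skew hypothesis on the hyperbolic pieces together with the description of free Seifert pieces in \cite{barbotFreeSeifertPieces2021}; alternatively, singular orbits can be allowed if the definition permits them.
\item The boundary is quasi-transverse, with the transverse orientation alternating along the tangent periodic orbits (Proposition~\ref{prop: qt bifoliation on qt surface in pA flow}).
\end{enumerate}
The traces of the boundary prefoliation give the \qms{} structure required. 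This follows from Lemma~\ref{lem: triple quasi transiverse bi foliation} and \cite[Proposition 1.11]{pauletAnosovFlowsDimension2025}.
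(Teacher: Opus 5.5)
\textbf{Step 1 is where the lemma's content lies, and the mechanism you propose does not work.} In the example following Remark~\ref{rmk: pap components}, the side pieces have no corner only because $T_1$ and $T_2$ bound \emph{different} connected components. Here $P$ is a single connected component of $M\setminus P_\per$, and $T,T'$ both belong to $\partial P_i$. Whatever collar the homotopy of Definition~\ref{def: pseudo anosov piece} uses, it ends with the boundary tori of $P_i$ in the prescribed positions $T$ and $T'$. So if a periodic orbit $O$ lies in $T\cap T'$, it lies in $\partial_r P_i$ as the intersection of two of its boundary tori. Near $O$ the set $P_i$ is then two closed wedges glued along $O$, which is not a manifold. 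Knowing that the spine through $O$ lies in $P_\per$ does not change this. No local picture of Birkhoff annuli can make $T$ and $T'$ ``separate locally'', since they genuinely meet along $O$. What must be proved is that no periodic orbit lies on two distinct boundary tori of $P_i$. Your proposal gives no argument for this, as you acknowledge, and the side-piece analogy points the wrong way.

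\textbf{The paper handles this step with $3$-manifold topology rather than flow geometry.} It places a corner orbit $O\subset T_1\cap T_2$ in a JSJ piece $Q\subset P$ adjacent to both tori, necessarily hyperbolic or free Seifert. If $Q$ is hyperbolic, $O$ yields an essential annulus from $T_1$ to $T_2$, contradicting acylindricity. If $Q$ is free Seifert, $[O]$ lies in two distinct peripheral subgroups of $\pi_1(Q)$, whose intersection is the fiber subgroup. Then $O$ would be homotopic to a power of the fiber, contradicting freeness.

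Be aware of what this argument uses: a wedge of $P_i$ at $O$ bounded by one half-annulus of each torus, so that the annulus lies in $Q$. In the configuration you have in mind, $O$ is a pinch point of the periodic piece and each wedge of $P_i$ at $O$ is bounded by the two halves of a single torus. There the annulus from $T$ to $T'$ runs through the periodic piece, and neither obstruction applies. That configuration would have to be excluded by a separate argument.

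Your Step 2 is at about the paper's level of detail; the paper only invokes smoothness for the hyperbolic splitting. However, the absence of periodic Seifert pieces does not by itself rule out singular orbits. It is the skew hypothesis that does so on the hyperbolic pieces.
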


\begin{proof}
First note that if $P_\per$ is empty, then $P_i=M$ and there is nothing to prove.

To ease notation, we will drop the $i$ subscripts in this proof and just set $P=P_i$ and $\varphi=\varphi_i$.
The flows are $\cC^1$ by assumption, hence the maximal invariant set admits a hyperbolic splitting.
To show that it is a building block, we need to show that the boundary of $P$ is embedded.
By Convention~\ref{conv: finite cover}, every modified JSJ torus of $\phi$ is embedded in $M$.
Hence each boundary component of $P$ is an embedded torus.
To prove that $P$ is a manifold, it only remains to show that two boundary components of $P$
cannot intersect along a periodic orbit, i.e., that no corner circle occurs.
Suppose for contradiction that there is a corner periodic orbit $O$ in $P$.
Then there exist two distinct boundary tori $T_1,T_2\subset \partial P$ meeting along $O$.
Since $P$ is obtained from the complement of the periodic JSJ pieces of $M$,
the orbit $O$ is contained in some JSJ piece $Q$ of $M$ adjacent to both $T_1$ and $T_2$, and $Q$ is either hyperbolic or a free Seifert piece.

We first rule out the hyperbolic case.
If $Q$ is hyperbolic, the common orbit $O$ would give an essential annulus in $Q$
with boundary on the boundary tori corresponding to $T_1$ and $T_2$.
This contradicts the fact that a hyperbolic JSJ piece is acylindrical.

We now rule out the free Seifert case.
Let $h\in \pi_1(Q)$ be the element represented by the periodic orbit $O$.
Since $O\subset T_1\cap T_2$, the element $h$ belongs to both boundary torus subgroups
$\pi_1(T_1)\cong \mathbb Z^2$ and
$\pi_1(T_2)\cong \mathbb Z^2$
inside $\pi_1(Q)$.
For an orientable Seifert piece, the intersection of two distinct boundary torus subgroups
is exactly the infinite cyclic subgroup generated by the regular fiber.
Therefore $h$ is a power of the fiber class.
This is impossible because $Q$ is by assumption a {free} Seifert piece.

We conclude that no corner periodic orbit can occur in $P$.
Therefore the boundary components of $P$ are pairwise disjoint embedded tori, so $P$ is a manifold, and $(P,\varphi)$ is a building block in the sense of \cite{pauletAnosovFlowsDimension2025}.
\end{proof}

Next we want to show that $(P_i, \varphi_i)$ is a skew piece. Note that this is not immediate from our assumption that all the modified JSJ pieces are either periodic Seifert or skew: $P_i$ may be a union of JSJ pieces, so we need to show that a union of skew pieces is a skew piece.

\begin{lem}\label{lem: Pi skew}
For each $i$, the pseudo-Anosov piece $(P_i, \varphi_i)$ is skew in the sense of Definition \ref{def: skew pap}.
\end{lem}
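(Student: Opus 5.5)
We need to show that $(P_i,\varphi_i)$ is skew, i.e.\ isotopic to a piece cut from a skew Anosov flow along a good collection of quasi-transverse tori. The plan is to build such an ambient skew Anosov flow by gluing, using the building-block machinery of \cite{pauletAnosovFlowsDimension2025}. Lemma~\ref{lem: Pi building block} makes that machinery available for $(P_i,\varphi_i)$.

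\textbf{Step 1: skew models for the JSJ pieces.} Write $P_i=\bigcup_j Q_{i,j}$ as a union of modified JSJ pieces, each hyperbolic or free Seifert. By Convention~\ref{conv: finite cover}, each hyperbolic $Q_{i,j}$ is skew. For a free Seifert piece, I will use the Barbot--Fenley description \cite{barbotFreeSeifertPieces2021}. It says the flow there is orbit equivalent to a piece cut from a suspension, or from a geodesic-flow-like model. In the free case every such model is $\R$-covered with skew type. So each $Q_{i,j}$ is isotopic to a piece $(Q'_{i,j},\psi_{i,j})$ cut from a skew Anosov flow $\Psi_{i,j}$.

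\textbf{Step 2: skewness is a local property of the orbit space.} I will use the characterization of skewness from \cite{barthelmeSkewPiecesPseudoAnosov}: a piece is skew if and only if every chain of lozenges it produces is $s$- and $u$-scalloped in the skew pattern. Equivalently, the lifts of the interior of the piece have no non-skew configurations, namely no perfect fits other than those forming lozenges arranged in the skew pattern.

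I then check this condition on $P_i$ directly. Any orbit of $\varphi_i$, and any pair of leaves making a perfect fit inside $P_i$, lies in one piece $Q_{i,j}$, or crosses an interior torus $T\subset \intr P_i$. The first case is controlled by Step~1.

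In the second case, the trace of $T$ is a chain of lozenges adjacent to two skew pieces. Both sides impose the skew (alternating, no-parallel-line) structure on that chain. Concretely, the two-sided lozenges correspond to a bi-infinite chain with alternating adjacency type, by Proposition~\ref{prop: trace foliation on qt torus in pa flow} and Claim~\ref{claim: chain decompo lines}.

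\textbf{Step 3: glue the models into a global skew flow.} Glue the skew models of Step~1 along the interior tori with gluing maps in the isotopy classes induced by $P_i$. For the exterior boundary, double $P_i$, or glue in attracting and repelling blocks. Then apply the gluing theorem of \cite{pauletAnosovFlowsDimension2025} to get an Anosov flow $\Psi$ containing a piece isotopic to $(P_i,\varphi_i)$.

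$\Psi$ must be skew. It is $\R$-covered because every lozenge chain produced by the decomposition has the skew pattern, by Step~2. Fenley's trichotomy then rules out the suspension case, since $\Psi$ contains hyperbolic or free Seifert pieces with non-fibred combinatorics.

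\textbf{Main obstacle.} The hard part is Step~3. The gluing must produce an Anosov flow that is \emph{skew} and not just Anosov, and it must produce exactly the given piece up to isotopy. I would first try to avoid the global construction altogether. The idea is to prove the orbit-space characterization of skew pieces from \cite{barthelmeSkewPiecesPseudoAnosov} in a form that is local along the decomposition, so that Step~2 alone finishes the proof.
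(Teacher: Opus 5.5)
Your overall frame matches the paper: use the building-block machinery of \cite{pauletAnosovFlowsDimension2025} to realize $(P_i,\varphi_i)$ inside a closed Anosov flow, then show that flow is skew. However, the proposal never proves skewness, which is the real content of the lemma. Step~2 rests on a ``local'' characterization of skew pieces by lozenge patterns along the decomposition tori that you neither prove nor can cite. Step~3 then asserts that $\Psi$ is $\R$-covered ``because every lozenge chain produced by the decomposition has the skew pattern.'' That inference is not justified. $\R$-coveredness is a global property of the leaf space of $\Psi$, and nothing in Steps~1--2 excludes branching; you leave this as the open obstacle yourself.

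The missing idea is the paper's axis argument. For each piece $Q_j$ of the glued flow, $\pi_1(Q_j)$ preserves an axis $A_j\cong\R$ in the stable leaf space $\Lambda(\cQ^s)$; this axis comes from the semi-conjugacy of the restricted flow with a piece of a skew flow. If $Q_j$ and $Q_k$ are adjacent along a torus $T$, then $\pi_1(T)$ preserves both axes. It also contains an element acting freely on the orbit space, whose axis is unique, so $A_j=A_k$. By connectedness and the graph-of-groups structure of the fundamental group, the whole group preserves this common axis, so $\Lambda(\cQ^s)=A\cong\R$. Since the flow is not a suspension, it is skew.

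Your construction in Step~3 also has problems. First, nothing guarantees that re-gluing the separate models of Step~1 by maps in the right isotopy classes gives back $(P_i,\varphi_i)$ up to orbit equivalence. The isotopy class of a gluing does not determine the flow, which is exactly the issue this paper studies. There is also no need to re-glue: by Lemma~\ref{lem: Pi building block}, $P_i$ is already a building block, so you should glue it to a copy of itself.

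Second, capping off with attracting and repelling blocks cannot work. It produces a non-transitive flow, whereas $\R$-covered (in particular skew) Anosov flows are transitive.

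Third, you do not check the hypotheses of the gluing theorem. The paper observes that each boundary torus of $P_i$ is adjacent to a skew hyperbolic or a free Seifert piece. Hence the boundary lamination has no compact leaves other than the boundary periodic orbits, which are even in number with alternating orientations. The lamination is therefore filling and alternating elementary. This is what allows the use of \cite[Lemma~11.4]{pauletAnosovFlowsDimension2025} on skewed $\R$-covered blocks, and then \cite[Theorem~1]{pauletAnosovFlowsDimension2025}, to produce an Anosov flow on the double containing a piece isotopic to $(P_i,\varphi_i)$.

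Finally, in Step~1 a suspension model would not be skew. What the argument needs is that free Seifert pieces are themselves skew pieces.
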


\begin{proof}
As in the preceding lemma, we set $P=P_i$ and $\varphi=\varphi_i$ to simplify notations. 
If $P$ is a single JSJ piece, then there is nothing to prove. Otherwise, $P$ is a union of hyperbolic skew pieces and free Seifert pieces glued along incompressible tori.
Let $(P', \varphi')$ be another copy of $(P,\varphi)$.
We know that $(P,\varphi)$ and $(P', \varphi')$ are building blocks in the sense of \cite{pauletAnosovFlowsDimension2025} by Lemma \ref{lem: Pi building block}.
Each boundary torus is adjacent to a skew hyperbolic piece or a free Seifert piece. In particular, the boundary lamination is elementary: There are no compact leaves other than the boundary periodic orbits; there is an even number of such boundary orbits with alternating orientations. Thus the boundary lamination is filling and alternating elementary (see \cite[Section 11.4]{pauletAnosovFlowsDimension2025}).
There is an obvious pairing of boundary components between $(P,\varphi)$ and its copy $(P',\varphi')$ with the same number of periodic orbits.
The block $(P, \varphi)$ satisfies the same properties as those of a \textit{skewed $\R$-covered block} in \cite[Lemma 11.4]{pauletAnosovFlowsDimension2025}. Therefore, we can apply \cite[Lemma 11.4]{pauletAnosovFlowsDimension2025} to the union of $(P,\varphi)$ and its copy $(P',\varphi')$.
We can then apply \cite[Theorem 1]{pauletAnosovFlowsDimension2025} and get that there is a piece $(P_1, \varphi_1)$ \textit{isotopic} (see \cite[Definition 1.27]{pauletAnosovFlowsDimension2025}) to $(P, \varphi)$ and a gluing map $f$ gluing $(P_1, \varphi_1)$ to its copy $(P'_1, \varphi'_1)$, and inducing an Anosov flow $\phi'$ on a closed manifold $M'$ diffeomorphic to the double of $P$ glued along $f$.
Up to replacing the boundary $\partial P$ in $M$ by an isotopic good collection of quasi-transverse tori, we can assume that the piece $(P_1, \varphi_1)$ for the gluing is orbit equivalent to the initial piece $(P, \varphi)$ (see \cite[Proposition 1.30]{pauletAnosovFlowsDimension2025}). So without loss of generality, we may assume that $P_1=P$, $\varphi_1 = \varphi$.

The fact that the Anosov flow $\phi'$ on $M'$ is skew is proved in \cite{barthelmeSkewPiecesPseudoAnosov}, for completeness, we sketch the argument, which is only a very slight modification of the proof of Theorem 5.3.2 of \cite{barthelmePseudoAnosovFlowsPlane2025}:

By construction, $(M',\phi')$ is built by gluing skew pieces together. Call $Q_j$, $j=1, \dots, k$, the pieces. Then each $\pi_1(Q_i)$ (seen as a subgroup of $\pi_1(M')$) preserves an axis $A_i\simeq \R$ in the, say stable, leaf space $\Lambda(\cQ^s)$ of $\phi'$. (This axis comes from the semi-conjugacy between $\phi'|_{Q_j}$ and a piece of a skew flow.)

We claim that $A_i=A_j$ for all $i,j$: By connectedness, it is enough to prove it for two adjacent pieces $Q_i,Q_j$. Such adjacent pieces share a common boundary torus, so there exists a common subgroup $\pi_1(T)$ of both $\pi_1(Q_i)$ and $\pi_1(Q_j)$. Hence $\pi_1(T)$ preserves both $A_i$ and $A_j$. Now $\pi_1(T)$ also contains an element $h$ acting freely on the orbit space of $\phi'$, and by uniqueness of axis of free elements (see, e.g., \cite[Section 3.3]{barthelmePseudoAnosovFlowsPlane2025}) we must then have $A_i=A_j$ as claimed. 

Call $A = A_j$ for all $j$. Now we claim that $\pi_1(M')$ preserves $A$: $\pi_1(M')$ is a graph of groups, with vertex groups being the $\pi_1(Q_j)$. If there are no loops in the graph, then $\pi_1(M')$ is just as amalgamated free product, and the result is immediate. Now each loop in the graph gives an additional generator, but each such element normalizes $\pi_1(Q_i)$, so also preserves $A$. Hence, we have that $\Lambda(\cQ^s) = A \simeq \R$, so $\phi'$ is $\R$-covered, and since it is not a suspension flow, it is skew, which ends the proof. \qedhere

\end{proof}

\begin{defi}[Boundary data] \label{def: boundary data}
    Let $(P, \varphi)$ be a \pap{} cut in a \paf{} $(M, \phi)$.
    We call the \emph{boundary data} of $(P,\varphi)$ the free homotopy class in $M$ of the boundary periodic orbits of $\phi$ on $\partial P$, on each connected component of $\partial P$.
\end{defi}

We now prove:
\begin{prop}\label{prop: finite on skew}
    There are only finitely many distinct orbit equivalence classes of \paps{} $(P_i,\varphi_i)$ on the complement $P$ of periodic Seifert pieces and the orbit equivalences are isotopic to the identity on the boundary.
\end{prop}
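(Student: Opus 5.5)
We will reduce the statement to a finiteness result for skew Anosov flows. By Lemma \ref{lem: Pi building block} and Lemma \ref{lem: Pi skew}, each $(P_i,\varphi_i)$ is a skew building block. More precisely, after an isotopy of its boundary, it is a piece of a skew Anosov flow $\phi'_i$ on the double $M'_i = P\cup_{f_i} P'$. The plan has four steps: bound the boundary data, fix the topology of the doubles, invoke rigidity of skew flows, and restrict back to the piece.

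\textbf{Step 1: boundary data.} Each boundary torus $T$ of $P$ is adjacent either to a hyperbolic skew piece or to a free Seifert piece. Hence, by the argument in the proof of Lemma \ref{lem: Pi skew}, the boundary lamination on $T$ is alternating elementary. Its only compact leaves are the boundary periodic orbits, and they come in an even number with alternating orientations. Their common free homotopy class in $T$, i.e.\ the boundary data of Definition \ref{def: boundary data}, is a primitive slope in $H_1(T)$. We first show that only finitely many slopes occur.

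\begin{itemize}
\item[$\bullet$] \emph{Hyperbolic side.} The slope is the class of a periodic orbit of a skew piece lying on $T$. We would use that in a skew $\R$-covered flow, periodic orbits freely homotopic into a peripheral torus generate a $\Z$-direction that is preserved by $\pi_1(T)$. Equivalently, it is the unique element of $\pi_1(T)$ acting with fixed points on the axis. Combined with finiteness of $\mathrm{MCG}(Q;\partial)$ for hyperbolic $Q$, this should leave only finitely many possibilities up to the mapping class group.
\item[$\bullet$] \emph{Free Seifert side.} We use the Barbot--Fenley description \cite{barbotFreeSeifertPieces2021}.
\end{itemize}

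The number of periodic orbits on $T$ should also be bounded. For that I would use that the number of lozenges in the trace chain modulo $\pi_1(T)$ is controlled by the translation of the slope element along the axis. This bound is where I expect the \emph{main obstacle}, and I would try first to read it off from the skew model, where a torus trace is a bi-infinite line of lozenges whose period is determined by the action of $\pi_1(T)$ on the axis.

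\textbf{Step 2: the doubles.} Given the boundary data, the gluing $f_i$ of $P$ to its copy is determined up to finitely many isotopy classes by Proposition \ref{prop: finiteness jsj}. The choices are: match the slopes, match the orbit counts, and fix the Dehn-twist ambiguity along the slope, which is finite by that proposition. Passing to a subsequence, we may therefore assume that all doubles $M'_i$ are the same manifold $M'$, obtained with the same isotopy class of gluing.

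\textbf{Step 3: rigidity.} Skew Anosov flows on a fixed manifold are few. By Barbot and Fenley's results, the orbit equivalence class of an $\R$-covered skew flow is determined by its free homotopy data, which in turn is determined up to finitely many choices (one per orientation/sign) by $\pi_1(M')$ acting on $\R$. Alternatively, we can use that skew flows on a fixed manifold fall into finitely many classes up to orbit equivalence homotopic to the identity, in line with \cite{BM24}. After a further subsequence, the $\phi'_i$ are orbit equivalent via maps isotopic to the identity.

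\textbf{Step 4: back to the piece.} Such an orbit equivalence maps the good collection $\partial P$ to a homotopic quasi-transverse collection for $\phi'_j$. By uniqueness up to flow isotopy of quasi-transverse tori in a given homotopy class (the trace chain is determined by $\pi_1(T)$, as in Lemma \ref{lem: isomorphism of chain}), it can be adjusted to preserve $\partial P$. It therefore restricts to an orbit equivalence $(P_i,\varphi_i)\to(P_j,\varphi_j)$, which is isotopic to the identity on $\partial P$ because the ambient map is isotopic to the identity and preserves each boundary torus.
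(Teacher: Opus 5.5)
Your architecture matches the paper's: double each $P_i$, obtain skew Anosov flows on the doubles, use finiteness of skew flows, restrict back. However, the steps that make it work are either missing or wrong.

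In Step~1 you look at the wrong side of the boundary tori. Every boundary torus of $P=M\smallsetminus P_\per$ also bounds a periodic Seifert piece, and the periodic orbits on it are freely homotopic to the fiber of that piece. So the boundary data is fixed by the topology of $M$. Your hyperbolic and free-Seifert arguments are only sketched. The ``main obstacle'' you identify, bounding the number of orbits on each torus, is not needed: you glue $P_i$ to a copy of itself, so the counts match, and the finiteness theorem on the double absorbs the rest.

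In Step~2 you misuse Proposition~\ref{prop: finiteness jsj}. It counts gluings that produce a \emph{given} manifold, so it cannot show that the doubles $M'_i$ coincide. Moreover, gluings matching the periodic slopes differ by arbitrary powers of a Dehn twist along that slope, which is an infinite family. What you need is the freedom to \emph{choose} the twist. The paper uses \cite[Lemma~11.6]{pauletAnosovFlowsDimension2025} to compose each $f_i$ with a Dehn twist along the periodic slope so that all $f_i$ are isotopic, while still producing skew Anosov flows. It also chooses the twist so that $\pi_i(\partial P_i)$ consists of JSJ tori of the double. That is what allows an arbitrary orbit equivalence of the doubles to be isotoped along flow lines so that it maps $\pi_i(P_i)$ onto $\pi_j(P_j)$.

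In Step~3 you claim that, after passing to a subsequence, the $\phi'_i$ are orbit equivalent via maps isotopic to the identity. The available finiteness result, \cite[Corollary~C]{martySkewedAnosovFlows2023}, holds only up to orbit equivalence; your sketch of why finiteness holds is really the content of that theorem. Nothing gives isotopic equivalence, and there is no reason to expect it when the mapping class group of the double is infinite. Since Step~4 derives the boundary conclusion from this, the second half of the proposition is unproved.

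The missing idea is a rigidity argument for an \emph{arbitrary} orbit equivalence $H\colon P\to P$:
\begin{enumerate}
\item Its restriction to each boundary torus preserves the periodic slope, so it is a power of a Dehn twist along that slope.
\item By \cite[Theorem~1]{mcculloughHomeomorphismsWhichAre2006a}, a nontrivial such twist would force periodic curves on distinct boundary components of $P$ to cobound an incompressible annulus in $P$.
\item This is impossible. Hyperbolic pieces are acylindrical, and in a free Seifert piece such an annulus is vertical, which would make the periodic slope the fiber slope and the piece periodic.
\end{enumerate}
Hence $H|_{\partial P}$ is isotopic to the identity.
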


Recall that all the pieces $P_i$ are isotopic to $P$ in the manifold $M$ so it makes sense to say that an orbit equivalence $P_i \to P_j$ is isotopic to the identity on the boundary.

\begin{proof}
Suppose first that $P_\per$ is empty. So $P_i = M$ for all $i$, and by Lemma \ref{lem: Pi skew}, all the $\phi_i$ are skew flows on $M$. Since there are only finitely many distinct orbit equivalence classes of skew Anosov flows in a fixed manifold (\cite[Corollary~C]{martySkewedAnosovFlows2023}), the proposition follows directly in that case.

So we now assume that $P_\per$ is non-empty, and we suppose, by contradiction, that the \paps{} $(P_i,\varphi_i)$ give rise to infinitely many distinct orbit equivalence classes.
    The boundary $\partial P$ has finitely many components $T^1, \dots, T^k$ and for each $i$ there is a corresponding torus $T_i^k\in \partial P_i$, so that for each fixed $k$, $T_i^k$ and $T_j^k$ are isotopic to the same JSJ torus $T^k$ in $M$.

We  first claim that all pieces have the same boundary data on the component corresponding to $T^k$:
Each such component is the boundary torus of the same periodic JSJ piece of $M$, and the periodic orbits of $\phi_i$ contained in $T^k_i$ are freely homotopic to the fiber of this periodic piece.  
        This fiber class is fixed by the topology of $M$, hence the corresponding class is the same for every $\phi_i$. Therefore, the boundary data of each $(P_i,\varphi_i)$ are indeed the same.

    Let $(P_i',\varphi_i')$ denote a second copy of $(P_i,\varphi_i)$.  

     As in the proof of Lemma \ref{lem: Pi skew}, we can find a \pagp{} 
      $f_i \colon \partial P_i \longrightarrow \partial P_i'$
        for a \pap{} equivalent to $(P_i \cup P_i',\, \varphi_i \cup \varphi_i')$ inducing a skew Anosov flow $\psi_i$ on the resulting manifolds
        $N_i \simeq (P_i \cup P_i') / f_i$.
        
        Let us show that each $N_i$ is diffeomorphic to $N_0$.
        Let $c_i$ be the free homotopy class represented by a boundary periodic orbit of $\varphi_i$ in $T_i$, and let $c_i'$ be the corresponding class for the copy $\varphi_i'$ on the associated boundary torus $T_i'$. By equality of the boundary data, these classes are independent of $i$.
        Any \pagp{} $f_i$ maps $T_i$ to $T_i'$ and sends the class $c_i=c_0$ to $c_i'=c_0'$.  
        It follows that two such gluings differ, up to isotopy, by a power of a Dehn twist along the periodic orbit of $\varphi_i'$ in $\partial P_i'$.
        By \cite[Lemma~11.6]{pauletAnosovFlowsDimension2025}, we can compose $f_i$ with an appropriate Dehn twist so that all gluing maps $f_i$ lie in the same isotopy class.
        In particular, the resulting manifolds $N_i$ are all diffeomorphic.
    
Using again that there are only finitely many skew Anosov flows on a given manifold up to orbit equivalence (\cite[Corollary~C]{martySkewedAnosovFlows2023}), we deduce that, after passing to an infinite subsequence, we may assume that the flows $\psi_i$ on $N_i$ are all orbit equivalent.
    Moreover, we can choose the isotopy class of each $f_i$ so that the image $\pi_i(\partial P_i)\subset N_i$ is a union of JSJ tori of $N_i$: Indeed, composing $f_i$ with an appropriate Dehn twist along the boundary periodic orbit ensures that $f_i$ does not identify Seifert fibers across the gluing.  

    Any orbit equivalence between $\psi_i$ and $\psi_j$ preserves the JSJ-decomposition of $N_i$ and $N_j$, and, up to composing with an isotopy along flow lines of $\psi_i$, we may arrange that the orbit equivalence maps the submanifold $\pi_i(P_i)\subset N_i$ exactly onto $\pi_j(P_j)\subset N_j$.
    In particular, this implies that the corresponding \paps{} $(P_i,\varphi_i)$ are all orbit equivalent.

    Now consider $H\colon P_i \to P_j$ be an orbit equivalence between $\psi_i$ and $\psi_j$. After composing by an isotopy we can assume that $P_i = P_j =P$ is the same manifold.
    Since the boundary data of the corresponding pieces
are the same, the restriction of $H$ to the boundary $\partial P$ preserves the class (in $\pi_1(\partial P)$) represented by the tangent periodic orbits, hence it is a power of a Dehn twist on each boundary torus along the corresponding slope.
Applying \cite[Theorem~1]{mcculloughHomeomorphismsWhichAre2006a} we deduce that two curves on distinct boundary components representing periodic orbits must bound an incompressible annulus in $P$.
This is impossible when $P$ is atoroidal.
If the piece is Seifert, such an annulus is isotopic to a vertical annulus (see, e.g., \cite{hatcherNotesBasic3Manifolda}), hence the periodic slope coincides with the Seifert-fiber slope and the piece is periodic. This is a contradiction and we conclude that $H_i|_{\partial P}$ is isotopic to the identity.
\end{proof}

\subsection{Finiteness of periodic Seifert pieces}

In this subsection, we fix a periodic Seifert piece of $M$, that we call $Q$, and denote by $(Q_i,\varphi_i)$
the modified pseudo-Anosov piece associated to $Q$ for the flow $\varphi_i$.
By Convention~\ref{conv: finite cover}, $Q$ is Seifert fibered over an orientable compact surface
$\Sigma$ with orientable fiber, and all modified JSJ tori of $\varphi_i$ are embedded.

\begin{prop} \label{prop: finite on one periodic piece}
There are only finitely many orbit equivalence classes of pseudo-Anosov pieces
$(Q_i,\varphi_i)$ on each periodic Seifert piece $Q$ and the orbit equivalences are isotopic to the identity on the boundary.

\end{prop}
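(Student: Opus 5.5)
The plan is to reduce everything to the spine and fatgraph description of periodic pieces. For each $i$, Theorem~\ref{thm_spines_exist} gives a spine $Z_i\subset Q$ for $\varphi_i$. After isotopy, $Z_i$ is foliated by Seifert fibers, so it projects to a fatgraph $\cG_i\subset\Sigma$. Collapsing valence-$2$ vertices to edges, Lemma~\ref{lem:finite_number_fat_graph} says there are only finitely many such fatgraphs up to diffeomorphism of $\Sigma$. The first step is therefore to pass to a subsequence on which the $\cG_i$ are all homeomorphic to a fixed $\cG_0$ via maps $h_i\colon\Sigma\to\Sigma$. A valence-$2$ vertex corresponds to an extra periodic orbit on an edge. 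I expect this changes nothing in the model flow, because the elementary Birkhoff annuli glued along a half-face reproduce the same spine. If that turns out to be false, I will instead bound the number of valence-$2$ vertices, using the fact that each one lies on a boundary curve of $\Sigma$ that is simple in $\cG_i$ (Proposition~\ref{prop: cover embedded jsj tori}). Since each periodic orbit carries two possible orientations and there are boundedly many vertices, a further subsequence lets us fix the orientation of the periodic orbit over a chosen vertex $v_0$ compatibly with $h_i$. Theorem~\ref{thm_same_spine_implies_same_flow} then shows that all $(Q_i,\varphi_i)$ in the subsequence are orbit equivalent, via maps $H_i$ inducing $h_i$ on the base. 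This settles finiteness of the orbit equivalence classes.

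The second step, which I expect to be the main obstacle, is to arrange that the orbit equivalences are isotopic to the identity on $\partial Q$. The map $h_i$ need not be isotopic to the identity, because $\mathrm{MCG}(\Sigma)$ is infinite. I will handle this in two stages.

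For the base, I will use the fact that $h_i$ must send each boundary component of $\Sigma$ to itself. This holds because the boundary tori of $Q_i$ are fixed JSJ tori, and an orbit equivalence between pieces inside the same $M$ coming from flows on $M$ should preserve which JSJ torus is adjacent. If it does not, I pass to a subsequence where the permutation of boundary components is constant and compose with one fixed map. Granting this, I claim that $h_i|_{\partial\Sigma}$ is determined up to isotopy by finite data, namely the combinatorics of $\cG_0$ (which vertex and sector each boundary curve retracts through). A homeomorphism of the annulus collar fixing a boundary circle setwise is, up to isotopy, a rotation and hence isotopic to the identity, because orientation is preserved once the orientation data are fixed. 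So $h_i|_{\partial \Sigma}$ is isotopic to the identity after composing with one of finitely many fixed maps, and a further subsequence removes this ambiguity.

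For the lift to $Q\cong\Sigma\times S^1$, a diffeomorphism covering a map of $\Sigma$ isotopic to the identity rel boundary can differ on each boundary torus from the identity only by powers of the fiber Dehn twist. These twists are realized by three-dimensional Dehn twists along vertical annuli joining pairs of boundary tori, together with the fiber direction. By Lemma~\ref{lem: vertical annular twists orbit equivalences}, each such twist is represented by a self-orbit equivalence of $(Q_0,\varphi_0)$. Composing $H_i$ with suitable products of these twists removes all fiber twisting on the boundary tori, with one exception: a single global twist constraint arises, since the twists along vertical annuli change the twist numbers in pairs summing to zero. To deal with it, I will check that the total twist is preserved, using that the total is determined by the Euler number of the Seifert fibration with fixed boundary framing. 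Alternatively, if $Q$ has a single boundary torus, I will absorb the twist by precomposing with a Dehn twist along a vertical torus parallel to the boundary, which is isotopic to the identity on $Q$.

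Combining the two stages yields orbit equivalences $(Q_i,\varphi_i)\to(Q_0,\varphi_0)$ that are isotopic to the identity on $\partial Q$. After the finitely many subsequence passages, only finitely many classes remain. This proves Proposition~\ref{prop: finite on one periodic piece}.
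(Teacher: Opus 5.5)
Your proposal has a genuine gap in the first step, in how you handle valence-$2$ vertices. The gap matters even though the second step is essentially fine.

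\textbf{Valence-$2$ vertices are not invisible.} A valence-$2$ vertex of $\cG_i$ is a genuine periodic orbit of $\varphi_i$. It lies on a boundary torus of $Q_i$, since interior vertices have valence $\ge 4$. An orbit equivalence of pieces preserves the number of periodic orbits contained in each boundary torus. So fatgraphs that become homeomorphic only after collapsing valence-$2$ vertices give, in general, non-orbit-equivalent pieces. Theorem~\ref{thm_same_spine_implies_same_flow} requires a homeomorphism of the uncollapsed fatgraphs.

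\textbf{Your fallback does not give a bound.} Lemma~\ref{lem:finite_number_fat_graph} only controls vertices of valence $\ge 3$, because a valence-$2$ vertex contributes $0$ to $\sum(\deg v-2)=-2\chi(\Sigma)$. The fact that each boundary curve retracts onto a \emph{simple} cycle of $\cG_i$ only means the cycle visits each vertex once. A simple cycle can still pass through arbitrarily many valence-$2$ vertices. Nothing intrinsic to $Q$ bounds their number, so your ``boundedly many vertices'' and the subsequent subsequence arguments are unsupported.

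\textbf{The missing input comes from outside $Q$.} Tori between two periodic Seifert pieces are transverse (Barbot--Fenley). Hence vertices of valence $<4$ can only come from periodic orbits on boundary tori adjacent to the non-periodic part $P=M\ssm P_\per$. The number of such orbits is determined by the orbit equivalence class of $(P_i,\varphi_i)$. Proposition~\ref{prop: finite on skew} gives finitely many such classes, and hence a uniform bound on valence-$2$ vertices. Only after this bound do you get finitely many fatgraphs, so that Theorem~\ref{thm_same_spine_implies_same_flow} applies.

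\textbf{Comments on the second step.}
\begin{enumerate}
\item Realizing the boundary fiber twists by vertical annular twists is a more hands-on version of the paper's appeal to McCullough's theorems. The zero-sum constraint does hold automatically: the boundary of the image of a horizontal section is null-homologous, and the fiber class has infinite order in $H_1(Q)$.
\item Your single-boundary fallback is vacuous. A Dehn twist along a boundary-parallel torus is the identity on $\partial Q$, so it cannot absorb anything. It is also unnecessary, since with one boundary torus the total twist must vanish.
\item You only know that $h_i|_{\partial\Sigma}$ is isotopic to the identity, not that $h_i$ is isotopic to the identity rel boundary. The former is all you need.
\end{enumerate}
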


\begin{proof}
Let $p_i \colon Q_i \to \Sigma_i$ be the Seifert fibration of the modified piece $Q_i$, where $\Sigma_i$ is an orientable surface.
Let $Z_i$ be the spine of $\phi_i$ in $Q_i$ and $\cG_i:=p_i(Z_i)\subset \Sigma_i$ the associated fatgraph (see Section \ref{subsec: spine}).

Recall that by Lemma~\ref{lem:finite_number_fat_graph}, up to diffeomorphisms, there are only finitely many fatgraphs with no vertices of valence $2$ in $\Sigma$.

The fatgraphs $\cG_i$ that we consider here may have vertices of valence $2$. However, such vertices are necessarily on the boundary of the modified surface $\Sigma_i$, as any vertex in the interior will have valence at least $4$.

Since the modified JSJ tori between two periodic Seifert pieces are transverse (see \cite[Proposition 3.4]{barbotClassificationRigidityTotally2015}), any vertex of valence $<4$ corresponds to periodic orbits on quasi-transverse boundary tori, which are adjacent to the non-periodic pieces, i.e., the skew piece $P = M \ssm P_\per$.

The number of periodic orbits on such a boundary torus is determined by the orbit equivalence class of the flow $\varphi_i$ on the modified skew piece $P_i$ associated to $P = M \ssm P_\per$. By the finiteness of the number of orbit-equivalent classes of such pieces of flows (Proposition \ref{prop: finite on skew}), we get that there is a uniformly bounded number of boundary periodic orbits. In particular there is a uniformly bounded number of vertices of valence $2$ in the associated fatgraphs.

As a consequence, we get that there are only finitely many diffeomorphism classes of possible fatgraphs in $\Sigma$ with the boundary data fixed. So, by Theorem \ref{thm_same_spine_implies_same_flow}, there are only finitely many distinct orbit equivalence classes of pseudo-Anosov pieces $(Q_i,\varphi_i)$.

Now consider $H\colon Q_i \to Q_j$ an orbit equivalence between $\varphi_i$ and $\varphi_j$. As in the proof of Proposition \ref{prop: finite on skew}, after composing by an isotopy we can assume that $Q_i = Q_j =Q$ is the same manifold. Using again \cite[Theorem~1]{mcculloughHomeomorphismsWhichAre2006a}, we know that if $H|_{\partial Q}$ is not the identity, then by uniqueness of the Seifert fibration (see Case 3 of the proof of Proposition \ref{prop: finiteness jsj}) it must correspond to Dehn twists on $\partial Q$ in the direction of the Seifert fibration of $Q$ (since an annulus joining two essential closed curves on distinct components of $\partial Q$ must be isotopic to a vertical annulus).

By \cite[Theorem~2]{mcculloughHomeomorphismsWhichAre2006a}
there exists
a composition
$D\colon Q\longrightarrow Q$
of three-dimensional Dehn twists along a disjoint collection of annuli whose
restriction to $\partial Q$ is isotopic to $H|_{\partial Q}$.
The twisting curves are Seifert-fiber slopes, so the annuli can be chosen vertical.
Therefore, the map $D$ can be represented within the same isotopy classes by a self-orbit
equivalence of $\varphi_i$ in $Q$ by Lemma \ref{lem: vertical annular twists orbit equivalences}.
Thus, up to composing $H$ with $D^\inv$, we get an orbit equivalence between $\varphi_i$ and $\varphi_j$
which is isotopic to the identity on the boundary.
\qedhere

\end{proof}

\subsection{Proof of Theorems \ref{thm: finiteness free homotopy data} and \ref{thmintro: finiteness orbit equivalence}}

We can now finish the proofs of Theorem \ref{thm: finiteness free homotopy data} and Theorem \ref{thmintro: finiteness orbit equivalence}.

Recall that we assumed the existence of infinitely many flows $\phi_i$ on $M$ satisfying the assumptions of Convention \ref{conv: finite cover}. Up to taking a subsequence, we may further assume, thanks to Proposition \ref{prop: finite on one periodic piece} and Proposition \ref{prop: finite on skew} that, on the union of skew pieces, the flows $(P_i, \varphi_i)$ are all orbit equivalent, and on each periodic Seifert piece $Q^j$, the flows $(Q^j_i, \varphi^j_i)$ are also orbit equivalent. Moreover, we can assume that $\{Q^j\}_j$ is a nonempty collection, as otherwise Proposition \ref{prop: finite on skew} would already yield the result.\footnote{In fact, we could also assume that there exists at least one skew piece, as otherwise we would have that the flows are totally periodic and hence the result already follows from \cite{barbotClassificationRigidityTotally2015} and Theorem \ref{thm_same_spine_implies_same_flow}. However, we do not need that here. }

Denote by $(N_i, \psi_i)$ the disjoint union of all $P^j_i$ and $Q^j_i$ together with the flow $\varphi_i^j$ on each piece. The flows $(N_i, \psi_i)$ are all orbit equivalent pseudo-Anosov pieces.
Recall that all the pieces $N_i$ are isotopic in the closed manifold $M$ (see Theorem \ref{thm: modified JSJ} and Proposition \ref{prop: cover embedded jsj tori}). 
Up to performing an isotopy on $(N_i, \psi_i)$ we can consider pieces on the same manifold. We denote those pieces $(N, \psi_i)$.
Let $f_i \colon \partial N \to \partial N$ be the pseudo-Anosov gluing map which recovers the pseudo-Anosov flow $\phi_i$ on $M$.
Denote by $H_i\colon N \to N$ the orbit equivalence between $\psi_0$ and $\psi_i$. 
To show that the triples $(N, \psi_i, f_i)$ are equivalent we need to show that the conjugate $(H_i)_* f_0$ is isotopic to $f_i$ for each $i$.
This is satisfied because $H_i$ is isotopic to the identity on the boundary by Proposition \ref{prop: finite on one periodic piece} and Proposition \ref{prop: finite on skew}.
We now apply Theorem \ref{thmintro: uniqueness of gluing} which implies that these triples only induce a finite number of distinct sets of free homotopy data, and, if the flows are assumed to be transitive, a finite number of orbit equivalence classes, leading to a contradiction.

\bibliographystyle{alpha}
\bibliography{biblio}

\end{document}